\newtheorem{theorem}{Theorem}[section]
\newtheorem{lemma}[theorem]{Lemma}
\newtheorem{proposition}[theorem]{Proposition}
\newtheorem{assumption}[theorem]{Assumption}
\newtheorem{definition}[theorem]{Definition}
\newtheorem{remark}[theorem]{Remark}
\let\originalleft\left
\let\originalright\right
\renewcommand{\left}{\mathopen{}\mathclose\bgroup\originalleft}
\renewcommand{\right}{\aftergroup\egroup\originalright}
\renewcommand{\d}{\/\mathrm{d}\/}
\def\w{\textbf{W}^{\varepsilon}_{{\theta}^{\varepsilon}}}
\def\L{\mathbb{L}}
\def\A{\mathrm{A}}
\def\I{\mathrm{I}}
\def\C{\mathrm{C}}
\def\f{\boldsymbol{f}}
\def\B{\mathrm{B}}
\def\D{\mathrm{D}}
\def\X{\mathbb{X}}
\def\x{\boldsymbol{x}}
\def\g{\boldsymbol{g}}
\def\v{\boldsymbol{v}}
\def\w{\boldsymbol{w}}
\def\W{\mathrm{W}}
\def\N{\mathbb{N}}
\def\V{\mathbb{V}}
\def\wi{\widetilde}
\def\P{\mathrm{P}}
\def\u{\boldsymbol{u}}
\def\H{\mathbb{H}}
\newcommand{\R}{\mathbb{R}}
\renewcommand{\d}{\/\mathrm{d}\/}
\subjclass[2020]{Primary 37L55; Secondary 37B55, 35B41, 35B40.}
\keywords{Asymptotically autonomous robustness, pullback random attractor, stochastic convective Brinkman-Forchheimer equations, backward uniform-tail estimate, backward flattening-property.
}
\author{Kush Kinra} \address[Kush Kinra]
{  Department of Mathematics\\
	Indian Institute of Technology Roorkee-IIT Roorkee \\
		Haridwar Highway, Roorkee, Uttarakhand 247667, INDIA.}
\email[K.~Kinra]{kkinra@ma.iitr.ac.in}
\author{Manil T. Mohan} \address[Manil T. Mohan]
{Corresponding author \\ Department of Mathematics\\
	Indian Institute of Technology Roorkee-IIT Roorkee \\
	Haridwar Highway, Roorkee, Uttarakhand 247667, INDIA.}
\email[M. T.~Mohan]{maniltmohan@ma.iitr.ac.in, maniltmohan@gmail.com}
\author{Renhai  Wang}
\address[Renhai  Wang]
{School of Mathematics and Statistics\\
 Southwest University\\
 Chongqing 400715, CHINA.}
\email[R.~Wang]{rwang-math@outlook.com}
\begin{document}

\baselineskip=1.1\baselineskip

\begin{abstract}
\textsf{This article is concerned with the \emph{asymptotically autonomous robustness} (almost surely and in probability) of non-autonomous random attractors for two stochastic versions of 3D convective Brinkman-Forchheimer (CBF) equations defined on the whole space $\mathbb{R}^3$:	
	$$\frac{\partial\boldsymbol{v}}{\partial t}-\mu \Delta\boldsymbol{v}+(\boldsymbol{v}\cdot\nabla)\boldsymbol{v}	+\alpha\boldsymbol{v}+	\beta|\boldsymbol{v}|^{r-1}\boldsymbol{v}+\nabla p=\boldsymbol{f}(t)+``\mbox{stochastic terms}",\quad \nabla\cdot\boldsymbol{v}=0,$$
	with initial and boundary vanishing conditions, where $\mu,\alpha,\beta >0$, $r\geq1$ and $\boldsymbol{f}(\cdot)$ is a given time-dependent external force field. By the asymptotically autonomous robustness of a non-autonomous random attractor $ \mathscr{A}=\{ \mathscr{A}(\tau,\omega): \tau\in\mathbb{R}, \omega\in\Omega\}$  we mean its time-section $\mathscr{A}(\tau,\omega)$ is robust to a time-independent random set as time $\tau$ tends to negative infinity according to the Hausdorff semi-distance of the underlying space. Our goal is to study this topic, almost surely and in probability, for the	non-autonomous 3D CBF equations when the stochastic term is a linear multiplicative or additive noise, and the time-dependent forcing converges towards a time-independent function. Our main results contain two cases: i) $r\in(3,\infty)$ with any $\beta,\mu>0$;	ii) $r=3$ with $2\beta\mu\geq1$. The main procedure to achieve our goal is how to justify that the usual  pullback asymptotic compactness of the solution operators is uniform on some \emph{uniformly} tempered universes over an \emph{infinite} time-interval $(-\infty,\tau]$.	This can be done by a method based on Kuratowski's measure of noncompactness by showing the backward uniform ``tail-smallness'' and ``flattening-property'' of the solutions over $(-\infty,\tau]$ in order to overcome the lack of compact Sobolev embeddings on unbounded domains. Several rigorous calculations dealing the pressure term $p$ and the nonlinear term $\beta|\boldsymbol{v}|^{r-1}\boldsymbol{v}$ in the whole analysis. The existence of regular solutions of stochastic CBF equations plays a crucial role in establishing uniform tail-estimates and backward flattening-property.} 
\end{abstract}
\title[Asymptotic autonomy of random attractors for 3D stochastic CBF equations]{Asymptotically autonomous robustness in Probability of non-autonomous random attractors for stochastic convective Brinkman-Forchheimer equations on $\mathbb{R}^3$}
\maketitle

	
	\section{Introduction} \label{sec1}\setcounter{equation}{0}
	\subsection{The model}
In this  article, we consider a stochastic fluid dynamic model concerning the 3D convective Brinkman-Forchheimer (CBF)
equations driven by stochastic and
non-autonomous forcing simultaneously defined on the whole space $\R^3$:
	\begin{equation}\label{1}
		\left\{
		\begin{aligned}
			\frac{\partial \v}{\partial t}-\mu \Delta\v+(\v\cdot\nabla)\v+\alpha\v+\beta|\v|^{r-1}\v+\nabla p&=\boldsymbol{f}+S(\v)\circ\frac{\d \W(t)}{\d t},     \text{ in }\  \R^3\times(\tau,\infty), \\ \nabla\cdot\v&=0,  \hspace{28mm}   \text{ in } \R^3\times[\tau,\infty), \\
			\v(x)|_{t=\tau}&=\v_0(x),  \hspace{23mm}    x\in \R^3 \ \text{ and }\ \tau\in\R,\\
			|\v(x)|&\to 0,  \hspace{28mm}   \text{ as }\ |x|\to\infty,
		\end{aligned}
		\right.
	\end{equation}
	where $\v(x,t)\in \R^3$, $p(x,t)\in\R$ and $\f(x,t)\in \R^3$ represent the velocity field, pressure field and external forcing, respectively. The constants $\mu, \alpha, \beta>0$ stand for the \emph{Brinkman} (effective viscosity), \emph{Darcy} (permeability of the porous medium) and \emph{Forchheimer} coefficients, respectively. Note that $r\in[1,\infty)$ is called the absorption exponent and $r=3$ is called the \emph{critical} absorption exponent. The one-dimensional two-sided Wiener process $\W(\cdot)$ is defined on a probability space $(\Omega, \mathscr{F}, \mathbb{P})$ (see Subsection \ref{2.5}). The diffusion coefficient $S(\v)$ of the noise is either
equal to $\v$ (multiplicative noise) or independent of $\v$ (additive noise).  The symbol $\circ$ means that the stochastic integral should be understood in the sense of Stratonovich.

The CBF equations are also referred to as the
 \textbf{tamed} Navier-Stokes equations with a modified damping term
$\alpha\v+\beta|\v|^{r-1}\v$. If $\alpha=\beta=0$, then the system \eqref{1} is reduced to the standard 3D Navier-Stokes equations.
It has been proved by Hajduk and Robinson \cite[Proposition 1.1]{HR} that the CBF equations and the Navier-Stokes equations
have the same scaling only when $r=3$ and $\alpha=0$,
but have no scaling invariance for other values of $\alpha$ and $r$. From the physical point of views, system  \eqref{1} is applied to the flows when the velocities are sufficiently high and porosities are not too small, that is, when the Darcy law for a porous medium does not apply, see \cite{PAM}. In this case, system \eqref{1} is also referred as the \emph{non-Darcy} model.
	
	\subsection{Literature results for CBF equations}
	For deterministic 2D/3D CBF equations, the existence and uniqueness of weak/strong solutions on bounded, periodic and unbounded domains has been investigated in \cite{SNA,FHR,HR,PAM,MTM}; the existence, uniqueness, regularity, stability and  Hausdorff/fractal dimension of global/pullback/exponential attractors have studied in \cite{KM6,MTM2,MTM3} and the references therein. For stochastic 2D/3D CBF equations, the existence of a global in time pathwise mild solution
was justified in \cite{MTM6} when the equations are defined on the whole space and driven by fractional Brownian noise; the existence and uniqueness of strong solutions (in probabilistic sense) was established in \cite{KM} when the equations are defined on unbounded Poincar\'e domains and forced by Gaussian noise.

However,  similar to the 3D Navier-Stokes equations, the existence of the unique global weak solution and unique pathwise strong solution of 3D deterministic and stochastic CBF equations are still open problems for $r\in[1,3)$ with $\beta,\mu>0$, and $r=3$ with $2\beta\mu<1$.
	
\subsection{Literature survey for random attractors}	
The theory of various types of attractors, such as global, pullback, exponential and trajectory attractors, of deterministic dynamical systems has been extensively studied in \cite{Ball,BCL,CLR,CLR1,CLR2,Chueshov2,CV2,Robinson2,Robinson1,R.Temam} and many others. For the long term dynamics of stochastic ordinary/partial differential equations which generates a random dynamical system (RDS) \cite{Arnold}, the extension of global attractors to the random attractors was introduced in \cite{BCF,CDF,CF,Schmalfussr} and successfully applied to 2D stochastic Navier-Stokes equations
and other stochastic equations. Since the evolution equations arriving from physics and other fields of science are often driven by non-autonomous and stochastic forcing simultaneously, random attractors of autonomous RDS are generalized to pullback random attractors in \cite{SandN_Wang} under the framework of non-autonomous RDS. In view of these abstract results, there is a large number of literature on the random attractors for autonomous and non-autonomous stochastic equations, see \cite{BCLLLR,
Caraballo201hf6na,Caraballomb2,GGW,KM6,KM7,KRM,PeriodicWang,RKM,WSY,XC}, etc.
As per the existing literature, the existence of random attractors for stochastic systems is based on some  transformation which converts the stochastic system into a pathwise deterministic system. This transformation is available in the literature only when the noise is either linear multiplicative or additive one, see  \cite{BCLLLR,FY,KM6,LG,PeriodicWang}, etc. In order to deal with the nonlinear diffusion term of the noise, the concept of  mean random attractors was introduced in \cite{Wang} and applied to stochastic Navier-Stokes and CBF equations (It\^o sense) with Lipschitz nonlinear diffusion term in \cite{Wang1} and \cite{KM4}, respectively, see \cite{Wang9,WangGUOWANG} for other physically relevant stochastic models. Another different approach in the direction of random attractors, when the diffusion term is nonlinear, is the Wong-Zakai approximation of pathwise random attractors, see \cite{GGW,GLW,KM7,XC} and the references therein.
	
	\subsection{Motivation, assumptions and main results} In general, a non-autonomous random attractor carries the form
	$\mathscr{A}_{\varsigma}=\{ \mathscr{A}_{\varsigma}(\tau,\omega): \tau\in\R,\ \omega\in\Omega\}$,
	where $\varsigma$ stands for some external perturbation parameter. In the literature, the robustness of pullback random attractors of stochastic CBF equations have been established in \cite{KM6,KM7} with respect to the external parameter $\varsigma$. For the robustness with respect to the external/internal parameters of pullback random attractors of 2D stochastic Navier-Stokes, we refer interested readers to the works \cite{HCPEK,GGW,GLW,KRM,RKM} and the references therein.
Currently, the questions of robustness of pullback random attractors of stochastic CBF equations defined on \emph{unbounded} domains with respect to the \emph{internal} parameter $\tau$, however, is still unsolved.

As per our expectations, if the time-dependent forcing term $\f(x,t)$ converges to some time-in- dependent forcing term $\f_{\infty}(x)$ in some sense, the non-autonomous random dynamics of the system \eqref{1} becomes more and more autonomous. Our main motivation is to examine the asymptotically autonomous robustness of pullback random attractors of \eqref{1}
when the parameters in \eqref{1} are discussed in the following two cases:
\begin{table}[ht]
		\begin{tabular}{|c|c|c|}
			\hline
			\textbf{Cases}&$ r$& conditions on $\mu$ \& $\beta$ \\
			\hline
			\textbf{I}& $r>3$& for any $\mu>0$ and $\beta>0$ \\
			\hline
			\textbf{II}& $r=3$&for $\mu>0$ and $\beta>0$ with $2\beta\mu\geq1$ \\
			\hline
		\end{tabular}
		\vskip 0.1 cm
		\caption{Values of $\mu,\beta$ and $r$.}
		\label{Table}
	\end{table}

\begin{assumption}\label{Hypo_f-N}
		 $\f\in\mathrm{L}^{2}_{\emph{loc}}
		(\R;\L^2(\R^3))$  converges to $\f_{\infty}\in\L^2(\R^3):$
		\begin{align}\label{Hyp1}
			\lim_{\tau\to -\infty}\int^{\tau}_{-\infty}
			\|\f(t)-\f_{\infty}\|^2_{\L^2(\R^3)}\d t=0.
		\end{align} 
	Moreover, there exists a numbers $\delta_1\in[0,\alpha)$ such that for all $t\in\R$
	\begin{align}\label{Hyp2}
		&\sup_{s\leq \tau}\int_{-\infty}^{0} e^{\delta_1 t}\|\f(t+s)\|^2_{\dot{\mathbb{H}}^{-1}(\R^3)}\d t<+\infty.
	\end{align}
	\end{assumption}

Under the above assumption on the external forcing term, let us state our main results of this work.

	\begin{theorem}[\texttt{Multiplicative noise case}]\label{MT1-N}
		Let Assumption \ref{Hypo_f-N} be satisfied. Then, for both the cases given in Table \ref{Table}, the non-autonomous RDS $\Phi$ generated by \eqref{1} with $S(\v)=\v$ has a
		unique pullback random attractor
		$\mathscr{A}
		=\{\mathscr{A}(\tau,\omega):\tau\in\mathbb{R},
		\omega\in\Omega\}$ such that
		$\bigcup\limits_{s\in(-\infty,\tau]}\mathscr{A}
		(s,\omega)$ is precompact in $\L^2(\R^3)$ and  $\lim\limits_{t \to +\infty} e^{- \gamma t}\sup\limits_{s\in(-\infty,\tau]
		}\|\mathscr{A}(s-t,\vartheta_{-t} \omega )\|_{\L^2(\R^3)} =0,$ for any
		$\gamma>0$, $\tau\in \mathbb{R}$ and  $\omega\in\Omega$.  In addition, the time-section  $\mathscr{A}(\tau,\omega)$ is asymptotically
		autonomous robust in $\L^2(\R^3)$, and the limiting set of $\mathscr{A}(\tau,\omega)$ as $\tau\rightarrow-\infty$ is just determined
		by the random attractor $\mathscr{A}_{\infty}=
		\{\mathscr{A}_{\infty}(\omega):
		\omega\in\Omega\}$ of stochastic CBF equations \eqref{1} with the autonomous forcing $\f_\infty$, that is,
		\begin{align}\label{MT2-N}
			\lim_{\tau\to -\infty}\mathrm{dist}_{\L^2(\R^3)}
			(\mathscr{A}(\tau,\omega),
			\mathscr{A}_{\infty}(\omega))=0, \ \ \ \ \  \mathbb{P}\text{-a.s.}
		\end{align}
		Moreover, the
		\texttt{asymptotically autonomous robustness in probability} is also justified:
		\begin{align}\label{MT3-N}
			\lim_{\tau\to -\infty}\mathbb{P}\Big(\omega\in\Omega:\mathrm{dist}_{\L^2(\R^3)}
			(\mathscr{A}(\tau,\omega),
			\mathscr{A}_{\infty}(\omega))\geq\delta\Big){=0},\ \ \ \forall\ \delta>0.
		\end{align}
		In addition,  for any $\varepsilon>0$ and sequence $\tau_{n}\to-\infty$, there exists $\Omega_{\varepsilon}\in\mathscr{F}$ with $\mathbb{P}(\Omega_{\varepsilon})>1-\varepsilon$ such that
		\begin{align}\label{MT4-N}
			\lim_{n\to\infty}\sup_{\omega\in\Omega_{\varepsilon}}\mathrm{dist}_{\L^2(\R^3)}
			(\mathscr{A}(\tau_n,\omega),
			\mathscr{A}_{\infty}(\omega))=0.
		\end{align}
	\end{theorem}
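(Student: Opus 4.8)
The plan is to reduce \eqref{1} with $S(\v)=\v$ to a pathwise random partial differential equation through the Ornstein--Uhlenbeck change of variable $\v(t)=e^{z(\vartheta_t\omega)}\u(t)$, where $z$ is the stationary solution of the one-dimensional Langevin equation driven by $\W$; this removes the Stratonovich noise and produces an equation for $\u$ carrying the random, tempered multiplier $e^{z(\vartheta_t\omega)}$ in front of the convective and Forchheimer terms. I would then organise the argument around two pillars: first, the existence of a unique pullback random attractor $\mathscr{A}$ whose backward union $\bigcup_{s\le\tau}\mathscr{A}(s,\omega)$ is precompact in $\L^2(\R^3)$, together with the tempered decay displayed in the statement; and second, the convergence of the solution cocycle of \eqref{1} to that of the autonomous limit system (forced by $\f_\infty$) as the initial time recedes to $-\infty$. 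The almost sure robustness \eqref{MT2-N} then follows from an abstract asymptotically autonomous criterion, and \eqref{MT3-N}--\eqref{MT4-N} are deduced from \eqref{MT2-N} by elementary measure theory.

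For the first pillar I would begin with backward uniform energy estimates. Testing the $\u$-equation against $\u$ (and, exploiting the regular solutions available earlier, against $-\Delta\u$), and absorbing the critical interaction using $r>3$ in Case I and $2\beta\mu\ge1$ in Case II, I would derive bounds uniform over the infinite window $s\le\tau$. Here the weighted bound \eqref{Hyp2} of Assumption \ref{Hypo_f-N}, with the exponential factor $e^{\delta_1 t}$ and $\delta_1<\alpha$, supplies the backward-in-time control of the forcing against the Darcy damping, while the temperedness of $z(\vartheta_t\omega)$ controls the random multiplier. This yields a closed, backward-uniform $\mathscr{D}$-pullback absorbing family over the universe of uniformly tempered random sets, uniform on $(-\infty,\tau]$.

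The main obstacle is the backward \emph{uniform} asymptotic compactness on the unbounded domain $\R^3$, where $\H^1\hookrightarrow\L^2$ fails to be compact. I would establish it through Kuratowski's measure of noncompactness by proving two properties uniformly in the initial time $s\le\tau$ and over the absorbing family: (a) a backward uniform tail estimate, showing $\sup_{s\le\tau}\int_{|x|\ge k}|\u|^2\d x\to0$ as $k\to\infty$, obtained by testing with $\u$ localised by a smooth cutoff supported outside the ball of radius $k$; and (b) a backward flattening property inside the ball, splitting $\u$ into a finite-dimensional low-mode part and a high-mode remainder of arbitrarily small $\L^2$-norm. The delicate points are exactly those flagged in the abstract: the pressure must be eliminated by the Helmholtz--Leray projection and estimated so as not to spoil the cutoff computation, and the Forchheimer term $\beta|\v|^{r-1}\v$ must be dominated using the higher integrability furnished by the regular solutions. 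Combining (a) and (b) drives the Kuratowski measure to zero, giving the backward uniform pullback asymptotic compactness, hence the existence and uniqueness of $\mathscr{A}$ with precompact backward union.

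For the second pillar I would note that the autonomous limit system admits a random attractor $\mathscr{A}_\infty$ as the autonomous counterpart of the construction above, and then fix $t\ge0$ and compare, via an energy estimate for the difference, the solution of \eqref{1} started at time $\tau-t$ with the solution of the autonomous system; the forcing convergence \eqref{Hyp1} forces $\int^{\tau}_{-\infty}\|\f(t)-\f_\infty\|_{\L^2(\R^3)}^2\d t\to0$, yielding convergence of the cocycles as $\tau\to-\infty$. Feeding both pillars into the abstract robustness theorem gives \eqref{MT2-N}: any $\xi_n\in\mathscr{A}(\tau_n,\omega)$ with $\tau_n\to-\infty$ has, by backward precompactness, a convergent subsequence whose limit is identified as a point of $\mathscr{A}_\infty(\omega)$ through the invariance of the attractors and the cocycle convergence. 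Finally, \eqref{MT2-N} yields the convergence in probability \eqref{MT3-N} by the bounded convergence theorem applied to the indicators $\mathbf{1}_{\{\mathrm{dist}_{\L^2(\R^3)}\ge\delta\}}$ along arbitrary sequences $\tau_n\to-\infty$, and \eqref{MT4-N} follows by Egorov's theorem applied to such a sequence on the finite measure space $(\Omega,\mathscr{F},\mathbb{P})$.
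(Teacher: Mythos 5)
Your proposal is correct and follows essentially the same route as the paper: the Ornstein--Uhlenbeck transformation, backward-uniform absorbing sets over the uniformly tempered universe, backward uniform tail-estimates and flattening combined through Kuratowski's measure of noncompactness (with the pressure and Forchheimer terms handled exactly as you flag), backward convergence of the cocycle under Assumption \ref{Hypo_f-N}, and an abstract asymptotically autonomous criterion. The only difference is presentational: you spell out how \eqref{MT3-N} and \eqref{MT4-N} follow from \eqref{MT2-N} via bounded convergence and Egorov's theorem, whereas the paper delegates this final assembly to the cited arguments of \cite[Theorem 1.6]{RKM} and \cite[Theorem 5.2]{CGTW}, where the same measure-theoretic steps are carried out.
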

	\begin{theorem}[\texttt{Additive noise case}]\label{MT1}
		Under the Assumption \ref{Hypo_f-N}, for both the cases given in Table \ref{Table}, all results in  Theorem \ref{MT1-N} hold for	the non-autonomous RDS generated by \eqref{1} with $S(\v)=\g$ with $\g\in\D(\A)$, where $\D(\A)$ is the domain of the Stokes operator $\A$ defined in \eqref{StokesO}.
	\end{theorem}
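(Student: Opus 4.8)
The plan is to mirror the argument for Theorem~\ref{MT1-N}, replacing the multiplicative conjugation by the additive Ornstein--Uhlenbeck transformation. First I would introduce the one-dimensional stationary Ornstein--Uhlenbeck process $z(\vartheta_t\omega)$ solving $\d z + z\,\d t = \d\W(t)$ and set $\v = \u + \g\, z(\vartheta_t\omega)$. Because the noise enters additively the Stratonovich and It\^o interpretations coincide, the stochastic differential $\g\,\d\W$ cancels, and $\u$ satisfies the pathwise random evolution equation
\begin{align*}
\frac{\partial\u}{\partial t} - \mu\Delta\u + \big((\u+\g z)\cdot\nabla\big)(\u+\g z) + \alpha\u + \beta|\u+\g z|^{r-1}(\u+\g z) + \nabla p = \f + \big((1-\alpha)\g + \mu\Delta\g\big)z.
\end{align*}
The additional random forcing $\big((1-\alpha)\g + \mu\Delta\g\big)z(\vartheta_t\omega)$ is well defined in $\L^2(\R^3)$ precisely because $\g\in\D(\A)$ yields $\Delta\g\in\L^2(\R^3)$; this is the exact analogue of the boundedness of the multiplicative coefficient $e^{z(\vartheta_t\omega)}$ and is what places the additive problem inside the same framework.

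The second step is to verify that the transformed equation is well posed and generates a non-autonomous RDS $\Phi$; this follows from the existence of regular solutions already invoked for \eqref{1}, while the cocycle property and measurability are inherited from $z(\vartheta_t\omega)$ exactly as in the multiplicative case. I would then reproduce the construction of a backward-uniformly tempered pullback absorbing set on the same universes used in Theorem~\ref{MT1-N}, the only change being that exponential moment bounds on $z(\vartheta_t\omega)$ replace those on $e^{z(\vartheta_t\omega)}$ when absorbing the random forcing; the temperedness of $z$ again delivers the decay $\lim_{t\to+\infty}e^{-\gamma t}\sup_{s\le\tau}\|\mathscr{A}(s-t,\vartheta_{-t}\omega)\|_{\L^2(\R^3)}=0$ for every $\gamma>0$.

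The heart of the proof is the backward uniform pullback asymptotic compactness on $(-\infty,\tau]$, obtained through Kuratowski's measure of noncompactness by combining the backward uniform tail-smallness and the backward flattening-property. Here I expect the main obstacle: every occurrence of $\v$ in the energy and higher-order estimates now carries the shift $\g z$, so the convective term $\big((\u+\g z)\cdot\nabla\big)(\u+\g z)$ and the Forchheimer term $\beta|\u+\g z|^{r-1}(\u+\g z)$ must be expanded and each cross term controlled \emph{uniformly} over the infinite interval. The regularity $\g\in\D(\A)$, together with $\g\in\L^2(\R^3)$ (so that $\int_{|x|\ge k}|\g|^2\,\d x\to0$) and the Sobolev embeddings, is exactly what allows these cross terms to be absorbed into the dissipation $\mu\int_{\R^3}|\nabla\u|^2\,\d x + \beta\int_{\R^3}|\u|^{r+1}\,\d x$, making both the tails outside a large ball and the high-frequency spectral projections uniformly small; the pressure $p$ is eliminated by the Helmholtz--Leray projection as in the multiplicative computation. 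The case distinctions of Table~\ref{Table} enter only through this absorption, precisely as for $r=3$ with $2\beta\mu\ge1$ and $r>3$ with arbitrary $\beta,\mu>0$.

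Finally, asymptotic autonomy is obtained by comparing, pathwise, the non-autonomous cocycle driven by $\f$ with the autonomous random attractor $\mathscr{A}_\infty$ driven by $\f_\infty$, using Assumption~\ref{Hypo_f-N} and the convergence $\int^{\tau}_{-\infty}\|\f(t)-\f_\infty\|^2_{\L^2(\R^3)}\,\d t\to0$ as $\tau\to-\infty$ to dominate the difference of the two dynamics; this yields the Hausdorff semi-distance bound \eqref{MT2-N}. The probabilistic statements \eqref{MT3-N}--\eqref{MT4-N} then follow from the almost-sure convergence together with Egorov's theorem, exactly as in Theorem~\ref{MT1-N}. Overall I anticipate that the only genuinely new work is the uniform control of the additive cross terms in the tail and flattening estimates, everything else being a transcription of the multiplicative proof.
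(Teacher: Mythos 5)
Your overall architecture matches the paper's: the additive Ornstein--Uhlenbeck transformation $\u=\v-\g y(\vartheta_t\omega)$, well-posedness and Lusin continuity to get an NRDS, backward-uniformly tempered absorbing sets, backward uniform tail-estimates plus backward flattening via Kuratowski's measure of noncompactness, and finally backward convergence of cocycles together with the abstract results of \cite{RKM,CGTW} to get \eqref{MT2-N}--\eqref{MT4-N}. However, there is one concrete step where your plan would fail as written: you assert that ``the pressure $p$ is eliminated by the Helmholtz--Leray projection as in the multiplicative computation.'' This is false precisely in the two estimates you correctly identify as the heart of the proof. In the tail estimate one tests the (unprojected) equation \eqref{2-A} against $\uprho\left(\frac{|x|^2}{k^2}\right)\u$, and in the flattening estimate against $(\I-\P_i)(\varrho_k\u)$; neither test function is divergence free, so $\int_{\R^3}(\nabla p)\cdot\uprho\left(\frac{|x|^2}{k^2}\right)\u\,\d x$ and its spectral analogue do \emph{not} vanish and cannot be projected away. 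The paper's resolution (Lemmas \ref{largeradius-A} and \ref{Flattening-N}) is to take the divergence of \eqref{2-A} to get the explicit weak representation \eqref{p-value-A} of $p$ in terms of $\nabla\cdot[\nabla\cdot((\u+\g y)\otimes(\u+\g y))]$, $\nabla\cdot[|\u+\g y|^{r-1}(\u+\g y)]$ and $\nabla\cdot\f$, and then to estimate the resulting terms $Q_1,Q_2,Q_3$ (and $\widetilde{Q}_1,\widetilde{Q}_2,\widetilde{Q}_3$) via Plancherel's theorem and the embedding $\L^{6/5}(\R^3)\hookrightarrow\dot{\mathbb{H}}^{-1}(\R^3)$ of Remark \ref{Hdot}. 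This is exactly the calculation the paper singles out as one of its principal difficulties, and it also forces Assumption \ref{Hypo_f-N}\eqref{Hyp2} on $\|\f\|_{\dot{\mathbb{H}}^{-1}}$, which your outline never uses.

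A second, related gap: you claim the cross terms and nonlinearities can be ``absorbed into the dissipation $\mu\int|\nabla\u|^2+\beta\int|\u|^{r+1}$.'' The first-order energy dissipation is not enough here. The pressure term $Q_2$ and the Forchheimer term in the flattening estimate (the integral $L_3$, the analogue of \eqref{FL-ES}) produce quantities like $\|\u+\g y\|_{\wi\L^{3(r+1)}}^{\frac{(3r-5)(r+1)}{3r-4}}$ and $\|\u\|_{\V}^{2(r+1)}$, which require the higher-order estimates of Lemma \ref{EI} (the $\H^1$-energy inequality \eqref{EI2-A} obtained by testing with $\A\u$, giving control of $\|\u+\g y\|^{r+1}_{\wi\L^{3(r+1)}}$) and their time-integrated, backward-uniform versions \eqref{AB12-A} and \eqref{AB18V-A} in Lemma \ref{Absorbing-A}. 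This reliance on regular solutions is stated explicitly in the paper's abstract and introduction; without it, neither the tail-smallness nor the flattening property closes. So the missing ideas are: (i) the explicit divergence representation of $p$ and its estimation in $\dot{\mathbb{H}}^{-1}$, and (ii) the backward-uniform $\H^1$ and $\L^{3(r+1)}$ estimates needed to control the terms that representation produces.
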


	\begin{remark}
(i) An example of Assumption \ref{Hypo_f-N} is
		$\f(x,t)=\f_\infty(x)e^t+\f_\infty(x)$ with $\f_{\infty}\in\L^2(\R^3)\cap\dot{\mathbb{H}}^{-1}(\R^3)$, where $\dot{\mathbb{H}}^{-1}(\R^3)$ is the dual of the homogeneous Sobolev space $\dot{\mathbb{H}}^{1}(\R^3)$. We also have by \cite[Theorem 1.38]{BCD} (see Remark \ref{Hdot} below) that the embedding $\L^{\frac{6}{5}}(\R^3)\hookrightarrow\dot{\mathbb{H}}^{-1}(\R^3)$ is continuous.

(ii) Assumption \ref{Hypo_f-N} implies the following conditions (cf. \cite{CGTW}):
\begin{align}\label{G3}
				&\mbox{Uniform integrability:}\ \ \  \sup_{s\leq \tau}\int_{-\infty}^{s}e^{\kappa(\xi-s)} \|\f(\xi)\|^2_{\L^2(\R^3)}\d \xi<+\infty,  \ \mbox{$\forall\ \kappa>0$, $\tau\in\mathbb{R}$},
\\&\label{f3-N}
\mbox{Uniform tails-smallness:}	\ \ \ \lim_{k\rightarrow\infty}\sup_{s\leq \tau}\int_{-\infty}^{s}e^{\kappa(\xi-s)}
				\int_{|x|\geq k}|\f(x,\xi)|^{2}\d x\d \xi=0,  \ \mbox{$\forall\  \kappa>0$, $\tau\in\mathbb{R}$.}
\end{align}

(iii) We only use Assumption \ref{Hypo_f-N} for $\f$ in the whole paper.

(iv) In Poincar\'e domains (bounded or unbounded), one can relax the condition \eqref{Hyp2} (see \cite{KM2,RKM}). 


(v) In the additive noise case, we do not need to assume, as in \cite[Hypothesis 1.3]{RKM}, that there exists a constant ${\aleph}>0$ such that
		$\g\in\D(\A)$ satisfies
		\begin{align}\label{GA}
			\bigg|
			\sum_{i,j=1}^3\int_{\R^3}v_i(x)
			\frac{\partial g_j(x)}{\partial x_i}v_j(x)\d x\bigg|\leq {\aleph}\|\v\|^2_{\mathbb{L}^2(\R^3)}, \ \ \forall\ \v\in\L^2(\R^3).
		\end{align}
	
\end{remark}
	\vskip 0.1mm
	\noindent	

	\subsection{Novelties, difficulties and approaches}\label{D&A}
	In order to prove  Theorems \ref{MT1-N} and \ref{MT1}, the uniform precompactness of $\bigcup\limits_{s\in(-\infty,\tau]}\mathscr{A} (s,\omega)$ in $\L^2(\R^3)$ is a pivotal point. The well-known abstract theory of pullback random attractors from \cite{SandN_Wang} tells us that the pullback asymptotic compactness of $\Phi$ gives the  compactness of $\mathscr{A} (\tau,\omega)$ for each $\tau\in\R$, but it cannot provide the precompactness of $\bigcup\limits_{s\in(-\infty,\tau]}\mathscr{A} (s,\omega)$ in $\L^2(\R^3)$, since $(-\infty,\tau]$ is an \emph{infinite} interval. However, motivated by the ideas of \cite{SandN_Wang}, this can be done if one is able to show that the usual pullback asymptotic compactness of $\Phi$ is uniform with respect to a uniformly tempered universe (see \eqref{D-NSE}) over $(-\infty,\tau]$.
	
Note that in the bounded domain case, one can obtain the \emph{uniform} pullback asymptotic compactness of $\Phi$ over $(-\infty,\tau]$ via a compact uniform pullback absorbing set by using compact Sobolev embeddings (see \cite[Theorem 3.10]{KRM}). Moreover, the same idea is used for several stochastic Navier-Stokes, $g$-Navier-Stokes,
magneto-hydrodynamics, Brinkman-Forchheimer equations on bounded domains, see \cite{KRM,LX1,WL,ZL}, etc. Due to the lack of compact Sobolev embeddings in unbounded domains as considered in the present work, to demonstrate such \emph{backward uniform} pullback asymptotic compactness  is therefore harder than that in the bounded domain case. We mention that the criteria of \emph{Kuratowski's measure of noncompactness} (\cite{Kuratowski,Rakocevic}) is useful to resolve the difficulty created by the noncompactness of Sobolev embeddings on unbounded domains (cf. \cite[Lemma 2.7]{LGL}). In order to apply such criteria, we use the idea of \emph{uniform tail-estimates} introduced by Wang \cite{UTE-Wang}, and \emph{flattening-properties} introduced by Ma et. al. \cite{MWZ} (deterministic case) and Kloeden and Langa \cite{Kloeden2007prsl} (random case). Using a cut-off technique, we show that the solutions of \eqref{1} are sufficiently small in $\L^2(\mathcal{O}^c_k)$ uniformly over $(-\infty,\tau],$ when $k$ is large enough, where $\mathcal{O}_{k}=\{x\in\R^3:|x|\leq k\}$ and $\mathcal{O}^{c}_{k}=\mathbb{R}^3\setminus \mathcal{O}_{k}$, that is, we obtain the backward uniform tail-estimates for the solutions. Furthermore, using the same cut-off function, we  establish the backward flattening-properties of the solutions.
	
	Note that parabolic and hyperbolic stochastic models as considered in the works \cite{
CGTW,CLR1HGHDGF,CLR1HFJFFGHDGF,chenp,LGL,Tuan1,UTE-Wang,rwang1} etc., do not contain pressure term $p$. But some physically relevant models such as Navier-Stokes equations (cf. \cite{RKM}), Brinkman-Forchheimer equations (cf. \cite{ZL}) and many others, contain the pressure term $p$. While proving the backward uniform tail-estimates as well as backward flattening-properties of the solutions, when we take a  suitable inner product, the pressure term $p$ does not vanish  with the help of divergence free condition (or incompressibility condition) of the solutions of \eqref{1}. However, by taking the divergence in \eqref{1} formally and using the divergence free condition, we end up with the rigorous expression of the pressure term
	\begin{align}\label{Pressure}
		p=(-\Delta)^{-1}\bigg[\sum_{i,j=1}^{3} \frac{\partial^2}{\partial x_i\partial x_j}(v_iv_j)+\nabla\cdot\{|\v|^{r-1}\v\}-\nabla\cdot \f\bigg],
	\end{align}
	in the weak sense, which is one of the most difficult terms to handle in an appropriate way.  In order to obtain the backward uniform tail-estimates as well as backward flattening-property (Lemmas \ref{largeradius}-\ref{Flattening} and \ref{largeradius-A}-\ref{Flattening-N}), we handle the term \eqref{Pressure} with the help of Plancherel's theorem and the continuous embedding of homogeneous Sobolev spaces given in \cite[Theorem 1.38]{BCD} (see Remark \ref{Hdot} below).

	An another major difficulty to demonstrate the backward flattening-property of the solution to the system \eqref{1} is to handle the integral 
	\begin{align}\label{FL-ES}
	\int_{\mathcal{O}_{\sqrt{2}k}}\left\{1-\uprho\left(\frac{|x|^2}{k^2}\right)\right\}|\v(x)|^{r-1}\v(x)\cdot(\I-\mathrm{P}_i)\left[\left\{1-\uprho\left(\frac{|x|^2}{k^2}\right)\right\}\v(x)\right]\d x,
	\end{align}
	where $\P_i$ is the orthogonal  projection operator which project $\L^2(\mathcal{O}_{\sqrt{2}k})$ to a finite-dimensional space and $\uprho$ is a cut-off function (see Subsection \ref{BUTE-BFP} below for more details). In order to handle the integral \eqref{FL-ES}, the existence of regular solutions of stochastic CBF equations has a major contribution. Due to regularization effect, we are able to find  upper bounds of the terms which contain $\H^{1}$-norm (see \eqref{AB12} and \eqref{AB13} below) and $\L^{3(r+1)}$-norm (see \eqref{AB12-A} and \eqref{AB18V-A} below) of $\v$. We estimate the integral \eqref{FL-ES} in terms of $\H^{1}$-norm and $\L^{3(r+1)}$-norm of $\v$, and obtain the backward flattening-property of the solution to the system \eqref{1} (cf. Lemmas \ref{Flattening} and \ref{Flattening-N} below). Due to the fast growing nonlinearities, the usual energy estimates are not enough to establish the uniform tail-estimates as well as backward flattening-property. Therefore, we use the higher order energy estimates of the solution to handle  the fast growing nonlinearities. 
	
	 As a  result of these backward uniform tail-estimates and backward flattening-property of the solutions to \eqref{1}, the backward uniform pullback asymptotic compactness of $\Phi$ in $\L^2(\R^3)$ follows. The wide-spread idea of energy equations introduced in \cite{Ball} can be used to overcome the non-compactness of Sobolev embeddings on unbounded domains, see the works \cite{BCLLLR,BL,GGW,KM2,KM7,Wang2011Tran,PeriodicWang,rwang2}, etc., and many others. A remark is  that we are currently unable to use the idea of energy equations to prove the backward uniform pullback asymptotic compactness of $\Phi$ in $\L^2(\R^3)$ since $(-\infty,\tau]$ is an infinite time-interval.

	Since we have to consider the uniformly tempered universe to prove the backward uniform pullback asymptotic compactness of $\Phi$, we shall establish the measurability of the uniformly compact  attractor. This is not straightforward compared with the usual case, since the radii of the uniform pullback absorbing set is taken as the supremum over an uncountable set $(-\infty,\tau]$ (see Proposition \ref{IRAS}). In order to  overcome this difficulty, we first observe that the measurability of the usual random attractor is known in the literature, see for example, \cite{BCLLLR,BL,GGW,SandN_Wang}, etc., and then prove that such a uniformly compact attractor is just equal to the usual random attractor. This idea has been successfully used by the authors in \cite{CGTW,WL,RKM}, etc., for different stochastic models.

	\vskip 1mm
	\subsection{Advantages of the linear and nonlinear damping terms}CBF equations are also known as damped Navier-Stokes equations (cf. \cite{HZ}). The damping arises from the resistance to the motion of the flow or by  friction effects. Due to the presence of the damping term $\alpha\v+\beta|\v|^{r-1}\v$, we are able to establish better results than which are available for the Navier-Stokes equations. The existence of global  as well as random  attractors for the Navier-Stokes equations on the whole space   or general unbounded domains is an interesting and challenging open problem. In the literature, for Navier-Stokes equations, these types of results are available on Poincar\'e domains (bounded or unbounded) only (cf. \cite{KRM,RKM}).   For 2D Navier-Stokes equations forced by a linear multiplicative noise on the whole space, we refer to \cite{KM8}. For stochastic CBF equations \eqref{1}, we are considering the whole space, where the linear damping term $\alpha\v$ plays a crucial role to establish the required results on the whole space. This is different from the 2D Navier-Stokes equations  on unbounded Poincar\'e domains, see \cite{RKM}. Moreover, the nonlinear damping $\beta|\v|^{r-1}\v$ with $r>1$ has big advantage in the case of stochastic CBF equations driven by additive noise. We are able to relax the condition \eqref{GA} on noise coefficient $\g$ due to the nonlinear damping. However, for SNSE as well as stochastic CBF equations perturbed by additive noise with $r=1$, one needs the assumption \eqref{GA} on noise coefficient $\g$ (see \cite{RKM}).

\subsection{Outline of the article}
In the next section, we provide the necessary function spaces and abstract formulation of \eqref{1}, and discuss the Ornstein-Uhlenbeck process with its properties. In Section \ref{sec3}, we prove Theorem \ref{MT1-N} for the system \eqref{1} driven by multiplicative noise. In the final section, we prove Theorem \ref{MT1} for the problem \eqref{1} driven by additive noise.

	\section{Mathematical formulation}\label{sec2}\setcounter{equation}{0}
	We start this section with some necessary function spaces whose elements satisfy the divergence free conditions, that is, $\nabla\cdot\v=0.$ Next, in order to obtain the abstract formulation of the system \eqref{1}, we define linear, bilinear and nonlinear operators along with their properties. Finally, we discuss the Ornstein-Uhlenbeck process with some of its properties and the backward tempered random sets.
	\subsection{Function spaces and operators}\label{FnO}
	Let $\C_0^{\infty}(\mathbb{R}^3;\mathbb{R}^3)$ denote  the space of all $\mathbb{R}^3$-valued, infinitely differentiable functions with compact support in $\mathbb{R}^3$.
Let $\mathbb{L}^s(\mathbb{R}^3):=
\mathrm{L}^s(\mathbb{R}^3;\mathbb{R}^3)$ and  $\mathbb{H}^k(\mathbb{R}^3):=
\mathrm{H}^k(\mathbb{R}^3;\mathbb{R}^3)$ for
$s\in[2,\infty)$ and $k\in\mathbb{N}$.
Define the spaces
	\begin{align*}
		\H&:=\overline{\{\v\in\C_0^{\infty}(\mathbb{R}^3;\mathbb{R}^3)
:\nabla\cdot\v=0\}}^{\mathbb{L}^2(\mathbb{R}^3)},\\ \V&:=\overline{\{\v\in\C_0^{\infty}(\mathbb{R}^3;
\mathbb{R}^3):\nabla\cdot\v=0\}}^{\mathbb{H}^1(\mathbb{R}^3)},\\
		\wi\L^p&:=\overline{\{\v\in\C_0^{\infty}
(\mathbb{R}^3;\mathbb{R}^3):\nabla\cdot\v=0\}
}^{\mathbb{L}^p(\mathbb{R}^3)},\ \ p>2.
	\end{align*}
The spaces $\H$, $\V$ and $\widetilde{\L}^{p}$ are endowed with the norms $$\|\v\|_{\H}^2:=\int_{\R^3}|\v(x)|^2\d x,\ \|\v\|^2_{\V}=\int_{\R^3}|\v(x)|^2\d x+\int_{\R^3}|\nabla\v(x)|^2\d x \text{ and } \|\v\|_{\wi \L^p}^p:=\int_{\R^3}|\v(x)|^p\d x,$$ for $p\in(2,\infty)$, respectively. The inner product in the Hilbert space $\H$ is represented by $( \cdot, \cdot)$. The duality pairing between the spaces $\V$ and $\V'$, and $\widetilde{\L}^p$ and its dual $\widetilde{\L}^{\frac{p}{p-1}}$ is denoted by $\langle\cdot,\cdot\rangle.$ Also, the space $\H$ can be identified with its own dual $\H'$. We endow the space $\V\cap\widetilde{\L}^{p}$ with the norm $\|\v\|_{\V}+\|\v\|_{\widetilde{\L}^{p}},$ for $\v\in\V\cap\widetilde{\L}^p$ and its dual $\V'+\widetilde{\L}^{p'}$ with the norm (cf. \cite[Subsection 2.1]{sum_and_inter}) $$\inf\left\{\|\u_1\|_{\V'}+\|\u_2\|_{\widetilde{\L}^{p'}}:\u=\u_1+\u_2, \ \u_1\in\V', \ \u_2\in\widetilde{\L}^{p'}\right\}.$$

Let us now discuss the definition of homogeneous Sobolev spaces and a  result on the continuous embedding which we use in the sequel.
	\begin{definition}[{\cite[Definition 1.31]{BCD}}]
	Let $s\in\R$. The homogeneous Sobolev space $\dot{\mathbb{H}}^{s}(\R^3)$ is the space of tempered distributions $\v$ over $\R^3$, the Fourier transform of which belongs to $\mathbb{L}^1_{\mathrm{loc}}(\R^3)$ and satisfies
	\begin{align*}
		\|\u\|^2_{\dot{\mathbb{H}}^s(\R^3)}:= \int_{\R^3}|\xi|^{2s}|\hat{\v}(\xi)|^2\d\xi<+\infty.
	\end{align*}
\end{definition}
\begin{theorem}[{\cite[Theorem 1.38]{BCD}}]
	If $s\in[0,\frac{3}{2})$, then the space $\dot{\mathbb{H}}^{s}(\R^3)$ is continuously embedded in $\L^{\frac{6}{3-2s}}(\R^3)$.
\end{theorem}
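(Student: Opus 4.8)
The statement is the classical critical Sobolev embedding $\dot{\mathbb{H}}^{s}(\R^3)\hookrightarrow\L^{p}(\R^3)$ with $p:=\frac{6}{3-2s}$, so the goal reduces to producing a constant $C=C(s)$ with $\|\u\|_{\L^{p}(\R^3)}\leq C\|\u\|_{\dot{\mathbb{H}}^{s}(\R^3)}$ for every $\u\in\dot{\mathbb{H}}^{s}(\R^3)$. The case $s=0$ is trivial, since then $p=2$ and Plancherel's theorem gives $\|\u\|_{\L^2(\R^3)}^2=\int_{\R^3}|\hat{\u}(\xi)|^2\d\xi=\|\u\|_{\dot{\mathbb{H}}^{0}(\R^3)}^2$. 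The plan is to handle $s\in(0,\tfrac32)$ by the \emph{Riesz potential} method. First I would set $g:=(-\Delta)^{s/2}\u$, characterized on the Fourier side by $\hat{g}(\xi)=|\xi|^{s}\hat{\u}(\xi)$; by the very definition of the homogeneous norm, $\hat{g}\in\L^2(\R^3)$ with $\|g\|_{\L^2(\R^3)}=\|\u\|_{\dot{\mathbb{H}}^{s}(\R^3)}$. Since the defining hypothesis forces $\hat{\u}$ to be a genuine element of $\L^1_{\mathrm{loc}}(\R^3)$ (ruling out distributions supported at the origin), the identity $\hat{\u}(\xi)=|\xi|^{-s}\hat{g}(\xi)$ holds for a.e. $\xi$, whence $\u=I_{s}g$, where $I_{s}=(-\Delta)^{-s/2}$ is the Riesz potential of order $s$, acting by convolution with the kernel $c_{s}|x|^{-(3-s)}$.

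The second step is to invoke the Hardy--Littlewood--Sobolev inequality: for $0<s<3$ and $1<q<p<\infty$ with $\tfrac1p=\tfrac1q-\tfrac{s}{3}$, the operator $I_{s}$ maps $\L^{q}(\R^3)$ boundedly into $\L^{p}(\R^3)$. Taking $q=2$ gives $\tfrac1p=\tfrac12-\tfrac{s}{3}=\tfrac{3-2s}{6}$, i.e. exactly $p=\frac{6}{3-2s}$, and the admissibility constraints $q<p<\infty$ translate precisely into $0<s<\tfrac32$, which is our range. Hence
\[
\|\u\|_{\L^{p}(\R^3)}=\|I_{s}g\|_{\L^{p}(\R^3)}\leq C\|g\|_{\L^{2}(\R^3)}=C\|\u\|_{\dot{\mathbb{H}}^{s}(\R^3)},
\]
which is the desired embedding. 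The one point demanding genuine care is the justification that $\u=I_{s}g$ holds as tempered distributions; this is exactly where the hypothesis $\hat{\u}\in\L^1_{\mathrm{loc}}(\R^3)$ is essential, as it removes the polynomial ambiguity intrinsic to homogeneous spaces and lets one recover $\u$ unambiguously from $\hat{\u}$.

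An equally natural route, and the one intrinsic to the Littlewood--Paley framework of \cite{BCD}, sidesteps Hardy--Littlewood--Sobolev altogether. One decomposes $\u=\sum_{j\in\mathbb{Z}}\dot{\Delta}_{j}\u$ into homogeneous dyadic blocks and applies Bernstein's inequality, $\|\dot{\Delta}_{j}\u\|_{\L^{p}(\R^3)}\lesssim 2^{3j(\frac12-\frac1p)}\|\dot{\Delta}_{j}\u\|_{\L^{2}(\R^3)}=2^{js}\|\dot{\Delta}_{j}\u\|_{\L^{2}(\R^3)}$, the exponent collapsing to $s$ precisely because $p=\frac{6}{3-2s}$. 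Summing via the square-function characterization of $\L^{p}$ (valid for $1<p<\infty$) together with Minkowski's inequality in $\L^{p/2}$ (legitimate since $p\geq2$, which is where $s\geq0$ enters) yields $\|\u\|_{\L^{p}(\R^3)}^2\lesssim\sum_{j}\|\dot{\Delta}_{j}\u\|_{\L^{p}(\R^3)}^2\lesssim\sum_{j}2^{2js}\|\dot{\Delta}_{j}\u\|_{\L^{2}(\R^3)}^2\approx\|\u\|_{\dot{\mathbb{H}}^{s}(\R^3)}^2$. In either approach the main obstacle is not the frequency-by-frequency estimate, which is soft, but the homogeneous-space bookkeeping at low frequencies: one must ensure that the dyadic series (equivalently the potential representation) reconstructs $\u$ itself and not merely $\u$ modulo polynomials. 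The restriction $s<\tfrac32$ plays the decisive structural role here, keeping $p<\infty$ and, in the companion $\L^\infty$-type splitting, guaranteeing convergence of the low-frequency sum $\sum_{j\leq0}2^{j(\frac32-s)}$, so that all the pieces assemble into a bona fide function of $\L^p(\R^3)$.
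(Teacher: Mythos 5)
Your proposal is mathematically sound, but note first that the paper does not prove this statement at all: it is imported verbatim as \cite[Theorem 1.38]{BCD} and used as a black box (via Remark \ref{Hdot}) to handle the pressure term, so there is no in-paper argument to compare against. Both of your routes are correct and standard. In the Riesz-potential route, the identification $\u=I_s g$ with $g=(-\Delta)^{s/2}\u$, the exponent arithmetic $\tfrac1p=\tfrac12-\tfrac s3$, and the admissibility window $0<s<\tfrac32$ for Hardy--Littlewood--Sobolev are all right, and you correctly isolate the one genuinely delicate point: the hypothesis $\hat{\u}\in\mathrm{L}^1_{\mathrm{loc}}$ is what kills the polynomial ambiguity and lets $\u$ be recovered from $\hat\u$. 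The Littlewood--Paley route is also correct, including the observation that $p\geq 2$ (i.e. $s\geq0$) is what legitimizes Minkowski in $\L^{p/2}$, and that $s<\tfrac32$ controls the low-frequency sum. For the record, the proof actually given in the cited source is a third, more elementary argument than either of yours: one splits $\u=\u_{1,A}+\u_{2,A}$ by a sharp frequency cut-off at radius $A$, bounds $\|\u_{1,A}\|_{\L^\infty}\lesssim A^{\frac32-s}\|\u\|_{\dot{\mathbb{H}}^s}$ by Cauchy--Schwarz on the Fourier side (this is where $s<\tfrac32$ enters), applies Chebyshev to $\u_{2,A}$ in $\L^2$, and integrates the distribution function of $\u$ after optimizing $A$ in terms of the level $\lambda$; this avoids both the HLS inequality and the square-function theory, at the cost of being specific to the $\L^2$-based scale. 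Your versions buy generality (HLS gives the full $\L^q\to\L^p$ scale; Littlewood--Paley extends to Besov refinements), while the cited proof is self-contained at the level of Plancherel and Fubini.
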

\begin{remark}\label{Hdot}
	For  $s=1$, we have the embedding $\dot{\mathbb{H}}^{1}(\R^3)\hookrightarrow\L^{6}(\R^3)$ is continuous. This implies that the embedding of dual spaces $\L^{\frac{6}{5}}(\R^3)\hookrightarrow\dot{\mathbb{H}}^{-1}(\R^3)$ is continuous.
\end{remark}

	\subsubsection{Linear operator}\label{LO}
	Let $\mathscr{P}: \L^2(\R^3) \to\H$ be the Helmholtz-Hodge (or Leray) projection. Note that the projection operator $\mathscr{P}$ can be expressed in terms of the Riesz transform (cf. \cite{MTSS}).  We define the Stokes operator
	\begin{equation}\label{StokesO}
		\A\v:=-\mathscr{P}\Delta\v,\;\v\in\D(\A):=\V\cap\H^{2}(\R^3).
	\end{equation}
	We also have that $\mathscr{P}$ and $\Delta$ commutes in $\R^3$, that is, $\mathscr{P}\Delta=\Delta\mathscr{P}$.

	\subsubsection{Bilinear operator}\label{BO}
	Let us define the \emph{trilinear form} $b(\cdot,\cdot,\cdot):\V\times\V\times\V\to\R$ by $$b(\v_1,\v_2,\v_3)=\int_{\R^3}(\v_1(x)\cdot\nabla)\v_2(x)\cdot\v_3(x)\d x=\sum_{i,j=1}^{3}\int_{\R^3}v_{1,i}(x)\frac{\partial v_{2,j}(x)}{\partial x_i}v_{3,j}(x)\d x.$$ If $\v_1, \v_2$ are such that the linear map $b(\v_1, \v_2, \cdot) $ is continuous on $\V$, the corresponding element of $\V'$ is denoted by $\B(\v_1, \v_2)$. We also denote $\B(\v) = \B(\v, \v)=\mathscr{P}[(\v\cdot\nabla)\v]$. An integration by parts yields
	\begin{equation}\label{b0}
		\left\{
		\begin{aligned}
			b(\v_1,\v_2,\v_2) &= 0,\ \text{ for all }\ \v_1,\v_2 \in\V,\\
			b(\v_1,\v_2,\v_3) &=  -b(\v_1,\v_3,\v_2),\ \text{ for all }\ \v_1,\v_2,\v_3\in \V.
		\end{aligned}
		\right.\end{equation}

	\begin{remark}
		Note that $\langle\B(\v_1,\v_1-\v_2),\v_1-\v_2\rangle=0$ (for all $\v_1, \v_2 \in \V$) gives us
		\begin{equation}\label{441}
			\begin{aligned}
				\langle \B(\v_1)-\B(\v_2),\v_1-\v_2\rangle =\langle\B(\v_1-\v_2,\v_2),\v_1-\v_2\rangle=-\langle\B(\v_1-\v_2,\v_1-\v_2),\v_2\rangle.
			\end{aligned}
		\end{equation}
	\end{remark}
	\subsubsection{Nonlinear operator}
	Let us consider the nonlinear operator $\mathcal{C}(\v):=\mathscr{P}(|\v|^{r-1}\v),$ for $\v\in\V\cap\wi\L^{r+1}$. The map $\mathcal{C}(\cdot):\V\cap\widetilde{\L}^{r+1}\to\V'+\widetilde{\L}^{\frac{r+1}{r}}$ and  $\langle\mathcal{C}(\v),\v\rangle =\|\v\|_{\widetilde{\L}^{r+1}}^{r+1}$.
	\begin{remark}
		For any $\v_1, \v_2 \in \V\cap\widetilde{\L}^{r+1}$, we have (cf. \cite[Subsection 2.4]{MTM1})
		\begin{align}\label{MO_c}
			\langle\mathcal{C}(\v_1)-\mathcal{C}(\v_2),\v_1-\v_2\rangle \geq\frac{1}{2}\||\v_1|^{\frac{r-1}{2}}(\v_1-\v_2)\|_{\H}^2+\frac{1}{2}\||\v_2|^{\frac{r-1}{2}}(\v_1-\v_2)\|_{\H}^2\geq 0,\ \text{ for all }\ r\geq 1.
		\end{align}

	\end{remark}
	\subsection{Abstract formulation and Ornstein-Uhlenbeck process}\label{2.5}
	By taking the projection $\mathscr{P}$ on the 3D stochastic CBF equations \eqref{1}, we obtain the following abstract formulation by linear, bilinear and nonlinear operators:
	\begin{equation}\label{SCBF}
		\left\{
		\begin{aligned}
			\frac{\d\v}{\d t}+\mu \A\v+\B(\v)+\alpha\v +\beta\mathcal{C}(\v)&=\mathscr{P}\f +S(\v)\circ\frac{\d \W}{\d t},\ \ \  t>\tau, \\
			\v(x)|_{t=\tau}&=\v_{\tau}(x),\hspace{24mm}x\in \R^3,
		\end{aligned}
		\right.
	\end{equation}
	where $S(\v)=\v$ (multiplicative noise) or $S(\v)$ is independent of $\v$ (additive noise). Here, the symbol $\circ$ represents that the stochastic integral is understood in the sense of Stratonovich and $\W(t,\omega)$ is the standard scalar Wiener process on the probability space $(\Omega, \mathscr{F}, \mathbb{P}),$ where $\Omega=\{\omega\in C(\R;\R):\omega(0)=0\},$ endowed with the compact-open topology given by the metric
	\begin{align*}
		d_{\Omega}(\omega,\omega'):=\sum_{m=1}^{\infty} \frac{1}{2^m}\frac{\|\omega-\omega'\|_{m}}{1+\|\omega-\omega'\|_{m}},\text{ where } \|\omega-\omega'\|_{m}:=\sup_{-m\leq t\leq m} |\omega(t)-\omega'(t)|,
	\end{align*}
	$\mathscr{F}$ is the Borel sigma-algebra induced by the compact-open topology of $(\Omega,d_{\Omega})$ and $\mathbb{P}$ is the two-sided Wiener measure on $(\Omega,\mathscr{F})$. From \cite{FS}, it is clear that  the measure $\mathbb{P}$ is ergodic and invariant under the translation-operator group $\{\vartheta_t\}_{t\in\R}$ on $\Omega$ defined by
	\begin{align*}
		\vartheta_t \omega(\cdot) = \omega(\cdot+t)-\omega(t), \ \text{ for all }\ t\in\R, \ \omega\in \Omega.
	\end{align*}
	The operator $\vartheta(\cdot)$ is known as \emph{Wiener shift operator}.
	\subsubsection{Ornstein-Uhlenbeck process}
	Consider for some $\sigma>0$
	\begin{align}\label{OU1}
		y(\vartheta_{t}\omega) =  \int_{-\infty}^{t} e^{-\sigma(t-\xi)}\d \W(\xi), \ \ \omega\in \Omega,
	\end{align} which is the stationary solution of the one-dimensional Ornstein-Uhlenbeck equation
	\begin{align}\label{OU2}
		\d y(\vartheta_t\omega) + \sigma y(\vartheta_t\omega)\d t =\d\W(t).
	\end{align}
	It is known from \cite{FAN} that there exists a $\vartheta$-invariant subset $\widetilde{\Omega}\subset\Omega$ of full measure such that $y(\vartheta_t\omega)$ is continuous in $t$ for every $\omega\in \widetilde{\Omega},$ and
	\begin{align}
		\lim_{t\to \pm \infty} \frac{|y(\vartheta_t\omega)|}{|t|}=	\lim_{t\to \pm \infty} \frac{1}{t} \int_{0}^{t} y(\vartheta_{\xi}\omega)\d\xi =\lim_{t\to \infty} e^{-\delta t}|y(\vartheta_{-t}\omega)| =0,\label{Z3}
	\end{align}
	for all $\delta>0$. For further analysis of this work, we do not distinguish between $\widetilde{\Omega}$ and $\Omega$.
	
	Since, $\omega(\cdot)$ has sub-exponential growth  (cf. \cite[Lemma 11]{CGSV}), $\Omega$ can be written as $\Omega=\bigcup\limits_{N\in\N}\Omega_{N}$, where
	\begin{align*}
		\Omega_{N}:=\{\omega\in\Omega:|\omega(t)|\leq Ne^{|t|},\text{ for all }t\in\R\}, \text{ for all } \ N\in\N.
	\end{align*}
	Moreover, for each $N\in\N$, $(\Omega_{N},d_{\Omega_{N}})$ is a polish space (cf. \cite[Lemma 17]{CGSV}).
	\begin{lemma}\label{conv_z}
		For each $N\in\N$, suppose $\omega_k,\omega_0\in\Omega_{N}$ are such that $d_{\Omega}(\omega_k,\omega_0)\to0$ as $k\to\infty$. Then, for each $\tau\in\R$, $T\in\R^+$ and $a\in\R$,
		\begin{align}
			\sup_{t\in[\tau,\tau+T]}&\bigg[|y(\vartheta_{t}\omega_k)-y(\vartheta_{t}\omega_0)|+|e^{a y(\vartheta_{t}\omega_k)}-e^{a y(\vartheta_{t}\omega_0)}|\bigg]\to 0 \ \text{ as } \ k\to\infty,\label{conv_z1}\\
			\sup_{k\in\N}\sup_{t\in[\tau,\tau+T]}&|y(\vartheta_{t}\omega_k)|\leq C(\tau,T,\omega_0).\label{conv_z2}
		\end{align}
	\end{lemma}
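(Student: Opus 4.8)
The plan is to first replace the stochastic object $y(\vartheta_t\omega)$ by an explicit \emph{pathwise} functional of the path $\omega$ that is manifestly continuous in $\omega$, and then to exploit that $d_{\Omega}(\omega_k,\omega_0)\to0$ is exactly locally uniform convergence $\omega_k\to\omega_0$. Starting from \eqref{OU1} with $\mathrm{W}(\xi)(\omega)=\omega(\xi)$, I would integrate by parts (equivalently, apply It\^o's formula to $e^{-\sigma(t-s)}\mathrm{W}(s)$). Since $\omega(0)=0$ and the weight $e^{-\sigma(t-s)}$ decays fast enough to kill the boundary contribution at $-\infty$ against the admissible growth of paths in $\Omega_N$, this yields the representation
\begin{align*}
 y(\vartheta_t\omega)=\omega(t)-\sigma\int_{-\infty}^{t}e^{-\sigma(t-s)}\omega(s)\d s,
\end{align*}
whose right-hand side is an ordinary (deterministic) integral of a continuous path, so that continuity in $\omega$ reduces to continuity of this integral functional.

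For \eqref{conv_z2}, I would insert the defining bound $|\omega(s)|\leq Ne^{|s|}$ of $\Omega_N$ into this representation to get
\begin{align*}
 |y(\vartheta_t\omega)|\leq Ne^{|t|}+\sigma N\int_{-\infty}^{t}e^{-\sigma(t-s)}e^{|s|}\d s,
\end{align*}
where the integral is finite because the exponential weight dominates $e^{|s|}$. Taking the supremum over $t\in[\tau,\tau+T]$ produces a finite constant depending only on $N,\tau,T,\sigma$; since $\omega_0\in\Omega_N$ fixes $N$ and every $\omega_k$ lies in the \emph{same} $\Omega_N$, this bound is independent of $k$, which is precisely the uniform estimate \eqref{conv_z2}.

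The substantive part is the convergence in \eqref{conv_z1}. Subtracting the representations gives
\begin{align*}
 y(\vartheta_t\omega_k)-y(\vartheta_t\omega_0)=\big[\omega_k(t)-\omega_0(t)\big]-\sigma\int_{-\infty}^{t}e^{-\sigma(t-s)}\big[\omega_k(s)-\omega_0(s)\big]\d s.
\end{align*}
Since $d_{\Omega}(\omega_k,\omega_0)\to0$ is equivalent to $\|\omega_k-\omega_0\|_{m}\to0$ for every $m$, i.e.\ uniform convergence on each compact interval, the boundary term $\sup_{t\in[\tau,\tau+T]}|\omega_k(t)-\omega_0(t)|\to0$ at once. \textbf{The main obstacle is the integral over the infinite interval} $(-\infty,t]$, because locally uniform convergence does not by itself control the tail. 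I would handle this by a splitting argument: given $\epsilon>0$, pick $M$ so large that the tail $\int_{-\infty}^{-M}$ is smaller than $\epsilon$ uniformly in $k$ and in $t\in[\tau,\tau+T]$, using $|\omega_k(s)-\omega_0(s)|\leq 2Ne^{|s|}$ together with the decay of $e^{-\sigma(t-s)}$; then on the remaining compact interval $[-M,\tau+T]$ the integrand converges uniformly to $0$, so the near part vanishes as $k\to\infty$. This establishes $\sup_{t\in[\tau,\tau+T]}|y(\vartheta_t\omega_k)-y(\vartheta_t\omega_0)|\to0$.

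Finally, for the exponential term in \eqref{conv_z1} I would use the elementary inequality $|e^{ap}-e^{aq}|\leq|a|\,e^{|a|\max\{|p|,|q|\}}|p-q|$ with $p=y(\vartheta_t\omega_k)$ and $q=y(\vartheta_t\omega_0)$. Bounding $\max\{|p|,|q|\}$ by the uniform constant $C(\tau,T,\omega_0)$ from \eqref{conv_z2},
\begin{align*}
 \sup_{t\in[\tau,\tau+T]}\big|e^{ay(\vartheta_t\omega_k)}-e^{ay(\vartheta_t\omega_0)}\big|\leq|a|\,e^{|a|C(\tau,T,\omega_0)}\sup_{t\in[\tau,\tau+T]}\big|y(\vartheta_t\omega_k)-y(\vartheta_t\omega_0)\big|\longrightarrow0,
\end{align*}
which combined with the previous step yields \eqref{conv_z1}. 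The only genuinely delicate point throughout is the uniform-in-$k$ control of the tail of the convolution integral; the remaining estimates are direct.
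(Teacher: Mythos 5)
Your strategy — rewriting $y(\vartheta_t\omega)$ via integration by parts as $\omega(t)-\sigma\int_{-\infty}^{t}e^{-\sigma(t-s)}\omega(s)\d s$, splitting the convolution into a far tail plus a compact part where $d_\Omega$-convergence gives uniform convergence, and finishing the exponential term with the mean value theorem and \eqref{conv_z2} — is the right one, and it is essentially the argument behind the references \cite{CLL,YR} to which the paper defers (the paper gives no self-contained proof, only the citation). However, there is a genuine quantitative gap: every step of yours that controls the tail of the convolution uses the bound $|\omega(s)|\le Ne^{|s|}$ from the definition of $\Omega_N$, and for $s<0$ this makes the integrand $e^{-\sigma(t-s)}e^{|s|}=e^{-\sigma t}e^{(\sigma-1)s}$, which is integrable on $(-\infty,-M]$ \emph{if and only if} $\sigma>1$. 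The paper's Ornstein--Uhlenbeck parameter is an arbitrary $\sigma>0$ (see \eqref{OU1}), so for $\sigma\in(0,1]$ your claim that ``the integral is finite because the exponential weight dominates $e^{|s|}$'' is false: $\int_{-\infty}^{t}e^{-\sigma(t-s)}e^{|s|}\d s$ diverges. This breaks your proof of \eqref{conv_z2}, the uniform-in-$k$ smallness of the tail in your splitting argument for \eqref{conv_z1}, and even the vanishing of the boundary term at $-\infty$ in the integration-by-parts representation. The failure is not cosmetic: with only the information $|\omega_k(s)|\le Ne^{|s|}$ and $\sigma<1$, no uniform tail control is possible — take $\omega_0\equiv 0$ and continuous $\omega_k$ vanishing on $[-k,\infty)$ but of size $e^{|s|}$ on $[-2k,-k]$ (suitably smoothed); then $d_\Omega(\omega_k,\omega_0)\to0$ while $|y(\omega_k)|\ge\sigma\int_{-2k}^{-k}e^{(\sigma-1)s}\d s\to\infty$, so the conclusion \eqref{conv_z2} itself fails for the paper's literal $\Omega_N$.

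The repair is to calibrate the growth rate in $\Omega_N$ to $\sigma$, which is exactly what makes the cited lemmas work for every $\sigma>0$: since Wiener paths a.s.\ have sub-exponential (indeed sublinear) growth, one may define $\Omega_N=\{\omega\in\Omega:|\omega(t)|\le Ne^{\delta|t|}\ \forall\,t\in\R\}$ with a fixed $0<\delta<\sigma$ (or with linear growth $N(1+|t|)$); the union over $N$ still exhausts the relevant full-measure set, the convolution tails then converge and are uniformly small for large $M$, and your splitting argument goes through verbatim. Alternatively one may note that $\sigma$ is a free parameter of the conjugation and impose $\sigma>1$, but the paper never makes that restriction. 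As written, your proof is complete only in the case $\sigma>1$; the remaining ingredients — the representation of $y(\vartheta_t\omega)$, the compact-part estimate from locally uniform convergence, the inequality $|e^{ap}-e^{aq}|\le|a|\,e^{|a|\max\{|p|,|q|\}}|p-q|$ combined with \eqref{conv_z2}, and the observation that all $\omega_k$ share the same $N$ — are correct.
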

	\begin{proof}
		See the proofs of \cite[Corollary 22]{CLL} and \cite[Lemma 2.5]{YR}.
	\end{proof}
	\subsubsection{Backward-uniformly tempered random set}
	A bi-parametric set $\mathcal{D}=\{\mathcal{D}(\tau,\omega)\}$ in a Banach space $\X$ is said to be \emph{backward-uniformly tempered} if
	\begin{align}\label{BackTem}
		\lim_{t\to +\infty}e^{-ct}\sup_{s\leq \tau}\|\mathcal{D}(s-t,\vartheta_{-t}\omega)\|^2_{\X}=0\ \ \forall \ \  (\tau,\omega,c)\in\R\times\Omega\times\R^+,  \ \ \ \text{	where }\ \|\mathcal{D}\|_{\X}=\sup\limits_{\x\in \mathcal{D}}\|\x\|_{\X}.
	\end{align}

	\subsubsection{Class of random sets}
	\begin{itemize}
		\item Let ${\mathfrak{D}}$ be the collection of subsets of $\H$ defined as:
		\begin{align}\label{D-NSE}
			{\mathfrak{D}}=\left\{{\mathcal{D}}=\{{\mathcal{D}}(\tau,\omega):(\tau,\omega)\in\R\times\Omega\}:\lim_{t\to +\infty}e^{-ct}\sup_{s\leq \tau}\|{\mathcal{D}}(s-t,\vartheta_{-t}\omega)\|^2_{\H}=0,\ \forall \ c>0\right\}.
		\end{align}
		\item Let ${\mathfrak{B}}$ be the collection of subsets of $\H$ defined as:
		\begin{align*}
			{\mathfrak{B}}=\left\{{\mathcal{B}}=\{{\mathcal{B}}(\tau,\omega):(\tau,\omega)\in\R\times\Omega\}:\lim_{t\to +\infty}e^{-ct}\|{\mathcal{B}}(\tau-t,\vartheta_{-t}\omega)\|^2_{\H}=0,\ \forall\ c>0\right\}.
		\end{align*}
		\item Let ${\mathfrak{D}}_{\infty}$ be the collection of subsets of $\H$ defined as:
		\begin{align*}
			{\mathfrak{D}}_{\infty}=\left\{\widehat{\mathcal{D}}=\{\widehat{\mathcal{D}}(\omega):\omega\in\Omega\}:\lim_{t\to +\infty}e^{-ct}\|\widehat{\mathcal{D}}(\vartheta_{-t}\omega)\|^2_{\H}=0,\ \forall \ c>0\right\}.
		\end{align*}
	\end{itemize}

	\section{3D stochastic CBF equations: Multiplicative noise}\label{sec3}\setcounter{equation}{0}
	In this section, we consider 3D stochastic CBF equations driven by a linear multiplicative white noise, that is, $S(\v)=\v$ and establish the asymptotic autonomy of pullback random attractors. Let us define $$\u(t,\tau,\omega,\u_{\tau}):=e^{-y(\vartheta_{t}\omega)}\v(t,\tau,\omega,\v_{\tau})\ \text{ with  }\  \u_{\tau}=e^{-y(\vartheta_{\tau}\omega)}\v_{\tau},$$ where $y$ satisfies \eqref{OU2} and $\v(\cdot):=\v(\cdot,\tau,\omega,\v_{\tau})$ is the solution of \eqref{1} with $S(\v)=\v$. Then $\u(\cdot):=\u(\cdot,\tau,\omega,\u_{\tau})$ satisfies:
	\begin{equation}\label{2}
		\left\{
		\begin{aligned}
			\frac{\d\u(t)}{\d t}-\mu \Delta\u(t)&+e^{y(\vartheta_{t}\omega)}(\u(t)\cdot\nabla)\u(t)+\alpha\u(t)+\beta e^{(r-1)y(\vartheta_{t}\omega)} \left|\u(t)\right|^{r-1}\u(t)\\&=-e^{-y(\vartheta_{t}\omega)}\nabla p(t)+\f(t) e^{-y(\vartheta_{t}\omega)} +\sigma y(\vartheta_t\omega)\u(t),  \ \  \ \  \text{ in }\  \R^3\times(\tau,\infty), \\ \nabla\cdot\u&=0, \hspace{79mm}   \text{ in } \  \R^3\times[\tau,\infty), \\
			\u(x)|_{t=\tau}&=\u_{0}(x)=e^{-y(\vartheta_{\tau}\omega)}\v_{0}(x),  \hspace{45mm}  x\in \R^3 \ \text{ and }\ \tau\in\R,\\
			|\u(x)|&\to 0\hspace{80.1mm}  \text{ as }\ |x|\to\infty,
		\end{aligned}
		\right.
	\end{equation}
	as well as (projected form)
	\begin{equation}\label{CCBF}
		\left\{
		\begin{aligned}
			\frac{\d\u(t)}{\d t}+\mu \A\u(t)&+e^{y(\vartheta_{t}\omega)}\B\big(\u(t)\big) +\alpha\u(t)+\beta e^{(r-1)y(\vartheta_{t}\omega)}\mathcal{C}\big(\u(t)\big)\\&=e^{-y(\vartheta_{t}\omega)}\mathscr{P}\f(t) + \sigma y(\vartheta_t\omega)\u(t) , \quad t> \tau,\  \tau\in\R ,\\
			\u(x)|_{t=\tau}&=\u_{0}(x)=e^{-y(\vartheta_{\tau}\omega)}\v_{0}(x), \hspace{16mm} x\in\R^3,
		\end{aligned}
		\right.
	\end{equation}
	in $\V'+\widetilde{\L}^{\frac{r+1}{r}}$, where $r\geq 1$.

	\subsection{Non-autonomous random dynamical system (NRDS)}
	Lusin continuity helps us to define the NRDS. The following lemma (energy inequality) will be frequently used.
	\begin{lemma}
		For both the cases given in Table \ref{Table}, assume that $\f\in\mathrm{L}^2_{\mathrm{loc}}(\R;\L^2(\R^3))$. Then, the solution of \eqref{CCBF} satisfies the following energy inequality: 
		\begin{align}\label{EI1}
			&	\frac{\d}{\d t} \|\u\|^2_{\H}+ \left(\alpha-2\sigma y(\vartheta_{t}\omega)+\frac{\alpha}{2}\right)\|\u\|^2_{\H}+2\mu\|\nabla\u\|^2_{\H}+ 2\beta e^{(r-1)y(\vartheta_{t}\omega)} \|\u\|^{r+1}_{\wi \L^{r+1}}\leq \frac{2}{\alpha}e^{2|y(\vartheta_{t}\omega)|}\|\f\|^2_{\L^2(\R^3)},
		\end{align}
for a.e $t\geq\tau$ and 
	\begin{align}\label{EI4}
		&\frac{\d}{\d t} \|\nabla\u\|^2_{\H} + (\alpha-2\sigma y(\vartheta_{t}\omega))\|\nabla\u\|^2_{\H}+C^*e^{(r-1)y(\vartheta_{t}\omega)}\|\u\|^{r+1}_{\wi\L^{3(r+1)}} \leq C\|\nabla\u\|^2_{\H}+Ce^{2|y(\vartheta_{t}\omega)|}\|\f\|^2_{\L^2(\R^3)},
	\end{align}
for a.e $t>\tau$, where $C, C^*>0$ are some constants.
	\end{lemma}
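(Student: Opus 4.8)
The plan is to derive the two energy inequalities \eqref{EI1} and \eqref{EI4} by testing the projected equation \eqref{CCBF} against $\u$ and against $\A\u$ respectively, and then absorbing the nonlinear and forcing contributions using the structural properties of the operators recorded earlier. For \eqref{EI1}, I would take the $\H$-inner product of \eqref{CCBF} with $\u$. Since $\u$ is divergence-free, the pressure term drops out in the projected formulation, and by the orthogonality relation \eqref{b0} we have $\langle\B(\u),\u\rangle=b(\u,\u,\u)=0$, so the bilinear term vanishes. The Stokes term gives $\mu(\A\u,\u)=\mu\|\nabla\u\|^2_{\H}$, the linear damping gives $\alpha\|\u\|^2_{\H}$, and the nonlinear damping gives $\beta e^{(r-1)y(\vartheta_t\omega)}\langle\mathcal{C}(\u),\u\rangle=\beta e^{(r-1)y(\vartheta_t\omega)}\|\u\|^{r+1}_{\wi\L^{r+1}}$. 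The Ornstein--Uhlenbeck term contributes $\sigma y(\vartheta_t\omega)\|\u\|^2_{\H}$, which after doubling produces the $-2\sigma y(\vartheta_t\omega)\|\u\|^2_{\H}$ on the left. For the forcing, I would use Cauchy--Schwarz and Young's inequality in the form
\begin{align*}
2e^{-y(\vartheta_t\omega)}(\mathscr{P}\f,\u)\leq\frac{\alpha}{2}\|\u\|^2_{\H}+\frac{2}{\alpha}e^{-2y(\vartheta_t\omega)}\|\f\|^2_{\L^2(\R^3)},
\end{align*}
and then bound $e^{-2y(\vartheta_t\omega)}\leq e^{2|y(\vartheta_t\omega)|}$ to obtain the stated right-hand side; the leftover $\tfrac{\alpha}{2}\|\u\|^2_{\H}$ is exactly the extra $\tfrac{\alpha}{2}\|\u\|^2_{\H}$ appearing in \eqref{EI1}.

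For \eqref{EI4}, I would take the $\H$-inner product of \eqref{CCBF} with $\A\u$. Here $(\tfrac{\d\u}{\d t},\A\u)=\tfrac12\tfrac{\d}{\d t}\|\nabla\u\|^2_{\H}$, $\mu(\A\u,\A\u)=\mu\|\A\u\|^2_{\H}$, the linear and noise terms reproduce $(\alpha-2\sigma y(\vartheta_t\omega))\|\nabla\u\|^2_{\H}$ after doubling, and the nonlinear damping term $\beta e^{(r-1)y(\vartheta_t\omega)}(\mathcal{C}(\u),\A\u)$ yields, after integration by parts, a positive coercive contribution controlling $\|\u\|^{r+1}_{\wi\L^{3(r+1)}}$ (via $\||\u|^{(r-1)/2}\nabla\u\|^2_{\H}$ and $\||\u|^{(r-1)/2}|\nabla|\u||\|^2$ together with the Sobolev embedding $\dot{\mathbb{H}}^1(\R^3)\hookrightarrow\L^6(\R^3)$ applied to $|\u|^{(r+1)/2}$). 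The constant $C^*$ absorbs the resulting coefficient. The forcing term is handled as before, now pairing $\mathscr{P}\f$ with $\A\u$ and using Young's inequality to split off a small multiple of $\mu\|\A\u\|^2_{\H}$ against the $\L^2$-norm of $\f$.

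The main obstacle is the convection term $e^{y(\vartheta_t\omega)}(\B(\u),\A\u)$, which, unlike the $\u$-test case, does not vanish. This is the term that forces the restriction to the two parameter regimes in Table \ref{Table}. The standard route is to estimate $|(\B(\u),\A\u)|$ by Hölder and interpolation and then to absorb it: one writes
\begin{align*}
|(\B(\u),\A\u)|\leq\|\u\|_{\L^6}\|\nabla\u\|_{\L^3}\|\A\u\|_{\H}
\end{align*}
and interpolates $\|\nabla\u\|_{\L^3}$ between $\|\nabla\u\|_{\H}$ and higher-order quantities. For $r>3$ the excess growth of the Forchheimer coercive term $\|\u\|^{r+1}_{\wi\L^{3(r+1)}}$ dominates and allows this term to be absorbed for \emph{any} $\beta,\mu>0$; for the critical case $r=3$ the absorption succeeds precisely under $2\beta\mu\geq1$, which is the sharp algebraic threshold guaranteeing that the coefficient in front of $\|\A\u\|^2_{\H}$ from the convection estimate is dominated by the available $\mu\|\A\u\|^2_{\H}$ after the Forchheimer term is deployed (this is the same mechanism yielding the orthogonality-type bound $\beta e^{(r-1)y}\||\u|^{(r-1)/2}|\nabla\u|\|^2_{\H}\geq\frac{1}{2\mu}|(\B(\u),\A\u)|^2/\|\A\u\|^2_{\H}$ in disguise). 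After this absorption, the remaining lower-order convection contributions are dominated by $C\|\nabla\u\|^2_{\H}$, which is exactly the term appearing on the right-hand side of \eqref{EI4}. Throughout, since all manipulations are at the level of regular solutions, the integration by parts in $(\mathcal{C}(\u),\A\u)$ and $(\B(\u),\A\u)$ is rigorously justified by the regularity asserted in the statement of the lemma.
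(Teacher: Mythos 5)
Your treatment of \eqref{EI1} coincides with the paper's: test \eqref{CCBF} with $\u$, kill the convection term via \eqref{b0}, and apply Cauchy--Schwarz/Young to the forcing, splitting off $\tfrac{\alpha}{2}\|\u\|^2_{\H}$; that part is fine. Your overall strategy for \eqref{EI4} is also the paper's: test with $\A\u$, use the commutation of $\mathscr{P}$ and $\Delta$ on $\R^3$ to obtain the identity $(\mathcal{C}(\u),\A\u)=\int_{\R^3}|\nabla\u|^2|\u|^{r-1}\d x+\tfrac{4(r-1)}{(r+1)^2}\int_{\R^3}\big|\nabla|\u|^{\frac{r+1}{2}}\big|^2\d x$ (the paper's \eqref{CuAu}), convert the gradient piece into $C^*\|\u\|^{r+1}_{\wi\L^{3(r+1)}}$ by Gagliardo--Nirenberg--Sobolev (the paper's \eqref{3(r+1)}), handle the forcing by Young against a small multiple of $\mu\|\A\u\|^2_{\H}$, and absorb the convection term according to the two cases of Table \ref{Table}.

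The genuine problem is the convection estimate you actually lead with: $|(\B(\u),\A\u)|\leq\|\u\|_{\L^{6}(\R^3)}\|\nabla\u\|_{\L^{3}(\R^3)}\|\A\u\|_{\H}$. This is the Navier--Stokes-type split and it does not produce \eqref{EI4}. Interpolating $\|\nabla\u\|_{\L^{3}(\R^3)}\leq C\|\nabla\u\|^{1/2}_{\H}\|\A\u\|^{1/2}_{\H}$ and using $\|\u\|_{\L^{6}(\R^3)}\leq C\|\nabla\u\|_{\H}$ gives $|(\B(\u),\A\u)|\leq C\|\nabla\u\|^{3/2}_{\H}\|\A\u\|^{3/2}_{\H}$, and Young's inequality then yields $\tfrac{\mu}{2}\|\A\u\|^2_{\H}+C\|\nabla\u\|^{6}_{\H}$: a sextic term rather than the $C\|\nabla\u\|^2_{\H}$ of \eqref{EI4}, and nothing in this chain interacts with the Forchheimer coercivity, so neither the unconditional $r>3$ case nor the sharp threshold $2\beta\mu\geq1$ can emerge from it. What works --- and what the paper does, and what you only allude to parenthetically (``in disguise'') --- is the strictly sharper pointwise H\"older bound $|b(\u,\u,\A\u)|\leq\||\u||\nabla\u|\|_{\H}\|\A\u\|_{\H}$, followed by Young: the convection term with its weight is then bounded by $\tfrac{\mu}{2}\|\A\u\|^2_{\H}+\tfrac{e^{2y(\vartheta_t\omega)}}{2\mu}\||\u||\nabla\u|\|^2_{\H}$. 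For $r=3$ the second term is absorbed by the coercive piece $\beta e^{2y(\vartheta_t\omega)}\||\u|\nabla\u\|^2_{\H}$ exactly when $\tfrac{1}{2\mu}\leq\beta$, i.e. $2\beta\mu\geq1$; for $r>3$ a further pointwise Young inequality $|\u|^2\leq\varepsilon|\u|^{r-1}+C_\varepsilon$ (with exponents $\tfrac{r-1}{2},\tfrac{r-1}{r-3}$, under which the exponential weights cancel exactly) splits it into $\tfrac{\beta}{2}e^{(r-1)y(\vartheta_t\omega)}\||\u|^{\frac{r-1}{2}}\nabla\u\|^2_{\H}+C\|\nabla\u\|^2_{\H}$, which is precisely where the $C\|\nabla\u\|^2_{\H}$ on the right of \eqref{EI4} comes from. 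So you have identified the correct mechanism, but the estimate you put forward as the main route breaks down at exactly the step where the two parameter regimes must be distinguished; the proof should be rewritten around the pointwise bound from the start.
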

	\begin{proof}
		From the first equation of the system \eqref{CCBF}, using \eqref{b0} and the Cauchy-Schwarz inequality, one can obtain \eqref{EI1} immediately. 
		
		Now taking inner product to the first equation of the system \eqref{CCBF} by $\A\u$, we find
			\begin{align}\label{EI2}
			&\frac{1}{2}\frac{\d}{\d t} \|\nabla\u\|^2_{\H} +\mu\|\A\u\|^2_{\H} + (\alpha-\sigma y(\vartheta_{t}\omega))\|\nabla\u\|^2_{\H} + \beta e^{(r-1)y(\vartheta_{t}\omega)} \left(\mathcal{C}(\u),\A\u\right)\nonumber\\&=-e^{y(\vartheta_{t}\omega)}b(\u,\u,\A\u)+e^{-y(\vartheta_{t}\omega)}\big(\f,\A\u\big)\nonumber\\&\leq |e^{y(\vartheta_{t}\omega)}b(\u,\u,\A\u)|+\frac{\mu}{4}\|\A\u\|^2_{\H}+\frac{e^{2|y(\vartheta_{t}\omega)|}}{\mu}\|\f\|^2_{\L^2(\R^3)}.
		\end{align}
	From H\"older's and Young's inequalities, we estimate
	\begin{align}
		|e^{y(\vartheta_{t}\omega)}b(u,u,\A\u)| \leq\begin{cases}
			\frac{\mu}{2}\|\A\u\|^2_{\H}+\frac{\beta}{2}e^{(r-1)y(\vartheta_{t}\omega)}\||\u|^{\frac{r-1}{2}}\nabla\u\|^2_{\H}+C\|\nabla\u\|^2_{\H}, &\text{ for }  r>3, \\
			\frac{\mu}{2}\|\A\u\|^2_{\H}+\frac{1}{2\mu}e^{2y(\vartheta_{t}\omega)}\||\u|\nabla\u\|^2_{\H},  &\text{ for } r=3.
		\end{cases}
	\end{align}
	Since $\mathscr{P}$ and $\Delta$ commute on the whole space, we deduce
		\begin{align}\label{CuAu}
		\big(\mathcal{C}(\u),\A\u\big)&=\int_{\mathbb{R}^3}|\nabla\u(x)|^2|\u(x)|^{r-1}\d x+4\left[\frac{r-1}{(r+1)^2}\right]\int_{\mathbb{R}^3}|\nabla|\u(x)|^{\frac{r+1}{2}}|^2\d x.
		\end{align} 
	From the Gagliardo-Nirenberg-Sobolev inequality (Theorem 1, Section 5.6.1, \cite{LCE}), we have
	\begin{align}\label{3(r+1)}
		\|\u\|^{r+1}_{\wi\L^{3(r+1)}}=\||\u|^{\frac{r+1}{2}}\|_{\mathrm{L}^6(\mathbb{R}^3)}\leq C \|\nabla|\u|^{\frac{r+1}{2}}\|^2_{\L^{2}(\R^3)}=\frac{8\beta}{C^*}\left[\frac{r-1}{(r+1)^2}\right] \int_{\mathbb{R}^3}|\nabla|\u(x)|^{\frac{r+1}{2}}|^2\d x,
	\end{align}
where $C^*>0$ is a constant. Combining \eqref{EI1} and \eqref{EI2}-\eqref{3(r+1)}, we arrive at \eqref{EI4}. This completes the proof.
	\end{proof}

	\begin{lemma}\label{Soln}
		For both the cases given in Table \ref{Table}, let $\f\in\mathrm{L}^2_{\mathrm{loc}}(\R;\L^2(\R^3))$. For each $(\tau,\omega,\u_{\tau})\in\R\times\Omega\times\H$, the system \eqref{CCBF} has a unique weak solution $\u(\cdot,\tau,\omega,\u_{\tau})\in\mathrm{C}([\tau,+\infty);\H)\cap\mathrm{L}^2_{\mathrm{loc}}(\tau,+\infty;\V)\cap\mathrm{L}^{r+1}_{\mathrm{loc}}(\tau,+\infty;\widetilde{\L}^{r+1})$ such that $\u$ is continuous with respect to the initial data.
	\end{lemma}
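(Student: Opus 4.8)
The plan is to establish existence via a Galerkin approximation scheme and derive uniqueness together with continuous dependence from the monotonicity structure of the operators. First I would set up the finite-dimensional approximation: let $\{e_k\}$ be a smooth orthonormal basis of $\H$ (for instance eigenfunctions of a suitable operator adapted to $\R^3$, or a Galerkin basis from $\V\cap\widetilde{\L}^{r+1}$) and seek $\u^{(n)}(t)=\sum_{k=1}^n c_k^{(n)}(t)e_k$ solving the projected ODE system obtained by testing \eqref{CCBF} against each $e_k$. Since all nonlinearities are locally Lipschitz in the finite-dimensional coefficients and the random coefficients $e^{y(\vartheta_t\omega)}$, $y(\vartheta_t\omega)$ are continuous in $t$ (by the properties of the Ornstein-Uhlenbeck process recalled after \eqref{OU2}), local existence of $\u^{(n)}$ follows from the Carath\'eodory/Picard theory; the energy inequality \eqref{EI1} then furnishes the a priori bounds that rule out blow-up and give global existence of each $\u^{(n)}$.

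Next I would extract uniform estimates from \eqref{EI1}. Integrating in time and using Gr\"onwall's inequality (the random prefactor $\alpha-2\sigma y(\vartheta_t\omega)$ is locally integrable, so the Gr\"onwall constants are finite on every $[\tau,T]$), one obtains that $\{\u^{(n)}\}$ is bounded in $\mathrm{L}^\infty(\tau,T;\H)\cap\mathrm{L}^2(\tau,T;\V)\cap\mathrm{L}^{r+1}(\tau,T;\widetilde{\L}^{r+1})$ for every $T>\tau$. These bounds, together with the equation, control $\tfrac{\d\u^{(n)}}{\d t}$ in the dual space $\V'+\widetilde{\L}^{\frac{r+1}{r}}$; one checks here that the condition $r>3$ (Case I), or $r=3$ with $2\beta\mu\ge1$ (Case II) from Table \ref{Table}, is exactly what is needed so that $\B(\u)$ and $\mathcal{C}(\u)$ land in the dual space with the stated integrability, using H\"older's inequality and the embedding $\V\cap\widetilde{\L}^{r+1}\hookrightarrow\mathbb{L}^3(\R^3)$. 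Passing to a weak-* limit along a subsequence and invoking an Aubin--Lions--Simon compactness lemma on each bounded time-window (with local compactness in space handled via the standard truncation since compactness of Sobolev embeddings fails globally on $\R^3$) yields a limit $\u$ in the required class; identifying the weak limits of the nonlinear terms uses the monotonicity of $\mathcal{C}$ recorded in \eqref{MO_c} and the antisymmetry \eqref{b0} of $b$, via a Minty--Browder argument.

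For uniqueness and continuous dependence I would take two solutions $\u_1,\u_2$ with data $\u_\tau^1,\u_\tau^2$, subtract the equations, and test the difference $\w=\u_1-\u_2$ against itself. The key cancellations are \eqref{441} for the bilinear term and the monotonicity bound \eqref{MO_c} for the nonlinear term, the latter giving a nonnegative contribution $\tfrac{\beta}{2}e^{(r-1)y(\vartheta_t\omega)}(\||\u_1|^{\frac{r-1}{2}}\w\|_{\H}^2+\||\u_2|^{\frac{r-1}{2}}\w\|_{\H}^2)$ that one keeps on the left. The remaining troublesome term is $-e^{y(\vartheta_t\omega)}\langle\B(\w,\w),\u_2\rangle$ coming from the right side of \eqref{441}, which one estimates by H\"older and interpolation and then absorbs the resulting $\||\u_2|^{\frac{r-1}{2}}\w\|_{\H}^2$ factor into the nonnegative monotone term (this absorption is precisely where $r>3$, respectively $r=3$ with $2\beta\mu\ge1$, is used). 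After absorption one reaches a differential inequality $\tfrac{\d}{\d t}\|\w\|_\H^2\le \kappa(t)\|\w\|_\H^2$ with a locally integrable random coefficient $\kappa(t)$, and Gr\"onwall's inequality gives $\|\w(t)\|_\H^2\le e^{\int_\tau^t\kappa}\|\u_\tau^1-\u_\tau^2\|_\H^2$, yielding both uniqueness and Lipschitz-type continuity in the initial data, and simultaneously confirming $\u\in\mathrm{C}([\tau,\infty);\H)$.

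The main obstacle I anticipate is the estimate of the bilinear term in the uniqueness/continuity argument and its absorption into the nonlinear monotone term: on the unbounded domain $\R^3$ with no gain from a Poincar\'e inequality, this is the step that forces the parameter restrictions in Table \ref{Table}, and handling the borderline case $r=3$, $2\beta\mu\ge1$ requires tracking the sharp constants carefully rather than relying on a crude Young's inequality. A secondary technical point is the recovery of the time-continuity $\u\in\mathrm{C}([\tau,\infty);\H)$ and the validity of the energy identity at the limit, which I would obtain from a Lions--Magenes-type lemma using that $\u\in\mathrm{L}^2(\tau,T;\V)$ with $\tfrac{\d\u}{\d t}\in\mathrm{L}^2(\tau,T;\V')+\mathrm{L}^{\frac{r+1}{r}}(\tau,T;\widetilde{\L}^{\frac{r+1}{r}})$.
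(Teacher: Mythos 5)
Your proposal is correct and follows essentially the same route as the paper: the paper's proof of this lemma simply invokes the standard Faedo--Galerkin method for existence and uniqueness (citing \cite{HR,KM,PAM}) and refers to \cite[Lemma 3.5]{KM7} for continuity with respect to the initial data, which is exactly the Galerkin-plus-monotonicity scheme you spell out. Your key step of absorbing the bilinear term into the monotone term from \eqref{MO_c} (using $r>3$, respectively $r=3$ with $2\beta\mu\geq 1$) is the same cancellation mechanism the paper itself deploys in its Lusin-continuity and backward-convergence estimates (cf. \eqref{LC11} and \eqref{BC5}).
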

	\begin{proof}
		One can prove the existence and uniqueness of solutions by a standard Faedo-Galerkin approximation method, cf. \cite{HR,KM,PAM}, etc. For continuity with respect to initial data $\u_{\tau}$, see \cite[Lemma 3.5]{KM7}.
	\end{proof}

	Next result shows the Lusin continuity of the mapping of solution to the system \eqref{CCBF} in sample points.
	\begin{proposition}\label{LusinC}
		For both the cases given in Table \ref{Table}, suppose that $\f\in\mathrm{L}^2_{\mathrm{loc}}(\R;\L^2(\R^3))$. For each $N\in\N$, the mapping $\omega\mapsto\u(t,\tau,\omega,\u_{\tau})$ (solution of \eqref{CCBF}) is continuous from $(\Omega_{N},d_{\Omega_N})$ to $\H$, uniformly in $t\in[\tau,\tau+T]$ with $T>0$.
	\end{proposition}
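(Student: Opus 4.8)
The plan is to establish the Lusin continuity by a standard argument based on the energy inequality \eqref{EI1}, combined with the continuous dependence of the Ornstein-Uhlenbeck coefficients on the sample point furnished by Lemma \ref{conv_z}. Fix $N\in\N$ and take a sequence $\omega_k\to\omega_0$ in $(\Omega_N,d_{\Omega_N})$. Write $\u_k(\cdot):=\u(\cdot,\tau,\omega_k,\u_\tau)$ and $\u_0(\cdot):=\u(\cdot,\tau,\omega_0,\u_\tau)$ for the corresponding solutions of \eqref{CCBF} issued from the \emph{same} initial datum $\u_\tau$, and set $\w_k:=\u_k-\u_0$. The goal is to show $\sup_{t\in[\tau,\tau+T]}\|\w_k(t)\|_{\H}\to0$ as $k\to\infty$. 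First I would subtract the two equations \eqref{CCBF} written for $\omega_k$ and $\omega_0$, take the $\H$-inner product with $\w_k$, and use $\w_k(\tau)=0$.

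The resulting identity produces several groups of terms. The linear Stokes and damping terms are handled directly; the quadratic and nonlinear differences are split into a ``diagonal'' part (same coefficient, difference of solutions) and a ``coefficient'' part (difference of the exponential factors $e^{y(\vartheta_t\omega_k)}-e^{y(\vartheta_t\omega_0)}$ etc.\ multiplying a fixed solution). For the diagonal part of the convective term I would invoke the structure \eqref{441}, which reduces $\langle\B(\u_k)-\B(\u_0),\w_k\rangle$ to $-\langle\B(\w_k,\w_k),\u_0\rangle$, and control it by H\"older and Young's inequalities absorbing $\mu\|\nabla\w_k\|^2_{\H}$; for the diagonal part of the damping term I would use the monotonicity \eqref{MO_c}, which has a favourable sign and can simply be dropped. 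The coefficient parts are the crucial contributions: each carries a factor $|e^{a y(\vartheta_t\omega_k)}-e^{a y(\vartheta_t\omega_0)}|$ that tends to zero \emph{uniformly} on $[\tau,\tau+T]$ by \eqref{conv_z1}, multiplied by norms of the fixed solution $\u_0$ (and of $\u_k$) that are bounded uniformly in $k$. This uniform-in-$k$ boundedness comes from integrating \eqref{EI1} over $[\tau,\tau+T]$: the right-hand side involves $e^{2|y(\vartheta_t\omega_k)|}$, which is bounded uniformly in $k$ and $t$ by \eqref{conv_z2}, so the $\mathrm{L}^\infty(\tau,\tau+T;\H)\cap\mathrm{L}^2(\tau,\tau+T;\V)\cap\mathrm{L}^{r+1}(\tau,\tau+T;\wi\L^{r+1})$ norms of $\u_k$ and $\u_0$ are bounded by a constant $C(\tau,T,\omega_0,N)$ depending on the datum and on $N$ but not on $k$.

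After these estimates one arrives at a differential inequality of the form
\begin{align*}
	\frac{\d}{\d t}\|\w_k(t)\|^2_{\H}\leq \varkappa(t)\|\w_k(t)\|^2_{\H}+\eta_k(t),
\end{align*}
where $\varkappa\in\mathrm{L}^1(\tau,\tau+T)$ is a nonnegative function built from the (uniformly bounded) norms of $\u_0$ and the OU factors, and $\eta_k\geq0$ collects all the coefficient-difference contributions so that $\int_\tau^{\tau+T}\eta_k(t)\d t\to0$ as $k\to\infty$ by the uniform convergence \eqref{conv_z1} together with the uniform bounds. Applying Gr\"onwall's lemma with $\w_k(\tau)=0$ gives
\begin{align*}
	\sup_{t\in[\tau,\tau+T]}\|\w_k(t)\|^2_{\H}\leq \left(\int_\tau^{\tau+T}\eta_k(s)\d s\right)\exp\left(\int_\tau^{\tau+T}\varkappa(s)\d s\right)\to0 \quad \text{as}\ k\to\infty,
\end{align*}
which is exactly the claimed continuity of $\omega\mapsto\u(t,\tau,\omega,\u_\tau)$ from $(\Omega_N,d_{\Omega_N})$ to $\H$, uniformly in $t\in[\tau,\tau+T]$.

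I expect the main obstacle to be the careful treatment of the nonlinear coefficient-difference terms in the critical and supercritical regimes of Table \ref{Table}: one must verify that the bad factor $\beta|e^{(r-1)y(\vartheta_t\omega_k)}-e^{(r-1)y(\vartheta_t\omega_0)}|$ multiplying $\|\u_0\|^{r+1}_{\wi\L^{r+1}}$ (and the analogous convective factor) can genuinely be estimated by the uniformly bounded space-time norms available from \eqref{EI1}, using H\"older's inequality with the correct exponents so that no uncontrolled higher-order norm appears; this is precisely where the case distinction $r>3$ versus $r=3$ with $2\beta\mu\geq1$ enters, mirroring the estimates used in the energy inequality itself.
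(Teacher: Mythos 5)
Your overall skeleton coincides with the paper's: the difference $\mathscr{U}^k=\u^k-\u^0$ of the two solutions issued from the same datum, an $\H$-energy estimate, uniform-in-$k$ space-time bounds obtained by integrating \eqref{EI1} and invoking \eqref{conv_z2}, uniform convergence of the Ornstein--Uhlenbeck factors from \eqref{conv_z1}, and Gronwall. The coefficient-difference terms are also treated exactly as in the paper (cf.\ \eqref{LC6} and \eqref{LC14}).

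However, there is a genuine gap in your treatment of the \emph{diagonal} convective term, and it is precisely the step on which the 3D result hinges. You propose to bound $\langle\B(\u^k)-\B(\u^0),\mathscr{U}^k\rangle=-\langle\B(\mathscr{U}^k,\mathscr{U}^k),\u^0\rangle$ by H\"older and Young, absorbing only into $\mu\|\nabla\mathscr{U}^k\|^2_{\H}$, while the monotonicity terms from \eqref{MO_c} are ``simply dropped''. In 3D this fails: by Ladyzhenskaya's inequality, $|b(\mathscr{U}^k,\mathscr{U}^k,\u^0)|\leq C\|\mathscr{U}^k\|^{1/2}_{\H}\|\nabla\mathscr{U}^k\|^{3/2}_{\H}\|\nabla\u^0\|_{\H}$, and after absorbing the gradient factor one is left with $C\|\nabla\u^0\|^4_{\H}\|\mathscr{U}^k\|^2_{\H}$, so Gronwall requires $\u^0\in\mathrm{L}^4(\tau,\tau+T;\V)$ --- not available for the weak solutions of Lemma \ref{Soln}, which only lie in $\mathrm{L}^2_{\mathrm{loc}}(\tau,+\infty;\V)$ (this is the classical 3D Navier--Stokes obstruction). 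The paper's proof instead \emph{retains} both squared terms delivered by \eqref{MO_c} (see \eqref{LC3}) and uses them as absorbers: for $r>3$ one estimates, as in \eqref{LC9}, $e^{y}|b(\mathscr{U}^k,\mathscr{U}^k,\u^0)|\leq\frac{\mu}{4}\|\nabla\mathscr{U}^k\|_{\H}^2+\frac{\beta}{4}e^{(r-1)y}\||\u^0|^{\frac{r-1}{2}}\mathscr{U}^k\|^2_{\H}+C\|\mathscr{U}^k\|^2_{\H}$, using the pointwise Young inequality $|\u^0|^2\lesssim\varepsilon|\u^0|^{r-1}+C_\varepsilon$ (valid only for $r>3$), the middle term being cancelled by the retained $-\frac{\beta}{2}$-term; for $r=3$ one must first rewrite $b(\mathscr{U}^k,\mathscr{U}^k,\u^0)=b(\mathscr{U}^k,\mathscr{U}^k,\u^k)$ as in \eqref{LC11}, split off $\frac{1}{2\beta}\|\nabla\mathscr{U}^k\|^2_{\H}$ --- absorbable by the viscous term exactly when $2\beta\mu\geq1$ --- and cancel $\frac{\beta}{2}e^{2y}\||\u^k|\mathscr{U}^k\|^2_{\H}$ against the $|\u^k|$-weighted monotonicity term. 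So the case distinction of Table \ref{Table} enters through this diagonal convective estimate (and its coefficient-difference analogue \eqref{LC10}/\eqref{LC12}), not, as you suggest, through the terms carrying the factor $|e^{(r-1)y(\vartheta_t\omega_k)}-e^{(r-1)y(\vartheta_t\omega_0)}|$, which are benign; and the monotonicity terms cannot be dropped --- using them is the whole point of the damping.
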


	\begin{proof}
		Assume that $\omega_k,\omega_0\in\Omega_N,\ N\in\mathbb{N}$ such that $d_{\Omega_N}(\omega_k,\omega_0)\to0$ as $k\to\infty$. Let $\mathscr{U}^k(\cdot):=\u^k(\cdot)-\u^0(\cdot),$ where $\u^k(\cdot):=\u(\cdot,\tau,\omega_k,\u_{\tau})$ and $\u^0:=\u(\cdot,\tau,\omega_0,\u_{\tau})$. Then, $\mathscr{U}^k(\cdot)$ satisfies:
		\begin{align}\label{LC1}
			\frac{\d\mathscr{U}^k}{\d t}&=-\mu \A\mathscr{U}^k-\left(\alpha-\sigma y(\vartheta_t\omega_k)\right) \mathscr{U}^k-e^{y(\vartheta_{t}\omega_k)}\left[\B\big(\u^k\big)-\B\big(\u^0\big)\right]-\left[e^{y(\vartheta_{t}\omega_k)}-e^{y(\vartheta_{t}\omega_0)}\right]\B\big(\u^0\big)\nonumber\\&\quad -\beta e^{(r-1)y(\vartheta_{t}\omega_k)}\left[\mathcal{C}\big(\u^k\big)-\mathcal{C}\big(\u^0\big)\right]-\beta\left[ e^{(r-1)y(\vartheta_{t}\omega_k)}- e^{(r-1)y(\vartheta_{t}\omega_0)}\right]\mathcal{C}\big(\u^0\big)\nonumber\\&\quad+\f \left[e^{-y(\vartheta_{t}\omega_k)}-e^{-y(\vartheta_{t}\omega_0)}\right]+\sigma\left[y(\vartheta_t\omega_k)-y(\vartheta_t\omega_0)\right]\u^0,
		\end{align}
		in $\V'+\widetilde{\L}^{\frac{r+1}{r}}$.  Taking the inner product with $\mathscr{U}^k(\cdot)$ in \eqref{LC1}, and using \eqref{b0} and \eqref{441}, we obtain
		\begin{align}\label{LC2}
			\frac{1}{2}\frac{\d }{\d t}\|\mathscr{U}^k\|^2_{\H}&=-\mu\|\nabla\mathscr{U}^k\|^2_{\H}-\left(\alpha-\sigma y(\vartheta_t\omega_k)\right)\|\mathscr{U}^k\|^2_{\H}-\beta e^{(r-1)y(\vartheta_{t}\omega_k)}\left\langle\mathcal{C}\big(\u^k\big)-\mathcal{C}\big(\u^0\big),\u^k-\u^0\right\rangle\nonumber\\&\quad+e^{y(\vartheta_{t}\omega_k)}b(\mathscr{U}^k,\mathscr{U}^k,\u^0)+\left[e^{y(\vartheta_{t}\omega_k)}-e^{y(\vartheta_{t}\omega_0)}\right]b(\u^0,\mathscr{U}^k,\u^0) \nonumber\\&\quad-\beta\left[ e^{(r-1)y(\vartheta_{t}\omega_k)}- e^{(r-1)y(\vartheta_{t}\omega_0)}\right]\left\langle\mathcal{C}\big(\u^0\big),\mathscr{U}^k\right\rangle+\left[e^{-y(\vartheta_{t}\omega_k)}-e^{-y(\vartheta_{t}\omega_0)}\right](\f,\mathscr{U}^k)\nonumber\\&\quad+\sigma\left[y(\vartheta_t\omega_k)-y(\vartheta_t\omega_0)\right](\u^0,\mathscr{U}^k).
		\end{align}
		We know by \eqref{MO_c} that
		\begin{align}\label{LC3}
			-&\left\langle\mathcal{C}\big(\u^k\big)-\mathcal{C}\big(\u^0\big),\u^k-\u^0\right\rangle \leq-\frac{1}{2}\||\u^k|^{\frac{r-1}{2}}(\u^k-\u^0)\|_{\H}^2-\frac{1}{2}\||\u^0|^{\frac{r-1}{2}}(\u^k-\u^0)\|_{\H}^2.
		\end{align}
		Using H\"older's and Young's inequalities, we obtain
		\begin{align}
			\left|\left[e^{-y(\vartheta_{t}\omega_k)}-e^{-y(\vartheta_{t}\omega_0)}\right](\f,\mathscr{U}^k)\right|&\leq C\left|e^{-y(\vartheta_{t}\omega_k)}-e^{-y(\vartheta_{t}\omega_0)}\right|^2\|\f\|^2_{\L^2(\R^3)}+\frac{\alpha}{4}\|\mathscr{U}^k\|^2_{\H},\label{LC4}\\
			\left|\sigma\left[y(\vartheta_t\omega_k)-y(\vartheta_t\omega_0)\right](\u^0,\mathscr{U}^k)\right|&\leq C\left|y(\vartheta_t\omega_k)-y(\vartheta_t\omega_0)\right|^2\|\u^0\|^2_{\H}+\frac{\alpha}{4}\|\mathscr{U}^k\|^2_{\H},\label{LC5}\\
			\left|\left[ e^{(r-1)y(\vartheta_{t}\omega_k)}- e^{(r-1)y(\vartheta_{t}\omega_0)}\right]\left\langle\mathcal{C}\big(\u^0\big),\mathscr{U}^k\right\rangle\right|&\leq C\left|e^{(r-1)y(\vartheta_{t}\omega_k)}- e^{(r-1)y(\vartheta_{t}\omega_0)}\right|\big[\|\u^0\|^{r+1}_{\wi\L^{r+1}}+\|\u^k\|^{r+1}_{\wi\L^{r+1}}\big].\label{LC6}
		\end{align}
		Next, we estimate the remaining terms of \eqref{LC2} separately.
		\vskip 2mm
		\noindent
		\textbf{Case I:} \textit{When $r>3$.} Using H\"older's and Young's inequalities, we infer
		\begin{align}\label{LC9}
			\left|e^{y(\vartheta_{t}\omega_k)}b(\mathscr{U}^k,\mathscr{U}^k,\u^0)\right|&\leq\frac{\mu}{4}\|\nabla\mathscr{U}^k\|_{\H}^2+\frac{\beta}{4}e^{(r-1)y(\vartheta_{t}\omega_k)}\||\mathscr{U}^k||\u^0|^{\frac{r-1}{2}}\|^2_{\H}+C\|\mathscr{U}^k\|^2_{\H},
		\end{align}
		and
		\begin{align}\label{LC10}
			&\left|\left[e^{y(\vartheta_{t}\omega_k)}-e^{y(\vartheta_{t}\omega_0)}\right]b(\u^0,\mathscr{U}^k,\u^0)\right|\nonumber\\&\leq\left|1-e^{y(\vartheta_{t}\omega_0)-y(\vartheta_{t}\omega_k)}\right|e^{y(\vartheta_{t}\omega_k)}\|\nabla\u^0\|_{\H}\||\mathscr{U}^k||\u^0|\|_{\H}\nonumber\\&\leq\frac{\beta}{4}e^{(r-1)y(\vartheta_{t}\omega_k)}\||\mathscr{U}^k||\u^0|^{\frac{r-1}{2}}\|^2_{\H}+C\left|1-e^{y(\vartheta_{t}\omega_0)-y(\vartheta_{t}\omega_k)}\right|^2\|\nabla\u^0\|^2_{\H}+C\|\mathscr{U}^k\|^2_{\H}.
		\end{align}
		\vskip 2mm
		\noindent
		\textbf{Case II:} \textit{When $r=3$ with $2\beta\mu\geq1$.} Applying \eqref{b0}, H\"older's and Young's inequalities, we obtain
		\begin{align}
			\left|e^{y(\vartheta_{t}\omega_k)}b(\mathscr{U}^k,\mathscr{U}^k,\u^0)\right|&=\left|e^{y(\vartheta_{t}\omega_k)}b(\mathscr{U}^k,\mathscr{U}^k,\u^k)\right|\leq\frac{1}{2\beta}\|\nabla\mathscr{U}^k\|_{\H}^2+\frac{\beta}{2}e^{2y(\vartheta_{t}\omega_k)}\||\mathscr{U}^k||\u^k|\|^2_{\H},\label{LC11}
		\end{align}
		and
		\begin{align}
			&\left|e^{y(\vartheta_{t}\omega_k)}-e^{y(\vartheta_{t}\omega_0)}||b(\u^0,\mathscr{U}^k,\u^0)\right|\leq C\left|1-e^{y(\vartheta_{t}\omega_0)-y(\vartheta_{t}\omega_k)}\right|^2\|\nabla\u^0\|^2_{\H}+\frac{\beta}{2}e^{2y(\vartheta_{t}\omega_k)}\||\mathscr{U}^k||\u^0|\|^2_{\H}.\label{LC12}
		\end{align}
		Combining \eqref{LC2}-\eqref{LC12}, we arrive at
		\begin{align}\label{LC13}
			\frac{\d }{\d t}\|\mathscr{U}^k(t)\|^2_{\H}\leq
			P(t)\|\mathscr{U}^k(t)\|^2_{\H}+Q(t),\text{  for a.e. } t\in[\tau,\tau+T] \text{ with } T>0,
		\end{align}
		where $P=y(\vartheta_{t}\omega_k)+C$ and
		\begin{align*}
		Q&=C\left|e^{-y(\vartheta_{t}\omega_k)}-e^{-y(\vartheta_{t}\omega_0)}\right|^2\|\f\|^2_{\L^2(\R^3)}+C\left|y(\vartheta_t\omega_k)-y(\vartheta_t\omega_0)\right|^2\|\u^0\|^2_{\H}\nonumber\\&\quad+C\left|e^{(r-1)y(\vartheta_{t}\omega_k)}-e^{(r-1)y(\vartheta_{t}\omega_0)}\right|\left[\|\u^0\|^{r+1}_{\wi\L^{r+1}}+\|\u^k\|^{r+1}_{\wi\L^{r+1}}\right]
		+C\left|1-e^{y(\vartheta_{t}\omega_0)-y(\vartheta_{t}\omega_k)}\right|^2\|\nabla\u^0\|^2_{\H}.
		\end{align*}
		From \eqref{EI1}, we deduce
		\begin{align*}
			\int_{\tau}^{\tau+T}2\beta e^{(r-1)y(\vartheta_{t}\omega_k)}\|\u^k(t)\|^{r+1}_{\wi\L^{r+1}}\d t&\leq\|\u_{\tau}\|^2_{\H}+C\int_{\tau}^{\tau+T}e^{-2y(\vartheta_{t}\omega_k)}\|\f(t)\|^2_{\L^2(\R^3)}\d t\nonumber\\&\leq\|\u_{\tau}\|^2_{\H}+C\sup_{t\in[\tau,\tau+T]}\left[e^{-2y(\vartheta_{t}\omega_k)}\right]\int_{\tau}^{\tau+T}\|\f(t)\|^2_{\L^2(\R^3)}\d t,
		\end{align*}
		which gives
		\begin{align}\label{LC14}
			\sup_{k\in\N}\int_{\tau}^{\tau+T}e^{(r-1)y(\vartheta_{t}\omega_k)}\|\u^k(t)\|^{r+1}_{\wi\L^{r+1}}\d t\leq C(\tau,T,\omega_0,\u_{\tau},\f),
		\end{align}
		where we have used \eqref{conv_z2} and the fact $\f\in\mathrm{L}^2_{\text{loc}}(\R;\L^2(\R^3))$. Using \eqref{conv_z2} and $\u^0\in\mathrm{L}^2_{\mathrm{loc}}(\tau,+\infty;\V)$, we deduce
		\begin{align}\label{LC15}
			\int_{\tau}^{\tau+T}P(t)\d t\leq C(\tau,T,\omega_0).
		\end{align}
		Now, from \eqref{LC14}, $\f\in\mathrm{L}^2_{\text{loc}}(\R;\L^2(\R^3))$, $\u^0\in\mathrm{C}([\tau,+\infty);\H)\cap\mathrm{L}^2_{\mathrm{loc}}(\tau,+\infty;\V)\cap\mathrm{L}^{r+1}_{\mathrm{loc}}(\tau,+\infty;\widetilde{\L}^{r+1})$ and Lemma \ref{conv_z}, we conclude
		\begin{align}\label{LC16}
			\lim_{k\to+\infty}\int_{\tau}^{\tau+T}Q(t)\d t=0.
		\end{align}
		Making use of the Gronwall inequality  in \eqref{LC13}, we get
		\begin{align}\label{LC17}
			\|\mathscr{U}^k(t)\|^2_{\H}\leq e^{\int_{\tau}^{\tau+T}P(t)\d t}\left[\int_{\tau}^{\tau+T}Q(t)\d t\right], \ \ \text{ for all } t\in[\tau,\tau+T].
		\end{align}
		In view of \eqref{LC15}-\eqref{LC17}, we complete the proof.
	\end{proof}

	Lemma \ref{Soln} ensures that we can define a mapping $\Phi:\R^+\times\R\times\Omega\times\H\to\H$ by
	\begin{align}\label{Phi}
		\Phi(t,\tau,\omega,\v_{\tau}):=\v(t+\tau,\tau,\vartheta_{-\tau}\omega,\v_{\tau})=e^{y(\vartheta_{t}\omega)}\u(t+\tau,\tau,\vartheta_{-\tau}\omega,\u_{\tau}).
	\end{align}
	The Lusin continuity in Proposition \ref{LusinC} provides the $\mathscr{F}$-measurability of $\Phi$. Consequently, in view of Lemma \ref{Soln} and Proposition \ref{LusinC}, we have the following result for NRDS.
	\begin{proposition}\label{NRDS}
		The mapping $\Phi$ defined by \eqref{Phi} is an  NRDS on $\H$, that is, $\Phi$ has  the following properties:
		\begin{itemize}
			\item [(i)]$\Phi$ is $(\mathscr{B}(\R^+)\times\mathscr{B}(\R)\times\mathscr{F}\times\mathscr{B}(\H);\mathscr{B}(\H))$-measurable,
			\item [(ii)] $\Phi$ satisfies the cocycle property: $\Phi(0,\tau,\omega,\cdot)=\I$, and
			\begin{align*}
				\Phi(t+s,\tau,\omega,\v_{\tau})=\Phi(t,\tau+s,\vartheta_s\omega,\Phi(s,\tau,\omega,\v_{\tau})), \ \ \ t,s\geq0.
			\end{align*}
		\end{itemize}
	\end{proposition}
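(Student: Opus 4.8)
The plan is to verify separately the two defining properties of an NRDS, drawing on the well-posedness in Lemma \ref{Soln} and the Lusin continuity in Proposition \ref{LusinC}.

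First, for the measurability claim (i), I would argue that $\Phi$ is a Carath\'eodory-type map and hence jointly measurable. By Lemma \ref{Soln}, for each fixed $(\tau,\omega)$ the solution depends continuously on the time $t$ and on the initial datum $\v_{\tau}$, so $\Phi$ is continuous in $(t,\v_{\tau})$. By Proposition \ref{LusinC}, the map $\omega\mapsto\u(t,\tau,\omega,\u_{\tau})$ is continuous from $(\Omega_{N},d_{\Omega_{N}})$ to $\H$ for each $N\in\N$, uniformly in $t$; since $\Omega=\bigcup_{N\in\N}\Omega_{N}$ with each $\Omega_{N}$ measurable, this delivers the $\mathscr{F}$-measurability of $\omega\mapsto\u(t,\tau,\omega,\u_{\tau})$ on all of $\Omega$. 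Since the prefactor $e^{y(\vartheta_{t}\omega)}$ is continuous in $t$ and measurable in $\omega$ by the properties of the Ornstein-Uhlenbeck process, and since $\H$ and $\R^{+}\times\R$ are separable, I then invoke the standard fact that a map which is continuous in a subset of its variables and measurable in the remaining variable is jointly measurable, thereby obtaining the required product measurability.

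Second, for the cocycle property (ii), the identity $\Phi(0,\tau,\omega,\cdot)=\I$ is immediate from the initial condition $\v(\tau,\tau,\vartheta_{-\tau}\omega,\v_{\tau})=\v_{\tau}$. For the composition law the crucial ingredient is the flow property of solutions,
$$\v(t,s,\omega,\v(s,\tau,\omega,\v_{\tau}))=\v(t,\tau,\omega,\v_{\tau}),\qquad \tau\leq s\leq t,$$
which follows from the uniqueness in Lemma \ref{Soln}. Using the definition \eqref{Phi} together with the group relation $\vartheta_{-(\tau+s)}\vartheta_{s}=\vartheta_{-\tau}$ of the Wiener shift, I would compute
$$\Phi(t,\tau+s,\vartheta_{s}\omega,\Phi(s,\tau,\omega,\v_{\tau}))=\v\big(t+\tau+s,\tau+s,\vartheta_{-\tau}\omega,\v(s+\tau,\tau,\vartheta_{-\tau}\omega,\v_{\tau})\big),$$
and then apply the flow property with $\omega'=\vartheta_{-\tau}\omega$ and intermediate time $s+\tau$ to collapse the right-hand side to $\v(t+s+\tau,\tau,\vartheta_{-\tau}\omega,\v_{\tau})=\Phi(t+s,\tau,\omega,\v_{\tau})$.

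I expect the measurability (i) to be the main obstacle, since the cocycle property is essentially a bookkeeping consequence of uniqueness and shift-covariance. The delicacy is that $\omega$ ranges over the non-locally-compact space $\Omega$, so one cannot directly exploit continuity in $\omega$; the decomposition $\Omega=\bigcup_{N}\Omega_{N}$ into Polish subspaces, combined with the uniform-in-$t$ Lusin continuity of Proposition \ref{LusinC}, is precisely what allows the measurability in $\omega$ to be recovered and then upgraded to joint measurability in all four arguments.
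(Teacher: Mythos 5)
Your proposal is correct and follows essentially the same route as the paper, which simply asserts that the Lusin continuity of Proposition \ref{LusinC} (on the Polish pieces $\Omega_N$ covering $\Omega$) yields the $\mathscr{F}$-measurability of $\Phi$, and that this together with the well-posedness and uniqueness in Lemma \ref{Soln} gives the NRDS properties. Your write-up merely makes explicit what the paper leaves implicit: the Carath\'eodory-type upgrade to joint measurability using continuity in $(t,\v_{\tau})$ and separability, and the verification of the cocycle identity via the flow property of solutions combined with the group relation $\vartheta_{-(\tau+s)}\vartheta_{s}=\vartheta_{-\tau}$.
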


	\subsection{Backward convergence of NRDS}
	Consider the following autonomous 3D stochastic CBF equations driven by linear multiplicative white noise:
	\begin{equation}\label{A-SCBF}
		\left\{
		\begin{aligned}
			\frac{\d\widetilde{\v}(t)}{\d t}+\mu \A\widetilde{\v}(t)+\B(\widetilde{\v}(t))+\alpha\widetilde{\v}(t) +\beta\mathcal{C}(\widetilde{\v}(t))&=\mathscr{P}\f_{\infty} +\widetilde{\v}(t)\circ\frac{\d \W(t)}{\d t}, \ \ t>0, \\
			\widetilde{\v}(x,0)&=\widetilde{\v}_{0}(x),	\hspace{30mm} x\in \R^3.
		\end{aligned}
		\right.
	\end{equation}
	Let $\widetilde{\u}(t,\omega)=e^{-y(\vartheta_{t}\omega)}\widetilde{\v}(t,\omega)$. Then, $\widetilde{\u}(\cdot)$ satisfies
	\begin{equation}\label{A-CCBF}
		\left\{
		\begin{aligned}
			\frac{\d\widetilde{\u}(t)}{\d t}+\mu \A\widetilde{\u}(t)&+e^{y(\vartheta_{t}\omega)}\B\big(\widetilde{\u}(t)\big) +\alpha\widetilde{\u}(t)+\beta e^{(r-1)y(\vartheta_{t}\omega)}\mathcal{C}\big(\widetilde{\u}(t)\big)\\&=\mathscr{P}\f_{\infty} e^{-y(\vartheta_{t}\omega)} + \sigma y(\vartheta_t\omega)\widetilde{\u}(t) , \quad t> 0,\\
			\widetilde{\u}(x,0)&=\widetilde{\u}_{0}(x)=e^{-y(\omega)}\widetilde{\v}_{0}(x), \hspace{18mm} x\in\R^3,
		\end{aligned}
		\right.
	\end{equation}
	in $\V'+\widetilde{\L}^{\frac{r+1}{r}}$.

	\begin{proposition}\label{Back_conver}
		For both the cases given in Table \ref{Table}, suppose that Assumption \ref{Hypo_f-N} is satisfied. Then, $\lim\limits_{\tau\to-\infty}\|\u_{\tau}-\widetilde{\u}_0\|_{\H}=0$ implies that the solution $\u(\cdot)$ of the system \eqref{CCBF} backward converges to the solution $\widetilde{\u}(\cdot)$ of the system \eqref{A-CCBF}, that is,
		\begin{align}
			\lim_{\tau\to -\infty}\|\u(T+\tau,\tau,\vartheta_{-\tau}\omega,\u_{\tau})-\widetilde{\u}(t,\omega,\widetilde{\u}_0)\|_{\H}=0, \ \ \text{ for all } \ T>0 \text{ and } \omega\in\Omega.
		\end{align}
		
	\end{proposition}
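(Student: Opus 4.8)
The plan is to bring both solutions onto the common interval $[0,T]$ so that they are driven by the \emph{same} realization of the noise, after which only the forcing and the initial data distinguish them. Using the shift identity $\vartheta_s\vartheta_{-\tau}=\vartheta_{s-\tau}$, which gives $y(\vartheta_s(\vartheta_{-\tau}\omega))=y(\vartheta_s\omega)$ when evaluated at internal time $s+\tau$, set $\u^\tau(s):=\u(s+\tau,\tau,\vartheta_{-\tau}\omega,\u_\tau)$ for $s\in[0,T]$. Then $\u^\tau$ solves \eqref{CCBF} on $(0,T)$ with noise coefficients $y(\vartheta_s\omega)$, forcing $e^{-y(\vartheta_s\omega)}\mathscr{P}\f(s+\tau)$ and datum $\u^\tau(0)=\u_\tau$, whereas $\wi{\u}$ solves \eqref{A-CCBF} with the identical noise, forcing $e^{-y(\vartheta_s\omega)}\mathscr{P}\f_\infty$ and datum $\wi{\u}(0)=\wi{\u}_0$. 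It therefore suffices to prove $\|\u^\tau(T)-\wi{\u}(T)\|_\H\to0$ as $\tau\to-\infty$.

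Next I would set $\mathscr{W}:=\u^\tau-\wi{\u}$, subtract the two equations, and test with $\mathscr{W}$ in $\H$. The damping difference $\beta e^{(r-1)y(\vartheta_s\omega)}\langle\mathcal{C}(\u^\tau)-\mathcal{C}(\wi{\u}),\mathscr{W}\rangle$ is nonnegative by \eqref{MO_c} and in fact controls $\tfrac{\beta}{2}e^{(r-1)y(\vartheta_s\omega)}\||\wi{\u}|^{\frac{r-1}{2}}\mathscr{W}\|_\H^2$ from below, which I keep on the left. Using \eqref{441} and \eqref{b0}, the convective difference reduces to $e^{y(\vartheta_s\omega)}b(\mathscr{W},\wi{\u},\mathscr{W})$, and this is estimated exactly as in Cases I and II of Proposition \ref{LusinC}: for $r>3$ by $\tfrac{\mu}{2}\|\nabla\mathscr{W}\|_\H^2+\tfrac{\beta}{2}e^{(r-1)y(\vartheta_s\omega)}\||\wi{\u}|^{\frac{r-1}{2}}\mathscr{W}\|_\H^2+C\|\mathscr{W}\|_\H^2$, and for $r=3$ by $\tfrac{1}{2\beta}\|\nabla\mathscr{W}\|_\H^2+\tfrac{\beta}{2}e^{2y(\vartheta_s\omega)}\||\wi{\u}|\mathscr{W}\|_\H^2$, the gradient part being absorbed into $\mu\|\nabla\mathscr{W}\|_\H^2$ because $2\beta\mu\geq1$. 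In both cases the weighted damping term absorbs the matching quantity and the viscous term absorbs the gradient, leaving only lower-order multiples of $\|\mathscr{W}\|_\H^2$.

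The sole inhomogeneity is the forcing gap $e^{-y(\vartheta_s\omega)}(\mathscr{P}[\f(s+\tau)-\f_\infty],\mathscr{W})$, which Young's inequality bounds by $\tfrac{\alpha}{2}\|\mathscr{W}\|_\H^2+Ce^{-2y(\vartheta_s\omega)}\|\f(s+\tau)-\f_\infty\|^2_{\L^2(\R^3)}$. Collecting everything yields a differential inequality $\frac{\d}{\d s}\|\mathscr{W}\|_\H^2\leq P(s)\|\mathscr{W}\|_\H^2+R(s)$ on $[0,T]$ with $P(s)=C+C|y(\vartheta_s\omega)|$, integrable on $[0,T]$ by continuity of $s\mapsto y(\vartheta_s\omega)$, and $R(s)=Ce^{-2y(\vartheta_s\omega)}\|\f(s+\tau)-\f_\infty\|^2_{\L^2(\R^3)}$. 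Gronwall's inequality then gives $\|\mathscr{W}(T)\|_\H^2\leq e^{\int_0^T P(s)\d s}\big(\|\u_\tau-\wi{\u}_0\|_\H^2+\int_0^T R(s)\d s\big)$. The prefactor is a finite $\omega$-dependent constant independent of $\tau$, the first term tends to $0$ by hypothesis, and, using $\sup_{s\in[0,T]}e^{-2y(\vartheta_s\omega)}<\infty$ and the substitution $\xi=s+\tau$, $\int_0^T R(s)\d s\leq C(\omega,T)\int_{-\infty}^{T+\tau}\|\f(\xi)-\f_\infty\|^2_{\L^2(\R^3)}\d\xi\to0$ as $\tau\to-\infty$ by \eqref{Hyp1}. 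Hence $\|\mathscr{W}(T)\|_\H\to0$, which is the claim.

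The hard part is the convective term at the critical/supercritical exponent, since on $\R^3$ there is no compact Sobolev embedding to fall back on: for $r=3$ the estimate closes only under the structural hypothesis $2\beta\mu\geq1$, and for $r>3$ one must extract the weighted norm $\||\wi{\u}|^{\frac{r-1}{2}}\mathscr{W}\|_\H^2$ from the monotonicity of $\mathcal{C}$ in order to dominate the trilinear form. A minor technical point is to keep all constants $\tau$-independent and $\omega$-measurable, which is guaranteed by the continuity and temperedness of the Ornstein--Uhlenbeck process recorded in \eqref{Z3} and Lemma \ref{conv_z}.
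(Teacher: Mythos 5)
Your proposal is correct and follows essentially the same route as the paper's proof: form the difference $\mathscr{U}^{\tau}$ of the time-shifted solutions, test with $\mathscr{U}^{\tau}$, use the monotonicity \eqref{MO_c} of $\mathcal{C}$ and the estimates \eqref{441}, \eqref{b0} on the trilinear term (split into the cases $r>3$ and $r=3$ with $2\beta\mu\geq 1$), and close with Gronwall plus Assumption \eqref{Hyp1}. The only deviations are cosmetic: you attach the factor $e^{-2y(\vartheta_s\omega)}$ to the forcing term rather than to $\|\mathscr{U}^{\tau}\|_{\H}^2$ in Young's inequality, and you let the convective bound consume the full $\tfrac{\beta}{2}$ of damping instead of the paper's $\tfrac{\beta}{4}$, both of which are harmless.
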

	\begin{proof}
		Let $\mathscr{U}^{\tau}(\cdot):=\u(\cdot+\tau,\tau,\vartheta_{-\tau}\omega,\u_{\tau})-\widetilde{\u}(\cdot,\omega,\widetilde{\u}_0)$. From \eqref{CCBF} and \eqref{A-CCBF}, we obtain
		\begin{align}\label{BC1}
			\frac{\d\mathscr{U}^{\tau}}{\d t}&=-\mu \A\mathscr{U}^{\tau}-\alpha\mathscr{U}^{\tau}-e^{y(\vartheta_{t}\omega)}\left[\B\big(\u\big)-\B\big(\widetilde{\u}\big)\right] -\beta e^{(r-1)y(\vartheta_{t}\omega)}\left[\mathcal{C}\big(\u\big)-\mathcal{C}\big(\widetilde{\u}\big)\right]\nonumber\\&\quad+e^{-y(\vartheta_{t}\omega)}\left[\mathscr{P}\f(t+\tau)-\mathscr{P}\f_{\infty}\right] + \sigma y(\vartheta_t\omega)\mathscr{U}^{\tau},
		\end{align}
		in $\V'+\widetilde{\L}^{\frac{r+1}{r}}$.  Taking the inner product with $\mathscr{U}^{\tau}(\cdot)$ in \eqref{BC1}, and using \eqref{b0} and \eqref{441}, we get
		\begin{align}\label{BC2}
			\frac{1}{2}\frac{\d }{\d t}\|\mathscr{U}^{\tau}\|^2_{\H}&=-\mu\|\nabla\mathscr{U}^{\tau}\|^2_{\H}-\left(\alpha-\sigma y(\vartheta_t\omega)\right)\|\mathscr{U}^{\tau}\|^2_{\H}-\beta e^{(r-1)y(\vartheta_{t}\omega)}\left\langle\mathcal{C}\big(\u\big)-\mathcal{C}\big(\widetilde{\u}\big),\u-\widetilde{\u}\right\rangle\nonumber\\&\quad+e^{y(\vartheta_{t}\omega)}b(\mathscr{U}^{\tau},\mathscr{U}^{\tau},\widetilde{\u})+e^{-y(\vartheta_{t}\omega)}(\f(t+\tau)-\f_{\infty},\mathscr{U}^{\tau}).
		\end{align}
		From \eqref{MO_c}, one can write
		\begin{align}\label{BC3}
			-&\left\langle\mathcal{C}\big(\u\big)-\mathcal{C}\big(\widetilde{\u}\big),\u-\widetilde{\u}\right\rangle \leq-\frac{1}{2}\||\u|^{\frac{r-1}{2}}(\u-\widetilde{\u})\|_{\H}^2-\frac{1}{2}\||\widetilde{\u}|^{\frac{r-1}{2}}(\u-\widetilde{\u})\|_{\H}^2.
		\end{align}
		Applying H\"older's and Young's inequalities, we infer
		\begin{align}\label{BC4}
			\left|e^{-y(\vartheta_{t}\omega)}(\f(t+\tau)-\f_{\infty},\mathscr{U}^{\tau})\right|\leq\|\f(t+\tau)-\f_{\infty}\|^2_{\L^2(\R^3)}+Ce^{-2y(\vartheta_{t}\omega)}\|\mathscr{U}^{\tau}\|^2_{\H},
		\end{align}
		and
		\begin{align}\label{BC5}
			&e^{y(\vartheta_{t}\omega)}b(\mathscr{U}^{\tau},\mathscr{U}^{\tau},\widetilde{\u})
			\leq \begin{cases}
				\frac{\mu}{2}\|\nabla\mathscr{U}^{\tau}\|_{\H}^2+\frac{\beta}{4}e^{(r-1)y(\vartheta_{t}\omega)}\||\mathscr{U}^{\tau}||\widetilde{\u}|^{\frac{r-1}{2}}\|^2_{\H}+C\|\mathscr{U}^{\tau}\|^2_{\H} ,&\text{ for }  r>3,\\
				\frac{1}{2\beta}\|\nabla\mathscr{U}^{\tau}\|_{\H}^2+\frac{\beta}{2}e^{2y(\vartheta_{t}\omega)}\||\mathscr{U}^{\tau}|\widetilde{\u}\|^2_{\H},&\text{ for } r=3.
			\end{cases}
		\end{align}
		Combining \eqref{BC2}-\eqref{BC5}, we achieve
		\begin{align}\label{BC6}
			\frac{\d }{\d t}\|\mathscr{U}^{\tau}(t)\|^2_{\H}\leq
			S(t)\|\mathscr{U}^{\tau}(t)\|^2_{\H}+\|\f(t+\tau)-\f_{\infty}\|^2_{\L^2(\R^3)},
		\end{align}
		for a.e. $t\in[\tau,\tau+T]$, where $S(t)=e^{-2y(\vartheta_{t}\omega)}+\left|y(\vartheta_{t}\omega)\right|+1$.
		 Making use of Gronwall's inequality  in \eqref{BC6} over $(0,T)$, we obtain
		\begin{align*}
			\|\mathscr{U}^{\tau}(T)\|^2_{\H}\leq \left[\|\mathscr{U}^{\tau}(0)\|^2_{\H}+\int_{0}^{T}\|\f(t+\tau)-\f_{\infty}\|^2_{\L^2(\R^3)} \d t\right]e^{\int_{0}^{T}S(t)\d t}.
		\end{align*}
		Since $y$ is continuous, it implies that $\int_{0}^{T}S(t)\d t$ is bounded. From Assumption \ref{Hypo_f-N} (particularly, \eqref{Hyp1}), we deduce
		\begin{align}\label{BC7}
			\int_{0}^{T}\|\f(t+\tau)-\f_{\infty}\|^2_{\L^2(\R^3)} \d t\leq \int_{-\infty}^{\tau+T}\|\f(t)-\f_{\infty}\|^2_{\L^2(\R^3)} \d t\to 0 \ \text{ as } \ \tau\to -\infty.
		\end{align}
		Using the fact that $\int_{0}^{T}S(t)\d t$ is bounded, \eqref{BC7} and $\lim\limits_{\tau\to\infty}\|\mathscr{U}^{\tau}(0)\|^2_{\H}=0$, we conclude the proof.
	\end{proof}

	\subsection{Increasing random absorbing sets}
	In this subsection, we prove the existence of a pullback $\mathfrak{D}$-random absorbing set for the  system \eqref{SCBF} with $S(\v)=\v$.
	\begin{lemma}\label{Absorbing}
		For both the cases given in Table \ref{Table}, suppose that Assumption \ref{Hypo_f-N} is satisfied. Then, for each $(\tau,\omega,D)\in\R\times\Omega\times\mathfrak{D},$ there exists a time $\mathcal{T}:=\mathcal{T}(\tau,\omega,D)>0$ such that
		\begin{align}\label{AB1}
			&\sup_{s\leq \tau}\sup_{t\geq \mathcal{T}}\sup_{\u_{0}\in D(s-t,\vartheta_{-t}\omega)}\bigg[\|\u(s,s-t,\vartheta_{-s}\omega,\u_{0})\|^2_{\H}\nonumber\\&\quad+\frac{\alpha}{2}\int_{s-t}^{s}e^{\alpha(\zeta-s)-2\sigma\int^{\zeta}_{s}y(\vartheta_{\upeta-s}\omega)\d\upeta}\|\u(\zeta,s-t,\vartheta_{-s}\omega,\u_{0})\|^2_{\H}\d\zeta\nonumber\\&\quad+2\mu\int_{s-t}^{s}e^{\alpha(\zeta-s)-2\sigma\int^{\zeta}_{s}y(\vartheta_{\upeta-s}\omega)\d\upeta}\|\nabla\u(\zeta,s-t,\vartheta_{-s}\omega,\u_{0})\|^2_{\H}\d\zeta\nonumber\\&\quad+2\beta\int_{s-t}^{s}e^{(r-1)y(\vartheta_{\zeta-s}\omega)+\alpha(\zeta-s)-2\sigma\int^{\zeta}_{s}y(\vartheta_{\upeta-s}\omega)\d\upeta}\|\u(\zeta,s-t,\vartheta_{-s}\omega,\u_{0})\|^{r+1}_{\wi\L^{r+1}}\d\zeta\bigg]\nonumber\\&\leq \frac{4}{\alpha}\sup_{s\leq \tau}\int_{-\infty}^{0}e^{\alpha\zeta+2|y(\vartheta_{\zeta}\omega)|+2\sigma\int_{\zeta}^{0}y(\vartheta_{\upeta}\omega)\d\upeta}\|\f(\zeta+s)\|^2_{\L^2(\R^3)}\d\zeta=:\frac{4}{\alpha}\sup_{s\leq \tau}K(s,\omega).
		\end{align}
		Furthermore, for $2<k_1<\infty$ and $k_2\geq0$, we infer
		\begin{align}\label{AB11}
			&\lim_{t\to+\infty}\sup_{s\leq \tau}\sup_{\u_{0}\in D(s-t,\vartheta_{-t}\omega)}\int_{s-t}^{s}e^{k_2|y(\vartheta_{\zeta-s}\omega)|+\alpha(\zeta-s)-2\sigma\int^{\zeta}_{s}y(\vartheta_{\upeta-s}\omega)\d\upeta}\|\u(\zeta,s-t,\vartheta_{-s}\omega,\u_{0})\|^{k_1}_{\H}\d\zeta\nonumber\\&\leq C\int\limits_{-\infty}^{0}e^{k_2\left|y(\vartheta_{\zeta}\omega)\right|+\frac{\alpha}{k_1}\zeta-(k_1-2)\sigma\int_{\zeta}^{0}y(\vartheta_{\upeta}\omega)\d\upeta}\d\zeta\nonumber\\&\quad\times\sup_{s\leq \tau}\bigg[\int\limits_{-\infty}^{0}e^{\frac{2(k_1-1)\alpha}{k_1^2}\zeta+2|y(\vartheta_{\zeta}\omega)|+2\sigma\int_{\zeta}^{0}y(\vartheta_{\upeta}\omega)\d\upeta}\|\f(\zeta+s)\|^2_{\L^2(\R^3)}\d\zeta\bigg]^{\frac{k_1}{2}},
		\end{align}
	and 
	\begin{align}\label{AB12}
		&\lim_{t\to+\infty}\sup_{s\leq \tau}\sup_{\u_{0}\in D(s-t,\vartheta_{-t}\omega)}\int_{s-t}^{s}e^{k_2|y(\vartheta_{\zeta-s}\omega)|+\alpha(\zeta-s)-2\sigma\int^{\zeta}_{s}y(\vartheta_{\upeta-s}\omega)\d\upeta}\|\nabla\u(\zeta,s-t,\vartheta_{-s}\omega,\u_{0})\|^{k_1}_{\H}\d\zeta\nonumber\\&\leq C\int\limits_{-\infty}^{0}e^{k_2|y(\vartheta_{\zeta}\omega)|+\frac{\alpha}{k_1}\zeta-(k_1-2)\sigma\int_{\zeta}^{0}y(\vartheta_{\upeta}\omega)\d\upeta}\d\zeta\nonumber\\&\quad\times\sup_{s\leq \tau}\bigg[\int\limits_{-\infty}^{0}e^{\frac{2(k_1-1)\alpha}{k_1^2}\zeta+2|y(\vartheta_{\zeta}\omega)|+2\sigma\int_{\zeta}^{0}y(\vartheta_{\upeta}\omega)\d\upeta}\|\f(\zeta+s)\|^2_{\L^2(\R^3)}\d\zeta\bigg]^{\frac{k_1}{2}}.
	\end{align}
Finally, we obtain for $s\in\R$ and $t>0$
\begin{align}\label{AB13}
	&\int_{s-t}^{s} (\zeta-s+t) e^{(r-1)y(\vartheta_{\zeta-s}\omega)+\alpha(\zeta-s)-2\sigma\int^{\zeta}_{s}y(\vartheta_{\upeta-s}\omega)\d\upeta}\|\u(\zeta,s-t,\vartheta_{-s}\omega,\u_{0})\|^{r+1}_{\wi\L^{3(r+1)}}\d\zeta \nonumber\\&\leq C(t+1)\bigg[e^{-\alpha t+2\sigma\int_{-t}^{0}y(\vartheta_{\upeta}\omega)\d\upeta}\|\u_{0}\|^2_{\H}+ \int_{-\infty}^{0} e^{\alpha\zeta-2\sigma\int^{\zeta}_{0}y(\vartheta_{\upeta}\omega)\d\upeta+2|y(\vartheta_{\zeta}\omega)|}\|\f(\zeta+s)\|^2_{\L^2(\R^3)}\d\zeta\bigg].
\end{align}
	\end{lemma}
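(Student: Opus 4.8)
The plan is to run a two-level energy argument in which the higher-order inequality \eqref{EI4} supplies the coercive $\wi\L^{3(r+1)}$-term sitting on the left of \eqref{AB13}, while the lower-order inequality \eqref{EI1} controls the dissipation integral that surfaces after integration; the whole estimate is forced to start from data lying in $\H$ only (not in $\V$) by inserting a \emph{time-weight} that vanishes at the initial instant $s-t$. Throughout write $\u(\zeta)=\u(\zeta,s-t,\vartheta_{-s}\omega,\u_0)$, so that the Ornstein--Uhlenbeck process entering the equation is $y(\vartheta_{\zeta-s}\omega)$ and the forcing is $\f(\zeta)$, and introduce
\[
\varrho(\zeta):=e^{\alpha(\zeta-s)-2\sigma\int_{s}^{\zeta}y(\vartheta_{\upeta-s}\omega)\d\upeta},\qquad w(\zeta):=\zeta-s+t,
\]
for which $\varrho'(\zeta)=(\alpha-2\sigma y(\vartheta_{\zeta-s}\omega))\varrho(\zeta)$, $w(s-t)=0$, $w'\equiv1$ and $0\le w\le t$ on $[s-t,s]$.

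First I would multiply \eqref{EI4} by $\varrho(\zeta)$, rewriting its left-hand side as $\frac{\d}{\d\zeta}(\varrho\|\nabla\u\|^2_{\H})+C^*\varrho e^{(r-1)y(\vartheta_{\zeta-s}\omega)}\|\u\|^{r+1}_{\wi\L^{3(r+1)}}$, then multiply by $w(\zeta)$ and use $\frac{\d}{\d\zeta}(w\varrho\|\nabla\u\|^2_{\H})=\varrho\|\nabla\u\|^2_{\H}+w\frac{\d}{\d\zeta}(\varrho\|\nabla\u\|^2_{\H})$. Integrating over $[s-t,s]$, the boundary term at $\zeta=s-t$ vanishes thanks to $w(s-t)=0$ --- this is the decisive point, because $\|\nabla\u(s-t)\|_{\H}$ is not available for $\u_0\in\H$ --- whereas the boundary term at $\zeta=s$ equals $-t\|\nabla\u(s)\|^2_{\H}\le0$ and is discarded. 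Bounding $w\le t$ in the surviving integrals then yields
\[
C^*\int_{s-t}^{s}w\varrho e^{(r-1)y(\vartheta_{\zeta-s}\omega)}\|\u\|^{r+1}_{\wi\L^{3(r+1)}}\d\zeta\le(1+Ct)\int_{s-t}^{s}\varrho\|\nabla\u\|^2_{\H}\d\zeta+Ct\int_{s-t}^{s}\varrho e^{2|y(\vartheta_{\zeta-s}\omega)|}\|\f(\zeta)\|^2_{\L^2(\R^3)}\d\zeta.
\]

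It then remains to dominate the dissipation integral $\int_{s-t}^{s}\varrho\|\nabla\u\|^2_{\H}\d\zeta$, which I would obtain from \eqref{EI1}: multiplying that inequality by $\varrho$, integrating over $[s-t,s]$, and discarding the nonnegative terms $\varrho(s)\|\u(s)\|^2_{\H}$, $\frac{\alpha}{2}\int\varrho\|\u\|^2_{\H}$ and $2\beta\int\varrho e^{(r-1)y}\|\u\|^{r+1}_{\wi\L^{r+1}}$ gives $2\mu\int_{s-t}^{s}\varrho\|\nabla\u\|^2_{\H}\d\zeta\le\varrho(s-t)\|\u_0\|^2_{\H}+\frac{2}{\alpha}\int_{s-t}^{s}\varrho e^{2|y|}\|\f(\zeta)\|^2_{\L^2(\R^3)}\d\zeta$, where $\varrho(s-t)=e^{-\alpha t+2\sigma\int_{-t}^{0}y(\vartheta_{\upeta}\omega)\d\upeta}$ and $\|\u(s-t)\|_{\H}=\|\u_0\|_{\H}$. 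Finally the substitution $\zeta\mapsto\zeta-s$ converts the forcing integral into $\int_{-t}^{0}e^{\alpha\zeta-2\sigma\int_{0}^{\zeta}y(\vartheta_{\upeta}\omega)\d\upeta+2|y(\vartheta_{\zeta}\omega)|}\|\f(\zeta+s)\|^2_{\L^2(\R^3)}\d\zeta$, which is bounded by the integral over $(-\infty,0]$ on the right of \eqref{AB13}. Inserting these bounds into the displayed time-weighted inequality, dividing by $C^*$ and collecting the common factor $(t+1)$ produces \eqref{AB13}.

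The only genuine obstacle is the absence of $\V$-regularity of the initial datum $\u_0$, which is exactly what the vanishing weight $w$ circumvents; beyond that the argument is essentially bookkeeping, the one nontrivial ingredient being the higher-order inequality \eqref{EI4} itself, which rests on the identity \eqref{CuAu} for $(\mathcal{C}(\u),\A\u)$ and the Gagliardo--Nirenberg--Sobolev bound \eqref{3(r+1)}, and hence on the regularity of the solution.
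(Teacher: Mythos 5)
Your proposal establishes only one of the four assertions of the lemma. The statement contains the backward absorbing estimate \eqref{AB1} (which is where the entrance time $\mathcal{T}(\tau,\omega,D)$ must be produced), the two $k_1$-power integral estimates \eqref{AB11} and \eqref{AB12}, and finally the time-weighted estimate \eqref{AB13}; your argument addresses \eqref{AB13} alone and says nothing about the other three. These do not follow as "bookkeeping" from what you wrote. For \eqref{AB1} one applies the variation-of-constants formula to \eqref{EI1} (giving the paper's \eqref{AB3}--\eqref{AB4}) and then uses \eqref{Z3} together with the backward temperedness \eqref{BackTem} of $D$ to choose $\mathcal{T}$ so large that the initial-data term $e^{-\alpha t+2\sigma\int_{-t}^{0}y(\vartheta_{\upeta}\omega)\d\upeta}\sup_{s\le\tau}\|\u_0\|^2_{\H}$ is dominated by the forcing term; this temperedness step is the whole point of the "there exists $\mathcal{T}$" clause and is absent from your proposal. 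For \eqref{AB11} one raises \eqref{AB3} to the power $k_1/2$ and integrates. The serious omission is \eqref{AB12}: it requires an estimate of $\int e^{k_2|y|}\varrho\,\|\nabla\u(\zeta)\|^{k_1}_{\H}\d\zeta$ for \emph{arbitrary} $k_1>2$, and an integral bound such as your displayed inequality (which controls only one specific power, $r+1$, of the $\wi\L^{3(r+1)}$-norm) cannot be raised to a power. The paper instead derives a \emph{pointwise-in-time} bound: it integrates the gradient inequality \eqref{EI4} from an arbitrary intermediate instant $\xi_1$ (see \eqref{AB13V}), then averages over $\xi_1\in(s-t,\xi)$ to trade the missing $\V$-regularity of $\u_0$ for the factor $\{\tfrac{1}{\xi-s+t}+1\}$ in \eqref{AB14V}; only such a pointwise bound can be raised to the power $k_1/2$ and integrated to give \eqref{AB15V} and hence \eqref{AB12}. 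Your vanishing-weight device and this averaging device are cousins, but they are not interchangeable, and \eqref{AB12} is precisely the estimate later needed (via $\|\u\|_{\V}^{r+1}$, $\|\u\|_{\V}^{2(r+1)}$ terms) in the flattening Lemma \ref{Flattening}.

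For the part you do prove, the argument is correct and coincides with the paper's own proof of \eqref{AB13}: the paper multiplies \eqref{EI4} by the same exponential weight and by $(\zeta-s+t)$, uses the identity \eqref{AB17V} (your product-rule step) so that the boundary term at $\zeta=s-t$ vanishes and the one at $\zeta=s$ has a favorable sign, bounds the weight by $t$, and then controls the resulting dissipation integral $\int_{s-t}^{s}e^{\alpha(\zeta-s)-2\sigma\int_{s}^{\zeta}y(\vartheta_{\upeta-s}\omega)\d\upeta}\|\nabla\u\|^2_{\H}\d\zeta$ by the zeroth-order energy identity \eqref{AB3} exactly as you do with \eqref{EI1}. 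So the missing idea is not in \eqref{AB13} but in the three estimates you skipped, above all the averaging (uniform-Gronwall) argument behind \eqref{AB12}.
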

	\begin{proof}
		Let us write the energy inequality \eqref{EI1} for $\u(\zeta)=\u(\zeta,s-t,\vartheta_{-s}\omega,\u_{0})$, that is,
		\begin{align}\label{AB0}
		&\frac{\d}{\d\zeta} \|\u(\zeta)\|^2_{\H}+ \left(\alpha-2\sigma y(\vartheta_{\zeta-s}\omega)\right)\|\u(\zeta)\|^2_{\H}+\frac{\alpha}{2}\|\u(\zeta)\|^2_{\H}+2\mu\|\nabla\u(\zeta)\|^2_{\H}+ 2\beta e^{(r-1)y(\vartheta_{\zeta-s}\omega)} \|\u(\zeta)\|^{r+1}_{\wi \L^{r+1}} \nonumber\\&\leq \frac{2e^{2|y(\vartheta_{\zeta-s}\omega)|}}{\alpha}\|\f(\zeta)\|^2_{\L^2(\R^3)}.
		\end{align}
		In view of the variation of constants formula with respect to $\zeta\in(s-t,\xi)$, we obtain
		\begin{align}\label{AB3}
			&\|\u(\xi,s-t,\vartheta_{-s}\omega,\u_{0})\|^2_{\H}+\frac{\alpha}{2}\int_{s-t}^{\xi}e^{\alpha(\zeta-\xi)-2\sigma\int^{\zeta}_{\xi}y(\vartheta_{\upeta-s}\omega)\d\upeta}\|\u(\zeta,s-t,\vartheta_{-s}\omega,\u_{0})\|^2_{\H}\d\zeta\nonumber\\&+2\mu\int_{s-t}^{\xi}e^{\alpha(\zeta-\xi)-2\sigma\int^{\zeta}_{\xi}y(\vartheta_{\upeta-s}\omega)\d\upeta}\|\nabla\u(\zeta,s-t,\vartheta_{-s}\omega,\u_{0})\|^2_{\H}\d\zeta\nonumber\\&+2\beta\int_{s-t}^{\xi}e^{(r-1)y(\vartheta_{\zeta-s}\omega)+\alpha(\zeta-\xi)-2\sigma\int^{\zeta}_{\xi}y(\vartheta_{\upeta-s}\omega)\d\upeta}\|\u(\zeta,s-t,\vartheta_{-s}\omega,\u_{0})\|^{r+1}_{\wi\L^{r+1}}\d\zeta\nonumber\\&\leq e^{-\alpha(\xi-s+t)+2\sigma\int_{-t}^{\xi-s}y(\vartheta_{\upeta}\omega)\d\upeta}\|\u_{0}\|^2_{\H} + \frac{2}{\alpha}\int_{-t}^{\xi-s}e^{\alpha(\zeta+s-\xi)+2|y(\vartheta_{\zeta}\omega)|+2\sigma\int_{\zeta}^{\xi-s}y(\vartheta_{\upeta}\omega)\d\upeta}\|\f(\zeta+s)\|^2_{\L^2(\R^3)}\d\zeta.
		\end{align}
		Putting $\xi=s$ in \eqref{AB3}, we find
		\begin{align}\label{AB4}
			&\|\u(s,s-t,\vartheta_{-s}\omega,\u_{0})\|^2_{\H}+\frac{\alpha}{2}\int_{s-t}^{s}e^{\alpha(\zeta-s)-2\sigma\int^{\zeta}_{s}y(\vartheta_{\upeta-s}\omega)\d\upeta}\|\u(\zeta,s-t,\vartheta_{-s}\omega,\u_{0})\|^2_{\H}\d\zeta\nonumber\\&+2\mu\int_{s-t}^{s}e^{\alpha(\zeta-s)-2\sigma\int^{\zeta}_{s}y(\vartheta_{\upeta-s}\omega)\d\upeta}\|\nabla\u(\zeta,s-t,\vartheta_{-s}\omega,\u_{0})\|^2_{\H}\d\zeta\nonumber\\&+2\beta\int_{s-t}^{s}e^{(r-1)y(\vartheta_{\zeta-s}\omega)+\alpha(\zeta-s)-2\sigma\int^{\zeta}_{s}y(\vartheta_{\upeta-s}\omega)\d\upeta}\|\u(\zeta,s-t,\vartheta_{-s}\omega,\u_{0})\|^{r+1}_{\wi\L^{r+1}}\d\zeta\nonumber\\&\leq e^{-\alpha t+2\sigma\int_{-t}^{0}y(\vartheta_{\upeta}\omega)\d\upeta}\|\u_{0}\|^2_{\H} +\frac{2}{\alpha}\int_{-\infty}^{0}e^{\alpha\zeta+2|y(\vartheta_{\zeta}\omega)|+2\sigma\int_{\zeta}^{0}y(\vartheta_{\upeta}\omega)\d\upeta}\|\f(\zeta+s)\|^2_{\L^2(\R^3)}\d\zeta,
		\end{align}
		for all $s\leq\tau$. Since $\u_0\in D(s-t,\vartheta_{-t}\omega)$ and $D$ is backward tempered, it implies from \eqref{Z3} and the definition of backward temperedness \eqref{BackTem} that there exists a time $\mathcal{T}=\mathcal{T}(\tau,\omega,D)$ such that for all $t\geq \mathcal{T}$,
		\begin{align}\label{v_0}
			&e^{-\alpha t+2\sigma\int_{-t}^{0}y(\vartheta_{\upeta}\omega)\d\upeta}\sup_{s\leq \tau}\|\u_{0}\|^2_{\H}\nonumber\\&\leq e^{-\frac{\alpha}{3}t}\sup_{s\leq \tau}\|D(s-t,\vartheta_{-t}\omega)\|^2_{\H}\leq\frac{2}{\alpha}\int_{-\infty}^{0}e^{\alpha\zeta+2|y(\vartheta_{\zeta}\omega)|+2\sigma\int_{\zeta}^{0}y(\vartheta_{\upeta}\omega)\d\upeta}\|\f(\zeta+s)\|^2_{\L^2(\R^3)}\d\zeta.
		\end{align}
		Hence, by using \eqref{v_0} and taking supremum on $s\in(-\infty,\tau]$ in \eqref{AB4},  we reach at \eqref{AB1}. Now, using \eqref{AB3}, we estimate for $2<k_1<\infty$ and $k_2\geq0$
		\begin{align}\label{AB-5}
			&\int_{s-t}^{s}e^{k_2|y(\vartheta_{\zeta-s}\omega)|+\alpha(\zeta-s)-2\sigma\int^{\zeta}_{s}y(\vartheta_{\upeta-s}\omega)\d\upeta}\|\u(\zeta,s-t,\vartheta_{-s}\omega,\u_{0})\|^{k_1}_{\H}\d\zeta\nonumber\\&\leq C\int_{s-t}^{s}e^{k_2|y(\vartheta_{\zeta-s}\omega)|+\alpha(\zeta-s)+2\sigma\int_{\zeta-s}^{0}y(\vartheta_{\upeta}\omega)\d\upeta}\bigg[e^{-\frac{k_1}{2}\alpha(\zeta-s+t)+k_1\sigma\int_{-t}^{\zeta-s}y(\vartheta_{\upeta}\omega)\d\upeta}\|\u_{0}\|^{k_1}_{\H} \nonumber\\&\quad+ \bigg(\int\limits_{-t}^{\zeta-s}e^{\alpha(\zeta_1+s-\zeta)+2|y(\vartheta_{\zeta_1}\omega)|+2\sigma\int_{\zeta_1}^{\zeta-s}y(\vartheta_{\upeta}\omega)\d\upeta}\|\f(\zeta_1+s)\|^2_{\L^2(\R^3)}\d\zeta_1\bigg)^\frac{k_1}{2}\bigg]\d\zeta\nonumber\\&\leq C\int_{-\infty}^{0}e^{k_2\left|y(\vartheta_{\zeta}\omega)\right|+\frac{\alpha}{k_1}\zeta-(k_1-2)\sigma\int_{\zeta}^{0}y(\vartheta_{\upeta}\omega)\d\upeta}\d\zeta\times\bigg[ e^{-\frac{(k_1-1)\alpha}{k_1}t+k_1\sigma\int_{-t}^{0}y(\vartheta_{\upeta}\omega)\d\upeta}\|\u_{0}\|^{k_1}_{\H}\nonumber\\&\quad+ \bigg(\int_{-\infty}^{0}e^{\frac{2(k_1-1)\alpha}{k_1^2}\zeta_1+2|y(\vartheta_{\zeta_1}\omega)|+2\sigma\int_{\zeta_1}^{0}y(\vartheta_{\upeta}\omega)\d\upeta}\|\f(\zeta_1+s)\|^2_{\L^2(\R^3)}\d\zeta_1\bigg)^{\frac{k_1}{2}}\bigg].
		\end{align}
		Hence, using \eqref{Z3} and the backward-uniform temperedness property \eqref{BackTem} of $\u_0$ (see \eqref{v_0}), we obtain \eqref{AB11}.
	
		From \eqref{EI4} for $\zeta>s-t$ and $\u(\zeta):=\u(\zeta,s-t,\vartheta_{-s}\omega,\u_{0})$, we have
      	\begin{align}\label{AB12V}
      	&\frac{\d}{\d\zeta} \|\nabla\u(\zeta)\|^2_{\H}+ \left(\alpha-2\sigma y(\vartheta_{\zeta-s}\omega)\right)\|\nabla\u(\zeta)\|^2_{\H} \leq C\|\nabla\u(\zeta)\|^2_{\H}+ C e^{2|y(\vartheta_{\zeta-s}\omega)|}\|\f(\zeta)\|^2_{\L^2(\R^3)}.
      \end{align}
	In view of the variation of constants formula with respect to $\zeta\in(\xi_1,\xi)$ with $s-t<\xi_1\leq\xi\leq s$, we find
  \begin{align}\label{AB13V}
  	& \|\nabla\u(\xi,s-t,\vartheta_{-s}\omega,\u_{0})\|^2_{\H}\nonumber\\&\leq e^{\alpha(\xi_1-\xi)-2\sigma\int^{\xi_1}_{\xi}y(\vartheta_{\upeta-s}\omega)\d\upeta} \|\nabla\u(\xi_1,s-t,\vartheta_{-s}\omega,\u_{0})\|^2_{\H}\nonumber\\&\quad+C\int_{\xi_1}^{\xi}\bigg[e^{\alpha(\zeta-\xi)-2\sigma\int^{\zeta}_{\xi}y(\vartheta_{\upeta-s}\omega)\d\upeta}\|\nabla\u(\zeta,s-t,\vartheta_{-s}\omega,\u_{0})\|^2_{\H}\nonumber\\&\quad+e^{\alpha(\zeta-\xi)-2\sigma\int^{\zeta}_{\xi}y(\vartheta_{\upeta-s}\omega)\d\upeta+2|y(\vartheta_{\zeta-s}\omega)|}\|\f(\zeta)\|^2_{\L^2(\R^3)}\bigg]\d\zeta \nonumber\\&\leq e^{\alpha(\xi_1-\xi)-2\sigma\int^{\xi_1}_{\xi}y(\vartheta_{\upeta-s}\omega)\d\upeta} \|\nabla\u(\xi_1,s-t,\vartheta_{-s}\omega,\u_{0})\|^2_{\H}\nonumber\\&\quad+C\int_{s-t}^{\xi}\bigg[e^{\alpha(\zeta-\xi)-2\sigma\int^{\zeta}_{\xi}y(\vartheta_{\upeta-s}\omega)\d\upeta}\|\nabla\u(\zeta,s-t,\vartheta_{-s}\omega,\u_{0})\|^2_{\H}\nonumber\\&\quad+e^{\alpha(\zeta-\xi)-2\sigma\int^{\zeta}_{\xi}y(\vartheta_{\upeta-s}\omega)\d\upeta+2|y(\vartheta_{\zeta-s}\omega)|}\|\f(\zeta)\|^2_{\L^2(\R^3)}\bigg]\d\zeta
  	\nonumber\\&\leq e^{\alpha(\xi_1-\xi)-2\sigma\int^{\xi_1}_{\xi}y(\vartheta_{\upeta-s}\omega)\d\upeta} \|\nabla\u(\xi_1,s-t,\vartheta_{-s}\omega,\u_{0})\|^2_{\H}+Ce^{-\alpha(\xi-s+t)+2\sigma\int_{s-t}^{\xi}y(\vartheta_{\upeta-s}\omega)\d\upeta}\|\u_{0}\|^2_{\H}\nonumber\\&\quad+C\int_{s-t}^{\xi}e^{\alpha(\zeta-\xi)-2\sigma\int^{\zeta}_{\xi}y(\vartheta_{\upeta-s}\omega)\d\upeta+2|y(\vartheta_{\zeta-s}\omega)|}\|\f(\zeta)\|^2_{\L^2(\R^3)}\d\zeta.
 \end{align}
Integrating \eqref{AB13V} from $s-t$ to $\xi$ with respect to $\xi_1$ and using \eqref{AB3}, we obtain
\begin{align}\label{AB14V}
	 \|\nabla\u(\xi,s-t,\vartheta_{-s}\omega,\u_{0})\|^2_{\H}&\leq C\bigg\{\frac{1}{\xi-s+t}+1\bigg\}\bigg[e^{-\alpha(\xi-s+t)+2\sigma\int_{s-t}^{\xi}y(\vartheta_{\upeta-s}\omega)\d\upeta}\|\u_{0}\|^2_{\H}\nonumber\\&\quad+\int_{s-t}^{\xi}e^{\alpha(\zeta-\xi)-2\sigma\int^{\zeta}_{\xi}y(\vartheta_{\upeta-s}\omega)\d\upeta+2|y(\vartheta_{\zeta-s}\omega)|}\|\f(\zeta)\|^2_{\L^2(\R^3)}\d\zeta\bigg].
\end{align}
  
  Now from \eqref{AB14V}, we estimate for $2<k_1<\infty$ 
  \begin{align}\label{AB15V}
  	&\int_{s-t}^{s}e^{k_2|y(\vartheta_{\xi-s}\omega)|+\alpha(\xi-s)-2\sigma\int^{\xi}_{s}y(\vartheta_{\upeta-s}\omega)\d\upeta}\|\nabla\u(\xi,s-t,\vartheta_{-s}\omega,\u_{0})\|^{k_1}_{\H}\d\xi\nonumber\\&\leq C\int_{s-t}^{s}e^{k_2|y(\vartheta_{\xi-s}\omega)|+\alpha(\xi-s)+2\sigma\int_{\xi-s}^{0}y(\vartheta_{\upeta}\omega)\d\upeta}\bigg\{\frac{1}{\xi-s+t}+1\bigg\}^{\frac{k_1}{2}}\bigg[e^{-\frac{k_1}{2}\alpha(\xi-s+t)+k_1\sigma\int_{-t}^{\xi-s}y(\vartheta_{\upeta}\omega)\d\upeta}\|\u_{0}\|^{k_1}_{\H} \nonumber\\&\quad+ \bigg(\int\limits_{-t}^{\xi-s}e^{\alpha(\zeta+s-\xi)+2|y(\vartheta_{\zeta}\omega)|+2\sigma\int_{\zeta}^{\xi-s}y(\vartheta_{\upeta}\omega)\d\upeta}\|\f(\zeta+s)\|^2_{\L^2(\R^3)}\d\zeta\bigg)^\frac{k_1}{2}\bigg]\d\xi\nonumber\\&\leq C\int_{-t}^{0}e^{k_2|y(\vartheta_{\xi}\omega)|+\frac{\alpha}{k_1}\xi-(k_1-2)\sigma\int_{\xi}^{0}y(\vartheta_{\upeta}\omega)\d\upeta}\bigg\{\frac{1}{\xi+t}+1\bigg\}^{\frac{k_1}{2}}\d\xi\times\bigg[ e^{-\frac{(k_1-1)\alpha}{k_1}t+k_1\sigma\int_{-t}^{0}y(\vartheta_{\upeta}\omega)\d\upeta}\|\u_{0}\|^{k_1}_{\H}\nonumber\\&\quad+ \bigg(\int_{-\infty}^{0}e^{\frac{2(k_1-1)\alpha}{k_1^2}\zeta+2|y(\vartheta_{\zeta}\omega)|+2\sigma\int_{\zeta}^{0}y(\vartheta_{\upeta}\omega)\d\upeta}\|\f(\zeta+s)\|^2_{\L^2(\R^3)}\d\zeta\bigg)^{\frac{k_1}{2}}\bigg].
  \end{align}
  Since $$\int_{-t}^{0}e^{c\xi}\bigg[\frac{1}{\xi+t}+1\bigg]^{\frac{k_1}{2}}\d\xi\to\frac{1}{c}\ \text{ as }\ t\to\infty,$$ for all $c>0$, using \eqref{Z3} and the backward-uniform temperedness property \eqref{BackTem} of $\u_0$ (see \eqref{v_0}), we deduce \eqref{AB12}, as required.
  
  Again from \eqref{EI4}, we write for $\zeta>s-t$
  	\begin{align*}
  	&\frac{\d}{\d\zeta} \|\nabla\u(\zeta)\|^2_{\H}+ \left(\alpha-2\sigma y(\vartheta_{\zeta-s}\omega)\right)\|\nabla\u(\zeta)\|^2_{\H}+C^*e^{(r-1)y(\vartheta_{\zeta-s}\omega)}\|\u(\zeta)\|^{r+1}_{\wi\L^{3(r+1)}} \nonumber\\&\leq C\|\nabla\u(\zeta)\|^2_{\H}+ C e^{2|y(\vartheta_{\zeta-s}\omega)|}\|\f(\zeta)\|^2_{\L^2(\R^3)},
  \end{align*}
  which implies that
 	\begin{align}\label{AB16V}
  	&(\zeta-s+t)\frac{\d}{\d\zeta} \bigg[e^{\alpha(\zeta-s)-2\sigma\int^{\zeta}_{s}y(\vartheta_{\upeta-s}\omega)\d\upeta}\|\nabla\u(\zeta)\|^2_{\H}\bigg]  \nonumber\\&+C^* (\zeta-s+t) e^{(r-1)y(\vartheta_{\zeta-s}\omega)+\alpha(\zeta-s)-2\sigma\int^{\zeta}_{s}y(\vartheta_{\upeta-s}\omega)\d\upeta}\|\u(\zeta)\|^{r+1}_{\wi\L^{3(r+1)}} \nonumber\\&\leq C(\zeta-s+t)\bigg[e^{\alpha(\zeta-s)-2\sigma\int^{\zeta}_{s}y(\vartheta_{\upeta-s}\omega)\d\upeta}\|\nabla\u(\zeta)\|^2_{\H}\nonumber\\&\quad+ e^{\alpha(\zeta-s)-2\sigma\int^{\zeta}_{s}y(\vartheta_{\upeta-s}\omega)\d\upeta+2|y(\vartheta_{\zeta-s}\omega)|}\|\f(\zeta)\|^2_{\L^2(\R^3)}\bigg].
  \end{align}
  We know that
  \begin{align}\label{AB17V}
  	&(\zeta-s+t)\frac{\d}{\d\zeta}\bigg[e^{\alpha(\zeta-s)-2\sigma\int^{\zeta}_{s}y(\vartheta_{\upeta-s}\omega)\d\upeta} \|\nabla\u(\zeta)\|^2_{\H}\bigg]\nonumber\\&=\frac{\d}{\d\zeta}\bigg[(\zeta-s+t) e^{\alpha(\zeta-s)-2\sigma\int^{\zeta}_{s}y(\vartheta_{\upeta-s}\omega)\d\upeta} \|\nabla\u(\zeta)\|^2_{\H}\bigg]- e^{\alpha(\zeta-s)-2\sigma\int^{\zeta}_{s}y(\vartheta_{\upeta-s}\omega)\d\upeta}\|\nabla\u(\zeta)\|^2_{\H}.
  \end{align}
  From \eqref{AB16V} and \eqref{AB17V}, we infer
  \begin{align}\label{AB18V}
  	&\int_{s-t}^{s} (\zeta-s+t) e^{(r-1)y(\vartheta_{\zeta-s}\omega)+\alpha(\zeta-s)-2\sigma\int^{\zeta}_{s}y(\vartheta_{\upeta-s}\omega)\d\upeta}\|\u(\zeta,s-t,\vartheta_{-s}\omega,\u_{0})\|^{r+1}_{\wi\L^{3(r+1)}}\d\zeta\nonumber\\&\leq C\int_{s-t}^{s}(\zeta-s+t+1)\bigg[e^{\alpha(\zeta-s)-2\sigma\int^{\zeta}_{s}y(\vartheta_{\upeta-s}\omega)\d\upeta}\|\nabla\u(\zeta)\|^2_{\H}\nonumber\\&\quad+ e^{\alpha(\zeta-s)-2\sigma\int^{\zeta}_{s}y(\vartheta_{\upeta-s}\omega)\d\upeta+2|y(\vartheta_{\zeta-s}\omega)|}\|\f(\zeta)\|^2_{\L^2(\R^3)}\bigg]\d\zeta\nonumber\\&\leq C(t+1)\int_{s-t}^{s}\bigg[e^{\alpha(\zeta-s)-2\sigma\int^{\zeta}_{s}y(\vartheta_{\upeta-s}\omega)\d\upeta}\|\nabla\u(\zeta)\|^2_{\H}\nonumber\\&\quad+ e^{\alpha(\zeta-s)-2\sigma\int^{\zeta}_{s}y(\vartheta_{\upeta-s}\omega)\d\upeta+2|y(\vartheta_{\zeta-s}\omega)|}\|\f(\zeta)\|^2_{\L^2(\R^3)}\bigg]\d\zeta \nonumber\\&\leq C(t+1)\bigg[e^{-\alpha t+2\sigma\int_{-t}^{0}y(\vartheta_{\upeta}\omega)\d\upeta}\|\u_{0}\|^2_{\H}\nonumber\\&\quad+ \int_{-\infty}^{0} e^{\alpha\zeta-2\sigma\int^{\zeta}_{0}y(\vartheta_{\upeta}\omega)\d\upeta+2|y(\vartheta_{\zeta}\omega)|}\|\f(\zeta+s)\|^2_{\L^2(\R^3)}\d\zeta\bigg],
  \end{align}
where we have used \eqref{AB3} also. This completes the proof.
	\end{proof}

	\begin{proposition}\label{IRAS}
		For both the cases given in Table \ref{Table}, suppose that Assumption \ref{Hypo_f-N} is satisfied. For $K(\tau,\omega)$ same as in \eqref{AB1}, we have
		\vskip 2mm
		\noindent
		\emph{(i)} There is an increasing pullback $\mathfrak{D}$-random absorbing set $\mathcal{K}$ given by
		\begin{align}\label{IRAS1}
			\mathcal{K}(\tau,\omega):=\left\{\v\in\H:\|\v\|^2_{\H}\leq \frac{4e^{2y(\omega)}}{\alpha}\sup_{s\leq \tau}K(s,\omega)\right\}, \ \text{ for all } \ \tau\in\R \text{ and }
			\omega\in\Omega.
		\end{align}
		Moreover, $\mathcal{K}$ is backward-uniformly tempered with arbitrary rate, that is, $\mathcal{K}\in{\mathfrak{D}}$.
		\vskip 2mm
		\noindent
		\emph{(ii)} There is a $\mathfrak{B}$-pullback \textbf{random} absorbing set $\widetilde{\mathcal{K}}$ given by
		
		\begin{align}\label{IRAS11}
			\widetilde{\mathcal{K}}(\tau,\omega):=\left\{\v\in\H:\|\v\|^2_{\H}\leq \frac{4e^{2y(\omega)}}{\alpha}K(\tau,\omega)\right\}\in\mathfrak{B}, \ \text{ for all } \ \tau\in\R \text{ and }
			\omega\in\Omega.
		\end{align}
	\end{proposition}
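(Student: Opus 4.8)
The plan is to push the a priori bounds of Lemma~\ref{Absorbing}, proved for the conjugated process $\u$, back to the cocycle $\Phi$ via \eqref{Phi}. Using the group law $\vartheta_t\vartheta_{-t}\omega=\omega$ and $\vartheta_{-(\tau-t)}\vartheta_{-t}\omega=\vartheta_{-\tau}\omega$, one computes
\begin{align*}
\Phi(t,\tau-t,\vartheta_{-t}\omega,\v_0)=e^{y(\omega)}\u(\tau,\tau-t,\vartheta_{-\tau}\omega,\u_0),\qquad \u_0=e^{-y(\vartheta_{-t}\omega)}\v_0,
\end{align*}
so that $\|\Phi(t,\tau-t,\vartheta_{-t}\omega,\v_0)\|^2_{\H}=e^{2y(\omega)}\|\u(\tau,\tau-t,\vartheta_{-\tau}\omega,\u_0)\|^2_{\H}$; since $e^{-y}$ is tempered, as $\v_0$ ranges over $\mathcal{D}(\tau-t,\vartheta_{-t}\omega)$ with $\mathcal{D}\in\mathfrak{D}$ the point $\u_0$ ranges over a set of the same universe, and the estimates of Lemma~\ref{Absorbing} apply. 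For (i) I would invoke the backward-uniform bound \eqref{AB1} at $s=\tau$: for $t\ge\mathcal{T}(\tau,\omega,\mathcal{D})$ it gives $\|\u(\tau,\tau-t,\vartheta_{-\tau}\omega,\u_0)\|^2_{\H}\le\frac{4}{\alpha}\sup_{s\le\tau}K(s,\omega)$, hence $\Phi(t,\tau-t,\vartheta_{-t}\omega,\mathcal{D}(\tau-t,\vartheta_{-t}\omega))\subseteq\mathcal{K}(\tau,\omega)$, which is the pullback $\mathfrak{D}$-absorption of the set \eqref{IRAS1}. For (ii) I would instead use the pointwise estimate \eqref{AB4} at $s=\tau$ and discard its initial term $e^{-\alpha t+2\sigma\int_{-t}^{0}y(\vartheta_\upeta\omega)\d\upeta}\|\u_0\|^2_{\H}$, which for $\mathcal{B}\in\mathfrak{B}$ tends to $0$ as $t\to\infty$ exactly as in \eqref{v_0}, so that $\|\Phi\|^2_{\H}\le\frac{4e^{2y(\omega)}}{\alpha}K(\tau,\omega)$ for large $t$, giving absorption into \eqref{IRAS11}.

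Next I would record the structural facts. Finiteness of the radii (so that $\mathcal{K}$ and $\widetilde{\mathcal{K}}$ are well defined) follows because the sublinear growth of $y$ and of $\xi\mapsto\int_0^\xi y(\vartheta_\upeta\omega)\d\upeta$ from \eqref{Z3} dominate the weight in $K(s,\omega)$ by $C_\epsilon e^{(\alpha/2)\zeta}$ on $\zeta\le 0$, whence $\sup_{s\le\tau}K(s,\omega)<\infty$ by the uniform integrability \eqref{G3} with $\kappa=\alpha/2$; the monotonicity of $\tau\mapsto\sup_{s\le\tau}K(s,\omega)$ shows that $\mathcal{K}$ is increasing in $\tau$. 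For the measurability of $\mathcal{K}$, which is the delicate point singled out in Subsection~\ref{D&A} because the radius is a supremum over the uncountable set $(-\infty,\tau]$, I would use the continuity of $s\mapsto K(s,\omega)$ (translation continuity of $\|\f(\cdot)\|^2_{\L^2(\R^3)}\in\mathrm{L}^1_{\mathrm{loc}}(\R)$ against the integrable weight) to reduce the supremum to $s\in\mathbb{Q}\cap(-\infty,\tau]$, a countable supremum of $\mathscr{F}$-measurable maps; the ball-valued map $\omega\mapsto\mathcal{K}(\tau,\omega)$ is then a random set. For $\widetilde{\mathcal{K}}$ this is immediate, since $\omega\mapsto K(\tau,\omega)$ is measurable for fixed $\tau$.

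The remaining and, I expect, main technical step is the temperedness $\mathcal{K}\in\mathfrak{D}$ and $\widetilde{\mathcal{K}}\in\mathfrak{B}$. Fix $c>0$; using $\sup_{s\le\tau}\sup_{s'\le s-t}=\sup_{s'\le\tau-t}$ and $\vartheta_\zeta\vartheta_{-t}\omega=\vartheta_{\zeta-t}\omega$,
\begin{align*}
e^{-ct}\sup_{s\le\tau}\|\mathcal{K}(s-t,\vartheta_{-t}\omega)\|^2_{\H}=\frac{4}{\alpha}e^{-ct}e^{2y(\vartheta_{-t}\omega)}\sup_{s'\le\tau-t}K(s',\vartheta_{-t}\omega).
\end{align*}
Estimating the shifted weight again through \eqref{Z3} produces, for every small $\epsilon>0$, a bound $K(s',\vartheta_{-t}\omega)\le C_\epsilon e^{(2+4\sigma)\epsilon t}\int_{-\infty}^{0}e^{(\alpha/2)\zeta}\|\f(\zeta+s')\|^2_{\L^2(\R^3)}\d\zeta$, where the integral is controlled uniformly over $s'\le\tau-t\le\tau$ by \eqref{G3}, while $e^{2y(\vartheta_{-t}\omega)}\le C_\epsilon e^{2\epsilon t}$. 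Thus the left-hand side is $\le C e^{(-c+(4+4\sigma)\epsilon)t}\to 0$ once $\epsilon<c/(4+4\sigma)$, and since $c>0$ is arbitrary this yields $\mathcal{K}\in\mathfrak{D}$; the claim $\widetilde{\mathcal{K}}\in\mathfrak{B}$ follows from the identical weight estimate applied to the single term $K(\tau-t,\vartheta_{-t}\omega)$. The crux is precisely this balance: the backward shifts $\vartheta_{-t}$ inflate the Ornstein--Uhlenbeck weight at an exponential-in-$t$ rate proportional to $\epsilon$, which must be out-run by the arbitrary tempering factor $e^{-ct}$ --- feasible only because \eqref{Z3} forces that growth to be sublinear, so that $\epsilon$ may be taken as small as the prescribed $c$ requires.
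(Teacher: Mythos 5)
Your proposal is correct, and it supplies in full the argument that the paper compresses into a single citation: the text's proof of Proposition \ref{IRAS} is just ``see \cite[Proposition 4.6]{RKM}'', and the chain you build --- transferring data through the conjugation \eqref{Phi} (with the observation that $e^{-y(\vartheta_{-t}\omega)}\mathcal{D}(s-t,\vartheta_{-t}\omega)$ stays in $\mathfrak{D}$ by \eqref{Z3}), absorption for (i) from \eqref{AB1} and for (ii) from \eqref{AB4} together with a \eqref{v_0}-type decay of the initial term, and temperedness by balancing the shifted Ornstein--Uhlenbeck weights against \eqref{G3} --- is exactly the route that citation stands for, including the correct exponents: your coefficient $(2+4\sigma)\epsilon t$ inside $K(s',\vartheta_{-t}\omega)$ and the threshold $\epsilon<c/(4+4\sigma)$ come out right only if one applies the Ces\`aro statement in \eqref{Z3} (sublinearity of $\xi\mapsto\int_0^\xi y(\vartheta_\upeta\omega)\d\upeta$) to $\int_\zeta^0 y(\vartheta_{\upeta-t}\omega)\d\upeta=\int_0^{-t}y(\vartheta_\xi\omega)\d\xi-\int_0^{\zeta-t}y(\vartheta_\xi\omega)\d\xi$, rather than integrating the pointwise bound $|y(\vartheta_{\upeta-t}\omega)|\le\epsilon|\upeta-t|+C_\epsilon$, which would produce a term $\epsilon\left(t|\zeta|+\zeta^2/2\right)$ destroying the $\zeta$-integrability; your numbers show you did the former, so no gap. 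The one genuine divergence from the paper is your measurability claim for $\mathcal{K}$: via continuity of $s\mapsto K(s,\omega)$ (dominated convergence under the $C_\epsilon e^{(\alpha/2)\zeta}$ majorant from \eqref{Z3} and \eqref{G3}) you reduce the radius to a countable supremum over rationals, making $\mathcal{K}$ itself a random set. The paper deliberately does not assert this --- note the boldface ``random'' appears only in part (ii), and Subsection \ref{D&A} explains that measurability of objects whose radius is a supremum over the uncountable set $(-\infty,\tau]$ is sidestepped by working with $\widetilde{\mathcal{K}}$ and later identifying the backward-uniform attractor with the usual random attractor. Your direct argument is sound and, if written carefully, would slightly strengthen part (i) and render that identification step unnecessary for measurability purposes.
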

	\begin{proof}
		See the proof \cite[Proposition 4.6]{RKM}.
	\end{proof}

	\subsection{Backward uniform tail-estimates and backward flattening-property}\label{BUTE-BFP}
	In this subsection, we show that the solution of the system \eqref{2} satisfies the \emph{backward uniform tail-estimates} and \emph{backward flattening-property} for $r\in(3,\infty)$ with any $\beta,\mu>0$ and $r=3$ with $2\beta\mu\geq1$. These estimates help us to obtain the backward uniform pullback $\mathfrak{D}$-asymptotic compactness of $\Phi$. We use a cut-off function technique to obtain \emph{backward uniform tail-estimates} and \emph{backward flattening-property}. The following lemma provides the backward uniform tail-estimates for the solutions of the system \eqref{2}.
	\begin{lemma}\label{largeradius}
		For both the cases given in Table \ref{Table}, suppose that Assumption \ref{Hypo_f-N} holds. Then, for any $(\tau,\omega,D)\in\R\times\Omega\times\mathfrak{D},$ the solution of \eqref{2}  satisfies
		\begin{align}\label{ep}
			&\lim_{k,t\to+\infty}\sup_{s\leq \tau}\sup_{\u_{0}\in D(s-t,\vartheta_{-t}\omega)}\|\u(s,s-t,\vartheta_{-s}\omega,\u_{0})\|^2_{\mathbb{L}^2(\mathcal{O}^{c}_{k})}=0,
		\end{align}
		where $\mathcal{O}_{k}=\{x\in\R^3:|x|\leq k\},$ $k\in\mathbb{N}$.
	\end{lemma}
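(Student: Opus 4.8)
The plan is to run the classical uniform tail-estimate scheme of Wang \cite{UTE-Wang}, but adapted backward in time and made uniform over $s\le\tau$. Fix a smooth cut-off $\uprho:[0,\infty)\to[0,1]$ with $\uprho\equiv1$ on $[0,1]$, $\uprho\equiv0$ on $[2,\infty)$ and $|\uprho'|\le C$, set $\uprho_k(x)=\uprho(|x|^2/k^2)$, and test the first equation of \eqref{2} against $\{1-\uprho_k\}\u$. The multiplier $1-\uprho_k$ is supported in $\mathcal{O}^c_k$, equals $1$ on $\mathcal{O}^c_{\sqrt2 k}$, while $|\nabla\uprho_k|\le C/k$ is supported on the annulus $\mathcal{O}_{\sqrt2 k}\setminus\mathcal{O}_k$. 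This produces $\tfrac12\frac{\d}{\d t}\int_{\R^3}\{1-\uprho_k\}|\u|^2\d x$ together with the sign-definite dissipation $\mu\int\{1-\uprho_k\}|\nabla\u|^2$, $\alpha\int\{1-\uprho_k\}|\u|^2$ and $\beta e^{(r-1)y(\vartheta_t\omega)}\int\{1-\uprho_k\}|\u|^{r+1}$, all of which I keep on the left.

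First I would treat the ``error'' terms created because $\uprho_k$ is non-constant. Integrating the viscous term by parts gives a remainder bounded by $\tfrac{C}{k}\|\u\|_{\H}\|\nabla\u\|_{\H}$; writing $(\u\cdot\nabla)\u\cdot\u=\tfrac12\u\cdot\nabla|\u|^2$ and using $\nabla\cdot\u=0$, the convective term reduces to $\tfrac12 e^{y(\vartheta_t\omega)}\int(\u\cdot\nabla\uprho_k)|\u|^2$, bounded via the $3$D Gagliardo--Nirenberg inequality $\|\u\|_{\L^3}^3\le C\|\u\|_{\H}^{3/2}\|\nabla\u\|_{\H}^{3/2}$ by $\tfrac{C}{k}e^{y(\vartheta_t\omega)}\|\u\|_{\H}^{3/2}\|\nabla\u\|_{\H}^{3/2}$. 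Both carry the crucial factor $1/k$ and involve precisely the exponent $k_1=3$ that is integrated, after the exponential weighting, by \eqref{AB11}--\eqref{AB12}. The forcing term I bound by $\tfrac\alpha4\int\{1-\uprho_k\}|\u|^2+Ce^{2|y(\vartheta_t\omega)|}\int_{|x|\ge k}|\f|^2$, so its tail is absorbed by the uniform tail-smallness \eqref{f3-N}.

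The hardest term, and the main obstacle, is the pressure. Because $\uprho_k$ is non-constant, $-e^{-y(\vartheta_t\omega)}\int\{1-\uprho_k\}\u\cdot\nabla p$ does not vanish; integrating by parts and using $\nabla\cdot\u=0$ turns it into $-e^{-y(\vartheta_t\omega)}\int p\,(\u\cdot\nabla\uprho_k)$, again with a $1/k$ gain but now requiring control of $p$ itself. Here I would substitute the explicit representation \eqref{Pressure} (with $\v=e^{y(\vartheta_t\omega)}\u$) and estimate $\|p\|_{\L^2(\R^3)}$ piecewise: the term $(-\Delta)^{-1}\partial_i\partial_j(v_iv_j)$ by the Calder\'on--Zygmund boundedness of $\partial_i\partial_j(-\Delta)^{-1}$ on $\L^2$, giving $C\|\v\|_{\L^4}^2\le C\|\v\|_{\H}^{1/2}\|\nabla\v\|_{\H}^{3/2}$; and the terms $(-\Delta)^{-1}\nabla\cdot(|\v|^{r-1}\v)$ and $(-\Delta)^{-1}\nabla\cdot\f$ via Plancherel's theorem as $\dot{\mathbb{H}}^{-1}$-norms, followed by the embedding $\L^{6/5}(\R^3)\hookrightarrow\dot{\mathbb{H}}^{-1}(\R^3)$ of Remark \ref{Hdot}, which reduces them to $\|\v\|^{r}_{\L^{6r/5}}$ (controlled after interpolation by the $\wi\L^{3(r+1)}$-regularity bound \eqref{AB13}) and to $\|\f\|_{\dot{\mathbb{H}}^{-1}}$ (controlled by hypothesis \eqref{Hyp2}). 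This is exactly the step where the regularity of the solutions enters decisively, and where the case split $r>3$ versus $r=3$, $2\beta\mu\ge1$ must be respected as in the energy inequalities \eqref{EI1}--\eqref{EI4}.

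Collecting everything yields a differential inequality of the form
\begin{align*}
\frac{\d}{\d t}\int_{\R^3}\{1-\uprho_k\}|\u|^2\d x+\Big(\tfrac{\alpha}{2}-2\sigma y(\vartheta_t\omega)\Big)\int_{\R^3}\{1-\uprho_k\}|\u|^2\d x\le \frac{C}{k}\,G(t,\omega)+Ce^{2|y(\vartheta_t\omega)|}\int_{|x|\ge k}|\f|^2\d x,
\end{align*}
where $G$ gathers the energy and regularity quantities above. Applying the variation of constants formula on $[s-t,s]$ exactly as in the proof of Lemma \ref{Absorbing}, taking $\sup_{s\le\tau}$, and then letting $t\to+\infty$ kills the initial-data contribution by the backward temperedness of $D$ (cf. \eqref{Z3}, \eqref{BackTem}, \eqref{v_0}); letting finally $k\to+\infty$ sends the $1/k$-weighted energy integrals to zero through \eqref{AB1}, \eqref{AB11}, \eqref{AB12} and \eqref{AB13}, and the forcing tail to zero through \eqref{f3-N}. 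Since $\int_{\R^3}\{1-\uprho_k\}|\u|^2\d x\ge\|\u\|^2_{\mathbb{L}^2(\mathcal{O}^c_{\sqrt2 k})}$, this gives \eqref{ep} after relabeling $k$. I expect the genuine difficulty to lie not in the finiteness of the nonlocal $\L^2$-bound for $p$, but in verifying that it is \emph{backward uniformly integrable in time} against the exponential weight, uniformly over $s\le\tau$ and with a genuine $1/k$ gain on the annulus.
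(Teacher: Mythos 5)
Your proposal follows essentially the same route as the paper's own proof: the same far-field cut-off multiplier, integration by parts producing $1/k$-weighted error terms, the pressure representation \eqref{Pressure} estimated via Plancherel's theorem and the embedding $\L^{6/5}(\R^3)\hookrightarrow\dot{\mathbb{H}}^{-1}(\R^3)$ of Remark \ref{Hdot}, interpolation of $\|\u\|^r_{\mathbb{L}^{6r/5}(\R^3)}$ against the weighted $\wi\L^{3(r+1)}$ bound \eqref{AB13}, and finally variation of constants, backward temperedness of $D$, Lemma \ref{Absorbing} and \eqref{f3-N} to conclude. The only cosmetic deviations (invoking Calder\'on--Zygmund boundedness in place of Plancherel for the quadratic pressure piece, and the exponent bookkeeping $k_1=3$ rather than the $k_1=6$ that actually arises after Young's inequality) are immaterial, since \eqref{AB11}--\eqref{AB12} hold for all $2<k_1<\infty$.
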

	\begin{proof}
		Let $\uprho$ be a smooth function such that $0\leq\uprho(\xi)\leq 1,$ for $\xi\in\R^+$ and
		\begin{align*}
			\uprho(\xi)=\begin{cases*}
				0, \text{ for }0\leq \xi\leq 1,\\
				1, \text{ for } \xi\geq2.
			\end{cases*}
		\end{align*}
		Then, there exists a positive constant $C$ such that $|\uprho'(\xi)|\leq C,$ for all $\xi\in\R^+$. Taking divergence to the first equation of \eqref{2} formally, we obtain   in the weak sense
		\begin{align*}
			-e^{-y(\vartheta_{t}\omega)}\Delta p&=e^{y(\vartheta_{t}\omega)}\nabla\cdot\left[\big(\u\cdot\nabla\big)\u\right]+\beta e^{(r-1)y(\vartheta_{t}\omega)}\nabla\cdot\left[|\u|^{r-1}\u\right]- e^{-y(\vartheta_{t}\omega)}\nabla\cdot\f\\
			&=e^{y(\vartheta_{t}\omega)}\nabla\cdot\left[\nabla\cdot\big(\u\otimes\u\big)\right]+\beta e^{(r-1)y(\vartheta_{t}\omega)}\nabla\cdot\left[|\u|^{r-1}\u\right]- e^{-y(\vartheta_{t}\omega)}\nabla\cdot\f\\
			&=e^{y(\vartheta_{t}\omega)}\sum_{i,j=1}^{3}\frac{\partial^2}{\partial x_i\partial x_j}\big(u_iu_j\big)+\beta e^{(r-1)y(\vartheta_{t}\omega)}\nabla\cdot\left[|\u|^{r-1}\u\right]-  e^{-y(\vartheta_{t}\omega)}\nabla\cdot\f.
		\end{align*}
	This  implies
		\begin{align}\label{p-value}
			p=(-\Delta)^{-1}\left[e^{2y(\vartheta_{t}\omega)}\sum_{i,j=1}^{3}\frac{\partial^2}{\partial x_i\partial x_j}\big(u_iu_j\big)+\beta e^{ry(\vartheta_{t}\omega)}\nabla\cdot\left[|\u|^{r-1}\u\right]- \nabla\cdot\f\right]
		\end{align}
		in the weak sense.	Taking the inner product to the first equation of \eqref{2} with $\uprho\left(\frac{|x|^2}{k^2}\right)\u$, we have
		\begin{align}\label{ep1}
			&\frac{1}{2} \frac{\d}{\d t} \int_{\R^3}\uprho\left(\frac{|x|^2}{k^2}\right)|\u|^2\d x\nonumber\\&= \mu \int_{\R^3}(\Delta\u) \uprho\left(\frac{|x|^2}{k^2}\right) \u \d x-\alpha \int_{\R^3}\uprho\left(\frac{|x|^2}{k^2}\right)|\u|^2\d x-e^{y(\vartheta_{t}\omega)}b\left(\u,\u,\uprho\left(\frac{|x|^2}{k^2}\right)\u\right)\nonumber\\&\quad-\beta e^{(r-1)y(\vartheta_{t}\omega)} \int_{\R^3}\left|\u\right|^{r+1}\uprho\left(\frac{|x|^2}{k^2}\right)\d x-e^{-y(\vartheta_{t}\omega)}\int_{\R^3}(\nabla p)\uprho\left(\frac{|x|^2}{k^2}\right)\u\d x\nonumber\\&\quad+ e^{-y(\vartheta_{t}\omega)} \int_{\R^3}\f\uprho\left(\frac{|x|^2}{k^2}\right)\u\d x +\sigma y(\vartheta_{t}\omega)\int_{\R^3}\uprho\left(\frac{|x|^2}{k^2}\right)|\u|^2\d x.
		\end{align}
		Let us now estimate each term on the right hand side of \eqref{ep1}. Integration by parts and divergence free condition of $\u(\cdot)$ help us to obtain
		\begin{align}\label{ep2}
			&\mu \int_{\R^3}(\Delta\u) \uprho\left(\frac{|x|^2}{k^2}\right) \u \d x+\mu \int_{\R^3}|\nabla\u|^2 \uprho\left(\frac{|x|^2}{k^2}\right)  \d x\nonumber\\&= -\mu \int_{\R^3} \uprho'\left(\frac{|x|^2}{k^2}\right)\frac{2}{k^2}(x\cdot\nabla) \u\cdot\u \d x\nonumber\\&\leq  \frac{2\sqrt{2}\mu}{k} \int\limits_{k\leq|x|\leq \sqrt{2}k}\left|\u\right| \left|\uprho'\left(\frac{|x|^2}{k^2}\right)\right|\left|\nabla \u\right| \d x\leq \frac{C}{k} \int_{\R^3}\left|\u\right| \left|\nabla \u\right| \d x\leq \frac{C}{k} \left[\|\u\|^2_{\H}+\|\nabla\u\|^2_{\H}\right],
		\end{align}
		and
		\begin{align}\label{ep3}
			-e^{y(\vartheta_{t}\omega)}b\left(\u,\u,\uprho\left(\frac{|x|^2}{k^2}\right)\u\right)&=e^{y(\vartheta_{t}\omega)}\int_{\R^3} \uprho'\left(\frac{|x|^2}{k^2}\right)\frac{x}{k^2}\cdot\u |\u|^2 \d x\nonumber\\&\leq \frac{\sqrt{2}e^{|y(\vartheta_{t}\omega)|}}{k} \int\limits_{k\leq|x|\leq \sqrt{2}k} \left|\uprho'\left(\frac{|x|^2}{k^2}\right)\right| |\u|^3 \d x\leq \frac{C}{k} e^{|y(\vartheta_{t}\omega)|}\|\u\|^2_{\wi\L^4}\|\u\|_{\H}\nonumber\\&\leq \frac{C}{k} e^{|y(\vartheta_{t}\omega)|} \|\u\|^{\frac{3}{2}}_{\H}\|\nabla\u\|_{\H}^{\frac{3}{2}}\leq \frac{C}{k}\bigg[\|\nabla\u\|^{2}_{\H}+e^{4|y(\vartheta_{t}\omega)|}\|\u\|_{\H}^{4}\bigg],		
		\end{align}
		where we have used Ladyzhenskaya's  and Young's inequalities in the penultimate and final inequalities, respectively. Using integration by parts, divergence free condition and \eqref{p-value}, we obtain
		\begin{align}\label{361}
			-&e^{-y(\vartheta_{t}\omega)}\int_{\R^3}(\nabla p)\uprho\left(\frac{|x|^2}{k^2}\right)\u\d x=e^{-y(\vartheta_{t}\omega)}\int_{\R^3}p\uprho'\left(\frac{|x|^2}{k^2}\right)\frac{2}{k^2}(x\cdot\u)\d x\nonumber\\& \leq \frac{Ce^{|y(\vartheta_{t}\omega)|}}{k}\int\limits_{\R^3}\left|(-\Delta)^{-1}\left[\nabla\cdot\left[\nabla\cdot\big(\u\otimes\u\big)\right]\right]\right|\cdot\left|\u\right|\d x \nonumber\\&\quad+ \frac{Ce^{(r-1)y(\vartheta_{t}\omega)}}{k}\int\limits_{\R^3}\left|(-\Delta)^{-1}\left[\nabla\cdot\left[|\u|^{r-1}\u\right]\right]\right|\cdot\left|\u\right|\d x+\frac{Ce^{|y(\vartheta_{t}\omega)|}}{k}\int_{\R^3}|(-\Delta)^{-1}[\nabla\cdot\f]|\cdot|\u|\d  x\\& =: \frac{C}{k}\left[S_1+e^{(r-1)y(\vartheta_{t}\omega)}S_2+S_3\right],\nonumber
		\end{align}
	where $S_1$, $S_2$ and $S_3$ are the terms appearing in the right hand side of \eqref{361}. 
		\vskip2mm
		\noindent
		\textbf{Estimate of $S_1$}: Using H\"older's inequality,  Plancherel's theorem, Ladyzhenskaya's and Young's inequalities, respectively, we get 
		\begin{align}
			|S_1| &\leq e^{|y(\vartheta_{t}\omega)|}\left\|(-\Delta)^{-1}\left[\nabla\cdot\left[\nabla\cdot\big(\u\otimes\u\big)\right]\right]\right\|_{\L^2(\R^3)}\|\u\|_{\H}\leq e^{|y(\vartheta_{t}\omega)|}\|\u\|^2_{\wi\L^4}\|\u\|_{\H}\nonumber\\&\leq C e^{|y(\vartheta_{t}\omega)|} \|\u\|^{\frac{3}{2}}_{\H}\|\nabla\u\|_{\H}^{\frac{3}{2}}\leq C\bigg[\|\nabla\u\|^{2}_{\H}+e^{4|y(\vartheta_{t}\omega)|}\|\u\|_{\H}^{6}\bigg].
		\end{align}
		\vskip2mm
		\noindent
		\textbf{Estimate of $S_2$:} Applying H\"older's inequality, Plancherel's theorem, \cite[Theorem 1.38]{BCD} (see Remark \ref{Hdot} above), interpolation and Young's inequalities, we obtain
		\begin{align}\label{S_2(d,r)}
			|S_2|&\leq 
				\|(-\Delta)^{-1}\left[\nabla\cdot\left[|\u|^{r-1}\u\right]\right]\|_{\L^{2}(\R^3)}\|\u\|_{\H} \leq \||\u|^{r-1}\u\|_{\dot{\mathbb{H}}^{-1}(\R^3)}\|\u\|_{\H}\nonumber\\&\leq \||\u|^{r-1}\u\|_{\mathbb{L}^{\frac{6}{5}}(\R^3)}\|\u\|_{\H} = \|\u\|^r_{\mathbb{L}^{\frac{6r}{5}}(\R^3)}\|\u\|_{\H}
			\nonumber\\&\leq \|\u\|^{\frac{6(r+1)}{3r+1}}_{\H}\|\u\|_{\wi\L^{3(r+1)}}^{\frac{(3r-5)(r+1)}{3r+1}}
			\leq \|\u\|^{2(r+1)}_{\H}+\|\u\|_{\wi\L^{3(r+1)}}^{\frac{(3r-5)(r+1)}{3r-4}}.
		\end{align}
		\vskip2mm
	\noindent
	\textbf{Estimate of $S_3$}: Applying H\"older's inequality, Plancherel's theorem, and Young's inequalities, we find 
	\begin{align}\label{S_3(d,r)}
		\left|S_3\right|&\leq 
			Ce^{|y(\vartheta_{t}\omega)|}\|(-\Delta)^{-1}\left[\nabla\cdot \f\right]\|_{\L^{2}(\R^3)}\|\u\|_{\H} \nonumber\\&\leq 
		Ce^{|y(\vartheta_{t}\omega)|}\|\f\|_{\dot{\mathbb{H}}^{-1}(\R^3)}\|\u\|_{\H}  \leq 
	Ce^{2|y(\vartheta_{t}\omega)|}\|\f\|^{2}_{\dot{\mathbb{H}}^{-1}(\R^3)}+C\|\u\|^2_{\H}.
	\end{align}

		Finally, we estimate the penultimate term on right hand side of \eqref{ep1} by using H\"older's and Young's inequalities as follows:
		\begin{align}
			e^{-y(\vartheta_{t}\omega)}\int_{\R^3}\f(x)\uprho\left(\frac{|x|^2}{k^2}\right)\u \d x\leq \frac{\alpha}{4} \int_{\R^3}\uprho\left(\frac{|x|^2}{k^2}\right)|\u|^2\d x +\frac{e^{2|y(\vartheta_{t}\omega)|}}{\alpha} \int_{\R^3}\uprho\left(\frac{|x|^2}{k^2}\right)|\f(x)|^2\d x.\label{ep4}
		\end{align}

		Combining \eqref{ep1}-\eqref{ep4}, we get
		\begin{align}\label{ep5}
			&	\frac{\d}{\d t} \|\u\|^2_{\mathbb{L}^2(\mathcal{O}_k^{c})}+ \left(\alpha-2\sigma y(\vartheta_{t}\omega)\right) \|\u\|^2_{\mathbb{L}^2(\mathcal{O}_k^c)} \nonumber\\ &\leq\frac{C}{k} \left[\|\u\|^2_{\V}+e^{4|y(\vartheta_{t}\omega)|}\|\u\|_{\H}^{6}+e^{(r-1)y(\vartheta_{t}\omega)}\|\u\|^{2(r+1)}_{\H}+e^{(r-1)y(\vartheta_{t}\omega)}\|\u\|_{\wi\L^{3(r+1)}}^{\frac{(3r-5)(r+1)}{3r-4}}\right.\nonumber\\&\quad\left.+e^{2|y(\vartheta_{t}\omega)|}\|\f\|^2_{\dot{\mathbb{H}}^{-1}(\R^3)}\right]+\frac{2e^{2|y(\vartheta_{t}\omega)|}}{\alpha} \int_{|x|\geq k}|\f(x)|^2\d x.
		\end{align}
	
		Applying the variation of constants formula to the above equation \eqref{ep5} on $(s-t,s)$ and replacing $\omega$ by $\vartheta_{-s}\omega$, for $s\leq\tau,$ $ t\geq 0$ and $\omega\in\Omega$, we find
			\begin{align}
			&\|\u(s,s-t,\vartheta_{-s}\omega,\u_{0})\|^2_{\mathbb{L}^2(\mathcal{O}_k^{c})} \nonumber\\& \leq e^{-\alpha t+2\sigma\int_{-t}^{0}y(\vartheta_{\upeta}\omega)\d\upeta}\|\u_{0}\|^2_{\H}+\frac{C}{k}\bigg[\int_{s-t}^{s}e^{\alpha(\zeta-s)-2\sigma\int^{\zeta}_{s}y(\vartheta_{\upeta-s}\omega)\d\upeta}\|\u(\zeta,s-t,\vartheta_{-s}\omega,\u_{0})\|^2_{\V}\d\zeta\nonumber\\&\quad+\int_{s-t}^{s}e^{4|y(\vartheta_{\zeta-s}\omega)|+\alpha(\zeta-s)-2\sigma\int^{\zeta}_{s}y(\vartheta_{\upeta-s}\omega)\d\upeta}\|\u(\zeta,s-t,\vartheta_{-s}\omega,\u_{0})\|^{6}_{\H}\d\zeta\nonumber\\&\quad+\int_{s-t}^{s}e^{(r-1)y(\vartheta_{\zeta-s}\omega)+\alpha(\zeta-s)-2\sigma\int^{\zeta}_{s}y(\vartheta_{\upeta-s}\omega)\d\upeta}\|\u(\zeta,s-t,\vartheta_{-s}\omega,\u_{0})\|^{2(r+1)}_{\H}\nonumber\\&\quad+\int_{s-t}^{s}e^{(r-1)y(\vartheta_{\zeta-s}\omega)+\alpha(\zeta-s)-2\sigma\int^{\zeta}_{s}y(\vartheta_{\upeta-s}\omega)\d\upeta}\|\u(\zeta,s-t,\vartheta_{-s}\omega,\u_{0})\|^{\frac{(3r-5)(r+1)}{3r-4}}_{\wi\L^{3(r+1)}}\d\zeta\nonumber\\&\quad+\int_{-t}^{0}e^{\alpha\zeta+2|y(\vartheta_{\zeta}\omega)|+2\sigma\int_{\zeta}^{0}y(\vartheta_{\upeta}\omega)\d\upeta}\|\f(\zeta+s)\|^2_{\dot{\mathbb{H}}^{-1}(\R^3)}\d\zeta\bigg]\nonumber\\&\quad+C\int_{-t}^{0}e^{\alpha\zeta+2|y(\vartheta_{\zeta}\omega)|+2\sigma\int_{\zeta}^{0}y(\vartheta_{\upeta}\omega)\d\upeta} \int_{|x|\geq k}|\f(x,\zeta+s)|^2\d x\d\zeta.\label{ep6}
		\end{align}
	
		Now, we obtain from \eqref{AB13}
	\begin{align*}
		&\int_{s-t}^{s}e^{(r-1)y(\vartheta_{\zeta-s}\omega)+\alpha(\zeta-s)-2\sigma\int^{\zeta}_{s}y(\vartheta_{\upeta-s}\omega)\d\upeta}\|\u(\zeta,s-t,\vartheta_{-s}\omega,\u_{0})\|^{\frac{(3r-5)(r+1)}{3r-4}}_{\wi\L^{3(r+1)}}\d\zeta
		\nonumber\\&\leq\left[\int_{s-t}^{s}\left\{\frac{1}{\zeta-s+t}\right\}^{3r-5}e^{(r-1)y(\vartheta_{\zeta-s}\omega)+\alpha(\zeta-s)-2\sigma\int^{\zeta}_{s}y(\vartheta_{\upeta-s}\omega)\d\upeta}\d\zeta\right]^{\frac{1}{3r-4}}\nonumber\\&\quad\times\left[\int_{s-t}^{s}(\zeta-s+t)e^{(r-1)y(\vartheta_{\zeta-s}\omega)+\alpha(\zeta-s)-2\sigma\int^{\zeta}_{s}y(\vartheta_{\upeta-s}\omega)\d\upeta}\|\u(\zeta,s-t,\vartheta_{-s}\omega,\u_{0})\|^{r+1}_{\wi\L^{3(r+1)}}\d\zeta\right]^{\frac{3r-5}{3r-4}}\nonumber\\&\leq C\left[(t+1)^{3r-5}\int_{-t}^{0}\left\{\frac{1}{\zeta+t}\right\}^{3r-5}e^{(r-1)y(\vartheta_{\zeta}\omega)+\alpha\zeta-2\sigma\int^{\zeta}_{0}y(\vartheta_{\upeta}\omega)\d\upeta}\d\zeta\right]^{\frac{1}{3r-4}}\nonumber\\&\quad\times\left[e^{-\alpha t+2\sigma\int_{-t}^{0}y(\vartheta_{\upeta}\omega)\d\upeta}\|\u_{0}\|^2_{\H}+ \int_{-\infty}^{0} e^{\alpha\zeta-2\sigma\int^{\zeta}_{0}y(\vartheta_{\upeta}\omega)\d\upeta+2|y(\vartheta_{\zeta}\omega)|}\|\f(\zeta+s)\|^2_{\L^2(\R^3)}\d\zeta\right]^{\frac{3r-5}{3r-4}}.
	\end{align*}
	
	We also have
	\begin{align*}
		\lim_{t\to +\infty}\left[(t+1)^{3r-5}\int_{-t}^{0}\left\{\frac{1}{\zeta+t}\right\}^{3r-5}e^{\frac{\alpha}{2}\zeta}\d\zeta\right]=\frac{2}{\alpha},
	\end{align*}
	which gives  
	\begin{align}\label{ep7}
		&\lim_{t\to +\infty}\sup_{s\leq \tau}\int_{s-t}^{s}e^{(r-1)y(\vartheta_{\zeta-s}\omega)+\alpha(\zeta-s)-2\sigma\int^{\zeta}_{s}y(\vartheta_{\upeta-s}\omega)\d\upeta}\|\u(\zeta,s-t,\vartheta_{-s}\omega,\u_{0})\|^{\frac{3(r-1)(r+1)}{3r-2}}_{\wi\L^{3(r+1)}}\d\zeta
		\nonumber\\&\leq C\left[ \sup_{s\leq \tau}\int_{-\infty}^{0} e^{\alpha\zeta-2\sigma\int^{\zeta}_{0}y(\vartheta_{\upeta}\omega)\d\upeta+2|y(\vartheta_{\zeta}\omega)|}\|\f(\zeta+s)\|^2_{\L^2(\R^3)}\d\zeta\right]^{\frac{3(r-1)}{3r-2}}.
	\end{align}
	
		Now using \eqref{Z3}, the definition of backward-uniform temperedness \eqref{BackTem} (for the first term on right hand side  of \eqref{ep6}), Lemma \ref{Absorbing}, convergence in \eqref{ep7} and Hypothesis \ref{Hypo_f-N} (for the second term on right hand side of \eqref{ep6}), and \eqref{f3-N} (for the final term on right hand side of \eqref{ep6}), we immediately complete the proof.
	\end{proof}
	

	The following lemma provides the backward flattening-property for the solution of the system \eqref{2}. For each $k\geq1$, we let
	\begin{align*}
		\varrho_k(x):= 1-\uprho\left(\frac{|x|^2}{k^2}\right),  \ \ x\in\R^3.
	\end{align*}
	Let $\bar{\u}:=\varrho_k\u$ for $\u:=\u(s,s-t,\omega,\u_{\tau})\in\H$. Then $\bar{\u}\in\L^2(\mathcal{O}_{\sqrt{2}k})$, which has the orthogonal decomposition:
	\begin{align}
		\bar{\u}=\P_{i}\bar{\u}\oplus(\I-\P_{i})\bar{\u}=:\bar{\u}_{i,1}+\bar{\u}_{i,2},  \ \ \text{ for eah } i\in\N,
	\end{align}
	where, $\P_i:\L^2(\mathcal{O}_{\sqrt{2}k})\to\H_{i}:=\mathrm{span}\{e_1,e_2,\cdots,e_i\}\subset\L^2(\mathcal{O}_{\sqrt{2}k})$ is a canonical projection and $\{e_j\}_{j=1}^{\infty}$ is the family of eigenfunctions for $-\Delta$ in $\L^2(\mathcal{O}_{\sqrt{2}k})$ with corresponding positive eigenvalues $\lambda_1\leq\lambda_2\leq\cdots\leq\lambda_j\to\infty$ as $j\to\infty$. We also have that $$\varrho_k \Delta\u=\Delta\bar{\u}-\u\Delta\varrho_k-2\nabla\varrho_k\cdot\nabla\u.$$ Furthermore, for $\boldsymbol{\psi}\in\H_0^1(\mathcal{O}_{\sqrt{2}k})$, we have
	\begin{align}\label{poin-i}
		\mathrm{P}_i\boldsymbol{\psi}&=\sum_{j=1}^{i}(\boldsymbol{\psi},e_j)e_j,\  \nabla\mathrm{P}_i\boldsymbol{\psi}=\A^{1/2}\mathrm{P}_i\boldsymbol{\psi}=\sum_{j=1}^{i}\lambda^{1/2}_j(\boldsymbol{\psi},e_j)e_j,\nonumber\\ (\I-\mathrm{P}_i)\boldsymbol{\psi}&=\sum_{j=i+1}^{\infty}(\boldsymbol{\psi},e_j)e_j,\ \nabla(\I-\P_i)\boldsymbol{\psi}=\A^{1/2}(\I-\P_i)\boldsymbol{\psi}=\sum_{j=i+1}^{\infty}\lambda^{1/2}_j(\boldsymbol{\psi},e_j)e_j, \nonumber\\
		\|\nabla(\I-\P_i)\boldsymbol{\psi}\|_{\L^2(\mathcal{O}_{\sqrt{2}k})}^2&=\sum_{j=i+1}^{\infty}\lambda_j|(\boldsymbol{\psi},e_j)|^2\geq \lambda_{i+1}\sum_{j=i+1}^{\infty}|(\boldsymbol{\psi},e_j)|^2=\lambda_{i+1}\|(\I-\P_i)\boldsymbol{\psi}\|_{\L^2(\mathcal{O}_{\sqrt{2}k})}^2.
	\end{align}

	\begin{lemma}\label{Flattening}
		For  both the cases given in Table \ref{Table}, suppose that Assumption \ref{Hypo_f-N} is satisfied. Let $(\tau,\omega,D)\in\R\times\Omega\times\mathfrak{D}$ and $k\geq1$ be fixed. Then
		\begin{align}\label{FL-P}
			\lim_{i,t\to+\infty}\sup_{s\leq \tau}\sup_{\u_{0}\in D(s-t,\vartheta_{-t}\omega)}\|(\I-\P_{i})\bar{\u}(s,s-t,\vartheta_{-s}\omega,\bar{\u}_{0,2})\|^2_{\L^2(\mathcal{O}_{\sqrt{2}k})}=0,
		\end{align}
		where $\bar{\u}_{0,2}=(\I-\P_{i})(\varrho_k\u_{0})$.
	\end{lemma}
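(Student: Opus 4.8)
The plan is to derive a differential inequality for the high-mode part $\bar{\u}_{i,2}=(\I-\P_i)\bar{\u}$ on the \emph{bounded} domain $\mathcal{O}_{\sqrt{2}k}$ (with $k$ fixed), to exploit the spectral gap $\|\nabla(\I-\P_i)\boldsymbol{\psi}\|^2\geq\lambda_{i+1}\|(\I-\P_i)\boldsymbol{\psi}\|^2$ from \eqref{poin-i}, and then to let $\lambda_{i+1}\to\infty$. First I would multiply the first equation of \eqref{2} by $\varrho_k$, use the identity $\varrho_k\Delta\u=\Delta\bar{\u}-\u\Delta\varrho_k-2\nabla\varrho_k\cdot\nabla\u$ to obtain a parabolic equation for $\bar{\u}$ on $\mathcal{O}_{\sqrt{2}k}$ with Dirichlet data (since $\varrho_k$ is supported in $|x|\leq\sqrt{2}k$ and vanishes near $|x|=\sqrt{2}k$), apply $\I-\P_i$, and take the $\L^2(\mathcal{O}_{\sqrt{2}k})$ inner product with $\bar{\u}_{i,2}$. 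Because $\P_i$ is the spectral projection of $-\Delta$, it commutes with the Laplacian, so integration by parts produces $\mu\|\nabla\bar{\u}_{i,2}\|^2\geq\mu\lambda_{i+1}\|\bar{\u}_{i,2}\|^2$, giving
\begin{align*}
\frac{1}{2}\frac{\d}{\d t}\|\bar{\u}_{i,2}\|^2_{\L^2(\mathcal{O}_{\sqrt{2}k})}+\mu\|\nabla\bar{\u}_{i,2}\|^2_{\L^2(\mathcal{O}_{\sqrt{2}k})}+(\alpha-\sigma y(\vartheta_t\omega))\|\bar{\u}_{i,2}\|^2_{\L^2(\mathcal{O}_{\sqrt{2}k})}\leq G(t),
\end{align*}
where $G$ collects the cut-off commutator terms (with coefficients $\sim k^{-1},k^{-2}$), the convective term $e^{y(\vartheta_t\omega)}\varrho_k(\u\cdot\nabla)\u$, the damping term $\beta e^{(r-1)y(\vartheta_t\omega)}\varrho_k|\u|^{r-1}\u$, the pressure term $e^{-y(\vartheta_t\omega)}\varrho_k\nabla p$, and the forcing $e^{-y(\vartheta_t\omega)}\varrho_k\f$, each paired with $\bar{\u}_{i,2}$.

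Next I would estimate every piece of $G$ so that, after using Young's inequality to absorb a fraction of $\mu\|\nabla\bar{\u}_{i,2}\|^2$ (leaving a residual $\tfrac{\mu}{4}\|\nabla\bar{\u}_{i,2}\|^2\geq\tfrac{\mu}{4}\lambda_{i+1}\|\bar{\u}_{i,2}\|^2$), what remains is bounded by quantities that are uniformly integrable in time to a power strictly larger than one. The commutator and convective terms are controlled by $\|\u\|_{\H}$, $\|\nabla\u\|_{\H}$ and $\|\u\|_{\L^6}\leq C\|\u\|_{\V}$, all governed by the higher-order bound \eqref{AB12}; the pressure term is treated exactly as in Lemma \ref{largeradius}, namely by substituting \eqref{p-value}, applying Plancherel's theorem and the embedding $\L^{6/5}(\R^3)\hookrightarrow\dot{\mathbb{H}}^{-1}(\R^3)$ of Remark \ref{Hdot}, which converts it into $\|\u\|_{\V}$, $\|\u\|_{\wi\L^{3(r+1)}}$ and $\|\f\|_{\dot{\mathbb{H}}^{-1}(\R^3)}$ contributions. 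The forcing is handled separately by writing it as $e^{-y(\vartheta_t\omega)}\big((\I-\P_i)(\varrho_k\f),\bar{\u}_{i,2}\big)$: since $\|(\I-\P_i)(\varrho_k\f(\zeta))\|_{\L^2}\to0$ pointwise in $\zeta$ and is dominated by $\|\f(\zeta)\|_{\L^2}$, the uniform integrability \eqref{G3} and dominated convergence make its weighted time-integral tend to $0$ uniformly in $s\leq\tau$ and $t$ as $i\to\infty$.

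The hard part will be the nonlinear damping integral \eqref{FL-ES}, i.e. $\beta e^{(r-1)y(\vartheta_t\omega)}\int_{\mathcal{O}_{\sqrt{2}k}}\varrho_k|\u|^{r-1}\u\cdot\bar{\u}_{i,2}\d x$, exactly as flagged in Subsection \ref{D&A}. Here I would use $\bar{\u}_{i,2}\in\H_0^1(\mathcal{O}_{\sqrt{2}k})$ and bound the pairing by $C\||\u|^{r-1}\u\|_{\L^{6/5}}\|\nabla\bar{\u}_{i,2}\|$; writing $\||\u|^{r-1}\u\|_{\L^{6/5}}=\|\u\|^r_{\L^{6r/5}}$ and interpolating $\L^{6r/5}$ between $\L^6$ and $\wi\L^{3(r+1)}$ splits this as $C\|\u\|^{\frac{3r+5}{3r+1}}_{\L^6}\|\u\|^{\frac{(3r-5)(r+1)}{3r+1}}_{\wi\L^{3(r+1)}}\|\nabla\bar{\u}_{i,2}\|$, whose $\wi\L^{3(r+1)}$ exponent $\frac{(3r-5)(r+1)}{3r+1}$ is precisely the one appearing in \eqref{S_2(d,r)}. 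A Young's inequality absorbs $\|\nabla\bar{\u}_{i,2}\|^2$ into the left-hand side, and a further Young's inequality (as in the derivation of \eqref{S_2(d,r)}) separates the $\L^6$ factor (raised to a fixed power, controlled by \eqref{AB12}) from an $\wi\L^{3(r+1)}$ factor raised to a power of the form $\frac{(3r-5)(r+1)}{3r-4}<r+1$, which is controlled by the weighted regularity estimate \eqref{AB13} through the same time-Hölder device used in \eqref{ep7}. It is exactly this $\wi\L^{3(r+1)}$ control of the regular solution that renders \eqref{FL-ES} tractable, which is why the existence of regular solutions is indispensable.

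Finally I would apply the variation of constants formula to the differential inequality on $(s-t,s)$ with $\omega$ replaced by $\vartheta_{-s}\omega$, the decay rate now carrying the extra factor $\tfrac{\mu}{2}\lambda_{i+1}$. The initial-data contribution carries $e^{-\frac{\mu}{2}\lambda_{i+1}t-\alpha t+2\sigma\int_{-t}^{0}y(\vartheta_{\upeta}\omega)\d\upeta}$ and vanishes as $t\to+\infty$ by \eqref{Z3} and the backward-uniform temperedness \eqref{BackTem} of $D$. For the convolution integral, the kernel $e^{-\frac{\mu}{2}\lambda_{i+1}(s-\zeta)}e^{-\alpha(s-\zeta)+\cdots}$ is multiplied by the surviving non-forcing pieces of $G$, each of which lies — uniformly in $s\leq\tau$ and $\u_0\in D(s-t,\vartheta_{-t}\omega)$ — in $L^p$ in time for some $p>1$ by \eqref{AB12} and \eqref{AB13}; Hölder's inequality in time therefore bounds it by a uniform constant times $\|e^{-\frac{\mu}{2}\lambda_{i+1}(s-\cdot)}\|_{L^{p'}}\sim\lambda_{i+1}^{-1/p'}$, which tends to $0$ as $i\to+\infty$, while the forcing piece tends to $0$ by the argument of the previous paragraph. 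Taking $\sup_{s\leq\tau}\sup_{\u_0\in D(s-t,\vartheta_{-t}\omega)}$ throughout and then letting $i,t\to+\infty$ yields \eqref{FL-P}.
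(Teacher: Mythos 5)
Your proposal follows the paper's own skeleton quite closely: the same cut-off $\varrho_k$, the same spectral decomposition \eqref{poin-i} on $\mathcal{O}_{\sqrt{2}k}$, the same substitution \eqref{p-value} combined with Plancherel's theorem and the embedding $\L^{\frac{6}{5}}(\R^3)\hookrightarrow\dot{\mathbb{H}}^{-1}(\R^3)$ for the pressure, and the same reliance on the higher-order estimates \eqref{AB12}--\eqref{AB13}. Your two structural deviations are legitimate in principle: keeping the enhanced decay $\mu\lambda_{i+1}\|\bar{\u}_{i,2}\|^2_{\L^2(\mathcal{O}_{\sqrt{2}k})}$ and extracting smallness from the kernel $e^{-\frac{\mu}{2}\lambda_{i+1}(s-\zeta)}$ by H\"older in time is a workable alternative to the paper's device, which instead attaches a factor $\lambda_{i+1}^{-\theta}$, $\theta>0$, to \emph{every} right-hand-side term through $\|\bar{\u}_{i,2}\|_{\L^2(\mathcal{O}_{\sqrt{2}k})}\leq\lambda_{i+1}^{-1/2}\|\nabla\bar{\u}_{i,2}\|_{\L^2(\mathcal{O}_{\sqrt{2}k})}$ and then needs only weighted $L^1$-in-time bounds (see \eqref{FL7}--\eqref{FL8}); your route requires the slightly stronger $L^p$-in-time integrability with $p>1$, which \eqref{AB11}--\eqref{AB13} do provide. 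Likewise, estimating the full damping pairing by duality is acceptable (the paper instead keeps $\beta e^{(r-1)y}\||\u|^{\frac{r-1}{2}}\bar{\u}_{i,2}\|^2$ on the left and only estimates the cross term $J_4$).

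Two concrete steps, however, fail as written. First, the interpolation you use for the damping term is false: interpolation of $\L^{\frac{6r}{5}}$ requires the reciprocal Lebesgue indices to balance, and with your exponents
\begin{align*}
\frac{1}{6}\cdot\frac{3r+5}{3r+1}+\frac{1}{3(r+1)}\cdot\frac{(3r-5)(r+1)}{3r+1}=\frac{9r-5}{6(3r+1)}\neq\frac{5}{6},
\qquad\text{while}\qquad
\frac{1}{2}\cdot\frac{3r+5}{3r+1}+\frac{1}{3(r+1)}\cdot\frac{(3r-5)(r+1)}{3r+1}=\frac{5}{6},
\end{align*}
so those exponents belong to the $\L^{2}$--$\wi\L^{3(r+1)}$ interpolation, not the $\L^{6}$--$\wi\L^{3(r+1)}$ one; moreover for $3\leq r<5$ one has $\frac{6r}{5}<6$, so on $\R^3$ the space $\L^{\frac{6r}{5}}$ is not intermediate between $\L^6$ and $\wi\L^{3(r+1)}$ at all. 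The repair is exactly the paper's \eqref{S_2(d,r)} and \eqref{S_2(d,r)p}: replace $\|\u\|_{\L^6}$ by $\|\u\|_{\H}$, after which your Young-inequality and \eqref{AB13}-based time integration go through unchanged. Second, the forcing term: dominated convergence gives $\int e^{(\cdot)}\|(\I-\P_i)(\varrho_k\f(\zeta+s))\|^2_{\L^2}\d\zeta\to0$ as $i\to\infty$ for each \emph{fixed} $s$, but the lemma demands smallness uniformly over the infinite family $s\in(-\infty,\tau]$ --- this is precisely the ``backward uniform'' content --- and dominated convergence over a noncompact family of translates does not deliver this; with \eqref{G3} alone, the high-frequency content of $\f(\cdot+s)$ could escape to ever higher modes as $s\to-\infty$. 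The paper sidesteps this entirely in the estimate of $J_3$ (see \eqref{FL4}): pair $\varrho_k\f$ with $\bar{\u}_{i,2}$, use the spectral gap and Young's inequality to obtain $\frac{\mu}{8}\|\nabla\bar{\u}_{i,2}\|^2_{\L^2(\mathcal{O}_{\sqrt{2}k})}+C\lambda_{i+1}^{-1}e^{2|y(\vartheta_t\omega)|}\|\f\|^2_{\L^2(\R^3)}$, whose weighted time integral is finite uniformly in $s\leq\tau$ by \eqref{G3}, so the prefactor $\lambda_{i+1}^{-1}$ alone yields the uniform smallness. With these two repairs your argument is sound, and it then essentially coincides with the paper's proof.
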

	\begin{proof}
		Multiplying by $\varrho_k$ in the first equation of \eqref{2}, we rewrite the equation as:
		\begin{align}\label{FL1}
			&\frac{\d\bar{\u}}{\d t}-\mu \Delta\bar{\u}+e^{y(\vartheta_{t}\omega)}\varrho_k(\u\cdot\nabla)\u+\alpha\bar{\u}+\beta e^{(r-1)y(\vartheta_{t}\omega)}\varrho_k|\u|^{r-1}\u\nonumber\\&=-e^{-y(\vartheta_{t}\omega)}\varrho_k\nabla p+e^{-y(\vartheta_{t}\omega)}\varrho_k\f +\sigma y(\vartheta_t\omega)\bar{\u}-\mu\u\Delta\varrho_k-2\mu\nabla\varrho_k\cdot\nabla\u.
		\end{align}
		Applying $(\I-\P_i)$ to the equation \eqref{FL1} and taking the inner product of the resulting equation with $\bar{\u}_{i,2}$ in $\L^2(\mathcal{O}_{\sqrt{2}k})$ gives
		\begin{eqnarray}\label{FL2}
			&&\frac{1}{2}\frac{\d}{\d t}\|\bar{\u}_{i,2}\|^2_{\L^2(\mathcal{O}_{\sqrt{2}k})}+\mu\|\nabla\bar{\u}_{i,2}\|^2_{\L^2(\mathcal{O}_{\sqrt{2}k})} +\left(\alpha-\sigma y(\vartheta_t\omega)\right)\|\bar{\u}_{i,2}\|^2_{\L^2(\mathcal{O}_{\sqrt{2}k})}\nonumber\\&&\quad+\beta e^{(r-1)y(\vartheta_{t}\omega)}\|\left|\u\right|^{\frac{r-1}{2}}\bar{\u}_{i,2}\|^2_{\L^2(\mathcal{O}_{\sqrt{2}k})}\nonumber\\&&=-\underbrace{e^{y(\vartheta_{t}\omega)}\sum_{q,q'=1}^{3}\int_{\mathcal{O}_{\sqrt{2}k}}\left(\I-\P_i\right)\bigg[u_{q}\frac{\partial u_{q'}}{\partial x_q}\left\{\varrho_k(x)\right\}^2 u_{q'}\bigg]\d x}_{=:J_1}-\underbrace{\big(e^{-y(\vartheta_{t}\omega)}\varrho_k\nabla p,\bar{\u}_{i,2}\big)}_{=:J_2}\nonumber\\&&\qquad+\underbrace{\left\{\big(e^{-y(\vartheta_{t}\omega)}\varrho_k\f,\bar{\u}_{i,2}\big)-\mu\big(\u\Delta\varrho_k,\bar{\u}_{i,2}\big) - \mu\big(2\nabla\varrho_k\cdot\nabla\u,\bar{\u}_{i,2}\big)\right\}}_{=:J_3}\nonumber\\&&\qquad-\underbrace{\beta e^{(r-1)y(\vartheta_{t}\omega)}\int_{\mathcal{O}_{\sqrt{2}k}}|\u|^{r-1}(\mathrm{P}_{i}\bar{\u})\cdot\bar{\u}_{i,2}\d x}_{=:J_4}.
		\end{eqnarray}
		Next, we estimate each terms of \eqref{FL2} as follows: Using integration by parts, divergence free condition of $\u(\cdot)$, \eqref{poin-i} (without loss of generality (WLOG), one may assume that $\lambda_{i}\geq1$), H\"older's and Young's inequalities,  we deduce 
		\begin{align}
			\left|J_1\right|&=e^{y(\vartheta_{t}\omega)}\left|\int_{\mathcal{O}_{\sqrt{2}k}}\left(\I-\P_i\right)\bigg[\uprho'\left(\frac{|x|^2}{k^2}\right)\frac{x}{k^2}\cdot\varrho_k(x)\u|\u|^2\bigg]\d x\right|\nonumber\\&\leq Ce^{y(\vartheta_{t}\omega)}\|\bar{\u}_{i,2}\|_{\L^2(\mathcal{O}_{\sqrt{2}k})}\|\u\|^2_{\wi\L^4}\leq C\lambda_{i+1}^{-\frac{1}{2}}e^{y(\vartheta_{t}\omega)}\|\nabla\bar{\u}_{i,2}\|_{\L^2(\mathcal{O}_{\sqrt{2}k})}\|\u\|^{\frac{r-3}{r-1}}_{\H}\|\u\|^{\frac{r+1}{r-1}}_{\wi\L^{r+1}}\nonumber\\&\leq\frac{\mu}{8}\|\nabla\bar{\u}_{i,2}\|^2_{\L^2(\mathcal{O}_{\sqrt{2}k})}+C\lambda_{i+1}^{-1}\bigg[\|\u\|^2_{\H}+e^{(r-1)y(\vartheta_{t}\omega)}\|\u\|^{r+1}_{\wi\L^{r+1}}\bigg],\label{FL3}
			\\
			\left|J_3\right|&\leq C\bigg[e^{|y(\vartheta_{t}\omega)|}\|\f\|_{\L^2(\R^3)}+\|\u\|_{\H}+ \|\nabla\u\|_{\H}\bigg]\|\bar{\u}_{i,2}\|_{\L^2(\mathcal{O}_{\sqrt{2}k})}\nonumber\\&\leq C\lambda^{-\frac{1}{2}}_{i+1}\bigg[\|\u\|_{\H}+\|\nabla\u\|_{\H}+e^{|y(\vartheta_{t}\omega)|}\|\f\|_{\L^2(\R^3)}\bigg]\|\nabla\bar{\u}_{i,2}\|_{\L^2(\mathcal{O}_{\sqrt{2}k})}\nonumber\\&\leq\frac{\mu}{8}\|\nabla\bar{\u}_{i,2}\|^2_{\L^2(\mathcal{O}_{\sqrt{2}k})}+ C\lambda^{-1}_{i+1}\bigg[\|\u\|^2_{\H}+ \|\nabla\u\|^2_{\H}+e^{2|y(\vartheta_{t}\omega)|}\|\f\|^2_{\L^2(\R^3)}\bigg].\label{FL4}
		\end{align}
		An integration by parts, divergence free condition and \eqref{p-value} results to 
		\begin{align}
			\quad|J_2|&=\left|e^{-y(\vartheta_{t}\omega)}\int_{\mathcal{O}_{\sqrt{2}k}}(\I-\P_i)p\uprho'\left(\frac{|x|^2}{k^2}\right)\frac{4}{k^2}(x\cdot\bar{\u})\d x\right|\nonumber\\& \leq Ce^{y(\vartheta_{t}\omega)}\int_{\mathcal{O}_{\sqrt{2}k}}\left|(-\Delta)^{-1}\left[\nabla\cdot\left[\nabla\cdot\big(\u\otimes\u\big)\right]\right]\right|\cdot\left|\bar{\u}_{i,2}\right|\d x \nonumber\\&\quad+ Ce^{(r-1)y(\vartheta_{t}\omega)}\int_{\mathcal{O}_{\sqrt{2}k}}\left|(-\Delta)^{-1}\left[\nabla\cdot\left[|\u|^{r-1}\u\right]\right]\right|\cdot\left|\bar{\u}_{i,2}\right|\d x \nonumber\\&\quad+Ce^{|y(\vartheta_{t}\omega)|}\int_{\mathcal{O}_{\sqrt{2}k}}|(-\Delta)^{-1}[\nabla\cdot\f]|\cdot|\bar{\u}_{i,2}|\d  x \nonumber\\&=: C\left[\widetilde{S}_1+\widetilde{S}_2+\widetilde{S}_3\right].
		\end{align}
		\vskip1mm
		\noindent
		\textbf{Estimate of $\widetilde{S}_1$}: Applying H\"older's inequality, Plancherel's theorem, \eqref{poin-i}, interpolation and Young's inequalities, we get (similar to \eqref{FL3})
		\begin{align}
			|\widetilde{S}_1| &\leq e^{y(\vartheta_{t}\omega)}\left\|(-\Delta)^{-1}\left[\nabla\cdot\left[\nabla\cdot\big(\u\otimes\u\big)\right]\right]\right\|_{\L^2(\R^3)}\|\bar{\u}_{i,2}\|_{\L^2(\mathcal{O}_{\sqrt{2}k})}\nonumber\\&\leq  Ce^{y(\vartheta_{t}\omega)}\|\bar{\u}_{i,2}\|_{\L^2(\mathcal{O}_{\sqrt{2}k})}\|\u\|^2_{\wi\L^4}\nonumber\\&\leq C\lambda_{i+1}^{-\frac{1}{2}}e^{y(\vartheta_{t}\omega)}\|\nabla\bar{\u}_{i,2}\|_{\L^2(\mathcal{O}_{\sqrt{2}k})}\|\u\|^{\frac{r-3}{r-1}}_{\H}\|\u\|^{\frac{r+1}{r-1}}_{\wi\L^{r+1}}\nonumber\\&\leq\frac{\mu}{8}\|\nabla\bar{\u}_{i,2}\|^2_{\L^2(\mathcal{O}_{\sqrt{2}k})}+C\lambda_{i+1}^{-1}\bigg[\|\u\|^2_{\H}+e^{(r-1)y(\vartheta_{t}\omega)}\|\u\|^{r+1}_{\wi\L^{r+1}}\bigg].
		\end{align}
		\vskip1mm
		\noindent
		\textbf{Estimate of $\widetilde{S}_2$:} Applying H\"older's inequality, Plancherel's theorem, \eqref{poin-i}, \cite[Theorem 1.38]{BCD} (see Remark \ref{Hdot} above), interpolation and Young's inequalities, we obtain
		\begin{align}\label{S_2(d,r)p}
			|\widetilde{S}_2|&\leq e^{(r-1)y(\vartheta_{t}\omega)}
			\|(-\Delta)^{-1}\left[\nabla\cdot\left[|\u|^{r-1}\u\right]\right]\|_{\L^{2}(\R^3)}\|\bar{\u}_{i,2}\|_{\L^2(\mathcal{O}_{\sqrt{2}k})}\nonumber\\& \leq Ce^{(r-1)y(\vartheta_{t}\omega)} \lambda_{i+1}^{-\frac{1}{2}} \||\u|^{r-1}\u\|_{\dot{\mathbb{H}}^{-1}(\R^3)}\|\nabla\bar{\u}_{i,2}\|_{\L^2(\mathcal{O}_{\sqrt{2}k})}\nonumber\\&\leq Ce^{(r-1)y(\vartheta_{t}\omega)}\lambda_{i+1}^{-\frac{1}{2}} \||\u|^{r-1}\u\|_{\mathbb{L}^{\frac{6}{5}}(\R^3)}\|\nabla\bar{\u}_{i,2}\|_{\L^2(\mathcal{O}_{\sqrt{2}k})}\nonumber\\& \leq Ce^{(r-1)y(\vartheta_{t}\omega)}\lambda_{i+1}^{-\frac{1}{2}} \|\u\|^r_{\mathbb{L}^{\frac{6r}{5}}(\R^3)}\|\u\|_{\V}
			\nonumber\\&\leq Ce^{(r-1)y(\vartheta_{t}\omega)}\lambda_{i+1}^{-\frac{1}{2}} \|\u\|^{\frac{3r+5}{3r+1}}_{\H}\|\u\|_{\wi\L^{3(r+1)}}^{\frac{(3r-5)(r+1)}{3r+1}}\|\u\|_{\V}\nonumber \\&\leq Ce^{(r-1)y(\vartheta_{t}\omega)} \lambda_{i+1}^{-\frac{1}{2}} \|\u\|^{\frac{6(r+1)}{3r+1}}_{\V}\|\u\|_{\wi\L^{3(r+1)}}^{\frac{(3r-5)(r+1)}{3r+1}}
			\nonumber\\& \leq C e^{(r-1)y(\vartheta_{t}\omega)}\lambda_{i+1}^{-\frac{1}{2}} \bigg[\|\u\|^{2(r+1)}_{\V}+\|\u\|_{\wi\L^{3(r+1)}}^{\frac{(3r-5)(r+1)}{3r-4}} \bigg].
		\end{align}
			\vskip1mm
		\noindent
		\textbf{Estimate of $\widetilde{S}_3$}: Similar to \eqref{S_3(d,r)}, we find 
		\begin{align}\label{S_3(d,r)p}
			|\widetilde{S}_3|&\leq Ce^{|y(\vartheta_{t}\omega)|}\|(-\Delta)^{-1}\left[\nabla\cdot \f\right]\|_{\L^{2}(\R^3)}\|\bar{\u}_{i,2}\|_{\L^2(\mathcal{O}_{\sqrt{2}k})} \nonumber\\&\leq 
			C\lambda_{i+1}^{-\frac{1}{2}}e^{|y(\vartheta_{t}\omega)|}\|\f\|_{\dot{\mathbb{H}}^{-1}(\R^3)}\|\nabla\bar{\u}_{i,2}\|_{\L^2(\mathcal{O}_{\sqrt{2}k})} \nonumber\\&  \leq  \frac{\mu}{8} \|\nabla\bar{\u}_{i,2}\|^2_{\L^2(\mathcal{O}_{\sqrt{2}k})}+
			C\lambda_{i+1}^{-1}e^{2|y(\vartheta_{t}\omega)|}\|\f\|^{2}_{\dot{\mathbb{H}}^{-1}(\R^3)}.
		\end{align}
	
		Using H\"older's, \eqref{poin-i}, interpolation and Young's inequalities, we obtain
	\begin{align}\label{J4}
		|J_4|&\leq C e^{(r-1)y(\vartheta_{t}\omega)}\int_{\mathcal{O}_{\sqrt{2}k}}|\u|^{\frac{r-1}{4}}|\bar{\u}_{i,2}|^{\frac{1}{2}}|\bar{\u}_{i,2}|^{\frac{1}{2}}|\u|^{\frac{3(r-1)}{4}}|\mathrm{P}_{i}\bar{\u}|\d x \nonumber\\&\leq C e^{(r-1)y(\vartheta_{t}\omega)} \|\left|\u\right|^{\frac{r-1}{2}}\bar{\u}_{i,2}\|^{\frac{1}{2}}_{\L^2(\mathcal{O}_{\sqrt{2}k})}  \|\bar{\u}_{i,2}\|^{\frac{1}{2}}_{\L^2(\mathcal{O}_{\sqrt{2}k})}\|\u\|^{\frac{3(r-1)}{4}}_{\wi\L^{3(r+1)}}\|\u\|_{\wi\L^{\frac{4(r+1)}{r+3}}}  \nonumber\\&\leq C \lambda_{i+1}^{-\frac{1}{4}}e^{(r-1)y(\vartheta_{t}\omega)} \|\left|\u\right|^{\frac{r-1}{2}}\bar{\u}_{i,2}\|^{\frac{1}{2}}_{\L^2(\mathcal{O}_{\sqrt{2}k})}  \|\nabla\bar{\u}_{i,2}\|^{\frac{1}{2}}_{\L^2(\mathcal{O}_{\sqrt{2}k})}\|\u\|^{\frac{3(r-1)}{4}}_{\wi\L^{3(r+1)}}\|\u\|_{\wi\L^{\frac{4(r+1)}{r+3}}}   \nonumber\\&\leq C \lambda_{i+1}^{-\frac{1}{4}}e^{(r-1)y(\vartheta_{t}\omega)} \|\left|\u\right|^{\frac{r-1}{2}}\bar{\u}_{i,2}\|^{\frac{1}{2}}_{\L^2(\mathcal{O}_{\sqrt{2}k})}  \|\u\|^{\frac{1}{2}}_{\V}\|\u\|^{\frac{3(r-1)}{4}}_{\wi\L^{3(r+1)}}\|\u\|^{\frac{1}{2}}_{\H} \|\u\|^{\frac{1}{2}}_{\wi\L^{r+1}}\nonumber\\& \leq \frac{\beta}{2} e^{(r-1)y(\vartheta_{t}\omega)}\|\left|\u\right|^{\frac{r-1}{2}}\bar{\u}_{i,2}\|^2_{\L^2(\mathcal{O}_{\sqrt{2}k})} +C \lambda_{i+1}^{-\frac{1}{3}}e^{(r-1)y(\vartheta_{t}\omega)}\bigg[\|\u\|^{r+1}_{\V}+\|\u\|^{\frac{3(r-1)(r+1)}{3r-2}}_{\wi\L^{3(r+1)}}\nonumber\\&\quad +\|\u\|^{2(r+1)}_{\H} + \|\u\|^{r+1}_{\wi\L^{r+1}}\bigg].
	\end{align}

		Now, combining \eqref{FL2}-\eqref{J4}, we arrive at
		\begin{align}\label{FL7}
			&\frac{\d}{\d t}\|\bar{\u}_{i,2}\|^2_{\L^2(\mathcal{O}_{\sqrt{2}k})} +\left(\alpha-2\sigma y(\vartheta_t\omega)\right)\|\bar{\u}_{i,2}\|^2_{\L^2(\mathcal{O}_{\sqrt{2}k})}\nonumber\\&\leq C\lambda^{-\frac{1}{3}}_{i+1}\bigg[\|\u\|^2_{\V}+e^{(r-1)|y(\vartheta_{t}\omega)|}\|\u\|^{r+1}_{\V}+e^{(r-1)|y(\vartheta_{t}\omega)|}\|\u\|^{2(r+1)}_{\V}+e^{(r-1)y(\vartheta_{t}\omega)}\|\u\|^{r+1}_{\wi\L^{r+1}}\nonumber\\&\quad+e^{(r-1)y(\vartheta_{t}\omega)}\|\u\|^{\frac{(3r-5)(r+1)}{3r-4}}_{\wi\L^{3(r+1)}}+e^{(r-1)y(\vartheta_{t}\omega)}\|\u\|^{\frac{3(r-1)(r+1)}{3r-2}}_{\wi\L^{3(r+1)}}+e^{2|y(\vartheta_{t}\omega)|}\|\f\|^2_{\L^2(\R^3)}\nonumber\\&\quad+e^{2|y(\vartheta_{t}\omega)|}\|\f\|^2_{\dot{\mathbb{H}}^{-1}(\R^3)}\bigg].
		\end{align}
		 In view of the variation of constant formula, we find
		\begin{align}\label{FL8}
			&\|(\I-\P_{i})\bar{\u}(s,s-t,\vartheta_{-s}\omega,\bar{\u}_{0,2})\|^2_{\L^2(\mathcal{O}_{\sqrt{2}k})}\nonumber\\&\leq e^{-\alpha t+2\sigma\int_{-t}^{0}y(\vartheta_{\upeta}\omega)\d\upeta}\|(\I-\P_i)(\varrho_k\u_{0})\|^2_{\L^2(\mathcal{O}_{\sqrt{2}k})}\nonumber\\&\quad+C\lambda^{-\frac{1}{3}}_{i+1}\bigg[\int_{s-t}^{s}e^{\alpha(\zeta-s)-2\sigma\int^{\zeta}_{s}y(\vartheta_{\upeta-s}\omega)\d\upeta}\|\u(\zeta,s-t,\vartheta_{-s}\omega,\u_{0})\|^2_{\V}\d\zeta\nonumber\\&\quad+\int_{s-t}^{s}e^{(r-1)|y(\vartheta_{\zeta-s}\omega)|+\alpha(\zeta-s)-2\sigma\int^{\zeta}_{s}y(\vartheta_{\upeta-s}\omega)\d\upeta}\|\u(\zeta,s-t,\vartheta_{-s}\omega,\u_{0})\|^{r+1}_{\V}\d\zeta\nonumber\\&\quad+\int_{s-t}^{s}e^{(r-1)|y(\vartheta_{\zeta-s}\omega)|+\alpha(\zeta-s)-2\sigma\int^{\zeta}_{s}y(\vartheta_{\upeta-s}\omega)\d\upeta}\|\u(\zeta,s-t,\vartheta_{-s}\omega,\u_{0})\|^{2(r+1)}_{\V}\d\zeta\nonumber\\&\quad+\int_{s-t}^{s}e^{(r-1)|y(\vartheta_{\zeta-s}\omega)|+\alpha(\zeta-s)-2\sigma\int^{\zeta}_{s}y(\vartheta_{\upeta-s}\omega)\d\upeta}\|\u(\zeta,s-t,\vartheta_{-s}\omega,\u_{0})\|^{r+1}_{\wi\L^{r+1}}\d\zeta\nonumber\\&\quad+\int_{s-t}^{s}e^{(r-1)y(\vartheta_{\zeta-s}\omega)+\alpha(\zeta-s)-2\sigma\int^{\zeta}_{s}y(\vartheta_{\upeta-s}\omega)\d\upeta}\|\u(\zeta,s-t,\vartheta_{-s}\omega,\u_{0})\|^{\frac{(3r-5)(r+1)}{3r-4}}_{\wi\L^{3(r+1)}}\d\zeta\nonumber\\&\quad+\int_{s-t}^{s}e^{(r-1)y(\vartheta_{\zeta-s}\omega)+\alpha(\zeta-s)-2\sigma\int^{\zeta}_{s}y(\vartheta_{\upeta-s}\omega)\d\upeta}\|\u(\zeta,s-t,\vartheta_{-s}\omega,\u_{0})\|^{\frac{3(r-1)(r+1)}{3r-2}}_{\wi\L^{3(r+1)}}\d\zeta\nonumber\\&\quad+\int_{-t}^{0}e^{\alpha\zeta+2|y(\vartheta_{\zeta}\omega)|+2\sigma\int_{\zeta}^{0}y(\vartheta_{\upeta}\omega)\d\upeta}\big\{\|\f(\zeta+s)\|^2_{\L^2(\R^3)}+\|\f(\zeta+s)\|^2_{\dot{\mathbb{H}}^{-1}(\R^3)}\big\}\d\zeta\bigg].
		\end{align}

		Since, $\|(\I-\P_i)(\varrho_k\u_{0})\|^2_{\L^2(\mathcal{O}_{\sqrt{2}k})}\leq C\|\u_{0}\|^2_{\H},$ for all $\u_{0}\in D(s-t,\vartheta_{-t}\omega)$ and $s\leq\tau$. Now, similar to \eqref{ep6}, using the definition of backward temperedness \eqref{BackTem}, \eqref{Z3}, \eqref{G3}, Lemma \ref{Absorbing}, Hypothesis \ref{Hypo_f-N} and the fact that $\lambda_{i}\to\infty$ as $i\to\infty$ in \eqref{FL8}, we obtain \eqref{FL-P}, as desired, which completes the proof.
	\end{proof}

	\subsection{Proof of Theorem \ref{MT1-N}}
	This subsection is devoted to the main result of this section, that is, the existence of pullback $\mathfrak{D}$-random attractors and their asymptotic autonomy for the solution of the system \eqref{SCBF} with $S(\v)=\v$. For both the cases given in Table \ref{Table}, the existence of pullback random attractors for non-autonomous 3D stochastic CBF equations driven by multiplicative noise on the whole space is established in \cite{KM6}. For both the cases given in Table \ref{Table}, as the existence of a unique pullback random attractor is known for each $\tau$, one can obtain the existence of a unique random attractor for autonomous 3D stochastic CBF equations driven by multiplicative noise on the whole space (cf. \cite{KM6}).
	
	In view of Propositions \ref{Back_conver} and \ref{IRAS}, and Lemmas \ref{largeradius}-\ref{Flattening}, the proof of Theorem \ref{MT1-N} can be completed by applying similar arguments as in the proof of \cite[Theorem 1.6]{RKM} (\cite[Subsection 3.5]{RKM}) and \cite[Theorem 5.2]{CGTW}.

	\section{3D stochastic CBF equations: Additive noise}\label{sec4}\setcounter{equation}{0}
	In this section, we consider 3D stochastic CBF equations driven by additive white noise, that is, $S(\v)$ is independent of $\v$ and establish the asymptotic autonomy of pullback random attractors. Let us consider the following 3D stochastic CBF equations:
	\begin{equation}\label{SCBF-A}
		\left\{
		\begin{aligned}
			\frac{\d\v(t)}{\d t}+\mu \A\v(t)+\B(\v(t))+\alpha\v(t) +\beta\mathcal{C}(\v(t))&=\mathscr{P}\f(t) +\g\frac{\d \W(t)}{\d t} , \ \ t>\tau, \ \tau\in\R, \\
			\v(x)|_{t=\tau}&=\v_{\tau}(x),	\hspace{28mm} x\in \R^3,
		\end{aligned}
		\right.
	\end{equation}
	where $\g\in\D(\A)$ and $\W(t,\omega)$ is the standard scalar Wiener process on the probability space $(\Omega, \mathscr{F}, \mathbb{P})$ (see Section \ref{sec3} above). Let us define $\u(t,\tau,\omega,\u_{\tau}):=\v(t,\tau,\omega,\v_{\tau})-\g y(\vartheta_{t}\omega)$, where $y$ is given in \eqref{OU1} and satisfies \eqref{OU2}, and $\v$ is the solution of \eqref{1} with $S(\v)=\g$.  Then $\u$ satisfies:
	\begin{equation}\label{2-A}
		\left\{
		\begin{aligned}
			\frac{\d\u}{\d t}-\mu \Delta\u&+\big((\u+\g y(\vartheta_{t}\omega))\cdot\nabla\big)(\u+\g y(\vartheta_{t}\omega))+\alpha\u+\beta \left|\u+\g y(\vartheta_{t}\omega)\right|^{r-1}(\u+\g y(\vartheta_{t}\omega))\\&=-\nabla p+\f +(\sigma-\alpha)\g y(\vartheta_{t}\omega)+\mu y(\vartheta_{t}\omega)\Delta\g,  \ \ \ \ \ \ \  \text{ in }\  \R^3\times(\tau,\infty), \\ \nabla\cdot\u&=0, \hspace{77mm}  \text{ in } \ \ \R^3\times[\tau,\infty), \\
			\u(x)|_{t=\tau}&=\u_{\tau}(x)=\v_{\tau}(x)-\g(x)y(\vartheta_{\tau}\omega),  \ \ \hspace{30mm} x\in \R^3 \ \text{ and }\ \tau\in\R,\\
			|\u(x)|&\to 0,\hspace{76.5mm}  \text{ as }\ |x|\to\infty,
		\end{aligned}
		\right.
	\end{equation}
	as well as (projected form)
	\begin{equation}\label{CCBF-A}
		\left\{
		\begin{aligned}
			\frac{\d\u}{\d t}  +\mu \A\u&+ \B(\u+\g y(\vartheta_{t}\omega))+\alpha\u + \beta \mathcal{C}(\u +\g y(\vartheta_{t}\omega)) \\&= \mathscr{P}\boldsymbol{f} + (\sigma-\alpha)\g y(\vartheta_{t}\omega) +\mu y(\vartheta_{t}\omega)\Delta\g ,\ \  \quad t> \tau, \ \ \tau\in\R ,\\
			\u(x)|_{t=\tau}&=\u_{\tau}(x)=\v_{0}(x)-\g(x)y(\vartheta_{\tau}\omega), \hspace{20mm} x\in\R^3,
		\end{aligned}
		\right.
	\end{equation}
	in $\V'+\widetilde{\L}^{\frac{r+1}{r}}$, where $r\geq1$.

	\subsection{NRDS}
	The following lemma will be frequently used.

	\begin{lemma}\label{EI}
		For both the cases given in Table \ref{Table}, the solution of \eqref{CCBF-A} satisfies the following inequality:
		\begin{align}\label{EI1-A}
			&\frac{\d}{\d t}\|\u(t)\|^2_{\H}+\alpha\|\u(t)\|^2_{\H}+\frac{\alpha}{2}\|\u(t)\|^2_{\H}+\mu\|\nabla\u(t)\|^2_{\H}+\beta\|\u(t)+\g y(\vartheta_{t}\omega)\|^{r+1}_{\wi\L^{r+1}}\nonumber\\&\leq R\left[\|\f(t)\|^{2}_{\L^2(\R^3)}+y_1(\vartheta_{t}\omega)\right],
		\end{align}
		for a.e. $t\geq0$, where $R>0$ is some constant and $y_1(\omega)=\left|y(\omega)\right|^2+\left|y(\omega)\right|^{r+1}+\left|y(\omega)\right|^{\frac{2(r+1)}{r-1}}$, and 
			\begin{align}\label{EI2-A}
			&\frac{\d}{\d t}\|\nabla\u(t)\|^2_{\H}+\alpha\|\nabla\u(t)\|^2_{\H}+\frac{C^*}{2}\|\u(t)+\g y(\vartheta_{t}\omega)\|^{r+1}_{\wi\L^{3(r+1)}}\nonumber\\&\leq \widetilde{R}\bigg[\|\f(t)\|^{2}_{\L^2(\R^3)}+\|\u(t)\|^2_{\V}+\|\u(t)+\g y(\vartheta_{t}\omega)\|^{r+1}_{\wi\L^{r+1}}+y_2(\vartheta_{t}\omega)\bigg],
		\end{align}
	for a.e. $t>0$, where $\widetilde{R}>0$ is some constant and $y_2(\omega)=|y(\omega)|^{2}+ |y(\omega)|^{r+1}$.
	\end{lemma}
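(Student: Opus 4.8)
The plan is to derive both inequalities by the energy method applied to the transformed pathwise system \eqref{CCBF-A}, exploiting that the arguments of $\B$ and $\mathcal{C}$ are exactly the original velocity $\v=\u+\g y(\vartheta_t\omega)$. For \eqref{EI1-A} I would test \eqref{CCBF-A} with $\u$ in $\H$, and for \eqref{EI2-A} with $\A\u$; both computations are formal and are justified on the finite-dimensional Faedo--Galerkin system (Lemma \ref{Soln}) before passing to the limit, which is precisely where sufficient regularity of the solution is needed so that $\A\u$ is an admissible test function. Throughout $\g\in\D(\A)$ is fixed, so $\g$, $\nabla\g$, $\Delta\g$ and $\A\g$ are bounded in every norm that appears, and all contributions carrying these factors are treated as constants times a power of $|y(\vartheta_t\omega)|$.

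For \eqref{EI1-A}, testing with $\u$ produces $\frac12\frac{\d}{\d t}\|\u\|^2_{\H}+\mu\|\nabla\u\|^2_{\H}+\alpha\|\u\|^2_{\H}+\langle\B(\v),\u\rangle+\beta\langle\mathcal{C}(\v),\u\rangle$ on the left. Writing $\u=\v-\g y$ and using $b(\v,\v,\v)=0$ together with the antisymmetry \eqref{b0}, the convective pairing collapses to the single cross term $y\,b(\v,\g,\v)$, while $\beta\langle\mathcal{C}(\v),\u\rangle=\beta\|\v\|^{r+1}_{\wi\L^{r+1}}-\beta y\langle\mathcal{C}(\v),\g\rangle$ isolates the good Forchheimer term. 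I would then bound the three surplus contributions by Hölder's and Young's inequalities, absorbing the monotone/dissipative parts into $\mu\|\nabla\u\|^2_{\H}$, $\frac{\alpha}{2}\|\u\|^2_{\H}$ and $\beta\|\v\|^{r+1}_{\wi\L^{r+1}}$: the linear drift terms $(\sigma-\alpha)y(\g,\u)$, $\mu y(\Delta\g,\u)$ and $(\f,\u)$ generate $\|\f\|^2_{\L^2(\R^3)}$ together with $|y|^2$; the Forchheimer cross term $\beta y\langle\mathcal{C}(\v),\g\rangle$, bounded by $|y|\,\|\v\|^r_{\wi\L^{r+1}}\|\g\|_{\wi\L^{r+1}}$, yields $|y|^{r+1}$; and the convective cross term $y\,b(\v,\g,\v)$, controlled by placing one velocity factor in $\wi\L^{r+1}$, the other in $\L^{2(r+1)/(r-1)}$, and $\nabla\g$ in $\L^2$, yields $|y|^{2(r+1)/(r-1)}$. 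Keeping the buffer $\frac{\alpha}{2}\|\u\|^2_{\H}$ undamaged by the forcing absorption gives \eqref{EI1-A} with $y_1(\omega)=|y(\omega)|^2+|y(\omega)|^{r+1}+|y(\omega)|^{2(r+1)/(r-1)}$.

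For \eqref{EI2-A}, testing with $\A\u$ yields $\frac12\frac{\d}{\d t}\|\nabla\u\|^2_{\H}+\mu\|\A\u\|^2_{\H}+\alpha\|\nabla\u\|^2_{\H}$ together with the pairings $\beta\langle\mathcal{C}(\v),\A\u\rangle$ and $\langle\B(\v),\A\u\rangle$, in which I would substitute $\A\u=\A\v-y\A\g$. The principal Forchheimer pairing $\beta\langle\mathcal{C}(\v),\A\v\rangle$ is rewritten via the pointwise identity \eqref{CuAu} (with $\u$ replaced by $\v$); discarding the nonnegative $\||\v|^{(r-1)/2}\nabla\v\|^2_{\H}$ part and invoking the Gagliardo--Nirenberg--Sobolev inequality \eqref{3(r+1)} produces the good left-hand term $\frac{C^*}{2}\|\v\|^{r+1}_{\wi\L^{3(r+1)}}$, whereas $\beta y\langle\mathcal{C}(\v),\A\g\rangle$ is bounded using $\A\g\in\H$ by $\|\v\|^{r+1}_{\wi\L^{r+1}}+|y|^{r+1}$. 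The principal convective pairing $\langle\B(\v),\A\v\rangle$ is estimated exactly as in the multiplicative computation \eqref{EI2}: for $r>3$ by a fraction of $\mu\|\A\v\|^2_{\H}$, the Forchheimer remainder, and $C\|\nabla\v\|^2_{\H}$; for $r=3$ by $\frac{\mu}{2}\|\A\v\|^2_{\H}+\frac{1}{2\mu}\||\v|\nabla\v\|^2_{\H}$, where $2\beta\mu\ge1$ lets the last term be absorbed by the Forchheimer dissipation. Finally $y\langle\B(\v),\A\g\rangle$ and the drift terms $(\f,\A\u)$, $(\sigma-\alpha)y(\g,\A\u)$, $\mu y(\Delta\g,\A\u)$ are bounded by a small multiple of $\mu\|\A\u\|^2_{\H}$ plus $\|\f\|^2_{\L^2(\R^3)}$ and $|y|^2$. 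Using $\|\A\v\|^2_{\H}\le 2\|\A\u\|^2_{\H}+2y^2\|\A\g\|^2_{\H}$ and $\|\nabla\v\|^2_{\H}\le 2\|\nabla\u\|^2_{\H}+2y^2\|\nabla\g\|^2_{\H}$ to return to $\u$, absorbing every $\|\A\u\|^2_{\H}$ occurrence, and collecting the rest into $\widetilde R[\|\f\|^2_{\L^2(\R^3)}+\|\u\|^2_{\V}+\|\v\|^{r+1}_{\wi\L^{r+1}}+y_2]$ with $y_2(\omega)=|y(\omega)|^2+|y(\omega)|^{r+1}$, gives \eqref{EI2-A}.

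The main obstacle is the second estimate, specifically the convective term $\langle\B(\v),\A\u\rangle$: testing against a second-order operator forces the case distinction between $r>3$ and the critical $r=3$, where the constraint $2\beta\mu\ge1$ must be used exactly to close the absorption into the Forchheimer dissipation, and one must simultaneously track the $\g y$-cross terms created by $\A\u=\A\v-y\A\g$ without inflating the powers of $|y|$. A more delicate bookkeeping point in the first estimate is the choice of Hölder exponents in $y\,b(\v,\g,\v)$ so that the surplus is exactly $|y|^{2(r+1)/(r-1)}$ and no larger; here the interpolation space $\L^{2(r+1)/(r-1)}$, which lies between $\L^2$ and $\L^{r+1}$ precisely because $r\ge3$, is the right intermediate norm.
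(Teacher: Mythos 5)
Your treatment of the first inequality \eqref{EI1-A} is essentially the paper's own proof: test \eqref{CCBF-A} with $\u$, reduce the convective and Forchheimer pairings to the cross terms $b(\v,\v,\g y)$ and $\beta y\langle\mathcal{C}(\v),\g\rangle$ with $\v=\u+\g y$, and close by H\"older--Young, absorbing into $\mu\|\nabla\u\|^2_{\H}$, $\alpha\|\u\|^2_{\H}$ and $\beta\|\v\|^{r+1}_{\wi\L^{r+1}}$. Your H\"older assignment in the convective cross term (putting $\nabla\g$ in $\L^2(\R^3)$ and the second velocity factor in $\wi\L^{\frac{2(r+1)}{r-1}}$, then interpolating between $\H$ and $\wi\L^{r+1}$) differs from the paper's \eqref{ue21} (which puts $\nabla\u$ in $\L^2(\R^3)$ and $\g$ in $\wi\L^{\frac{2(r+1)}{r-1}}$) and in fact yields a surplus $|y|^2$ rather than $|y|^{\frac{2(r+1)}{r-1}}$, but either power sits inside $y_1$, so this part is fine.

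The second inequality \eqref{EI2-A} is where your plan has two genuine gaps. First, the claim that $\beta y\langle\mathcal{C}(\v),\A\g\rangle$ can be ``bounded using $\A\g\in\H$ by $\|\v\|^{r+1}_{\wi\L^{r+1}}+|y|^{r+1}$'' is false: H\"older forces $|y|\,\|\v\|^{r}_{\wi\L^{2r}}\|\A\g\|_{\H}$, and on the unbounded domain $\R^3$ the $\wi\L^{2r}$-norm is not dominated by the $\wi\L^{r+1}$-norm (note $2r>r+1$, and there are no Lebesgue-space inclusions on $\R^3$). This term must be interpolated between $\wi\L^{r+1}$ and $\wi\L^{3(r+1)}$, exactly as in the paper's estimate \eqref{ue26}, at the price of a fraction $\frac{C^*}{4}\|\v\|^{r+1}_{\wi\L^{3(r+1)}}$ of the good term produced by \eqref{CuAu}--\eqref{3(r+1)}; that is precisely why the final coefficient in \eqref{EI2-A} is $\frac{C^*}{2}$ rather than $C^*$. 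Your accounting, which produces $\frac{C^*}{2}$ without ever spending any of the good term, cannot close.

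Second, substituting $\A\u=\A\v-y\A\g$ inside the convective pairing is a losing move at the critical case $r=3$ with $2\beta\mu=1$. There the Young split must be exactly $|b(\v,\v,\A\v)|\leq\frac{\mu}{2}\|\A\v\|^2_{\H}+\frac{1}{2\mu}\||\v|\nabla\v\|^2_{\H}$, since any larger constant than $\frac{1}{2\mu}=\beta$ on the gradient factor cannot be absorbed by the Forchheimer dissipation $\beta\int_{\R^3}|\v|^2|\nabla\v|^2\d x$ coming from \eqref{CuAu}; so that dissipation is already completely used up. Your conversion $\|\A\v\|^2_{\H}\leq 2\|\A\u\|^2_{\H}+2y^2\|\A\g\|^2_{\H}$ then turns $\frac{\mu}{2}\|\A\v\|^2_{\H}$ into $\mu\|\A\u\|^2_{\H}$, exhausting the entire viscous dissipation as well, and nothing is left to absorb $(\f,\A\u)$, the $\g$-drift terms, or the additional cross term $y\,b(\v,\v,\A\g)$ that your substitution creates (which itself needs a further $\epsilon\||\v|\nabla\v\|^2_{\H}$ or $\epsilon\|\A\cdot\|^2_{\H}$). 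The paper sidesteps both losses by never substituting in the convective pairing: it estimates $|b(\v,\v,\A\u)|\leq\||\v|\nabla\v\|_{\H}\|\A\u\|_{\H}$ directly in \eqref{ue25}, so no $\B$--$\A\g$ cross term appears and no factor of two is lost. Your route survives when $2\beta\mu>1$ strictly, and the doubling loss can be repaired by expanding $\|\A\v\|^2_{\H}=\|\A\u\|^2_{\H}+2y(\A\u,\A\g)+y^2\|\A\g\|^2_{\H}$ exactly, but the cleanest fix is simply to keep $\A\u$ as the second argument, as the paper does.
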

	\begin{proof}
		We find from \eqref{CCBF-A} that
		\begin{align}\label{ue17}
			\frac{1}{2}\frac{\d}{\d t}\|\u\|^2_{\H}=&-\mu\|\nabla\u\|^2_{\H}-\alpha\|\u\|^2_{\H}-\beta\|\u+\g y\|^{r+1}_{\wi\L^{r+1}}+b(\u+\g y,\u+\g y,\g y)\nonumber\\&+\beta\left\langle\mathcal{C}(\u+\g y),\g y\right\rangle+\left(\f,\u\right)+y\left(\left(\sigma-\alpha\right)\g-\mu \A\g,\u\right),
		\end{align}
		for a.e. $t\in[\tau,\tau+T]$ with $T>0$. Using $\g\in\D(\A)$, H\"older's and Young's inequalities, there exist constants $R_1,R_2,R_3, R_4>0$ such that
		\begin{align}
				\beta|\left\langle\mathcal{C}(\u+\g y),\g y\right\rangle|&\leq\beta \left|y\right|\|\u+\g y\|^{r}_{\wi\L^{r+1}}\|\g\|_{\wi\L^{r+1}}\leq\frac{\beta}{4}\|\u+\g y\|^{r+1}_{\wi\L^{r+1}} + R_1\left|y\right|^{r+1},\label{ue18}\\
				|\left(\f,\u\right)|&\leq\|\f\|_{\L^2(\R^3)}\|\u\|_{\H}\leq\frac{\alpha}{12}\|\u\|^2_{\H}+R_2\|\f\|^{2}_{\L^2(\R^3)},\label{ue19}\\
				|y\big(\left(\sigma-\alpha\right)\g-\mu \A\g,\u\big)|&\leq\frac{\alpha}{12}\|\u\|^2_{\H}+R_3\left|y\right|^2,\label{ue20}\\
			\left|b(\u+\g y,\u+\g y,\g y)\right|&= \left|b(\u+\g y,\u,\g y)\right|\nonumber\\&\leq \left|y\right|\|\u+\g y\|_{\wi\L^{r+1}}\|\nabla\u\|_{\H}\|\g\|_{\wi\L^{\frac{2(r+1)}{r-1}}}\nonumber\\&\leq\frac{\beta}{4}\|\u+\g y\|^{r+1}_{\wi\L^{r+1}} +\frac{\mu}{2}\|\nabla\u\|^{2}_{\H}+R_4\left|y\right|^{\frac{2(r+1)}{r-1}}.\label{ue21}
		\end{align}
		Combining \eqref{ue17}-\eqref{ue21}, we reach at \eqref{EI1-A} with $R=\max\{2R_1,2R_2,2R_3,2R_4\}$, as required.
		
		Again from \eqref{CCBF-A}, we have 
		\begin{align}\label{ue22}
			&\frac{1}{2}\frac{\d}{\d t}\|\nabla\u\|^2_{\H}  +\mu \|\A\u\|^2_{\H}+\alpha\|\nabla\u\|^2_{\H} + \beta \big(\mathcal{C}(\u +\g y),\A(\u+\g y)\big) \nonumber\\&= -b(\u+\g y, \u+\g y, \A\u)-\beta\big(\mathcal{C}(\u +\g y),\A\g y\big)+(\boldsymbol{f}, \A\u) + (\sigma-\alpha) y(\g , \A\u) +\mu y(\Delta\g, \A\u)
		\end{align}
	
	From \eqref{CuAu} and \eqref{3(r+1)}, we write
		\begin{align}\label{ue23}
		\big(\mathcal{C}(\u+\g y),\A\u+\A\g y\big)&=\int_{\mathbb{R}^3}|\nabla\u(x)+\nabla\g(x) y|^2|\u(x)+\g(x) y|^{r-1}\d x\nonumber\\&\quad+4\left[\frac{r-1}{(r+1)^2}\right]\int_{\mathbb{R}^3}|\nabla|\u(x)+\g(x) y|^{\frac{r+1}{2}}|^2\d x.
	\end{align} 
and 
\begin{align}\label{3(r+1)-A}
	 \|\u+\g y\|^{r+1}_{\wi\L^{3(r+1)}} \leq \frac{8\beta}{C^*}\left[\frac{r-1}{(r+1)^2}\right] \int_{\mathbb{R}^3}|\nabla|\u(x)+\g(x)y|^{\frac{r+1}{2}}|^2\d x, 
\end{align} 
respectively. Using $\g\in\D(\A)$, H\"older's, interpolation and Young's inequalities, there exist constants $R_5,R_6,R_7, R_8>0$ such that
\begin{align}\label{ue24}
	|\left(\f,\A\u\right)|&\leq\|\f\|_{\L^2(\R^3)}\|\A\u\|_{\H}\leq\frac{\mu}{6}\|\A\u\|^2_{\H}+R_5\|\f\|^{2}_{\L^2(\R^3)},\\
	|y\big(\left(\sigma-\alpha\right)\g-\mu \A\g,\A\u\big)|&\leq\frac{\mu}{6}\|\A\u\|^2_{\H}+R_6\left|y\right|^2,
\end{align}
\begin{align}\label{ue25}
&|b(\u+\g y, \u+\g y, \A\u)| 
\nonumber\\&\leq
	\||\u+\g y|\nabla(\u+\g y)\|_{\H}\|\A\u\|_{\H}
\nonumber\\&\leq 
\begin{cases}
	\frac{\mu}{6}\|\A\u\|^2_{\H}+\frac{\beta}{2}\||\u+\g y|^{\frac{r-1}{2}}\nabla(\u+\g y)\|^2_{\H}+C\|\nabla\u+\nabla\g y\|^2_{\H},   & \hspace{3mm}\text{ for } r>3,\\
	\frac{\mu}{6}\|\A\u\|^2_{\H}+\frac{1}{2\mu}\||\u+\g y|\nabla(\u+\g y)\|^2_{\H},  & \hspace{3mm}\text{ for } r=3,
\end{cases}
\nonumber\\&\leq 
\begin{cases}
	\frac{\mu}{6}\|\A\u\|^2_{\H}+\frac{\beta}{2}\||\u+\g y|^{\frac{r-1}{2}}\nabla(\u+\g y)\|^2_{\H}+R_7\big[\|\nabla\u\|^2_{\H}+ |y|^2\big],   & \text{ for } r>3,\\
	\frac{\mu}{6}\|\A\u\|^2_{\H}+\frac{1}{2\mu}\||\u+\g y|\nabla(\u+\g y)\|^2_{\H},  & \text{ for } r=3,
\end{cases}
\end{align}
and
\begin{align}\label{ue26}
	\beta |y(\mathcal{C}(\u+\g y),\A\g )|&\leq \beta|y|\|\u+\g y\|^{r}_{\wi\L^{2r}}\|\A\g\|_{\H}\nonumber\\& \leq C|y|\|\u+\g y\|^{\frac{r+3}{4}}_{\wi\L^{r+1}}\|\u+\g y\|^{\frac{3(r-1)}{4}}_{\wi\L^{3(r+1)}} \nonumber\\&\leq \frac{C^*}{4}\|\u+\g y\|^{r+1}_{\wi\L^{3(r+1)}}+R_8\bigg[\|\u+\g y\|^{r+1}_{\wi\L^{r+1}} + |y|^{r+1}\bigg].
\end{align}

Combining \eqref{ue22}-\eqref{ue26}, we arrive at \eqref{EI2-A} with $\widetilde{R}=\max\{2R_5,2R_6,2R_7,2R_8\}$, as desired.

	\end{proof}

	\begin{lemma}\label{Soln-A}
		Assume that $\f\in\mathrm{L}^2_{\mathrm{loc}}(\R;\L^2(\R^3))$. For each $(\tau,\omega,\u_{\tau})\in\R\times\Omega\times\H$, the system \eqref{CCBF-A} has a unique solution $\u(\cdot,\tau,\omega,\u_{\tau})\in\mathrm{C}([\tau,+\infty);\H)\cap\mathrm{L}^2_{\mathrm{loc}}(\tau,+\infty;\V)\cap\mathrm{L}^{r+1}_{\mathrm{loc}}(\tau,+\infty;\widetilde{\L}^{r+1})$ for both the cases given in Table \ref{Table} such that $\u(\cdot)$ is continuous with respect to the initial data.
	\end{lemma}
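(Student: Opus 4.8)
The plan is to follow the blueprint of Lemma \ref{Soln} for the multiplicative case, adapting it to the shifted nonlinearities appearing in \eqref{CCBF-A}. First I would note that the effective forcing $\mathscr{P}\f+(\sigma-\alpha)\g y(\vartheta_t\omega)+\mu y(\vartheta_t\omega)\Delta\g$ lies in $\mathrm{L}^2_{\mathrm{loc}}(\tau,+\infty;\H)$: indeed $\mathscr{P}\f\in\L^2(\R^3)$ after projection, while $\g\in\D(\A)$ and the continuity of $t\mapsto y(\vartheta_t\omega)$ render the remaining two terms continuous $\H$-valued functions. Hence the right-hand side is admissible. I would then implement a Faedo-Galerkin scheme, projecting \eqref{CCBF-A} onto the span of the first $n$ eigenfunctions of the Stokes operator $\A$ and solving the resulting finite system of ODEs on a maximal interval, exactly along the lines of \cite{HR,KM,PAM}.

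The a priori bounds come directly from the energy inequality \eqref{EI1-A} of Lemma \ref{EI}: integrating it in time yields uniform estimates for the Galerkin approximants $\u_n$ in $\mathrm{L}^{\infty}(\tau,T;\H)\cap\mathrm{L}^2(\tau,T;\V)\cap\mathrm{L}^{r+1}(\tau,T;\wi\L^{r+1})$ for every $T>\tau$ (the $\wi\L^{r+1}$-control of $\u_n+\g y(\vartheta_t\omega)$ transfers to $\u_n$ since $\g y(\vartheta_t\omega)\in\wi\L^{r+1}$). These bounds preclude finite-time blow-up, so the approximants are global, and, through the equation, they also bound $\tfrac{\d\u_n}{\d t}$ in a suitable dual space. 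I would then extract a subsequence converging weakly-$*$ in $\mathrm{L}^{\infty}(\H)$ and weakly in $\mathrm{L}^2(\V)$ and $\mathrm{L}^{r+1}(\wi\L^{r+1})$.

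The crucial step, and the one I expect to be the main obstacle, is identifying the weak limits of the shifted nonlinear terms $\B(\u_n+\g y(\vartheta_t\omega))$ and $\beta\mathcal{C}(\u_n+\g y(\vartheta_t\omega))$, because on $\R^3$ the Aubin-Lions compactness routinely used on bounded domains is unavailable. Here I would rely on the \emph{local monotonicity} of the combined operator $\v\mapsto\mu\A\v+\B(\v)+\beta\mathcal{C}(\v)$, valid precisely in the two regimes of Table \ref{Table} ($r>3$ for any $\beta,\mu>0$, and $r=3$ with $2\beta\mu\geq1$), and run a Minty-Browder argument to pin down the nonlinear limits without any strong convergence. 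The cross terms produced by the shift $\g y(\vartheta_t\omega)$ are absorbed using $\g\in\D(\A)$ together with the H\"older/interpolation bounds already displayed in \eqref{ue18}-\eqref{ue21}.

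Finally, for uniqueness and continuity in the initial data I would take the difference $\mathscr{W}:=\u_1-\u_2$ of two solutions, test with $\mathscr{W}$ in $\H$, and exploit the algebraic structure: by \eqref{b0} and \eqref{441} the bilinear contribution reduces to $b(\mathscr{W},\mathscr{W},\u_2+\g y(\vartheta_t\omega))$, while the monotonicity estimate \eqref{MO_c} lets me discard the $\mathcal{C}$-difference. Exactly as in the Case I/Case II estimates \eqref{LC9}-\eqref{LC12}, the bilinear term is then absorbed into $\mu\|\nabla\mathscr{W}\|^2_{\H}$ (using $2\beta\mu\geq1$ in the critical case $r=3$) plus a term proportional to $\|\mathscr{W}\|^2_{\H}$, producing a differential inequality $\frac{\d}{\d t}\|\mathscr{W}\|^2_{\H}\leq P(t)\|\mathscr{W}\|^2_{\H}$ with $P\in\mathrm{L}^1_{\mathrm{loc}}$. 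Gronwall's inequality yields both uniqueness and the continuous (indeed Lipschitz) dependence on $\u_{\tau}$, and the strong continuity $\u\in\mathrm{C}([\tau,+\infty);\H)$ follows from the energy equality combined with the weak continuity in $\H$.
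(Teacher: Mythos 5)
Your proposal follows essentially the same route as the paper: the paper's proof of this lemma is a two-line citation to the standard Faedo--Galerkin plus monotonicity arguments of \cite{HR,KM,PAM}, and to \cite{KM} for continuity in the initial data, which is precisely the machinery you spell out (energy estimates from \eqref{EI1-A}, Minty--Browder identification of the nonlinear limits in the two regimes of Table \ref{Table}, and a Gronwall argument for uniqueness and Lipschitz dependence). One technical correction: on the whole space $\R^3$ the Stokes operator $\A$ has purely continuous spectrum, so there are no eigenfunctions to project onto; the Galerkin basis must instead be taken as a countable linearly independent family of smooth, compactly supported, divergence-free fields whose span is dense in $\V\cap\wi\L^{r+1}$ (as in the cited works), after which every other step of your argument goes through unchanged.
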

	\begin{proof}
		One can prove the existence and uniqueness of solution by a standard Faedo-Galerkin approximation method, see the works \cite{HR,KM,PAM}, etc. For the continuity with respect to the initial data $\u_{\tau}$, see the proof of Theorem 3.9 in \cite{KM}.
	\end{proof}

	Next result shows the Lusin continuity of the mapping of the solution to the system \eqref{CCBF-A} in sample points.

	\begin{proposition}\label{LusinC-A}
		For both the cases given in Table \ref{Table}, suppose that $\f\in\mathrm{L}^2_{\mathrm{loc}}(\R;\L^2(\R^3))$. For each $N\in\N$, the mapping $\omega\mapsto\u(t,\tau,\omega,\u_{\tau})$ (solution of \eqref{CCBF-A}) is continuous from $(\Omega_{N},d_{\Omega_N})$ to $\H$, uniformly in $t\in[\tau,\tau+T]$ with $T>0.$
	\end{proposition}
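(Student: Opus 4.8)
The plan is to mirror the proof of Proposition \ref{LusinC}, adapting it to the additive structure of \eqref{CCBF-A}. Fix $N\in\N$ and take $\omega_k,\omega_0\in\Omega_N$ with $d_{\Omega_N}(\omega_k,\omega_0)\to0$ as $k\to\infty$. With $\u^k(\cdot):=\u(\cdot,\tau,\omega_k,\u_\tau)$, $\u^0(\cdot):=\u(\cdot,\tau,\omega_0,\u_\tau)$ and $\mathscr{U}^k:=\u^k-\u^0$, I would subtract the two copies of \eqref{CCBF-A}. Since the deterministic forcing $\mathscr{P}\f$ does not depend on $\omega$, it cancels, and all the $\omega$-dependence survives only through $y(\vartheta_t\omega_k)$ and $y(\vartheta_t\omega_0)$ --- inside the nonlinear arguments $\u+\g y(\vartheta_t\omega)$ and inside the linear terms $(\sigma-\alpha)\g y(\vartheta_t\omega)+\mu y(\vartheta_t\omega)\Delta\g$. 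The efficient bookkeeping is to set $\v^k:=\u^k+\g y(\vartheta_t\omega_k)$ and $\v^0:=\u^0+\g y(\vartheta_t\omega_0)$, so that $\v^k-\v^0=\mathscr{U}^k+\g\big(y(\vartheta_t\omega_k)-y(\vartheta_t\omega_0)\big)$.

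Testing the difference equation with $\mathscr{U}^k$ in $\H$, I would treat the nonlinearities by isolating the shift. For the damping term I would write $\langle\mathcal{C}(\v^k)-\mathcal{C}(\v^0),\mathscr{U}^k\rangle=\langle\mathcal{C}(\v^k)-\mathcal{C}(\v^0),\v^k-\v^0\rangle-\big(y(\vartheta_t\omega_k)-y(\vartheta_t\omega_0)\big)\langle\mathcal{C}(\v^k)-\mathcal{C}(\v^0),\g\rangle$, keeping the first term (nonnegative by \eqref{MO_c}) on the dissipative side and bounding the second by H\"older's and Young's inequalities in terms of $\|\g\|_{\wi\L^{r+1}}$ and the $\wi\L^{r+1}$-norms of $\v^k,\v^0$. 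For the convection term, using \eqref{b0} and $\v^k-\v^0=\mathscr{U}^k+\g(y(\vartheta_t\omega_k)-y(\vartheta_t\omega_0))$, the contribution $\langle\B(\v^k)-\B(\v^0),\mathscr{U}^k\rangle$ reduces to $-b(\mathscr{U}^k,\mathscr{U}^k,\v^0)$ plus cross terms carrying an explicit factor $y(\vartheta_t\omega_k)-y(\vartheta_t\omega_0)$. The genuinely delicate piece $b(\mathscr{U}^k,\mathscr{U}^k,\v^0)$ is then split exactly as in Proposition \ref{LusinC}: for $r>3$ it is absorbed into $\frac{\mu}{4}\|\nabla\mathscr{U}^k\|^2_\H$ together with the nonlinear damping, while in the critical case $r=3$ with $2\beta\mu\geq1$ the antisymmetry \eqref{b0} lets one trade it against $\frac{1}{2\beta}\|\nabla\mathscr{U}^k\|^2_\H$. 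The surviving linear terms $(\sigma-\alpha)(y(\vartheta_t\omega_k)-y(\vartheta_t\omega_0))(\g,\mathscr{U}^k)$ and $\mu(y(\vartheta_t\omega_k)-y(\vartheta_t\omega_0))(\Delta\g,\mathscr{U}^k)$ are controlled by Young's inequality using $\g\in\D(\A)$.

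Collecting these bounds should produce a differential inequality $\frac{\d}{\d t}\|\mathscr{U}^k(t)\|^2_\H\le P(t)\|\mathscr{U}^k(t)\|^2_\H+Q(t)$ for a.e.\ $t\in[\tau,\tau+T]$, where $P$ gathers the coefficients of $\|\mathscr{U}^k\|^2_\H$ and $Q$ is a finite sum of terms each carrying a factor $|y(\vartheta_t\omega_k)-y(\vartheta_t\omega_0)|$ (or a continuous function of $y(\vartheta_t\omega_k),y(\vartheta_t\omega_0)$ vanishing as $k\to\infty$) times a quantity integrable on $[\tau,\tau+T]$. This last integrability is exactly where the energy inequality \eqref{EI1-A} of Lemma \ref{EI} enters: integrating it over $[\tau,\tau+T]$ and invoking the uniform bound $\sup_{k\in\N}\sup_{t\in[\tau,\tau+T]}|y(\vartheta_t\omega_k)|\le C(\tau,T,\omega_0)$ from \eqref{conv_z2} yields $k$-uniform bounds for $\int_\tau^{\tau+T}\|\v^k\|^{r+1}_{\wi\L^{r+1}}\d t$ and $\int_\tau^{\tau+T}\|\nabla\u^k\|^2_\H\d t$. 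Together with the regularity $\u^0\in\mathrm{C}([\tau,+\infty);\H)\cap\mathrm{L}^2_{\mathrm{loc}}(\tau,+\infty;\V)\cap\mathrm{L}^{r+1}_{\mathrm{loc}}(\tau,+\infty;\wi\L^{r+1})$ from Lemma \ref{Soln-A}, this gives $\int_\tau^{\tau+T}P(t)\d t\le C(\tau,T,\omega_0)$ and, by dominated convergence using \eqref{conv_z1}, $\int_\tau^{\tau+T}Q(t)\d t\to0$ as $k\to\infty$. Gronwall's inequality then yields $\sup_{t\in[\tau,\tau+T]}\|\mathscr{U}^k(t)\|^2_\H\to0$, which is the asserted Lusin continuity.

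I expect the principal obstacle to be the handling of the nonlinearities with the \emph{shifted} arguments $\u+\g y(\vartheta_t\omega)$: in contrast with the multiplicative case, where the noise appears as a multiplicative factor, here the monotonicity of $\mathcal{C}$ and the antisymmetry of $b$ act cleanly only on the difference $\v^k-\v^0$, so the extra shift $\g(y(\vartheta_t\omega_k)-y(\vartheta_t\omega_0))$ generates cross terms that must be absorbed without spoiling the sign of the dissipative contributions, and in the critical regime $r=3$ this absorption succeeds precisely because of the condition $2\beta\mu\geq1$. Once these cross terms are controlled, the vanishing of $\int_\tau^{\tau+T}Q\,\d t$ is routine from Lemma \ref{conv_z}.
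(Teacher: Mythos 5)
Your proposal is correct and follows essentially the same route as the paper's proof: the paper likewise rewrites the difference equation so that the monotonicity \eqref{MO_c} and the identity \eqref{441} act on the shifted difference $\big(\u^k+y(\vartheta_t\omega_k)\g\big)-\big(\u^0+y(\vartheta_t\omega_0)\g\big)$, bounds the resulting cross terms (each carrying a factor $y(\vartheta_t\omega_k)-y(\vartheta_t\omega_0)$) using $\g\in\D(\A)$ exactly as you describe, splits the convective estimate into the cases $r>3$ and $r=3$ with $2\beta\mu\geq1$, and closes via the energy inequality \eqref{EI1-A}, the uniform bound \eqref{conv_z2}, the convergence \eqref{conv_z1} and Gronwall's inequality. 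The only cosmetic difference is bookkeeping of the leading convective term: the paper keeps it in the form $b\big(\mathscr{U}^k+[y(\vartheta_t\omega_k)-y(\vartheta_t\omega_0)]\g,\,\mathscr{U}^k+[y(\vartheta_t\omega_k)-y(\vartheta_t\omega_0)]\g,\,\u^0+y(\vartheta_t\omega_0)\g\big)$ rather than expanding it further into $b(\mathscr{U}^k,\mathscr{U}^k,\cdot)$ plus additional cross terms, but both versions are absorbed in the same way.
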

	\begin{proof}
		Let us assume $\omega_k,\omega_0\in\Omega_N$ be such that $d_{\Omega_N}(\omega_k,\omega_0)\to0$ as $k\to\infty$. Let $\mathscr{U}^k(\cdot):=\u^k(\cdot)-\u^0(\cdot),$ where $\u^k(\cdot)=\u(\cdot,\tau,\omega_k,\u_{\tau})$ and $\u_0(\cdot)=\u(\cdot,\tau,\omega_0,\u_{\tau})$. Then, $\mathscr{U}^k(\cdot)$ satisfies:
		\begin{align}\label{LC1-A}
			\frac{\d\mathscr{U}^k}{\d t}&=-\mu \A\mathscr{U}^k-\alpha\mathscr{U}^k-\left[\B\big(\u^k+y(\vartheta_{t}\omega_k)\g\big)-\B\big(\u^0+y(\vartheta_{t}\omega_0)\g\big)\right] -\big[\beta \mathcal{C}\big(\u^k+y(\vartheta_{t}\omega_k)\g\big)\nonumber\\&\quad-\beta \mathcal{C}\big(\u^0+y(\vartheta_{t}\omega_0)\g\big)\big] +\left\{(\sigma-\alpha)\g+\mu\Delta\g\right\}\left[y(\vartheta_t\omega_k)-y(\vartheta_t\omega_0)\right],
		\end{align}
		in $\V'+\widetilde{\L}^{\frac{r+1}{r}}$.  Taking the inner product with $\mathscr{U}^k(\cdot)$ in \eqref{LC1-A}, using \eqref{441} and rearranging the terms, we obtain
		\begin{align}\label{LC2-A}
			&\frac{1}{2}\frac{\d }{\d t}\|\mathscr{U}^k\|^2_{\H}+\mu\|\nabla\mathscr{U}^k\|^2_{\H}+\alpha\|\mathscr{U}^k\|^2_{\H}\nonumber\\&=-b\big(\mathscr{U}^k+\left[y(\vartheta_t\omega_k)-y(\vartheta_t\omega_0)\right]\g,\mathscr{U}^k+\left[y(\vartheta_t\omega_k)-y(\vartheta_t\omega_0)\right]\g,\u^0+y(\vartheta_t\omega_0)\g\big)\nonumber\\&\quad+\left[y(\vartheta_{t}\omega_k)-y(\vartheta_{t}\omega_0)\right]\left\{b\big(\u^k+y(\vartheta_t\omega_k)\g,\u^k,\g\big)-b\big(\u^0+y(\vartheta_t\omega_0)\g,\u^0,\g\big)\right\}\nonumber\\&\quad-\beta\big\langle\mathcal{C}\big(\u^k+y(\vartheta_t\omega_k)\g\big)-\mathcal{C}\big(\u^0+y(\vartheta_t\omega_0)\g\big),\left(\u^k+y(\vartheta_t\omega_k)\g\right)-\left(\u^0+y(\vartheta_t\omega_0)\g\right)\big\rangle \nonumber\\&\quad+\beta\left[y(\vartheta_{t}\omega_k)-y(\vartheta_{t}\omega_0)\right]\left\langle\mathcal{C}\big(\u^k+y(\vartheta_t\omega_k)\g\big)-\mathcal{C}\big(\u^0+y(\vartheta_t\omega_0)\g\big),\g\right\rangle\nonumber\\&\quad+\left[y(\vartheta_t\omega_k)-y(\vartheta_t\omega_0)\right]\big((\sigma-\alpha)\g+\mu\Delta\g,\mathscr{U}^k\big).
		\end{align}
		From \eqref{MO_c}, we get
		
		\begin{align}\label{LC3-A}
			-&\beta \left\langle\mathcal{C}\big(\u^k+y(\vartheta_t\omega_k)\g\big)-\mathcal{C}\big(\u^0+y(\vartheta_t\omega_0)\g\big),\left(\u^k+y(\vartheta_t\omega_k)\g\right)-\left(\u^0+y(\vartheta_t\omega_0)\g\right)\right\rangle \nonumber\\&\leq-\frac{\beta}{2}\left\|\left|\left(\u^k+y(\vartheta_t\omega_k)\g\right)\right|^{\frac{r-1}{2}}\left[\mathscr{U}^k+\left(y(\vartheta_t\omega_k)-y(\vartheta_t\omega_0)\right)\g\right]\right\|_{\H}^2\nonumber\\&\quad-\frac{\beta}{2}\left\|\left|\left(\u^0+y(\vartheta_t\omega_0)\g\right)\right|^{\frac{r-1}{2}}\left[\mathscr{U}^k+\left(y(\vartheta_t\omega_k)-y(\vartheta_t\omega_0)\right)\g\right]\right\|_{\H}^2.
		\end{align}
		For $\g\in\D(\A)$, in view of H\"older's and Young's inequalities, we obtain
		\begin{align}
			&\left\{b\big(\u^k+y(\vartheta_t\omega_k)\g,\u^k,\g\big)-b\big(\u^0+y(\vartheta_t\omega_0)\g,\u^0,\g\big)\right\}\nonumber\\&\leq \bigg\{\|\u^k+y(\vartheta_t\omega_k)\g\|_{\wi\L^{r+1}}\|\nabla\u^k\|_{\H}+\|\u^0+y(\vartheta_t\omega_0)\g\|_{\wi\L^{r+1}}\|\nabla\u^0\|_{\H}\bigg\}\|\g\|_{\wi\L^{\frac{2(r+1)}{r-1}}}\nonumber\\&\leq C\bigg\{\|\u^k+y(\vartheta_t\omega_k)\g\|^{r+1}_{\wi\L^{r+1}}+\|\nabla\u^k\|^2_{\H}+\|\u^0+y(\vartheta_t\omega_0)\g\|^{r+1}_{\wi\L^{r+1}}+\|\nabla\u^0\|^2_{\H}+1\bigg\}.\label{LC4-A}
		\end{align}
		Moreover, we have
		\begin{align}
			&\beta\left\langle\mathcal{C}\big(\u^k+y(\vartheta_t\omega_k)\g\big)-\mathcal{C}\big(\u^0+y(\vartheta_t\omega_0)\g\big),\g\right\rangle\nonumber\\&\leq
			\bigg\{\|\u^k+y(\vartheta_t\omega_k)\g\|^{r}_{\wi\L^{r+1}}+\|\u^0+y(\vartheta_t\omega_0)\g\|^{r}_{\wi\L^{r+1}}\bigg\}\|\g\|_{\wi\L^{r+1}}\nonumber\\&\leq C\bigg\{\|\u^k+y(\vartheta_t\omega_k)\g\|^{r+1}_{\wi\L^{r+1}}+\|\u^0+y(\vartheta_t\omega_0)\g\|^{r+1}_{\wi\L^{r+1}}+1\bigg\},\label{LC5-A}
		\end{align}
	and
	\begin{align}
			&\big((\sigma-\alpha)\g+\mu\Delta\g,\mathscr{U}^k\big) \leq C\|\u^k-\u^0\|_{\H}\leq C\bigg\{\|\u^k\|^2_{\H}+\|\u^0\|^2_{\H}+1\bigg\}\label{LC6-A}.
		\end{align}
		Next, we estimate the remaining terms of \eqref{LC2-A} separately.
		\vskip 2mm
		\noindent
		\textbf{Case I:} \textit{When $r>3$.} Using H\"older's and Young's inequalities, we infer
		\begin{align}\label{LC8-A}
			&\left|b\big(\mathscr{U}^k+\left[y(\vartheta_t\omega_k)-y(\vartheta_t\omega_0)\right]\g,\mathscr{U}^k+\left[y(\vartheta_t\omega_k)-y(\vartheta_t\omega_0)\right]\g,\u^0+y(\vartheta_t\omega_0)\g\big)\right|
			\nonumber\\&\leq\frac{\mu}{2}\|\nabla\mathscr{U}^k\|^2_{\H}+C\|\mathscr{U}^k\|^2_{\H}+C\left|y(\vartheta_t\omega_k)-y(\vartheta_t\omega_0)\right|^2\nonumber\\&\quad+\frac{\beta}{4}\left\|\left|\u^0+y(\vartheta_{t}\omega_0)\g\right|^{\frac{r-1}{2}}\left[\mathscr{U}^k+\left(y(\vartheta_t\omega_k)-y(\vartheta_t\omega_0)\right)\g\right]\right\|^2_{\H}.
		\end{align}
		\vskip 2mm
		\noindent
		\textbf{Case II:} \textit{When $r=3$ with $2\beta\mu\geq1$.} Applying \eqref{b0}, H\"older's and Young's inequalities, we obtain
		
		\begin{align}
			&\left|b\big(\mathscr{U}^k+\left[y(\vartheta_t\omega_k)-y(\vartheta_t\omega_0)\right]\g,\mathscr{U}^k+\left[y(\vartheta_t\omega_k)-y(\vartheta_t\omega_0)\right]\g,\u^0+y(\vartheta_t\omega_0)\g\big)\right|\nonumber\\&\leq\left|b\big(\mathscr{U}^k,\mathscr{U}^k+\left[y(\vartheta_t\omega_k)-y(\vartheta_t\omega_0)\right]\g,\u^0+y(\vartheta_t\omega_0)\g\big)\right|\nonumber\\&\quad+\left|y(\vartheta_t\omega_k)-y(\vartheta_t\omega_0)\right|\left|b\big(\g,\mathscr{U}^k+\left[y(\vartheta_t\omega_k)-y(\vartheta_t\omega_0)\right]\g,\u^k+y(\vartheta_t\omega_k)\g\big)\right|\nonumber\\&\leq\frac{1}{2\beta}\|\nabla\mathscr{U}^k\|_{\H}+\frac{\beta}{2}\left\|\left|\u^0+y(\vartheta_{t}\omega_0)\g\right|\left[\mathscr{U}^k+\left(y(\vartheta_t\omega_k)-y(\vartheta_t\omega_0)\right)\g\right]\right\|^2_{\H}\nonumber\\&\quad+C\left|y(\vartheta_t\omega_k)-y(\vartheta_t\omega_0)\right|^2+\frac{\beta}{2}\left\|\left|\u^k+y(\vartheta_{t}\omega_k)\g\right|\left[\mathscr{U}^k+\left(y(\vartheta_t\omega_k)-y(\vartheta_t\omega_0)\right)\g\right]\right\|^2_{\H}.\label{LC9-A}
		\end{align}
		Combining \eqref{LC2-A}-\eqref{LC9-A}, we arrive at
		\begin{align}\label{LC10-A}
			\frac{\d }{\d t}\|\mathscr{U}^k(t)\|^2_{\H}\leq C
			\left[\|\mathscr{U}^k(t)\|^2_{\H}+\widetilde{Q}(t)\right],
		\end{align}
		for a.e. $t\in[\tau,\tau+T]$ with $T>0$, and where
		\begin{align*}
				\widetilde{Q}&=\left|y(\vartheta_{t}\omega_k)-y(\vartheta_{t}\omega_0)\right|\bigg\{\|\u^k+y(\vartheta_t\omega_k)\g\|^{r+1}_{\wi\L^{r+1}}+\|\u^k\|^2_{\V}+\|\u^0+y(\vartheta_t\omega_0)\g\|^{r+1}_{\wi\L^{r+1}}+\|\u^0\|^2_{\V}+1\bigg\}\nonumber\\&\quad+\left|y(\vartheta_t\omega_k)-y(\vartheta_t\omega_0)\right|^2.
		\end{align*}
		We infer from \eqref{EI1-A} that
		\begin{align*}
			&\int_{\tau}^{\tau+T}\bigg\{\|\u^k(t)+y(\vartheta_t\omega_k)\g\|^{r+1}_{\wi\L^{r+1}}+\|\u^k(t)\|^2_{\H}+\|\nabla\u^k(t)\|^2_{\H}\bigg\}\d t\nonumber\\&\leq\|\u_{\tau}\|^2_{\L^2(\R^3)}+C\int_{\tau}^{\tau+T}\left[\|\f(t)\|^{2}_{\H}+\left|y(\vartheta_{t}\omega_k)\right|^2+\left|y(\vartheta_{t}\omega_k)\right|^{r+1}+\left|y(\vartheta_{t}\omega_k)\right|^{\frac{2(r+1)}{r-1}}\right]\d t,
		\end{align*}
		which gives
		\begin{align}\label{LC11-A}
			\sup_{k\in\N}\int_{\tau}^{\tau+T}\bigg\{\|\u^k(t)+y(\vartheta_t\omega_k)\g\|^{r+1}_{\wi\L^{r+1}}+\|\u^k(t)\|^2_{\H}+\|\nabla\u^k(t)\|^2_{\H}\bigg\}\d t\leq C(\tau,T,\omega_0),
		\end{align}
		where we have used \eqref{conv_z2} and the fact that $\f\in\mathrm{L}^2_{\text{loc}}(\R;\L^2(\R^3))$.
		Now, from $\f\in\mathrm{L}^2_{\text{loc}}(\R;\L^2(\R^3))$, $\u^0\in\mathrm{C}([\tau,+\infty);\H)\cap\mathrm{L}^2_{\mathrm{loc}}(\tau,+\infty;\V)\cap\mathrm{L}^{r+1}_{\mathrm{loc}}(\tau,+\infty;\widetilde{\L}^{r+1})$, Lemma \ref{conv_z} and \eqref{LC11-A}, we conclude that
		\begin{align}\label{LC13-A}
			\lim_{k\to+\infty}\int_{\tau}^{\tau+T}\widetilde{Q}(t)\d t=0.
		\end{align}
		In view of the Gronwall inequality in \eqref{LC10-A} and making use of \eqref{LC13-A}, one can complete the proof.
	\end{proof}
	
	Lemma \ref{Soln-A} ensures us that we can define a mapping $\widetilde{\Phi}:\R^+\times\R\times\Omega\times\H\to\H$ by
	\begin{align}\label{Phi-A}
		\widetilde{\Phi}(t,\tau,\omega,\v_{\tau}):=\v(t+\tau,\tau,\vartheta_{-\tau}\omega,\v_{\tau})=\u(t+\tau,\tau,\vartheta_{-\tau}\omega,\u_{\tau})+\g y(\vartheta_{t}\omega).
	\end{align}
	The Lusin continuity in Proposition \ref{LusinC-A} provides the $\mathscr{F}$-measurability of $\widetilde{\Phi}$. Consequently, $\widetilde{\Phi}$ defined by \eqref{Phi-A} is a NRDS on $\H$.

	\subsection{Backward convergence of NRDS}  Consider the following autonomous 3D stochastic CBF equations driven by the additive white noise:
	\begin{equation}\label{A-SCBF-A}
		\left\{
		\begin{aligned}
			\frac{\d\widetilde{\v}(t)}{\d t}+\mu \A\widetilde{\v}(t)+\B(\widetilde{\v}(t))+\alpha\widetilde{\v}(t) +\beta\mathcal{C}(\widetilde{\v}(t))&=\mathscr{P}\f_{\infty} +\g\frac{\d \W(t)}{\d t},\ \ t>0, \\
			\widetilde{\v}(x,0)&=\widetilde{\v}_{0}(x),\hspace{27mm}	x\in \R^3.
		\end{aligned}
		\right.
	\end{equation}
	Let $\widetilde{\u}(t,\omega)=\widetilde{\v}(t,\omega)-\g y(\vartheta_{t}\omega)$. Then, the system \eqref{A-SCBF-A} can be written in the following pathwise deterministic system:
	\begin{equation}\label{A-CCBF-A}
		\left\{
		\begin{aligned}
			\frac{\d\widetilde{\u}(t)}{\d t} & +\mu \A\widetilde{\u}(t)+ \B(\widetilde{\u}(t)+\g y(\vartheta_{t}\omega))+\alpha\widetilde{\u}(t) + \beta \mathcal{C}(\widetilde{\u}(t) +\g y(\vartheta_{t}\omega)) \\&\quad= \mathscr{P} \boldsymbol{f}_{\infty} + (\sigma-\alpha)\g y(\vartheta_{t}\omega) -\mu y(\vartheta_{t}\omega)\A\g , \quad t> 0 ,\\
			\widetilde{\u}(x,0)&=\widetilde{\u}_{0}(x)=\widetilde{\v}_{0}(x)-\g(x)y(\omega), \hspace{25mm} \ \ x\in\R^3,
		\end{aligned}
		\right.
	\end{equation}
	in $\V'+\widetilde{\L}^{\frac{r+1}{r}}$.
	\begin{proposition}\label{Back_conver-A}
		For both the cases given in Table \ref{Table}, let Assumption \ref{Hypo_f-N} hold and $\lim\limits_{\tau\to-\infty}\|\u_{\tau}-\widetilde{\u}_0\|_{\H}=0$. Then, the solution $\u$ of the system \eqref{CCBF-A} backward converges to the solution $\widetilde{\u}$ of the system \eqref{A-CCBF-A}, that is,
		\begin{align}
			\lim_{\tau\to -\infty}\|\u(T+\tau,\tau,\vartheta_{-\tau}\omega,\u_{\tau})-\widetilde{\u}(t,\omega,\widetilde{\u}_0)\|_{\H}=0, \ \ \text{ for all } \ T>0 \text{ and } \omega\in\Omega.
		\end{align}
	\end{proposition}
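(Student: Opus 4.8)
The plan is to follow the strategy of Proposition \ref{Back_conver}, exploiting the fact that in the additive framework the stochastic drift terms of \eqref{CCBF-A} and \eqref{A-CCBF-A} are \emph{identical} and therefore cancel in the difference equation. Setting $\mathscr{U}^{\tau}(\cdot):=\u(\cdot+\tau,\tau,\vartheta_{-\tau}\omega,\u_{\tau})-\widetilde{\u}(\cdot,\omega,\widetilde{\u}_0)$ and abbreviating $\u:=\u(\cdot+\tau,\tau,\vartheta_{-\tau}\omega,\u_{\tau})$, $\widetilde{\u}:=\widetilde{\u}(\cdot,\omega,\widetilde{\u}_0)$, I would subtract \eqref{A-CCBF-A} from \eqref{CCBF-A} written with $\omega$ replaced by $\vartheta_{-\tau}\omega$ and evaluated in shifted time, so that $y(\vartheta_{t+\tau}\vartheta_{-\tau}\omega)=y(\vartheta_t\omega)$. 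Since the extra drift $(\sigma-\alpha)\g y(\vartheta_t\omega)+\mu y(\vartheta_t\omega)\Delta\g$ then appears identically in both systems, it cancels, leaving only the genuine forcing difference $\mathscr{P}[\f(t+\tau)-\f_{\infty}]$ as inhomogeneity. Crucially, because $\mathscr{U}^{\tau}=(\u+\g y(\vartheta_t\omega))-(\widetilde{\u}+\g y(\vartheta_t\omega))$, the arguments of $\B$ and $\mathcal{C}$ differ exactly by $\mathscr{U}^{\tau}$, so \eqref{b0}, \eqref{441} and the monotonicity \eqref{MO_c} apply verbatim.

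Taking the $\H$-inner product with $\mathscr{U}^{\tau}$ and using \eqref{b0}, \eqref{441} yields
\begin{align*}
\frac{1}{2}\frac{\d}{\d t}\|\mathscr{U}^{\tau}\|^2_{\H}&=-\mu\|\nabla\mathscr{U}^{\tau}\|^2_{\H}-\alpha\|\mathscr{U}^{\tau}\|^2_{\H}+b\big(\mathscr{U}^{\tau},\mathscr{U}^{\tau},\widetilde{\u}+\g y(\vartheta_t\omega)\big)\\&\quad-\beta\big\langle\mathcal{C}(\u+\g y(\vartheta_t\omega))-\mathcal{C}(\widetilde{\u}+\g y(\vartheta_t\omega)),\mathscr{U}^{\tau}\big\rangle+(\f(t+\tau)-\f_{\infty},\mathscr{U}^{\tau}).
\end{align*}
The nonlinear term is discarded via \eqref{MO_c}, which bounds it above by $-\tfrac{\beta}{2}\||\u+\g y(\vartheta_t\omega)|^{\frac{r-1}{2}}\mathscr{U}^{\tau}\|^2_{\H}-\tfrac{\beta}{2}\||\widetilde{\u}+\g y(\vartheta_t\omega)|^{\frac{r-1}{2}}\mathscr{U}^{\tau}\|^2_{\H}$. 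For the trilinear term I would reuse the H\"older--Young splittings already carried out in \eqref{LC8-A}--\eqref{LC9-A}: for $r>3$ a pointwise Young inequality produces $\tfrac{\mu}{2}\|\nabla\mathscr{U}^{\tau}\|^2_{\H}+\tfrac{\beta}{4}\||\widetilde{\u}+\g y(\vartheta_t\omega)|^{\frac{r-1}{2}}\mathscr{U}^{\tau}\|^2_{\H}+C\|\mathscr{U}^{\tau}\|^2_{\H}$, while for $r=3$ the condition $2\beta\mu\geq1$ lets $\tfrac{1}{2\beta}\|\nabla\mathscr{U}^{\tau}\|^2_{\H}$ be absorbed into $\mu\|\nabla\mathscr{U}^{\tau}\|^2_{\H}$. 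In either case the weighted term is dominated by the negative contribution of \eqref{MO_c}, the gradient term by the viscosity, and the forcing is split as $(\f(t+\tau)-\f_{\infty},\mathscr{U}^{\tau})\leq\tfrac{\alpha}{2}\|\mathscr{U}^{\tau}\|^2_{\H}+C\|\f(t+\tau)-\f_{\infty}\|^2_{\L^2(\R^3)}$.

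The outcome is the differential inequality
\begin{align*}
\frac{\d}{\d t}\|\mathscr{U}^{\tau}(t)\|^2_{\H}\leq C\|\mathscr{U}^{\tau}(t)\|^2_{\H}+C\|\f(t+\tau)-\f_{\infty}\|^2_{\L^2(\R^3)},
\end{align*}
whose leading coefficient is, in contrast with the multiplicative case \eqref{BC6}, a \emph{pure constant}: there is no $\sigma y(\vartheta_t\omega)$ contribution and no $e^{-y(\vartheta_t\omega)}$ weight on the forcing, since the damping enters as $\alpha\u$ and the forcing enters without exponential factor. Applying Gronwall's inequality over $(0,T)$ gives
\begin{align*}
\|\mathscr{U}^{\tau}(T)\|^2_{\H}\leq e^{CT}\Big[\|\mathscr{U}^{\tau}(0)\|^2_{\H}+C\int_0^{T}\|\f(t+\tau)-\f_{\infty}\|^2_{\L^2(\R^3)}\d t\Big].
\end{align*}
Since $\|\mathscr{U}^{\tau}(0)\|^2_{\H}=\|\u_{\tau}-\widetilde{\u}_0\|^2_{\H}\to0$ by hypothesis, $e^{CT}$ is finite for fixed $T$, and $\int_0^{T}\|\f(t+\tau)-\f_{\infty}\|^2_{\L^2(\R^3)}\d t\leq\int_{-\infty}^{\tau+T}\|\f(t)-\f_{\infty}\|^2_{\L^2(\R^3)}\d t\to0$ as $\tau\to-\infty$ by \eqref{Hyp1}, the claim follows. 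I expect the only delicate point to be the critical exponent $r=3$, where absorbing the trilinear gradient term into the viscosity is exactly what forces the structural assumption $2\beta\mu\geq1$; everything else is a routine adaptation of the energy estimates of Lemma \ref{EI} and Proposition \ref{LusinC-A}.
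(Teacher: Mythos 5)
Your proposal is correct and follows essentially the same route as the paper's proof: the same difference equation (with the additive drift terms cancelling), the same use of \eqref{b0}, \eqref{441} and the monotonicity \eqref{MO_c}, the same two-case H\"older--Young treatment of the trilinear term (with $2\beta\mu\geq1$ absorbing the gradient term when $r=3$), and the same Gronwall argument combined with \eqref{Hyp1}. Your observation that the resulting Gronwall coefficient is a pure constant, unlike the $\omega$-dependent $S(t)$ in the multiplicative case \eqref{BC6}, is exactly what the paper's inequality \eqref{BC6-A} reflects.
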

	\begin{proof}
		Let $\mathscr{U}^{\tau}(\cdot):=\u(\cdot+\tau,\tau,\vartheta_{-\tau}\omega,\u_{\tau})-\widetilde{\u}(\cdot,\omega,\widetilde{\u}_0)$. From \eqref{CCBF-A} and \eqref{A-CCBF-A}, we get
		\begin{align}\label{BC1-A}
			\frac{\d\mathscr{U}^{\tau}}{\d t}&=-\mu \A\mathscr{U}^{\tau}-\alpha\mathscr{U}^{\tau}-\left[\B\big(\u+\g y(\vartheta_{t}\omega)\big)-\B\big(\widetilde{\u}+\g y(\vartheta_{t}\omega)\big)\right]\nonumber\\&\quad -\beta\left[\mathcal{C}\big(\u+\g y(\vartheta_{t}\omega)\big)-\mathcal{C}\big(\widetilde{\u}+\g y(\vartheta_{t}\omega)\big)\right]+\left[\mathscr{P}\f(t+\tau)-\mathscr{P}\f_{\infty}\right],
		\end{align}
		in $\V'+\widetilde{\L}^{\frac{r+1}{r}}$.  In view of \eqref{BC1-A}, we obtain
		\begin{align}\label{BC2-A}
			\frac{\d }{\d t}\|\mathscr{U}^{\tau}\|^2_{\H}&=-\mu\|\nabla\mathscr{U}^{\tau}\|^2_{\H}-\alpha\|\mathscr{U}^{\tau}\|^2_{\H}-\left\langle\B\big(\u+\g y(\vartheta_{t}\omega)\big)-\B\big(\widetilde{\u}+\g y(\vartheta_{t}\omega)\big),\u-\widetilde{\u}\right\rangle\nonumber\\&\quad-\beta \left\langle\mathcal{C}\big(\u+\g y(\vartheta_{t}\omega)\big)-\mathcal{C}\big(\widetilde{\u}+\g y(\vartheta_{t}\omega)\big),\u-\widetilde{\u}\right\rangle+(\f(t+\tau)-\f_{\infty},\mathscr{U}^{\tau}).
		\end{align}
		From \eqref{MO_c}, one can  rewrite
		\begin{align}\label{BC3-A}
			-&\beta \left\langle\mathcal{C}\big(\u+\g y(\vartheta_{t}\omega)\big)-\mathcal{C}\big(\widetilde{\u}+\g y(\vartheta_{t}\omega)\big),(\u+\g y(\vartheta_{t}\omega))-(\widetilde{\u}+\g y(\vartheta_{t}\omega))\right\rangle \nonumber\\&\leq-\frac{\beta}{2}\||\u+\g y(\vartheta_{t}\omega)|^{\frac{r-1}{2}}\mathscr{U}^{\tau}\|_{\H}^2-\frac{\beta}{2}\||\widetilde{\u}+\g y(\vartheta_{t}\omega)|^{\frac{r-1}{2}}\mathscr{U}^{\tau}\|_{\H}^2
		\end{align}
		Applying \eqref{b0}, \eqref{441}, H\"older's and Young's inequalities, we infer
		\begin{align}\label{BC4-A}
			&\left|\left\langle\B\big(\u+\g y(\vartheta_{t}\omega)\big)-\B\big(\widetilde{\u}+\g y(\vartheta_{t}\omega)\big),(\u+\g y(\vartheta_{t}\omega))-(\widetilde{\u}+\g y(\vartheta_{t}\omega))\right\rangle\right|\nonumber\\&=\left|b(\mathscr{U}^{\tau},\mathscr{U}^{\tau},\widetilde{\u}+\g y(\vartheta_{t}\omega))\right|\nonumber\\&\leq \begin{cases}
				\frac{\mu}{2}\|\nabla\mathscr{U}^{\tau}\|_{\H}^2+\frac{\beta}{4}\||\widetilde{\u}+\g y(\vartheta_{t}\omega)|^{\frac{r-1}{2}}|\mathscr{U}^{\tau}|\|^2_{\H}+C\|\mathscr{U}^{\tau}\|^2_{\H} ,&\text{ for } r>3,\\
				\frac{1}{2\beta}\|\nabla\mathscr{U}^{\tau}\|_{\H}^2+\frac{\beta}{2}\||\widetilde{\u}+\g y(\vartheta_{t}\omega)||\mathscr{U}^{\tau}|\|^2_{\H},&\text{ for } r=3 \text{ and } 2\beta\mu\geq1,
			\end{cases}
		\end{align}
		and
		\begin{align}\label{BC5-A}
			\left|(\f(t+\tau)-\f_{\infty},\mathscr{U}^{\tau})\right|\leq C\|\f(t+\tau)-\f_{\infty}\|^2_{\L^2(\R^3)}+\frac{\alpha}{2}\|\mathscr{U}^{\tau}\|^2_{\H}.
		\end{align}
		Combining \eqref{BC2-A}-\eqref{BC5-A}, we achieve
		\begin{align}\label{BC6-A}
			&\frac{\d }{\d t}\|\mathscr{U}^{\tau}\|^2_{\H}\leq C
				\|\mathscr{U}^{\tau}\|^2_{\H}+\|\f(t+\tau)-\f_{\infty}\|^2_{\L^2(\R^3)}.
		\end{align}
	 Applying similar steps as in Proposition \ref{Back_conver}, we complete the proof.
	\end{proof}
	\subsection{Increasing random absorbing sets}
	This subsection provides the existence of increasing $\mathfrak{D}$-random absorbing set for non-autonomous 3D stochastic CBF equations \eqref{SCBF-A}.
	\begin{lemma}\label{Absorbing-A}
		For both the cases given in Table \ref{Table} and for each $(\tau,\omega,D)\in\R\times\Omega\times\mathfrak{D},$ there exists a time $\widetilde{\mathcal{T}}:=\widetilde{\mathcal{T}}(\tau,\omega,D)>0$ such that
		\begin{align}\label{AB1-A}
			&\sup_{s\leq \tau}\sup_{t\geq \widetilde{\mathcal{T}}}\sup_{\u_{0}\in D(s-t,\vartheta_{-t}\omega)}\bigg[\|\u(s,s-t,\vartheta_{-s}\omega,\u_{0})\|^2_{\H}+\frac{\alpha}{2}\int_{s-t}^{s}e^{\alpha(\zeta-s)}\|\u(\zeta,s-t,\vartheta_{-s}\omega,\u_{0})\|^2_{\H}\d\zeta\nonumber\\&\quad+\mu\int_{s-t}^{s}e^{\alpha(\zeta-s)}\|\nabla\u(\zeta,s-t,\vartheta_{-s}\omega,\u_{0})\|^2_{\H}\d\zeta\nonumber\\&\quad+\beta\int_{s-t}^{s}e^{\alpha(\zeta-s)}\|\u(\zeta,s-t,\vartheta_{-s}\omega,\u_{0})+\g y(\vartheta_{\zeta-s}\omega)\|^{r+1}_{\wi\L^{r+1}}\d\zeta\bigg]\leq 2R\sup_{s\leq \tau}\widetilde{K}(s,\omega),
		\end{align}
		where $R$ is the  same as in \eqref{EI1-A} and $\widetilde{K}(s,\omega)$ is given by
		\begin{align}\label{AB2-A}
			\widetilde{K}(s,\omega):=\int_{-\infty}^{0}e^{\alpha\zeta}\left[\|\f(\zeta+s)\|^2_{\L^2(\R^3)}+y_1(\vartheta_{\zeta}\omega)\right]\d\zeta.
		\end{align}
	 Furthermore, for $2<k_1<\infty$, we infer that
		\begin{align}\label{AB11-A}
			&\lim_{t\to +\infty}\sup_{s\leq \tau}\sup_{\u_{0}\in D(s-t,\vartheta_{-t}\omega)}\int_{s-t}^{s}e^{\alpha(\zeta-s)}\|\u(\zeta,s-t,\vartheta_{-s}\omega,\u_{0})\|^{k_1}_{\H}\d\zeta\nonumber\\&\leq C\bigg(\int_{-\infty}^{0}e^{\frac{2(k_1-1)\alpha}{k_1^2}\zeta}\bigg[\|\f(\zeta+s)\|^2_{\L^2(\R^3)}+y_1(\vartheta_{\zeta}\omega)\bigg]\d\zeta\bigg)^{\frac{k_1}{2}},
		\end{align}
	and 
		\begin{align}\label{AB12-A}
		&\lim_{t\to +\infty}\sup_{s\leq \tau}\sup_{\u_{0}\in D(s-t,\vartheta_{-t}\omega)}\int_{s-t}^{s}e^{\alpha(\xi-s)}\|\nabla\u(\xi,s-t,\vartheta_{-s}\omega,\u_{0})\|^{k_1}_{\H}\d\xi\nonumber\\&\leq C\bigg[\int_{-\infty}^{0}e^{\frac{\alpha}{2k_1}\zeta}\bigg(\|\f(\zeta+s)\|^2_{\L^2(\R^3)}+y_1(\vartheta_{\zeta}\omega)+y_2(\vartheta_{\zeta}\omega)\bigg)\d\zeta\bigg]^{\frac{k_1}{2}}   \nonumber\\&\quad + C\bigg[ \int_{-\infty}^{0}e^{\frac{\alpha}{8k_1}\zeta_1}\left[\|\f(\zeta_1+s)\|^2_{\L^2(\R^3)}+y_1(\vartheta_{\zeta_1}\omega)\right]\d\zeta_1\bigg]^{k_1}.
	\end{align}
Finally, we obtain for $s\in\R$ and $t>0$
\begin{align}\label{AB18V-A}
	&\int_{s-t}^{s} (\zeta-s+t) e^{\alpha(\zeta-s)}\|\u(\zeta,s-t,\vartheta_{-s}\omega,\u_{0})+\g y(\vartheta_{\zeta-s}\omega)\|^{r+1}_{\wi\L^{3(r+1)}}\d\zeta
	\nonumber\\&\leq C(t+1)\bigg[ e^{-\frac{\alpha}{4}t}\|\u_{0}\|^4_{\H}+ \int_{-\infty}^{0}e^{\frac{\alpha}{2}\zeta}\bigg(\|\f(\zeta+s)\|^2_{\L^2(\R^3)}+y_1(\vartheta_{\zeta}\omega)+y_2(\vartheta_{\zeta}\omega)\bigg)\d\zeta\ \nonumber\\&\quad  + \bigg(\int_{-\infty}^{0}e^{\frac{\alpha}{8}\zeta}\left[\|\f(\zeta+s)\|^2_{\L^2(\R^3)}+y_1(\vartheta_{\zeta}\omega)\right]\d\zeta\bigg)^2\bigg].
\end{align}
	\end{lemma}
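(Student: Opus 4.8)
The plan is to transcribe the proof of Lemma \ref{Absorbing} into the additive setting, replacing the multiplicative energy inequalities \eqref{EI1} and \eqref{EI4} by their additive counterparts \eqref{EI1-A} and \eqref{EI2-A} from Lemma \ref{EI}. The essential simplification is that, since the noise no longer multiplies $\u$, the damping coefficient on the left of \eqref{EI1-A}--\eqref{EI2-A} is the deterministic constant $\alpha$ rather than $\alpha-2\sigma y(\vartheta_t\omega)$; consequently the integrating factor throughout is the clean exponential $e^{\alpha(\zeta-s)}$ and the whole stochastic contribution is packaged into the sub-exponential forcing term $y_1(\vartheta_\zeta\omega)$. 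Concretely, I would first write \eqref{EI1-A} along the trajectory $\u(\zeta)=\u(\zeta,s-t,\vartheta_{-s}\omega,\u_{0})$, apply the variation-of-constants formula on $(s-t,\xi)$ with factor $e^{\alpha(\zeta-\xi)}$, and evaluate at $\xi=s$. This yields the additive analogue of \eqref{AB3}--\eqref{AB4}: a bound for $\|\u(s)\|^2_{\H}$ together with the weighted integrals of $\|\u\|^2_{\H}$, $\|\nabla\u\|^2_{\H}$ and $\|\u+\g y\|^{r+1}_{\wi\L^{r+1}}$ by $e^{-\alpha t}\|\u_{0}\|^2_{\H}+2R\,\widetilde K(s,\omega)$ with $\widetilde K$ as in \eqref{AB2-A}.

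For part (i) I would then invoke the backward temperedness of $D$ together with \eqref{Z3}, exactly as in \eqref{v_0}, to pick $\widetilde{\mathcal T}$ so large that for $t\ge\widetilde{\mathcal T}$ the initial-data term $e^{-\alpha t}\sup_{s\le\tau}\|\u_{0}\|^2_{\H}$ is dominated by $R\sup_{s\le\tau}\widetilde K(s,\omega)$; taking the supremum over $s\le\tau$ then gives \eqref{AB1-A}. Before this one must check that $\widetilde K$ is finite and backward-uniform: this follows from Assumption \ref{Hypo_f-N} (via \eqref{G3}) for the $\|\f\|^2$ part, and from the sub-exponential growth of $y_1(\vartheta_\zeta\omega)$ guaranteed by \eqref{Z3} for the $y_1$ part, so that $\int_{-\infty}^0 e^{\alpha\zeta}y_1(\vartheta_\zeta\omega)\d\zeta<\infty$.

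For parts (ii) and (iii) I raise the pointwise bounds to the power $k_1/2$ and integrate against $e^{\alpha(\xi-s)}$, using $(a+b)^{p}\le C(a^{p}+b^{p})$ and the double-integral rearrangement that takes \eqref{AB-5} into \eqref{AB11}. Estimate \eqref{AB11-A} comes directly from the additive analogue of \eqref{AB3}. For \eqref{AB12-A} I first derive the additive counterpart of \eqref{AB14V}: starting from \eqref{EI2-A}, drop the positive term $\tfrac{C^*}{2}\|\u+\g y\|^{r+1}_{\wi\L^{3(r+1)}}$, run the uniform-Gronwall argument (variation of constants on $(\xi_1,\xi)$ followed by integration in $\xi_1$ over $(s-t,\xi)$, producing the weight $\tfrac{1}{\xi-s+t}+1$), and control the time-averages of the right-hand side $\|\u\|^2_{\V}+\|\u+\g y\|^{r+1}_{\wi\L^{r+1}}$ by the already-established estimate \eqref{AB1-A}. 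Raising this pointwise $\H^1$-bound to the power $k_1/2$ produces, besides the expected $k_1/2$-power forcing term, the additional $k_1$-power term in \eqref{AB12-A}, which is the genuinely new feature of the additive case and originates from feeding the quadratic absorbing bound back into the gradient estimate.

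Finally, for part (iv) I multiply \eqref{EI2-A} by the weight $(\zeta-s+t)$ and use the identity \eqref{AB17V} to rewrite $(\zeta-s+t)\tfrac{\d}{\d\zeta}[\,\cdots]$ as a total derivative minus a lower-order term, integrate over $(s-t,s)$, and bound the resulting integrals through \eqref{AB1-A} and the pointwise $\H^1$ control; this yields \eqref{AB18V-A}, with the quartic $\|\u_{0}\|^4_{\H}$ and the squared forcing integral again arising from substituting the quadratic absorbing bound. The main obstacle I anticipate is the handling of the supercritical nonlinearity $\|\u+\g y\|^{r+1}_{\wi\L^{3(r+1)}}$ together with the cross terms $b(\u+\g y,\u+\g y,\A\u)$ and $(\mathcal C(\u+\g y),\A\g\,y)$ generated by the additive shift $\g y(\vartheta_t\omega)$: these force the use of the regular ($\V$-valued, and via \eqref{ue23}, \eqref{ue26} the $\wi\L^{3(r+1)}$) estimates and the uniform-Gronwall weight $(\zeta-s+t)$, and they are precisely the terms responsible for the higher-order dependence on the data that distinguishes \eqref{AB12-A} and \eqref{AB18V-A} from their multiplicative counterparts \eqref{AB12} and \eqref{AB13}.
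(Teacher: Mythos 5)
Your proposal matches the paper's own proof essentially step for step: the same variation-of-constants plus backward-temperedness argument for \eqref{AB1-A}, the same $k_1/2$-power and double-integral rearrangement (as in \eqref{AB-5}) for \eqref{AB11-A}, the same uniform-Gronwall device (variation of constants on $(\xi_1,\xi)$ followed by integration in $\xi_1$, producing the weight $\tfrac{1}{\xi-s+t}+1$) for \eqref{AB12-A}, and the same $(\zeta-s+t)$-weighted total-derivative identity for \eqref{AB18V-A}. Your diagnosis of the $k_1$-power and squared-integral terms as arising from feeding the quadratic absorbing bound back into the gradient estimate is precisely the mechanism in the paper's \eqref{AB14V-A}, where the $\|\u\|^4_{\H}$ contribution is bounded by the square of \eqref{AB3-A}.
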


	\begin{proof}
		Let us consider the energy inequality \eqref{EI1-A} for $\u(\zeta):=\u(\zeta,s-t,\vartheta_{-s}\omega,\u_{0})$, that is,
		\begin{align*}
			&\frac{\d}{\d \zeta}\|\u(\zeta)\|^2_{\H}+\alpha\|\u(\zeta)\|^2_{\H}+\frac{\alpha}{2}\|\u(\zeta)\|^2_{\H}+\mu\|\nabla\u(\zeta)\|^2_{\H}+\beta\|\u(\zeta)+\g y(\vartheta_{\zeta-s}\omega)\|^{r+1}_{\wi\L^{r+1}}\nonumber\\&\leq R\left[\|\f(\zeta)\|^{2}_{\L^2(\R^3)}+y_1(\vartheta_{\zeta-s}\omega)\right],
		\end{align*}
		In view of the variation of constants formula with respect to $\zeta\in(s-t,\xi)$, 	\begin{align}\label{AB3-A}
			&\|\u(\xi,s-t,\vartheta_{-s}\omega,\u_{0})\|^2_{\H}+\frac{\alpha}{2}\int_{s-t}^{\xi}e^{\alpha(\zeta-\xi)}\|\u(\zeta,s-t,\vartheta_{-s}\omega,\u_{0})\|^2_{\H}\d\zeta\nonumber\\&+\mu\int_{s-t}^{\xi}e^{\alpha(\zeta-\xi)}\|\nabla\u(\zeta,s-t,\vartheta_{-s}\omega,\u_{0})\|^2_{\H}\d\zeta\nonumber\\&+\beta\int_{s-t}^{\xi}e^{\alpha(\zeta-\xi)}\|\u(\zeta,s-t,\vartheta_{-s}\omega,\u_{0})+\g y(\vartheta_{\zeta-s}\omega)\|^{r+1}_{\wi\L^{r+1}}\d\zeta\nonumber\\&\leq e^{-\alpha(\xi-s+t)}\|\u_{0}\|^2_{\H} + R\int_{-t}^{\xi-s}e^{\alpha(\zeta+s-\xi)}\left[\|\f(\zeta+s)\|^2_{\L^2(\R^3)}+y_1(\vartheta_{\zeta}\omega)\right]\d\zeta.
		\end{align}
		Putting $\xi=s$ in \eqref{AB3-A}, we find
		\begin{align}\label{AB4-A}
			&\|\u(s,s-t,\vartheta_{-s}\omega,\u_{0})\|^2_{\H}+\frac{\alpha}{2}\int_{s-t}^{s}e^{\alpha(\zeta-s)}\|\u(\zeta,s-t,\vartheta_{-s}\omega,\u_{0})\|^2_{\H}\d\zeta\nonumber\\&+\mu\int_{s-t}^{s}e^{\alpha(\zeta-s)}\|\nabla\u(\zeta,s-t,\vartheta_{-s}\omega,\u_{0})\|^2_{\H}\d\zeta\nonumber\\&+\beta\int_{s-t}^{s}e^{\alpha(\zeta-s)}\|\u(\zeta,s-t,\vartheta_{-s}\omega,\u_{0})+\g y(\vartheta_{\zeta-s}\omega)\|^{r+1}_{\wi\L^{r+1}}\d\zeta\nonumber\\&\quad\leq e^{-\alpha t}\|\u_{0}\|^2_{\H} +R\int_{-\infty}^{0}e^{\alpha\zeta}\left[\|\f(\zeta+s)\|^2_{\L^2(\R^3)}+y_1(\vartheta_{\zeta}\omega)\right]\d\zeta,
		\end{align}
		for all $s\leq\tau$. Since $\u_0\in D(s-t,\vartheta_{-t}\omega)$ and $D$ is backward tempered, the definition of backward temperedness \eqref{BackTem} ensures that there exists a time $\widetilde{\mathcal{T}}=\widetilde{\mathcal{T}}(\tau,\omega,D)$ such that for all $t\geq \widetilde{\mathcal{T}}$,
		\begin{align}\label{v_0-A}
			e^{-\alpha t}\sup_{s\leq \tau}\|\u_{0}\|^2_{\H}\leq R\int_{-\infty}^{0}e^{\alpha\zeta}\left[\|\f(\zeta+s)\|^2_{\L^2(\R^3)}+y_1(\vartheta_{\zeta}\omega)\right]\d\zeta.
		\end{align}
		Hence, Using \eqref{v_0-A} and taking supremum on $s$ over $(-\infty,\tau]$ in \eqref{AB4-A}, we arrive at \eqref{AB1-A}. Furthermore, the inequality \eqref{AB11-A} can be obtained by using \eqref{AB3-A} and following the similar arguments as in \eqref{AB-5}.

			From \eqref{EI2-A} for $\zeta>s-t$ and $\u(\zeta):=\u(\zeta,s-t,\vartheta_{-s}\omega,\u_{0})$, we have
	\begin{align}\label{AB12V-A}
		&\frac{\d}{\d \zeta}\|\nabla\u(\zeta)\|^2_{\H}+\alpha\|\nabla\u(\zeta)\|^2_{\H}\nonumber\\&\leq \widetilde{R}\bigg[\|\f(\zeta)\|^{2}_{\L^2(\R^3)}+\|\u(\zeta)+\g y(\vartheta_{\zeta-s}\omega)\|^{r+1}_{\wi\L^{r+1}}+\|\u(\zeta)\|^2_{\V}+\|\u(\zeta)\|^4_{\H}+y_2(\vartheta_{\zeta-s}\omega)\bigg],
	\end{align}

		In view of the variation of constants formula with respect to $\zeta\in(\xi_1,\xi)$ with $s-t<\xi_1\leq\xi\leq s$, we find
		 \begin{align}\label{AB13V-A}
			 &\|\nabla\u(\xi,s-t,\vartheta_{-s}\omega,\u_{0})\|^2_{\H}\nonumber\\&\leq e^{\alpha(\xi_1-\xi)} \|\nabla\u(\xi_1,s-t,\vartheta_{-s}\omega,\u_{0})\|^2_{\H}+C\int_{s-t}^{\xi}e^{\alpha(\zeta-\xi)}\bigg[\|\u(\zeta,s-t,\vartheta_{-s}\omega,\u_{0})\|^2_{\V}\nonumber\\&\quad+\|\u(\zeta,s-t,\vartheta_{-s}\omega,\u_{0})+\g y(\vartheta_{\zeta-s}\omega)\|^{r+1}_{\wi\L^{r+1}}+ \|\u(\zeta,s-t,\vartheta_{-s}\omega,\u_{0})\|^4_{\H}\bigg]\d\zeta \nonumber\\&\quad+C\int_{s-t}^{\xi}e^{\alpha(\zeta-\xi)}\bigg[\|\f(\zeta)\|^2_{\L^2(\R^3)}+y_2(\vartheta_{\zeta-s}\omega)\bigg]\d\zeta.
		\end{align}
		Integrating \eqref{AB13V-A} from $s-t$ to $\xi$ with respect to $\xi_1$ and using \eqref{AB3-A}, we obtain for $2<k_1<\infty$ 
		{\small \begin{align}\label{AB14V-A}
			&\|\nabla\u(\xi,s-t,\vartheta_{-s}\omega,\u_{0})\|^2_{\H}\nonumber\\&\leq C\bigg\{\frac{1}{\xi-s+t}+1\bigg\}\bigg[e^{-\alpha(\xi-s+t)}\|\u_{0}\|^2_{\H} + \int_{s-t}^{\xi}e^{\alpha(\zeta-\xi)}\bigg(\|\f(\zeta)\|^2_{\L^2(\R^3)}+y_1(\vartheta_{\zeta-s}\omega)+y_2(\vartheta_{\zeta-s}\omega)\bigg)\d\zeta\bigg]     \nonumber\\&\quad+C\int_{s-t}^{\xi}e^{\alpha(\zeta-\xi)} \|\u(\zeta,s-t,\vartheta_{-s}\omega,\u_{0})\|^4_{\H}\d\zeta
			\nonumber\\&\leq C\bigg\{\frac{1}{\xi-s+t}+1\bigg\}\bigg[e^{-\alpha(\xi-s+t)}\|\u_{0}\|^2_{\H} + \int_{s-t}^{\xi}e^{\alpha(\zeta-\xi)}\bigg(\|\f(\zeta)\|^2_{\L^2(\R^3)}+y_1(\vartheta_{\zeta-s}\omega)+y_2(\vartheta_{\zeta-s}\omega)\bigg)\d\zeta\bigg]    \nonumber\\&\quad+C\int_{s-t}^{\xi}e^{\alpha(\zeta-\xi)}\bigg[ e^{-2\alpha(\zeta-s+t)}\|\u_{0}\|^4_{\H} + \bigg(\int_{-t}^{\zeta-s}e^{\alpha(\zeta_1+s-\zeta)}\left[\|\f(\zeta_1+s)\|^2_{\L^2(\R^3)}+y_1(\vartheta_{\zeta_1}\omega)\right]\d\zeta_1\bigg)^2\bigg]\d\zeta\nonumber\\&\leq C\bigg\{\frac{1}{\xi-s+t}+1\bigg\}\bigg[e^{-\frac{\alpha}{2k_1}(\xi-s+t)}\|\u_{0}\|^2_{\H} + \int_{-t}^{\xi-s}e^{\frac{\alpha}{2k_1}(\zeta+s-\xi)}\bigg(\|\f(\zeta+s)\|^2_{\L^2(\R^3)}+y_1(\vartheta_{\zeta}\omega)+y_2(\vartheta_{\zeta}\omega)\bigg)\d\zeta\bigg]    \nonumber\\&\quad+C\int_{s-t}^{\xi}e^{\frac{\alpha}{2k_1}(\zeta-\xi)}\bigg[ e^{-\frac{\alpha}{4k_1}(\zeta-s+t)}\|\u_{0}\|^4_{\H} + \bigg(\int_{-t}^{\zeta-s}e^{\frac{\alpha}{8k_1}(\zeta_1+s-\zeta)}\left[\|\f(\zeta_1+s)\|^2_{\L^2(\R^3)}+y_1(\vartheta_{\zeta_1}\omega)\right]\d\zeta_1\bigg)^2\bigg]\d\zeta
			\nonumber\\&\leq Ce^{-\frac{\alpha}{2k_1}(\xi-s)}\bigg\{\frac{1}{\xi-s+t}+1\bigg\}\bigg[e^{-\frac{\alpha}{2k_1}t}\|\u_{0}\|^2_{\H} + \int_{-\infty}^{0}e^{\frac{\alpha}{2k_1}\zeta}\bigg(\|\f(\zeta+s)\|^2_{\L^2(\R^3)}+y_1(\vartheta_{\zeta}\omega)+y_2(\vartheta_{\zeta}\omega)\bigg)\d\zeta\bigg]    \nonumber\\&\quad+C\int_{s-t}^{\xi}e^{\frac{\alpha}{2k_1}(\zeta-\xi)}e^{-\frac{\alpha}{4k_1}(\zeta-s)}\d\zeta\bigg[ e^{-\frac{\alpha}{4k_1}t}\|\u_{0}\|^4_{\H} + \bigg(\int_{-\infty}^{0}e^{\frac{\alpha}{8k_1}\zeta_1}\left[\|\f(\zeta_1+s)\|^2_{\L^2(\R^3)}+y_1(\vartheta_{\zeta_1}\omega)\right]\d\zeta_1\bigg)^2\bigg]
			\nonumber\\&\leq Ce^{-\frac{\alpha}{2k_1}(\xi-s)}\bigg\{\frac{1}{\xi-s+t}+1\bigg\}\bigg[e^{-\frac{\alpha}{2k_1}t}\|\u_{0}\|^2_{\H} + \int_{-\infty}^{0}e^{\frac{\alpha}{2k_1}\zeta}\bigg(\|\f(\zeta+s)\|^2_{\L^2(\R^3)}+y_1(\vartheta_{\zeta}\omega)+y_2(\vartheta_{\zeta}\omega)\bigg)\d\zeta\bigg]    \nonumber\\&\quad+Ce^{-\frac{\alpha}{4k_1}(\xi-s)}\bigg[ e^{-\frac{\alpha}{4k_1}t}\|\u_{0}\|^4_{\H} + \bigg(\int_{-\infty}^{0}e^{\frac{\alpha}{8k_1}\zeta_1}\left[\|\f(\zeta_1+s)\|^2_{\L^2(\R^3)}+y_1(\vartheta_{\zeta_1}\omega)\right]\d\zeta_1\bigg)^2\bigg].
		\end{align}}

		Now from \eqref{AB14V-A}, we estimate for $2<k_1<\infty$ 
		\begin{align}\label{AB15V-A}
			&\int_{s-t}^{s}e^{\alpha(\xi-s)}\|\nabla\u(\xi,s-t,\vartheta_{-s}\omega,\u_{0})\|^{k_1}_{\H}\d\xi\nonumber\\&\leq C\int_{s-t}^{s}e^{\frac{3\alpha}{4}(\xi-s)}\bigg\{\frac{1}{\xi-s+t}+1\bigg\}^{\frac{k_1}{2}}\d\xi \nonumber\\&\qquad\times\bigg[e^{-\frac{\alpha}{2k_1}t}\|\u_{0}\|^2_{\H} + \int_{-\infty}^{0}e^{\frac{\alpha}{2k_1}\zeta}\bigg(\|\f(\zeta+s)\|^2_{\L^2(\R^3)}+y_1(\vartheta_{\zeta}\omega)+y_2(\vartheta_{\zeta}\omega)\bigg)\d\zeta\bigg]^{\frac{k_1}{2}}   \nonumber\\&\quad + C\int_{s-t}^{s}e^{\frac{7\alpha}{8}(\xi-s)}\d\xi\bigg[ e^{-\frac{\alpha}{4k_1}t}\|\u_{0}\|^4_{\H} + \bigg(\int_{-\infty}^{0}e^{\frac{\alpha}{8k_1}\zeta_1}\left[\|\f(\zeta_1+s)\|^2_{\L^2(\R^3)}+y_1(\vartheta_{\zeta_1}\omega)\right]\d\zeta_1\bigg)^2\bigg]^{\frac{k_1}{2}}.
		\end{align}
		Since $$\int_{-t}^{0}e^{c\xi}\bigg[\frac{1}{\xi+t}+1\bigg]^{\frac{k_1}{2}}\d\xi\to\frac{1}{c}\ \text{ as }\ t\to\infty,$$ using \eqref{Z3} and the backward-uniform temperedness property \eqref{BackTem} of $\u_0$ (see \eqref{v_0} above), we obtain \eqref{AB12-A}, as desired.
		
		Again from \eqref{EI2-A}, we write for $\zeta>s-t$
		\begin{align*}
		&\frac{\d}{\d \zeta}\|\nabla\u(\zeta)\|^2_{\H}+\alpha\|\nabla\u(\zeta)\|^2_{\H}+\frac{C^*}{2}\|\u(\zeta)+\g y(\vartheta_{\zeta-s}\omega)\|^{r+1}_{\wi\L^{3(r+1)}}\nonumber\\&\leq \widetilde{R}\bigg[\|\f(\zeta)\|^{2}_{\L^2(\R^3)}+\|\u(\zeta)+\g y(\vartheta_{\zeta-s}\omega)\|^{r+1}_{\wi\L^{r+1}}+\|\u(\zeta)\|^2_{\V}+\|\u(\zeta)\|^4_{\H}+y_2(\vartheta_{\zeta-s}\omega)\bigg],
	\end{align*}
		which implies that
		\begin{align}\label{AB16V-A}
			&(\zeta-s+t)\frac{\d}{\d\zeta} \bigg[e^{\alpha(\zeta-s)}\|\nabla\u(\zeta)\|^2_{\H}\bigg]  +\frac{C^*}{2} (\zeta-s+t) e^{\alpha(\zeta-s)}\|\u(\zeta)+\g y(\vartheta_{\zeta-s}\omega)\|^{r+1}_{\wi\L^{3(r+1)}} \nonumber\\&\leq Ce^{\alpha(\zeta-s)}(\zeta-s+t)\bigg[\|\f(\zeta)\|^{2}_{\L^2(\R^3)}+\|\u(\zeta)+\g y(\vartheta_{\zeta-s}\omega)\|^{r+1}_{\wi\L^{r+1}}+\|\u(\zeta)\|^2_{\V}\nonumber\\&\quad+\|\u(\zeta)\|^4_{\H}+y_2(\vartheta_{\zeta-s}\omega)\bigg].
		\end{align}
	
		We know that
		\begin{align}\label{AB17V-A}
			&(\zeta-s+t)\frac{\d}{\d\zeta}\bigg[e^{\alpha(\zeta-s)} \|\nabla\u(\zeta)\|^2_{\H}\bigg]=\frac{\d}{\d\zeta}\bigg[(\zeta-s+t) e^{\alpha(\zeta-s)} \|\nabla\u(\zeta)\|^2_{\H}\bigg]- e^{\alpha(\zeta-s)}\|\nabla\u(\zeta)\|^2_{\H}.
		\end{align}
		From \eqref{AB16V-A} and \eqref{AB17V-A}, we infer
		\begin{align*}
			&\int_{s-t}^{s} (\zeta-s+t) e^{\alpha(\zeta-s)}\|\u(\zeta,s-t,\vartheta_{-s}\omega,\u_{0})+\g y(\vartheta_{\zeta-s}\omega)\|^{r+1}_{\wi\L^{3(r+1)}}\d\zeta\nonumber\\&
			\leq C\int_{s-t}^{s}(\zeta-s+t+1)e^{\alpha(\zeta-s)}\bigg[\|\f(\zeta)\|^{2}_{\L^2(\R^3)}+\|\u(\zeta)+\g y(\vartheta_{\zeta-s}\omega)\|^{r+1}_{\wi\L^{r+1}}+\|\u(\zeta)\|^2_{\V}+\|\u(\zeta)\|^4_{\H}\nonumber\\&\quad+y_2(\vartheta_{\zeta-s}\omega)\bigg]\d\zeta \nonumber\\&\leq C(t+1)\bigg[e^{-\frac{\alpha}{2}t}\|\u_{0}\|^2_{\H} + e^{-\frac{\alpha}{4}t}\|\u_{0}\|^4_{\H}+ \int_{-\infty}^{0}e^{\frac{\alpha}{2}\zeta}\bigg(\|\f(\zeta+s)\|^2_{\L^2(\R^3)}+y_1(\vartheta_{\zeta}\omega)+y_2(\vartheta_{\zeta}\omega)\bigg)\d\zeta\ \nonumber\\&\quad  + \bigg(\int_{-\infty}^{0}e^{\frac{\alpha}{8}\zeta}\left[\|\f(\zeta+s)\|^2_{\L^2(\R^3)}+y_1(\vartheta_{\zeta}\omega)\right]\d\zeta\bigg)^2\bigg],
		\end{align*}
		where we have used \eqref{AB4-A}, which completes the proof.
		
	\end{proof}
	\begin{proposition}\label{IRAS-A}
		For both the cases given in Table \ref{Table}, suppose that Assumption \ref{Hypo_f-N} holds. For $R$ and $\widetilde{K}(s,\omega), $ the same as in \eqref{EI1-A} and \eqref{AB2-A}, respectively, we have
		\vskip 2mm
		\noindent
		\emph{(i)} There is an increasing pullback $\mathfrak{D}$-random absorbing set $\mathcal{R}$ given by
		\begin{align}\label{IRAS1-N}
			\mathcal{R}(\tau,\omega):=\left\{\v\in\H:\|\v\|^2_{\H}\leq 4R\sup_{s\leq \tau} \widetilde{K}(s,\omega)+2\|\g\|^2_{\H}\left|y(\omega)\right|^2\right\}, \text{ for all } \tau\in\R \text{ and } \omega\in\Omega.
		\end{align}
		Moreover, $\mathcal{R}$ is backward-uniformly tempered with arbitrary rate, that is, $\mathcal{R}\in{\mathfrak{D}}$.
		\vskip 2mm
		\noindent
		\emph{(ii)} There is a $\mathfrak{B}$-pullback \textbf{random} absorbing set $\widetilde{\mathcal{R}}$ given by
		\begin{align}\label{IRAS11-N}
			\widetilde{\mathcal{R}}(\tau,\omega):=\left\{\v\in\H:\|\v\|^2_{\H}\leq 4R \widetilde{K}(\tau,\omega)+2\|\g\|^2_{\H}\left|y(\omega)\right|^2\right\}\in{\mathfrak{B}}, \text{ for all } \tau\in\R \text{ and } \omega\in\Omega.
		\end{align}
	\end{proposition}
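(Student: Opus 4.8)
The plan is to deduce the entire statement from the uniform energy estimate \eqref{AB1-A} of Lemma \ref{Absorbing-A}, combined with the change of variables $\u=\v-\g y(\vartheta_{\cdot}\omega)$ built into the definition \eqref{Phi-A} of $\widetilde{\Phi}$. First I would record the cocycle identity
\[
\widetilde{\Phi}(t,s-t,\vartheta_{-t}\omega,\v_0)=\v(s,s-t,\vartheta_{-s}\omega,\v_0)=\u(s,s-t,\vartheta_{-s}\omega,\u_0)+\g y(\omega),
\]
valid for every $s\le\tau$, where $\u_0=\v_0-\g y(\vartheta_{-t}\omega)$. Since the translation $\g y$ is tempered, letting $\v_0$ range over a set $D\in\mathfrak{D}$ forces $\u_0$ to range over a set of the same universe, so \eqref{AB1-A} applies and the elementary inequality $\|a+b\|_{\H}^2\le 2\|a\|_{\H}^2+2\|b\|_{\H}^2$ gives, for all $t\ge\widetilde{\mathcal{T}}(\tau,\omega,D)$ and uniformly in $s\le\tau$,
\[
\|\widetilde{\Phi}(t,s-t,\vartheta_{-t}\omega,\v_0)\|_{\H}^2\le 4R\sup_{s\le\tau}\widetilde{K}(s,\omega)+2\|\g\|_{\H}^2|y(\omega)|^2.
\]
This is exactly the radius in \eqref{IRAS1-N}, so $\mathcal{R}$ is a pullback $\mathfrak{D}$-absorbing set, uniformly over the infinite interval $(-\infty,\tau]$; part (ii) is the same computation with the supremum over $s$ dropped and $\widetilde{K}(\tau,\omega)$ in place of $\sup_{s\le\tau}\widetilde{K}(s,\omega)$, coming from \eqref{AB4-A} with $s=\tau$.

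Second, the increasing property and measurability are immediate here. The map $\tau\mapsto\sup_{s\le\tau}\widetilde{K}(s,\omega)$ is nondecreasing, whence $\mathcal{R}(\tau_1,\omega)\subseteq\mathcal{R}(\tau_2,\omega)$ whenever $\tau_1\le\tau_2$. For measurability I would exploit a simplification special to the additive case: since $y_1(\vartheta_{\zeta}\omega)$ carries no dependence on $s$, the definition \eqref{AB2-A} splits as
\[
\widetilde{K}(s,\omega)=\int_{-\infty}^{0}e^{\alpha\zeta}\|\f(\zeta+s)\|_{\L^2(\R^3)}^2\d\zeta+\int_{-\infty}^{0}e^{\alpha\zeta}y_1(\vartheta_{\zeta}\omega)\d\zeta,
\]
so that $\sup_{s\le\tau}\widetilde{K}(s,\omega)$ equals a purely deterministic quantity (finite by \eqref{G3}) plus an $s$-independent, manifestly $\mathscr{F}$-measurable functional of $\omega$. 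Hence the radius of $\mathcal{R}(\tau,\omega)$ is measurable in $\omega$, and $\mathcal{R}$ is a random set.

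Third, for $\mathcal{R}\in\mathfrak{D}$ I must verify $\lim_{t\to+\infty}e^{-ct}\sup_{s\le\tau}\|\mathcal{R}(s-t,\vartheta_{-t}\omega)\|_{\H}^2=0$ for every $c>0$. Writing the radius of $\mathcal{R}(s-t,\vartheta_{-t}\omega)$ as $4R\sup_{\sigma\le s-t}\widetilde{K}(\sigma,\vartheta_{-t}\omega)+2\|\g\|_{\H}^2|y(\vartheta_{-t}\omega)|^2$ and using the same splitting, the forcing contribution $\sup_{\sigma\le s-t}\int_{-\infty}^{0}e^{\alpha\zeta}\|\f(\zeta+\sigma)\|_{\L^2(\R^3)}^2\d\zeta$ is bounded independently of $t$ by \eqref{G3}, while the remaining contributions $\int_{-\infty}^{0}e^{\alpha\zeta}y_1(\vartheta_{\zeta-t}\omega)\d\zeta$ and $|y(\vartheta_{-t}\omega)|^2$ grow only sub-exponentially in $t$ by the Ornstein-Uhlenbeck estimates \eqref{Z3}; multiplying by $e^{-ct}$ sends all three to zero. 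The analogous but easier single-parameter computation yields $\widetilde{\mathcal{R}}\in\mathfrak{B}$.

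The step I expect to be the main obstacle is the backward-uniform temperedness $\mathcal{R}\in\mathfrak{D}$: one has to control the supremum over the infinite interval $\sigma\le s-t$ (and over $s\le\tau$) of $\widetilde{K}(\sigma,\vartheta_{-t}\omega)$ uniformly as $t\to+\infty$, and it is precisely here that the uniform-integrability form \eqref{G3} of Assumption \ref{Hypo_f-N} and the sub-exponential growth \eqref{Z3} of $y$ must be combined. This is the feature that distinguishes the present backward-uniform construction from a standard pullback absorbing set, and the argument parallels that of \cite[Proposition 4.6]{RKM}.
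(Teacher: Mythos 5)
Your proof is correct, and in its main lines it is exactly the argument that the paper compresses into the citation of \cite[Proposition 3.6]{RKM}: absorption follows from \eqref{AB1-A} (resp.\ \eqref{AB4-A} with $s=\tau$ for part (ii)) after the affine substitution $\v=\u+\g y$, with the elementary inequality $\|a+b\|_{\H}^2\le2\|a\|_{\H}^2+2\|b\|_{\H}^2$ producing precisely the radii in \eqref{IRAS1-N} and \eqref{IRAS11-N}; the monotonicity of $\tau\mapsto\sup_{s\le\tau}\widetilde{K}(s,\omega)$ gives the increasing property; and \eqref{G3} combined with the subexponential growth \eqref{Z3} of the Ornstein--Uhlenbeck process yields $\mathcal{R}\in\mathfrak{D}$ and $\widetilde{\mathcal{R}}\in\mathfrak{B}$. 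Your hypothesis-matching step --- that $\u_0=\v_0-\g y(\vartheta_{-t}\omega)$ ranges over a family still belonging to $\mathfrak{D}$ (resp.\ $\mathfrak{B}$) when $\v_0$ does --- is needed in order to invoke Lemma \ref{Absorbing-A}, and you state it correctly. The one genuine departure from the paper's route is your direct measurability argument in part (i): since \eqref{AB2-A} splits additively into an $s$-dependent but deterministic integral and an $s$-independent, manifestly measurable functional of $\omega$, the uncountable supremum $\sup_{s\le\tau}\widetilde{K}(s,\omega)$ is a deterministic constant plus a measurable function, so $\mathcal{R}(\tau,\omega)$ is itself a random set. This observation is correct but special to additive noise; it addresses head-on the difficulty (radius defined by a supremum over the uncountable set $(-\infty,\tau]$) that the paper flags in Subsection \ref{D&A} and resolves by a different device, namely proving measurability only for the non-increasing absorbing set $\widetilde{\mathcal{R}}$ of part (ii) and later identifying the uniformly compact attractor with the usual random attractor. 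Your splitting would fail in the multiplicative case \eqref{IRAS1}, where the random exponential weight multiplies the forcing inside the integral defining $K(s,\omega)$, so the paper's indirect route is the one that works uniformly for both noise types, while yours buys a cleaner, self-contained statement in the additive setting. The only caveat, inherited from the statement itself rather than introduced by you, is the degenerate situation $\widetilde{K}(\tau,\omega)=0$, in which the absorption inequality $e^{-\alpha t}\|\u_{0}\|_{\H}^2\le R\widetilde{K}(\tau,\omega)$ cannot be achieved at finite time; this is standard and ignored throughout this literature.
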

	\begin{proof}
		See the proof of \cite[Proposition 3.6]{RKM}.
	\end{proof}
	
	\subsection{Backward uniform tail-estimates and backward flattening-property}
	In this subsection, we prove the backward tail-estimates and backward flattening-property for the solution of \eqref{2-A} for both the cases given in Table \ref{Table}. These estimates help us to prove the backward uniform pullback $\mathfrak{D}$-asymptotic compactness of the solution of \eqref{CCBF-A}. We will use the cut-off function (same as in Lemma \ref{largeradius}) to obtain these estimates. The following lemma provides the backward uniform tail-estimates for the solution of the system \eqref{2-A}.

	\begin{lemma}\label{largeradius-A}
		For both the cases given in Table \ref{Table}, suppose that Assumption \ref{Hypo_f-N} is satisfied. Then, for any $(\tau,\omega,D)\in\R\times\Omega\times\mathfrak{D},$ the solution of \eqref{SCBF-A} satisfies
		\begin{align}\label{ep-A}
			&\lim_{k,t\to+\infty}\sup_{s\leq \tau}\sup_{\u_{0}\in D(s-t,\vartheta_{-t}\omega)}\|\u(s,s-t,\vartheta_{-s}\omega,\u_{0})\|^2_{\mathbb{L}^2(\mathcal{O}^{c}_{k})}=0,
		\end{align}
		where $\mathcal{O}_{k}=\{x\in\R^3:|x|\leq k\}.$
	\end{lemma}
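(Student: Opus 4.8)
The plan is to replicate the strategy of Lemma \ref{largeradius}, adapting it to the additive structure of \eqref{2-A}. I keep the same smooth cut-off $\uprho$ satisfying $\uprho(\xi)=0$ for $0\le\xi\le1$ and $\uprho(\xi)=1$ for $\xi\ge2$, and first derive a rigorous expression for the pressure. Taking the divergence of the first equation of \eqref{2-A} and using that both $\u$ and $\g$ are divergence free (so that $\g\in\D(\A)\subset\V$ makes the two extra forcing terms $(\sigma-\alpha)\g y(\vartheta_{t}\omega)$ and $\mu y(\vartheta_{t}\omega)\Delta\g$ disappear under $\nabla\cdot$), I obtain in the weak sense
\begin{align*}
	p=(-\Delta)^{-1}\bigg[\sum_{i,j=1}^{3}\frac{\partial^2}{\partial x_i\partial x_j}\big((u_i+g_iy)(u_j+g_jy)\big)+\beta\nabla\cdot\big[|\u+\g y|^{r-1}(\u+\g y)\big]-\nabla\cdot\f\bigg],
\end{align*}
where $y:=y(\vartheta_{t}\omega)$. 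Next I would take the $\L^2(\R^3)$-inner product of the first equation of \eqref{2-A} with $\uprho\left(\frac{|x|^2}{k^2}\right)\u$, producing an identity in direct analogy with \eqref{ep1}.

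Then I estimate every resulting term. The viscous contribution, treated by integration by parts and the divergence-free condition, yields a factor $\frac{C}{k}\big[\|\u\|^2_{\H}+\|\nabla\u\|^2_{\H}\big]$ as in \eqref{ep2}, since $\uprho'$ is supported in $\{k\le|x|\le\sqrt{2}\,k\}$. The convective, damping and pressure terms are handled by Plancherel's theorem, the embedding $\L^{\frac{6}{5}}(\R^3)\hookrightarrow\dot{\mathbb{H}}^{-1}(\R^3)$ of Remark \ref{Hdot}, together with interpolation and Young's inequalities, following the chain \eqref{361}--\eqref{S_3(d,r)}. The genuinely new feature is that the nonlinearities involve $\u+\g y$ and that two $\g$-driven source terms are present; after expanding $|\u+\g y|^{r-1}(\u+\g y)$ and $(\u+\g y)\otimes(\u+\g y)$, every contribution carrying a factor $\g$ is integrated against $\uprho$ or $\uprho'$ and is therefore supported in $\{|x|\ge k\}$. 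Since $\g\in\D(\A)\subset\L^2(\R^3)$, the tails $\int_{|x|\ge k}\big(|\g|^2+|\nabla\g|^2+|\Delta\g|^2\big)\d x\to0$ as $k\to\infty$; combined with the local-in-time boundedness of $y$ from \eqref{Z3} and the absorbing bounds of Lemma \ref{Absorbing-A}, these pieces vanish in the limit.

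Assembling the estimates gives a differential inequality of the form
\begin{align*}
	\frac{\d}{\d t}\|\u\|^2_{\mathbb{L}^2(\mathcal{O}_k^{c})}+\alpha\|\u\|^2_{\mathbb{L}^2(\mathcal{O}_k^{c})}\le\frac{C}{k}\big[\,\cdots\,\big]+\frac{C}{k}\int_{|x|\ge k}|\f|^2\d x+C\int_{|x|\ge k}\big(|\g|^2+|\Delta\g|^2\big)\d x,
\end{align*}
whose bracket collects $\|\u\|^2_{\V}$, higher powers of $\|\u\|_{\H}$, the term $\|\u+\g y\|^{r+1}_{\wi\L^{r+1}}$, suitable powers of $\|\u+\g y\|_{\wi\L^{3(r+1)}}$, and $\|\f\|^2_{\dot{\mathbb{H}}^{-1}(\R^3)}$. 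Applying the variation of constants formula on $(s-t,s)$ and replacing $\omega$ by $\vartheta_{-s}\omega$, I would bound the time integrals of these quantities, uniformly for $s\le\tau$, using \eqref{AB11-A}, \eqref{AB12-A} and in particular the higher-order estimate \eqref{AB18V-A} of Lemma \ref{Absorbing-A}, together with the backward temperedness \eqref{BackTem} of the initial data, \eqref{Z3}, Assumption \ref{Hypo_f-N} and \eqref{f3-N} for the tail of $\f$. The main obstacle, exactly as in the multiplicative case, is the nonlinear damping term together with the pressure piece $\||\u+\g y|^{r-1}(\u+\g y)\|_{\dot{\mathbb{H}}^{-1}(\R^3)}$: through Remark \ref{Hdot} and interpolation this reduces to a non-integer power of $\|\u+\g y\|_{\wi\L^{3(r+1)}}$, whose uniform-in-$s$ time integral requires a H\"older splitting (as in the passage to \eqref{ep7}) that first adjusts the exponent and then invokes \eqref{AB18V-A}. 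Letting first $t\to+\infty$ and then $k\to+\infty$ then yields \eqref{ep-A}.
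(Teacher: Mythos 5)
Your proposal follows essentially the same route as the paper's proof: the same cut-off function and pressure representation \eqref{p-value-A}, the same term-by-term estimates via Plancherel's theorem, the embedding $\L^{\frac{6}{5}}(\R^3)\hookrightarrow\dot{\mathbb{H}}^{-1}(\R^3)$, interpolation and Young's inequalities, the same differential inequality closed by the variation of constants formula, and the same appeal to Lemma \ref{Absorbing-A} (in particular the weighted estimate \eqref{AB18V-A} with the H\"older splitting as in \eqref{ep7}) together with \eqref{BackTem}, \eqref{Z3}, \eqref{f3-N} and the vanishing tails of $\g\in\D(\A)$. The only blemishes are cosmetic: the $\f$-tail enters with coefficient $C$ rather than $\frac{C}{k}$, and the $\g$-tails also include the powers $|\g|^{r+1}$ and $|\g|^{\frac{2(r+1)}{r-1}}$ (harmless, since $\D(\A)\subset\H^2(\R^3)\hookrightarrow\L^{\infty}(\R^3)$), neither of which affects the argument.
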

	\begin{proof}
		Let $\uprho$ be a smooth function defined in Lemma \ref{largeradius}. Taking divergence to the first equation in \eqref{2-A}, formally we obtain (see the proof of Lemma \ref{largeradius} the detailed calculations)
		\begin{align}\label{p-value-A}
			p=(-\Delta)^{-1}\left[\nabla\cdot\left[\nabla\cdot\big((\u+\g y) \otimes(\u+\g y) \big)\right] +\beta\nabla\cdot\left[|\u+\g y|^{r-1}(\u+\g y)\right]-\nabla\cdot\f \right].
		\end{align}
		Taking the inner product to the first equation of \eqref{2-A} with $\uprho\left(\frac{|x|^2}{k^2}\right)\u$, we have
		\begin{align}\label{ep1-A}
			&\frac{1}{2} \frac{\d}{\d t} \int_{\R^3}\uprho\left(\frac{|x|^2}{k^2}\right)|\u|^2\d x\nonumber \\&= \mu \int_{\R^3}(\Delta\u) \uprho\left(\frac{|x|^2}{k^2}\right) \u \d x-\alpha \int_{\R^3}\uprho\left(\frac{|x|^2}{k^2}\right)|\u|^2\d x-b\left(\u+\g y,\u+\g y,\uprho\left(\frac{|x|^2}{k^2}\right)(\u+\g y)\right)\nonumber\\&\quad+b\left(\u+\g y,\u+\g y,\uprho\left(\frac{|x|^2}{k^2}\right)\g y\right)-\beta \int_{\R^3}\uprho\left(\frac{|x|^2}{k^2}\right)\left|\u+\g y\right|^{r+1}\d x\nonumber\\&\quad+\beta \int_{\R^3}\left|\u+\g y\right|^{r-1}(\u+\g y)\uprho\left(\frac{|x|^2}{k^2}\right)\g y\d x-\int_{\R^3}(\nabla p)\uprho\left(\frac{|x|^2}{k^2}\right)\u\d x+ \int_{\R^3}\f\uprho\left(\frac{|x|^2}{k^2}\right)\u\d x \nonumber\\&\quad+(\sigma-\alpha)y\int_{\R^3}\g\uprho\left(\frac{|x|^2}{k^2}\right)\u\d x+\mu y\int_{\R^3}(\Delta\g)\uprho\left(\frac{|x|^2}{k^2}\right)\u\d x.
		\end{align}
	
		Let us now estimate each terms on right hand side of \eqref{ep1-A}. Using integration by parts, divergence free condition of $\u(\cdot)$ and $\g\in\D(\A)$, we infer (see inequalities \eqref{ep2}-\eqref{ep4})
		\begin{align}
				\mu \int_{\R^3}(\Delta\u) \uprho\left(\frac{|x|^2}{k^2}\right) \u \d x&\leq -\mu \int_{\R^3}|\nabla\u|^2 \uprho\left(\frac{|x|^2}{k^2}\right)  \d x +\frac{C}{k} \left[\|\u\|^2_{\H}+\|\nabla\u\|^2_{\H}\right],\label{ep2-A}\\
				y^2\ b\left(\u+\g y,\g,\uprho\left(\frac{|x|^2}{k^2}\right)\g\right)&\leq\frac{C}{k}\left[\|\u+\g y\|^2_{\H}+\left|y\right|^4\|\g\|^{4}_{\wi \L^{4}}\right]\leq\frac{C}{k}\left[\|\u\|^2_{\H}+\left|y\right|^2+\left|y\right|^4\right],\\
			-b\left(\u+\g y,\u+\g y,\uprho\left(\frac{|x|^2}{k^2}\right)(\u+\g y)\right)&\leq \frac{C}{k}\|\u+\g y\|^3_{\wi \L^3}\leq\frac{C}{k}\bigg[\|\u+\g y\|^2_{\H}+\|\u+\g y\|^{r+1}_{\wi \L^{r+1}}\bigg]\nonumber\\&\leq\frac{C}{k}\bigg[\|\u\|^2_{\H}+\left|y\right|^2+\|\u+\g y\|^{r+1}_{\wi \L^{r+1}}\bigg], \label{ep3-A}
		\end{align}
		where we have used interpolation and Young's inequalities. Using integration by parts, divergence free condition and \eqref{p-value-A}, we obtain 
		\begin{align}
			-&\int_{\R^3}(\nabla p)\uprho\left(\frac{|x|^2}{k^2}\right)\u\d x=\int_{\R^3}p\uprho'\left(\frac{|x|^2}{k^2}\right)\frac{2}{k^2}(x\cdot\u)\d x\nonumber\\& \leq \frac{C}{k}\int\limits_{\R^3}\left|(-\Delta)^{-1}\left[\nabla\cdot\left[\nabla\cdot\big((\u+\g y)\otimes(\u+\g y)\big)\right]\right]\right|\cdot\left|\u\right|\d x \nonumber\\&\quad+ \frac{C}{k}\int\limits_{\R^3}\left|(-\Delta)^{-1}\left[\nabla\cdot\left[|\u+\g y|^{r-1}(\u+\g y)\right]\right]\right|\cdot\left|\u\right|\d x+\frac{C}{k}\int_{\R^3}|(-\Delta)^{-1}[\nabla\cdot\f]|\cdot|\u|\d  x \nonumber\\&=: \frac{C}{k}\left[Q_1+Q_2+Q_3\right].
		\end{align}
		\vskip2mm
		\noindent
		\textbf{Estimate of $Q_1$}: Using $\g\in\D(\A)$, H\"older's inequality, Plancherel's theorem, and  interpolation and Young's inequalities, we get
		\begin{align}
			|Q_1| &\leq \left\|(-\Delta)^{-1}\left[\nabla\cdot\left[\nabla\cdot\big((\u+\g y)\otimes(\u+\g y)\big)\right]\right]\right\|_{\L^2(\R^3)}\|\u\|_{\H}\nonumber\\&\leq \|\u+\g y\|^2_{\wi\L^4}\|\u\|_{\H}\leq \|\u+\g y\|^{\frac{r-3}{r-1}}_{\H}\|\u+\g y\|^{\frac{r+1}{r-1}}_{\wi\L^{r+1}}\|\u\|_{\H} \nonumber\\&\leq C\bigg[\|\u\|^2_{\H}+\|\u+\g y\|^{r+1}_{\wi\L^{r+1}}+|y|^2\bigg].
		\end{align}
		\vskip2mm
		\noindent
		\textbf{Estimate of $Q_2$:} Similar to \eqref{S_2(d,r)}, applying H\"older's inequality, Plancherel's theorem, \cite[Theorem 1.38]{BCD} (Remark \ref{Hdot}), and interpolation and Young's inequalities, we obtain
		\begin{align}\label{Q_2(d,r)p1}
			|Q_2|
			&\leq C 
				\|\u+\g y\|^r_{\wi\L^{\frac{6r}{5}}}\|\u\|_{\H} 
		\leq C \|\u+\g y\|^{\frac{3r+5}{3r+1}}_{\H}\|\u+\g y\|_{\wi\L^{3(r+1)}}^{\frac{(3r-5)(r+1)}{3r+1}}\|\u\|_{\H}
			\nonumber\\&\leq C \bigg[\|\u+\g y\|^{\frac{3r+5}{3}}_{\H} +\|\u+\g y\|_{\wi\L^{3(r+1)}}^{\frac{(3r-5)(r+1)}{3r-4}}+\|\u\|_{\H}^{\frac{3r+1}{2}}\bigg]
			\nonumber\\&\leq C \bigg[\|\u\|_{\H}^2+\|\u\|^{2(r+1)}_{\H} +\|\u+\g y\|_{\wi\L^{3(r+1)}}^{\frac{(3r-5)(r+1)}{3r-4}}+|y|^2+|y|^{2(r+1)}\bigg],
		\end{align}
		where we have used interpolation and Young's inequalities.
			\vskip2mm
		\noindent
		\textbf{Estimate of $Q_3$}: Similar to \eqref{S_3(d,r)}, we find 
		\begin{align}\label{wiS_3(d,r)}
			\left|Q_3\right|&\leq 
			C\|\f\|^{2}_{\dot{\mathbb{H}}^{-1}(\R^3)}+C\|\u\|^2_{\H}.
		\end{align}
		
		Finally, we estimate the remaining terms of \eqref{ep1-A} by using H\"older's and Young's inequalities as follows,
		\begin{align}
			&y b\left(\u+\g y,\u,\uprho\left(\frac{|x|^2}{k^2}\right)\g\right)+\beta y\int_{\R^3}\left|\u+\g y\right|^{r-1}(\u+\g y)\uprho\left(\frac{|x|^2}{k^2}\right)\g \d x  \nonumber\\&+\int_{\R^3}\f(x)\uprho\left(\frac{|x|^2}{k^2}\right)\u \d x+(\ell-\alpha)y\int_{\R^3}\g \uprho\left(\frac{|x|^2}{k^2}\right)\u\d x+\mu y\int_{\R^3}(\Delta\g )\uprho\left(\frac{|x|^2}{k^2}\right)\u\d x \nonumber\\&\leq\frac{\beta}{2}\int_{\R^3}\uprho\left(\frac{|x|^2}{k^2}\right)\left|\u+\g y\right|^{r+1}\d x+\frac{\mu}{2}\int_{\R^3} \uprho\left(\frac{|x|^2}{k^2}\right)|\nabla\u|^2 \d x+\frac{\alpha}{2}\int_{\R^3} \uprho\left(\frac{|x|^2}{k^2}\right)|\u|^2 \d x \nonumber\\&\quad+C\int_{\R^3}\uprho\left(\frac{|x|^2}{k^2}\right)\bigg[\left|y\right|^{\frac{2(r+1)}{r-1}}\left|\g\right|^{\frac{2(r+1)}{r-1}}+\left|y\right|^{r+1}|\g|^{r+1}+|\f|^2+\left|y\right|^2 |\g|^2+\left|y\right|^2|\Delta\g|^2\bigg]\d x.	\label{ep4-A}
		\end{align}
		Combining \eqref{ep1-A}-\eqref{ep4-A}, we get
		\begin{align}\label{ep5-A}
			&\frac{\d}{\d t}\|\u\|^2_{\mathbb{L}^2(\mathcal{O}_k^{c})}+\alpha\|\u\|^2_{\mathbb{L}^2(\mathcal{O}_k^c)}\nonumber\\& \leq \frac{C}{k} \bigg[\|\u\|^2_{\V}+\|\u+\g y\|^{r+1}_{\wi\L^{r+1}}+\|\u+\g y\|_{\wi\L^{3(r+1)}}^{\frac{(3r-5)(r+1)}{3r-4}}+\|\u\|_{\H}^{2(r+1)}+\|\f\|^2_{\dot{\mathbb{H}}^{-1}(\R^3)}+\left|y\right|^{2}+\left|y\right|^{2(r+1)}\bigg]\nonumber\\&\quad+C\left|y\right|^{\frac{2(r+1)}{r-1}} \int_{|x|\geq k}\left|\g(x)\right|^{\frac{2(r+1)}{r-1}}\d x+C\left|y\right|^{r+1} \int_{|x|\geq k}|\g (x)|^{r+1}\d x+C \int_{|x|\geq k}|\f(x)|^2\d x\nonumber\\&\quad+C\left|y\right|^2 \int_{|x|\geq k}|\g (x)|^2\d x+C\left|y\right|^2 \int_{|x|\geq k}|\Delta\g (x)|^2\d x.
		\end{align}
		Making use of the variation of constant formula to the above equation \eqref{ep5-A} on $(s-t,s)$ and replacing $\omega$ by $\vartheta_{-s}\omega$, we find that, for $s\leq\tau, t\geq 0$ and $\omega\in\Omega$,
		\begin{eqnarray}\label{ep6-A}
			&&\|\u(s,s-t,\vartheta_{-s}\omega,\u_{0})\|^2_{\mathbb{L}^2(\mathcal{O}_k^{c})} \nonumber\\&& \leq e^{-\alpha t}\|\u_{0}\|^2_{\H}+\frac{C}{k}\bigg[\int_{s-t}^{s}e^{\alpha(\zeta-s)}\bigg\{\|\u(\zeta,s-t,\vartheta_{-s}\omega,\u_{0})\|^2_{\V}+\|\u(\zeta,s-t,\vartheta_{-s}\omega,\u_{0})\|^{2(r+1)}_{\H}\nonumber\\&&\quad+\|\u(\zeta,s-t,\vartheta_{-s}\omega,\u_{0})+\g y(\vartheta_{\zeta-s}\omega)\|^{r+1}_{\wi\L^{r+1}}+\|\u(\zeta,s-t,\vartheta_{-s}\omega,\u_{0})+\g y(\vartheta_{\zeta-s}\omega)\|^{\frac{(3r-5)(r+1)}{3r-4}}_{\wi\L^{3(r+1)}}\bigg\}\d\zeta\nonumber\\&&\quad+\int_{-\infty}^{0}e^{\alpha\zeta}\bigg\{\|\f(\zeta+s)\|^2_{\dot{\mathbb{H}}^{-1}(\R^3)}+\left|y(\vartheta_{\zeta}\omega)\right|^{2}+\left|y(\vartheta_{\zeta}\omega)\right|^{2(r+1)}\bigg\}\d\zeta\bigg]\nonumber\\&&\quad+C\int_{-\infty}^{0}e^{\alpha\zeta}\left|y(\vartheta_{\zeta}\omega)\right|^{\frac{2(r+1)}{r-1}}\d\zeta\int\limits_{|x|\geq k}\left|\g(x)\right|^{\frac{2(r+1)}{r-1}}\d x+C\int_{-\infty}^{0}e^{\alpha\zeta}\left|y(\vartheta_{\zeta}\omega)\right|^{r+1}\d\zeta\int\limits_{|x|\geq k}|\g (x)|^{r+1}\d x \nonumber\\&&\quad+C\int_{-\infty}^{0}e^{\alpha\zeta}\left|y(\vartheta_{\zeta}\omega)\right|^2\d\zeta\int\limits_{|x|\geq k}|\g (x)|^2\d x+C\int_{-\infty}^{0}e^{\alpha\zeta}\left|y(\vartheta_{\zeta}\omega)\right|^2\d\zeta \int\limits_{|x|\geq k}|\Delta\g (x)|^2\d x\nonumber\\&&\quad+C\int_{-\infty}^{0}e^{\alpha\zeta} \int\limits_{|x|\geq k}|\f(x,\zeta+s)|^2\d x\d\zeta.
		\end{eqnarray}
		Now, using the definition of backward temperedness \eqref{BackTem}, \eqref{Z3}, \eqref{f3-N}, $\g\in\D(\A)$ and Lemma \ref{Absorbing-A} (\eqref{AB1-A} and \eqref{AB11-A}-\eqref{AB18V-A}), one can complete the proof.
	\end{proof}

	\begin{lemma}\label{Flattening-N}
		For both the cases given in Table \ref{Table}, suppose that Assumption \ref{Hypo_f-N} holds. Let $(\tau,\omega,D)\in\R\times\Omega\times\mathfrak{D}$ and $k\geq1$ be fixed. Then
		\begin{align}\label{FL-P-A}
			\lim_{i,t\to+\infty}\sup_{s\leq \tau}\sup_{\u_{0}\in D(s-t,\vartheta_{-t}\omega)}\|(\I-\P_{i})\bar{\u}(s,s-t,\vartheta_{-s}\omega,\bar{\u}_{0,2})\|^2_{\L^2(\mathcal{O}_{\sqrt{2}k})}=0,
		\end{align}
		where $\bar{\u}_{0,2}=(\I-\P_{i})(\varrho_k\u_{0})$.
	\end{lemma}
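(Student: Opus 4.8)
The plan is to reproduce the scheme of Lemma \ref{Flattening}, now driven by the additive system \eqref{2-A} and fed by the absorbing estimates of Lemma \ref{Absorbing-A} and the energy inequalities of Lemma \ref{EI}. First I would multiply the first equation of \eqref{2-A} by the cut-off $\varrho_k = 1 - \uprho(|x|^2/k^2)$ and rewrite it for $\bar{\u} = \varrho_k\u$, using $\varrho_k\Delta\u = \Delta\bar{\u} - \u\Delta\varrho_k - 2\nabla\varrho_k\cdot\nabla\u$ exactly as in \eqref{FL1}. I would then apply $(\I-\P_i)$ and take the $\L^2(\mathcal{O}_{\sqrt{2}k})$-inner product with $\bar{\u}_{i,2}=(\I-\P_i)\bar{\u}$. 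Relative to \eqref{FL2}, the differences are that every nonlinear contribution now carries the shifted argument $\u + \g y(\vartheta_t\omega)$, that extra linear source terms $(\sigma-\alpha)\g y(\vartheta_t\omega)$ and $\mu y(\vartheta_t\omega)\Delta\g$ appear on the right-hand side, and that the pressure is given by \eqref{p-value-A}. The absorption term $\beta\varrho_k|\u+\g y|^{r-1}(\u+\g y)$ produces, after decomposing $\bar{\u}=\P_i\bar{\u}+\bar{\u}_{i,2}$, a coercive contribution $\tfrac{\beta}{2}\||\u+\g y|^{(r-1)/2}\bar{\u}_{i,2}\|^2_{\L^2(\mathcal{O}_{\sqrt{2}k})}$ plus a cross-term (the analogue of $J_4$), and the $\alpha\bar{\u}$ and $-\mu\Delta\bar{\u}$ terms yield the clean good terms $\alpha\|\bar{\u}_{i,2}\|^2$ and $-\mu\|\nabla\bar{\u}_{i,2}\|^2$.

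Next I would estimate each term by exploiting the spectral gain $\|\bar{\u}_{i,2}\|_{\L^2(\mathcal{O}_{\sqrt{2}k})}\le\lambda_{i+1}^{-1/2}\|\nabla\bar{\u}_{i,2}\|_{\L^2(\mathcal{O}_{\sqrt{2}k})}$ from \eqref{poin-i}, together with H\"older, interpolation, Ladyzhenskaya and Young's inequalities. The convective flux term and its pressure counterpart are treated as in \eqref{FL3} and the estimate of $\widetilde{S}_1$; the pressure piece coming from $\beta|\u+\g y|^{r-1}(\u+\g y)$ is bounded via Plancherel's theorem and the embedding $\L^{6/5}(\R^3)\hookrightarrow\dot{\mathbb{H}}^{-1}(\R^3)$ of Remark \ref{Hdot}, exactly as in \eqref{S_2(d,r)p}, producing $\wi\L^{3(r+1)}$-norms of $\u+\g y$; the external-force pressure piece reduces to \eqref{S_3(d,r)p}. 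The nonlinear cross-term $\beta\int|\u+\g y|^{r-1}(\P_i\bar{\u})\cdot\bar{\u}_{i,2}$ is absorbed by the coercive term above after paying a factor $\lambda_{i+1}^{-1/4}$ and splitting the residual powers among the $\V$, $\H$, $\wi\L^{r+1}$ and $\wi\L^{3(r+1)}$ norms of $\u+\g y$, and all the $\g$-dependent remainders are controlled using $\g\in\D(\A)$ and the temperedness of $y(\vartheta_t\omega)$. The outcome is a differential inequality of the shape
$$\frac{\d}{\d t}\|\bar{\u}_{i,2}\|^2_{\L^2(\mathcal{O}_{\sqrt{2}k})}+\alpha\|\bar{\u}_{i,2}\|^2_{\L^2(\mathcal{O}_{\sqrt{2}k})}\le C\lambda_{i+1}^{-1/3}\,\mathcal{G}(t),$$
where $\mathcal{G}(t)$ collects $\|\u\|_{\V}^2$, $\|\u\|_{\H}^{2(r+1)}$, the powers $\|\u+\g y\|_{\wi\L^{r+1}}^{r+1}$, $\|\u+\g y\|_{\wi\L^{3(r+1)}}^{(3r-5)(r+1)/(3r-4)}$ and $\|\u+\g y\|_{\wi\L^{3(r+1)}}^{3(r-1)(r+1)/(3r-2)}$, and the data $\|\f\|_{\L^2(\R^3)}^2$, $\|\f\|_{\dot{\mathbb{H}}^{-1}(\R^3)}^2$, $y_1(\vartheta_t\omega)$, $y_2(\vartheta_t\omega)$.

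I would then apply the variation of constants formula on $(s-t,s)$ with $\omega$ replaced by $\vartheta_{-s}\omega$, bound the initial value by $\|(\I-\P_i)(\varrho_k\u_0)\|^2_{\L^2(\mathcal{O}_{\sqrt{2}k})}\le C\|\u_0\|_{\H}^2$, and take suprema over $s\le\tau$ and $\u_0\in D(s-t,\vartheta_{-t}\omega)$. Passing to the limit $t\to+\infty$, I invoke \eqref{Z3} and the backward-uniform temperedness \eqref{BackTem} to annihilate the initial-data term, and use \eqref{AB1-A}, \eqref{AB11-A}, \eqref{AB12-A} and \eqref{AB18V-A} of Lemma \ref{Absorbing-A}, together with \eqref{G3}, \eqref{f3-N} and Assumption \ref{Hypo_f-N}, to bound the time-integrals of $\mathcal{G}$ uniformly in $s$; in particular, the fractional $\wi\L^{3(r+1)}$-powers are handled by H\"older in time against the weight $(\zeta-s+t)$ furnished by \eqref{AB18V-A}, exactly as in \eqref{ep7}. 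Letting $i\to+\infty$ and using $\lambda_i\to\infty$ then yields \eqref{FL-P-A}.

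The main obstacle will be the same as in the multiplicative case: closing the nonlinear cross-term and the nonlinear pressure piece $\widetilde{S}_2$, which both generate super-quadratic $\wi\L^{3(r+1)}$-norms of $\u+\g y$ that are not directly integrable in time against $e^{\alpha(\zeta-s)}$. The resolution is the higher-order estimate \eqref{AB18V-A}, which guarantees integrability of $(\zeta-s+t)\|\u+\g y\|^{r+1}_{\wi\L^{3(r+1)}}$; interpolating this bound against a small negative power of $(\zeta-s+t)$, as in \eqref{ep7}, produces the required uniform-in-$s$ control. The additive-noise bookkeeping of the numerous $\g y(\vartheta_t\omega)$ remainder terms, although routine given $\g\in\D(\A)$ and \eqref{Z3}, is precisely what lengthens the computation beyond its multiplicative counterpart in Lemma \ref{Flattening}.
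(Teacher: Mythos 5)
Your proposal follows essentially the same route as the paper's own proof: cut off with $\varrho_k$, project by $(\I-\P_i)$, use the pressure representation \eqref{p-value-A} with Plancherel and the embedding $\L^{6/5}(\R^3)\hookrightarrow\dot{\mathbb{H}}^{-1}(\R^3)$, absorb the nonlinear cross-term into the coercive damping term with a spectral factor $\lambda_{i+1}^{-1/3}$, and close the time-integrals via the variation of constants formula together with \eqref{AB1-A}, \eqref{AB11-A}--\eqref{AB18V-A}, backward temperedness, and the weighted $\wi\L^{3(r+1)}$-estimate \eqref{AB18V-A} handled as in \eqref{ep7}. The only deviations are cosmetic bookkeeping (you keep the coercive term as $\||\u+\g y|^{(r-1)/2}\bar{\u}_{i,2}\|^2$ with the $\bar{\g}_{i,2}y$ remainder split off, whereas the paper retains $\||\u+\g y|^{(r-1)/2}(\bar{\u}_{i,2}+\bar{\g}_{i,2}y)\|^2$ and estimates the extra cross-terms $L_2$, $L_4$ via $\g\in\D(\A)$ and Agmon's inequality), so the argument is correct as proposed.
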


	\begin{proof}
		The first equation of \eqref{2-A} can be rewritten as (multiplying by $\varrho_k$):
		\begin{align}\label{FL1-A}
			&\frac{\d\bar{\u}}{\d t}-\mu\Delta\bar{\u}+\varrho_k\big((\u+\g y)\cdot\nabla\big)(\u+\g y)+\alpha\bar{\u}+\varrho_k|\u+\g y|^{r-1}(\u+\g y)+\varrho_k\nabla p\nonumber\\&=-\mu\u\Delta\varrho_k-2\mu\nabla\varrho_k\cdot\nabla\u+\varrho_k\f +(\sigma-\alpha)\varrho_k\g y+\mu y\varrho_k\Delta\g.
		\end{align}
		Applying the projection $(\I-\P_i)$ and taking the inner product with $\bar{\u}_{i,2}$ in $\L^2(\mathcal{O}_{\sqrt{2}k})$ to the equation \eqref{FL1-A}, we get
		\begin{align}\label{FL2-A}
			&\frac{1}{2}\frac{\d}{\d t}\|\bar{\u}_{i,2}\|^2_{\L^2(\mathcal{O}_{\sqrt{2}k})} +\mu\|\nabla\bar{\u}_{i,2}\|^2_{\L^2(\mathcal{O}_{\sqrt{2}k})}+\alpha\|\bar{\u}_{i,2}\|^2_{\L^2(\mathcal{O}_{\sqrt{2}k})}+\beta\||\u+\g y|^{\frac{r-1}{2}}(\bar{\u}_{i,2}+\bar{\g}_{i,2}y)\|^2_{\L^2(\mathcal{O}_{\sqrt{2}k})}\nonumber\\&=-\underbrace{\sum_{q,m=1}^{3}\int_{\mathcal{O}_{\sqrt{2}k}}\left(\I-\P_i\right)\bigg[(u_{q}+g_{q}y)\frac{\partial(u_{m}+g_{m}y)}{\partial x_q}\left\{\varrho_k(x)\right\}^2(u_{m}+g_{m}y)\bigg]\d x}_{:=L_1}\nonumber\\&\quad+\underbrace{y\sum_{q,m=1}^{3}\int_{\mathcal{O}_{\sqrt{2}k}}\left(\I-\P_i\right)\bigg[(u_{q}+g_{q}y)\frac{\partial(u_{m}+g_{m}y)}{\partial x_q}\left\{\varrho_k(x)\right\}^2g_{m}\bigg]\d x}_{:=L_2}\nonumber\\& \quad + \underbrace{\beta \int_{\mathcal{O}_{\sqrt{2}k}}|\u+\g y|^{r-1}(\mathrm{P}_{i}\bar{\u}+\mathrm{P}_{i}\bar{\g} y)\cdot(\bar{\u}_{i,2}+\bar{\g}_{i,2} y)\d x}_{=:L_3} \nonumber\\&\quad+\underbrace{y\int_{\mathcal{O}_{\sqrt{2}k}}\bigg[|\u+\g y|^{r-1}\varrho_k(\u+\g y)\bar{\g}_{i,2} \bigg]\d x}_{:=L_4} - \underbrace{\big(\varrho_k(x)\nabla p, \bar{\u}_{i,2}\big)}_{:=L_5} \nonumber\\&\quad-\underbrace{\left\{\mu\big(\u\Delta\varrho_k+2\nabla\varrho_k\cdot\nabla\u,\bar{\u}_{i,2}\big)-\big(\varrho_k\f,\bar{\u}_{i,2}\big) -(\sigma-\alpha) y\big(\varrho_k\g,\bar{\u}_{i,2}\big)-\mu y\big(\varrho_k\Delta\g,\bar{\u}_{i,2}\big)\right\}}_{:=L_6}.
		\end{align}
		Next, we estimate each terms on the right hand side of \eqref{FL2-A} as follows:
		\vskip 2mm
		\noindent
		\textbf{Estimate of $L_1$:} Using integration by parts, divergence free condition of $\u(\cdot)$, \eqref{poin-i} (WLOG we assume that $\lambda_{i}\geq1$), H\"older's, Ladyzhenskaya's and Young's inequalities, we find
		\begin{align}\label{FL3-A}
			\left|L_1\right|&=\left|\int_{\mathcal{O}_{\sqrt{2}k}}\left(\I-\P_i\right)\bigg[\uprho'\left(\frac{|x|^2}{k^2}\right)\frac{x}{k^2}\cdot\left\{\bar{\u}+\bar{\g}y\right\}|\u+\boldsymbol{g}y|^2\bigg]\d x\right| \nonumber\\&\leq C\|\bar{\u}_{i,2}+\bar{\g}_{i,2}y\|_{\L^2(\mathcal{O}_{\sqrt{2}k})}\|\u+\g y\|^2_{\wi\L^4}\nonumber\\&\leq C\lambda_{i+1}^{-\frac{1}{2}}\|\nabla(\bar{\u}_{i,2}+\bar{\g}_{i,2}y)\|_{\L^2(\mathcal{O}_{\sqrt{2}k})}\|\u+\g y\|^{\frac{r-3}{r-1}}_{\H} \|\u+\g y\|^{\frac{r+1}{r-1}}_{\wi\L^{r+1}}
			\nonumber\\&\leq  C\lambda_{i+1}^{-\frac{1}{2}}\|\u+\g y\|_{\V}\|\u+\g y\|^{\frac{r-3}{r-1}}_{\H} \|\u+\g y\|^{\frac{r+1}{r-1}}_{\wi\L^{r+1}}
			\nonumber\\&\leq C\lambda_{i+1}^{-\frac{1}{2}}\bigg[\|\u\|^2_{\V} +\|\u+\g y\|^{r+1}_{\wi\L^{r+1}}+|y|^2\bigg].
		\end{align}
		\vskip 2mm
		\noindent
		\textbf{Estimate of $L_2$ and $L_4$:} Using $\g\in\D(\A)$, H\"older's, Agmon's, \eqref{poin-i}, interpolation and Young's inequalities, respectively, we get 
		\begin{align}\label{FL4-A}
				&\left|L_2+L_4\right|\nonumber\\&\leq|y|\|\bar{\g}_{i,2}\|_{\L^{\infty}(\mathcal{O}_{\sqrt{2}k})}\bigg[\|\u+\g y\|_{\H} \|\nabla(\u+\g y)\|_{\H}+\|\u+\g y\|^{r}_{\wi\L^{r}}\bigg]\nonumber\\
		&\leq C\|\bar{\g}_{i,2}\|^{\frac{1}{4}}_{\L^{2}(\mathcal{O}_{\sqrt{2}k})}\|\bar{\g}_{i,2}\|^{\frac{3}{4}}_{\H^{2}(\mathcal{O}_{\sqrt{2}k})}|y|\bigg[\|\u+\g y\|_{\H} \|\nabla(\u+\g y)\|_{\H}+
		\|\u+\g y\|^{\frac{2}{r-1}}_{\H}\|\u+\g y\|^{\frac{(r+1)(r-2)}{r-1}}_{\wi\L^{r+1}}\bigg]
		\nonumber\\&\leq C\lambda^{-\frac{1}{8}}_{i+1}\bigg[\|\u+\g y\|^4_{\H}+ \|\nabla(\u+\g y)\|^2_{\H}+\|\u+\g y\|^{r+1}_{\H}+\|\u+\g y\|^{r+1}_{\wi\L^{r+1}}+|y|^4+|y|^{r+1}\bigg]
		\nonumber\\&\leq C\lambda^{-\frac{1}{8}}_{i+1}\bigg[\|\u+\g y\|^2_{\V}+ \|\u+\g y\|^{r+1}_{\H}+\|\u+\g y\|^{r+1}_{\wi\L^{r+1}}+|y|^4+|y|^{r+1}\bigg]
		\nonumber\\&\leq C\lambda^{-\frac{1}{8}}_{i+1}\bigg[\|\u\|^2_{\V}+\|\u\|^{r+1}_{\H}+\|\u+\g y\|^{r+1}_{\wi\L^{r+1}}+|y|^2+|y|^{r+1}\bigg].
		\end{align}
		\vskip 2mm
		\noindent
		\textbf{Estimate of $L_5$:} Using integration by parts, divergence free condition for $\u(\cdot)$ and \eqref{p-value-A}, we obtain
		\begin{align}
			|L_5|&=\left|\int_{\mathcal{O}_{\sqrt{2}k}}(\I-\P_i)\bigg[p\uprho'\left(\frac{|x|^2}{k^2}\right)\frac{4}{k^2}(x\cdot\bar{\u})\bigg]\d x\right|\nonumber\\& \leq C\int_{\mathcal{O}_{\sqrt{2}k}}\left|(-\Delta)^{-1}\left[\nabla\cdot\left[\nabla\cdot\big((\u+\g y)\otimes(\u+\g y)\big)\right]\right]\right|\cdot\left|\bar{\u}_{i,2}\right|\d x \nonumber\\&\quad+ C\int_{\mathcal{O}_{\sqrt{2}k}}\left|(-\Delta)^{-1}\left[\nabla\cdot\left[|\u+\g y|^{r-1}(\u+\g y)\right]\right]\right|\cdot\left|\bar{\u}_{i,2}\right|\d x +C\int_{\mathcal{O}_{\sqrt{2}k}}|(-\Delta)^{-1}[\nabla\cdot\f]|\cdot|\bar{\u}_{i,2}|\d  x \nonumber\\&=: C\left[\widetilde{Q}_1+\widetilde{Q}_2+\widetilde{Q}_3\right].
		\end{align}
		\vskip2mm
		\noindent
		\textit{Estimate of $\widetilde{Q}_1$}: Using H\"older's inequality, Plancherel's theorem, interpolation and Young's inequalities, we get 
		\begin{align}
			|\widetilde{Q}_1|&\leq \left\|(-\Delta)^{-1}\left[\nabla\cdot\left[\nabla\cdot\big((\u+\g y)\otimes(\u+\g y)\big)\right]\right]\right\|_{\L^2(\R^3)}\|\bar{\u}_{i,2}\|_{\L^2(\mathcal{O}_{\sqrt{2}k})}\nonumber\\&\leq \|\u+\g y\|^2_{\wi\L^4} \|\bar{\u}_{i,2}\|_{\L^2(\mathcal{O}_{\sqrt{2}k})} \leq C\lambda^{-\frac{1}{2}}_{i+1} \bigg[\|\u\|^2_{\V}+\|\u +\g y\|^{r+1}_{\wi\L^{r+1}}+|y|^2\bigg]
		\end{align}
		\vskip2mm
		\noindent
		\textit{Estimate of $\widetilde{Q}_2$:} Making use of H\"older's inequality, Plancherel's theorem, \cite[Theorem 1.38]{BCD} (Remark \ref{Hdot}), \eqref{poin-i}, interpolation and Young's inequalities (see \eqref{S_2(d,r)p}), we find
		\begin{align}\label{Q_2(d,r)p}
				|\widetilde{Q}_2|&\leq C
				\|\u+\g y\|^r_{\wi\L^{\frac{6r}{5}}}\|\bar{\u}_{i,2}\|_{\L^2(\mathcal{O}_{\sqrt{2}k})},
			\nonumber\\&\leq C \lambda_{i+1}^{-\frac{1}{2}} \|\u+\g y\|^{\frac{3r+5}{3r+1}}_{\H}\|\u+\g y\|^{\frac{(3r-5)(r+1)}{3r+1}}_{\wi\L^{3(r+1)}}\|\nabla\bar{\u}_{i,2}\|_{\L^2(\mathcal{O}_{\sqrt{2}k})}\nonumber\\& \leq C\lambda_{i+1}^{-\frac{1}{2}}  \bigg[\|\u+\g y\|^{\frac{3r+5}{3}}_{\H} +\|\u+\g y\|_{\wi\L^{3(r+1)}}^{\frac{(3r-5)(r+1)}{3r-4}}+\|\u\|_{\V}^{\frac{3r+1}{2}}\bigg]
			\nonumber\\&\leq C \lambda_{i+1}^{-\frac{1}{2}}\bigg[\|\u\|_{\V}^2+\|\u\|^{2(r+1)}_{\V} +\|\u+\g y\|_{\wi\L^{3(r+1)}}^{\frac{(3r-5)(r+1)}{3r-4}}+|y|^2+|y|^{2(r+1)}\bigg]
		\end{align}
	\vskip 2mm
	\noindent
		\textit{Estimate of $\widetilde{Q}_3$}: Similar to \eqref{S_3(d,r)p}, we find 
		\begin{align}\label{Q_3(d,r)p}
			|\widetilde{Q}_3|&\leq 
			 \frac{\mu}{4}\|\nabla\bar{\u}_{i,2}\|^2_{\L^2(\mathcal{O}_{\sqrt{2}k})}+ C\lambda^{-1}_{i+1}\|\f\|^2_{\dot{\mathbb{H}}^{-1}(\R^3)}.
		\end{align}
		\vskip 2mm
		\noindent
		\textbf{Estimate of $L_6$:} Applying H\"older's and Young's inequalities, we deduce
		\begin{align}
			\left|L_6\right|&\leq C\bigg[\|\u\|_{\H}+ \|\nabla\u\|_{\H}+\|\f\|_{\L^2(\R^3)}+\left|y\right|\bigg]\|\bar{\u}_{i,2}\|_{\L^2(\mathcal{O}_{\sqrt{2}k})}\nonumber\\&\leq C\lambda^{-1/2}_{i+1}\bigg[\|\u\|_{\H}+\|\nabla\u\|_{\H}+\|\f\|_{\L^2(\R^3)}+\left|y\right|\bigg]\|\nabla\bar{\u}_{i,2}\|_{\L^2(\mathcal{O}_{\sqrt{2}k})}\nonumber\\&\leq\frac{\mu}{4} \|\nabla\bar{\u}_{i,2}\|^2_{\L^2(\mathcal{O}_{\sqrt{2}k})}+ C\lambda^{-1}_{i+1}\bigg[\|\u\|^2_{\H}+\|\nabla\u\|^2_{\H}+\|\f\|^2_{\L^2(\R^3)}+\left|y\right|^2\bigg].\label{FL5-A}
		\end{align}
				\vskip 2mm
			\noindent
			\textbf{Estimate of $L_3$:} Using H\"older's, \eqref{poin-i}, interpolation and Young's inequalities, we obtain
	\begin{eqnarray}\label{L3}
		&&|L_3|\nonumber\\&&\leq C \int_{\mathcal{O}_{\sqrt{2}k}}|\u+\g y|^{\frac{r-1}{4}} |\bar{\u}_{i,2}+\bar{\g}_{i,2}y|^{\frac{1}{2}}|\bar{\u}_{i,2}+\bar{\g}_{i,2}y|^{\frac{1}{2}}|\u+\g y|^{\frac{3(r-1)}{4}} |\mathrm{P}_{i}(\bar{\u}+\bar{\g} y)|\d x \nonumber\\&&\leq C  \|\left|\u+\g y\right|^{\frac{r-1}{2}}(\bar{\u}_{i,2}+\bar{\g}_{i,2} y)\|^{\frac{1}{2}}_{\L^2(\mathcal{O}_{\sqrt{2}k})}  \|\bar{\u}_{i,2}+\bar{\g}_{i,2} y\|^{\frac{1}{2}}_{\L^2(\mathcal{O}_{\sqrt{2}k})} \|\u+\g y\|^{\frac{3(r-1)}{4}}_{\wi\L^{3(r+1)}} \|\u+\g y\|_{\wi\L^{\frac{4(r+1)}{r+3}}}  \nonumber\\&&\leq C \lambda_{i+1}^{-\frac{1}{4}} \|\left|\u+\g y\right|^{\frac{r-1}{2}} (\bar{\u}_{i,2}+\bar{\g}_{i,2}y)\|^{\frac{1}{2}}_{\L^2(\mathcal{O}_{\sqrt{2}k})}  \|\nabla(\bar{\u}_{i,2}+\bar{\g}_{i,2}y)\|^{\frac{1}{2}}_{\L^2(\mathcal{O}_{\sqrt{2}k})}\|\u+\g y\|^{\frac{3(r-1)}{4}}_{\wi\L^{3(r+1)}}\nonumber\\&&\quad\times\|\u+\g y\|_{\wi\L^{\frac{4(r+1)}{r+3}}}   \nonumber\\&&\leq C \lambda_{i+1}^{-\frac{1}{4}} \|\left|\u+\g y\right|^{\frac{r-1}{2}}(\bar{\u}_{i,2}+\bar{\g}_{i,2} y)\|^{\frac{1}{2}}_{\L^2(\mathcal{O}_{\sqrt{2}k})}  \|\u+\g y\|^{\frac{1}{2}}_{\V} \|\u+\g y\|^{\frac{3(r-1)}{4}}_{\wi\L^{3(r+1)}} \|\u+\g y\|^{\frac{1}{2}}_{\H} \|\u+\g y\|^{\frac{1}{2}}_{\wi\L^{r+1}}\nonumber\\&& \leq \frac{\beta}{2} \|\left|\u+\g y\right|^{\frac{r-1}{2}}(\bar{\u}_{i,2}+\bar{\g}_{i,2}y)\|^2_{\L^2(\mathcal{O}_{\sqrt{2}k})} +C \lambda_{i+1}^{-\frac{1}{3}}\bigg[\|\u+\g y\|^{r+1}_{\V}+\|\u+\g y\|^{\frac{3(r-1)(r+1)}{3r-2}}_{\wi\L^{3(r+1)}}\nonumber\\&&\quad+\|\u+\g y\|^{2(r+1)}_{\H}  + \|\u+\g y\|^{r+1}_{\wi\L^{r+1}}\bigg].
	\end{eqnarray}
		Now, combining \eqref{FL2-A}-\eqref{FL5-A}, applying the variation of constant formula, using Lemma \ref{Absorbing-A} (\eqref{AB1-A} and \eqref{AB11-A}-\eqref{AB18V-A}) and passing limit $i\to\infty$, ($\lambda_{i+1}\to0$ as $i\to\infty$), we demonstrate \eqref{FL-P-A}, as desired (see the proof of Lemma \ref{Flattening}), which  completes the proof.
	\end{proof}
	
	\subsection{Proof of Theorem \ref{MT1}}
	This subsection is devoted to the proof of main result of this section, that is, the existence of pullback $\mathfrak{D}$-random attractors and their asymptotic autonomy for the solution of the system \eqref{SCBF} with $S(\v)=\g\in\D(\A)$. For both the cases given in Table \ref{Table}, the existence of pullback $\mathfrak{D}$-random attractors for non-autonomous 3D stochastic CBF equations driven by additive noise on the whole space is established in \cite{KM7}. For both the cases given in Table \ref{Table}, as the existence of a unique pullback random attractor is known for each $\tau$, one can obtain the existence of a unique random attractor for an autonomous 3D stochastic CBF equations driven by additive noise on the whole space (cf. \cite{KM7}).
	
	In view of Propositions \ref{Back_conver-A} and \ref{IRAS-A}, and Lemmas \ref{largeradius-A} and \ref{Flattening-N}, the proof of Theorem \ref{MT1} can be obtained by applying similar arguments as in the proof of \cite[Theorem 1.6]{RKM} (Subsection 3.5 in \cite{RKM}) and \cite[Theorem 5.2]{CGTW}.

	\medskip\noindent
	{\bf Acknowledgments:}    The first author would like to thank the Council of Scientific $\&$ Industrial Research (CSIR), India for financial assistance (File No. 09/143(0938)/2019-EMR-I). M. T. Mohan would  like to thank the Department of Science and Technology (DST), Govt of India for Innovation in Science Pursuit for Inspired Research (INSPIRE) Faculty Award (IFA17-MA110). Renhai Wang was supported by China Postdoctoral Science Foundation under grant numbers 2020TQ0053 and 2020M680456.

\end{document}